\begin{document}

\renewcommand{\theenumi}{\rm (\roman{enumi})}
\renewcommand{\labelenumi}{\rm \theenumi}

\setcounter{footnote}{1}

\newtheorem{thm}{Theorem}[section]
\newtheorem{defi}[thm]{Definition}
\newtheorem{lem}[thm]{Lemma}
\newtheorem{prop}[thm]{Proposition}
\newtheorem{cor}[thm]{Corollary}
\newtheorem{exam}[thm]{Example}
\newtheorem{conj}[thm]{Conjecture}
\newtheorem{rem}[thm]{Remark}
\allowdisplaybreaks

\title{The invariant measure and the flow associated to the $\Phi ^4_3$-quantum field model}

\author{Sergio Albeverio$^\dagger$ and Seiichiro Kusuoka$^\ddagger$
\vspace{5mm}\\
\normalsize $^\dagger$Institute for Applied Mathematics and HCM, University of Bonn\\
\normalsize Endenicher Allee 60, 53115 Bonn, Germany \\
\normalsize e-mail address: albeverio@iam.uni-bonn.de\vspace{5mm}\\
\small $^\ddagger$ Department of Mathematics, Graduate School of Science, Kyoto University,\\
\small Kitashirakawa-Oiwakecho, Sakyo-ku, Kyoto 606-8502, Japan\\
\small e-mail address: {\tt{kusuoka@math.kyoto-u.ac.jp}}}
\maketitle

\begin{abstract}
We give a direct construction of invariant measures and global flows for the stochastic quantization equation to the quantum field theoretical $\Phi ^4_3$-model on the $3$-dimensional torus.
This stochastic equation belongs to a class of singular stochastic partial differential equations (SPDEs) presently intensively studied, especially after Hairer's groundbreaking work on regularity structures.
Our direct construction exhibits invariant measures and flows as limits of the (unique) invariant measures for corresponding finite-dimensional approximation equations.
Our work is done in the setting of distributional Besov spaces, adapting semigroup techniques for solving nonlinear dissipative parabolic equations on such spaces and using methods that originated from work by Gubinelli et al on paracontrolled distributions for singular SPDEs.
\end{abstract}

{\bf AMS Classification Numbers:}
81T08, 81S20, 60H15, 35Q40, 35R60, 35K58

 \vskip0.2cm

{\bf Key words:} stochastic quantization, quantum field theory, singular SPDE, invariant measure, flow

\section{Introduction}\label{sec:intro}

The present paper undertakes a new and direct construction of global solutions with general initial conditions and the invariant measure for a nonlinear stochastic partial differential equation (stochastic quantization equation) associated with the $\Phi ^4_3$-model of quantum field theory on a torus.
To understand the origins of the problem and present some motivation for the study of the $\Phi ^4_3$-model, let us shortly recall the origins of quantum field theory and the motivations for the construction of quantum field models.

The origins of quantum field theory have to be found already at the beginning of quantum theory.
In fact the considerations which lead M. Planck at the beginning of last century to the introduction of the basic ``quantum of action'' (expressed by Planck's constant $\hbar$) were based on a phenomenon (``black body radiation'') involving the electromagnetic field (described by Maxwell's equations).
Quantum theory evolved first (1924-25) as a physical theory, different from classical mechanics, for the description of phenomena characterized by a dependence on $\hbar$, typical of the world of atoms and molecules.
Later it found its well-known mathematical formulation in terms of operators acting in Hilbert spaces (see e.g. \cite{ReSi1, ReSi2, ReSi3, ReSi4}).
Already in 1927, M. Born, W. Heisenberg and P. Jordan considered an analogue of quantum mechanics where the particles are replaced by fields.
This was quite natural since a field (e.g. the classical electromagnetic field) in any bounded space-time domain after a decomposition in Fourier components can be looked upon as an infinite system of oscillators, susceptible to be quantized as mechanical particles performing oscillations.
In the same year P. Dirac gave the first physical discussion of a quantized electromagnetic field in interaction with quantized particles (see e.g. \cite{Jo1, Jo2}).
Soon it was realized that divergences arise in trying to compute quantities of physical interest.
This is largely due to the fact of having to do with an infinite-dimensional quantum system which evolves according to the laws of relativity theory.
Despite the fact that quantum mechanics of finite systems of non-relativistic particles found a mathematical formulation quite early, the extension to the case of quantum fields took a lot of time and in some sense is still an open problem.
However, in the case without interaction (``free field case'') a suitable setting was found through the Fock space representation (since the 30s) and (since the 60s) the isomorphic Friedrichs-Segal representation of Fock spaces as an $L^2$-space with respect to a suitable Gaussian measure on the space of real maps from the space variables to the real numbers.
The singularities of this measure coupled with the nonlinearity of the interaction makes difficult the treatment of the inclusion of interactions.
These difficulties lead in the 50s on one hand to the physical theory of renormalization, on the other hand to the development of ``axiomatic settings'', trying to fix a minimal set of requirements for a theory or a model to be acceptable.
Up to the present no model satisfying all requirements has been found for the case where the dimension $d$ of space-time is $4$.
In the case where $d\leq 3$ some nontrivial models satisfying all requirements have been constructed, as part of the area of research developed in the 60s-70s known under the name of ``constructive quantum field theory'' (see e.g. \cite{AHFL}, \cite{BSZ}, \cite{GlJa}, \cite{HKPS}, \cite{Si}).
The $\Phi ^4_3$-model, which we discuss in the present paper, belongs to this area, more precisely to the class of models which can be looked upon as quantized versions of a classical nonlinear partial differential equation of the form
\begin{equation}\label{eq:intro1}
\frac{\partial ^2}{\partial t^2} \phi (t,\vec x) = \left( \triangle _{\vec x} -m_0^2\right) \phi (t,\vec x) - V' (\phi (t,\vec x)) .
\end{equation}
Here $m_0>0$ is a constant, $t$ and $\vec x$ are time and space variables, respectively, $t\in {\mathbb R}$, $\vec x \in {\mathbb R}^\sigma$, $\sigma \in {\mathbb N} \cup \{ 0\}$, $\phi$ takes real values, $V$ is a real-valued differentiable map on ${\mathbb R}$ expressing the nonlinearity of the equation.
More precisely, $-V'(\phi )$ is the nonlinearity for the equation.
In the $\Phi ^4_3$ case we have $\sigma =2$, $V(y)= \lambda y^4$ for some $\lambda >0$ (more general models have been discussed for $\sigma \leq 1$, where $V$ can be of the lower bounded polynomial, trigonometric on exponential-type (see e.g. \cite{AGH}, \cite{Al}, \cite{Al16}, \cite{Di}, \cite{GlJa} and \cite{ReSi3})).
(\ref{eq:intro1}) is called Klein-Gordon equation (with mass $m_0$ and nonlinearity given by $V$).
For the study of (\ref{eq:intro1}) and similar classical  nonlinear PDEs see, e.g. \cite{BeF} and \cite{VeWi77}.
It is a prototype of relativistic local equations, in as much as it can be looked upon as a local perturbation by the $V$-term of the relativistic linear equation expressed by (\ref{eq:intro1}) for $V\equiv 0$ (the linear Klein-Gordon equation, which is obviously relativistic covariant, since it only involves the relativistic operator $\Box = \frac{\partial ^2}{\partial t^2} - \triangle _{\vec x}$).

The quantum field $\Phi _{\rm qu}$ corresponding to the classical field $\phi $ satisfying (\ref{eq:intro1}) has been realized in the models mentioned above as an operator-valued distribution, satisfying all requirements of a relativistic quantum field theory in space-time dimension $d:=\sigma +1 \leq 3$ (as mentioned above, the most interesting case where $d=4$ is still out of reach, despite several partial results, see, e.g \cite{AlSe}, \cite{FMRS}, \cite{FeFrSo}, \cite{GRS76} and \cite{So}).

A common construction of $\Phi _{\rm qu}$ for all $d\leq 3$ (within the above mentioned ``constructive quantum field theoretical approach) is by probabilistic methods, where one first constructs a generalized random field $\Phi _{\rm Eu}$ (where $\rm Eu$ stands for ``Euclidean") defined as the coordinate process to a probability measure $\mu _{\rm Eu}$ (depending on $m_0$ and $V$) on the probability space $\Omega = {\mathcal S}'({\mathbb R}^d)$, with its Borel $\sigma$-algebra.
The measure $\mu _{\rm Eu}$ is invariant under the (full) Euclidean group $E_d$ acting on ${\mathcal S}' ({\mathbb R}^d)$.
$\Phi _{\rm Eu}$ is thus $E_d$-homogeneous (stationary).
All axioms of Euclidean field theory are satisfied, and from the moments functions of $\mu _{\rm Eu}$ (which have been shown to exist) one can find, by a suitable analytic continuation, a set of functions, called Wightman functions, which characterize the relativistic quantum field $\Phi _{\rm qu}$ corresponding to $\Phi _{\rm Eu}$.
These $\Phi _{\rm qu}$ are ``nontrivial'' in the sense that they differ both physically and mathematically from the corresponding quantities for $V \equiv 0$ (see e.g. \cite{BLOT}, \cite{GlJa} and \cite{Si}).

Let us indicate briefly how the structure $(\Phi _{\rm Eu},\mu _{\rm Eu})$ is constructed in the cases $\sigma =1,2$ (for the more elementary but also instructive case $\sigma =0$ (nonlinear quantum oscillator) see \cite{CoRe}, \cite{PrYo} and \cite{Str}).
$\mu _{\rm Eu}$ is obtained by a double limit, introducing both a space-time cut off (also called ``infrared cut-off'') and a regularization cut-off (``ultraviolet cut-off'').
The first is realized either by considering the interaction-term only for $(t,\overrightarrow{x})$ in a bounded region $\Lambda$ of ${\mathbb R}^d$, and putting appropriate boundary conditions on $\partial \Lambda$ for the space-time Laplacian in ${\mathbb R}^d$ (with Euclidean metric), or by replacing ${\mathbb R}^d$ itself by a $d$-dimensional torus ${\mathbb T}^d$.
The ultraviolet cut-off is realized in two steps: first by plainly replacing in the interaction term the coordinate variable by a regularized version of it (e.g. through convolution with a mollifier, depending on a parameter $\varepsilon >0$); the second step consists in introducing appropriate renormalization counterterms as we shall see.
As a result of the first step one has then a family of probability measures $\mu _{\Lambda ,\varepsilon}$ on ${\mathcal S}'({\mathbb R}^{d})$ of the form
\begin{equation}\label{eq:intro2}
\mu _{\Lambda ,\varepsilon} (d\omega ) = Z_{\Lambda ,\varepsilon}^{-1} e^{-\int _{\Lambda}V(\omega _{\varepsilon}(t,\vec x)) dt d\vec x} \mu _0(d\omega )
\end{equation}
where $\omega \in \Omega = {\mathcal S}'({\mathbb R}^d)$, $\Omega$ denoting the probability space ($\omega$ plays the role of $\Phi _{\rm Eu}$); $\mu _0$ is the probability measure corresponding to the case where in (\ref{eq:intro1}) we have $V\equiv 0$, which is Nelson's free field measure $\mu _0$ on ${\mathcal S}'({\mathbb R}^d)$, i.e. the Gaussian measure with mean $0$ and covariance operator $(-\triangle +m_0^2)^{-1}$ in $L^2({\mathbb R}^d)$, respectively when $\Lambda$ is the torus ${\mathbb T}^d$, in $L^2({\mathbb T}^d)$ (and then $\mu _0$ can be seen as a measure on ${\mathcal S}'(\Lambda )$) (see, e.g. \cite{GlJa}, \cite{GRS75}, \cite{GRS76}, \cite{Ne73a}, \cite{Ne73b} and \cite{Si}).
To keep in touch with suggestive notations of the physical literature $\mu_0$ is heuristically given by a normalization times
\[
\exp \left( -\frac 12 \int [ \dot \omega ^2(t,\vec x) + |\nabla \omega (t,\vec x) | ^2 + m_0^2 \omega ^2(t,\vec x) ] dt d\vec x \right) \prod _{t,\vec x}d\omega (t,\vec x).
\]
$Z_{\Lambda , \varepsilon}$ in (\ref{eq:intro2}) is a normalization constant.
Note that $(t,\vec x)$ are meant to run over ${\mathbb R}^d$ resp. ${\mathbb T}^d$, in the former case it is the $\mu _{\rm Eu}$ which corresponds to $V\equiv 0$ in (\ref{eq:intro1}).
As it stands the limit of $\mu _{\Lambda, \varepsilon}$ for $\varepsilon \downarrow 0$ (removal of the regularization given by $\varepsilon >0$) does not exist even when $V$ has a simple form, e.g. $V(y)= \lambda y^4 /4$ for $y\in {\mathbb R}$ with a constant $\lambda >0$ (this model is called $\Phi ^4_d$-model).
For $d=2$ a replacement of $\omega _{\varepsilon}^4(t,\vec x)$ by the Wick ordered power $:\omega_{\varepsilon}^4(t,\vec x):$ to $\omega _{\varepsilon}(t,\vec x)$ (renormalization by Wick ordering; for Wick ordered powers see, e.g. \cite{DPTu}, \cite{Ne73b} and \cite{Si}) suffices, in the sense that the moments of the measure
\[
Z _{\Lambda , \varepsilon} ^{-1} \exp \left( -\frac{\lambda}{4}\int _{\Lambda} :\omega_{\varepsilon}^4(t,\vec x): dt d\vec x \right) \mu _0 (d\omega )
\]
($Z_{\Lambda ,\varepsilon}$ being again a suitable normalization constant), converge as $\varepsilon \downarrow 0$ to the moments of a probability measure $\mu _{\Lambda}$ on ${\mathcal S}'({\mathbb R}^2)$.
Moreover, (for $\lambda /m_0^2$ small enough, ``weak coupling case") the latter moments converge as $\Lambda \uparrow {\mathbb R}^2$ to the moments of a probability measure $\mu _{\rm Eu}$ on ${\mathcal S}'({\mathbb R}^2)$.
$\mu _{\rm Eu}$ is singular with respect to $\mu _0$, whereas $\mu _{\Lambda}$ was still absolutely continuous with respect to $\mu _0$.
For these and other results on the $\Phi ^4_2$-model, including its relevance as yielding a model of relativistic quantum fields, see e.g. \cite{AKR}, \cite{Fr}, \cite{GlJa}, \cite{GRS75} and \cite{Si}.

\begin{rem}\label{rem:intro1}
Let us make two side remarks:
\begin{enumerate}
\item \label{rem:intro1-1} for $d\leq 2$ other interesting models have been constructed, e.g. for $V$ a lower bounded polynomial (see e.g. \cite{GlJa} and \cite{Si}), or $V$ of exponential or trigonometric type (see e.g. \cite{AlHo}, \cite{HKPS} and references therein),

\item the $\Phi ^4_2$-model and related ones are also relevant for other areas of research, like condensed matter physics (Allen-Cahn model of phase separation), image analysis, hydrodynamics, or nonlinear phenomena (see e.g. \cite{AlFe}, \cite{LeNR}, \cite{FKSS}, \cite{Rou}).
\end{enumerate}
\end{rem}

The construction of a corresponding $\Phi ^4_3$-model is more complicated and less detailed results have been established.
The main difference in the construction with respect to the one for the $\Phi ^4_2$-model is that the renormalization needed to obtain $\mu _{\Lambda}$ (from $\mu _{\Lambda ,\varepsilon}$) involves, besides Wick ordering, the insertion of a divergent second order ``mass renormalization'' term and to perform the limit $\varepsilon \downarrow 0$ more detailed estimates had to be established.
Basic steps for this were made by J. Glimm \cite{Gl68} and J. Glimm and A. Jaffe \cite{GlJa73}, who developed a Hamiltonian approach (see also \cite{GlJa}, \cite{GlJa1} and \cite{GlJa2}).
J. Feldman constructed the moments of a measure corresponding to $\mu_\Lambda$, $\Lambda$ being now a bounded subset of ${\mathbb R}^3$ (see \cite{Fe}).

The proof of convergence of the moments of $\mu _{\Lambda}$ as $\Lambda \uparrow {\mathbb R}^3$ to the moments of a Euclidean $\Phi ^4_3$-measure $\mu _{\rm Eu}$ is also more indirect, but it has been achieved in \cite{FeOs}, \cite{MaSe} and \cite{SeSi}, for the model defined by replacing in the expression for $\mu_{\Lambda ,\varepsilon}$ in (\ref{eq:intro2}) the term
\[
\frac{\lambda}{4} \int_{\Lambda} : \omega_\varepsilon^4 (t,\vec x): dt d\vec x
\]
by
\[
\frac{\lambda}{4} \int_\Lambda [\omega_\varepsilon^4(t,\vec x) + a(\varepsilon, \lambda)\omega_\varepsilon^2(t,\vec x)] dt dx,
\]
with $a(\varepsilon, \lambda) := -\alpha\lambda\varepsilon ^{-1}+ \beta\lambda^2\ln (\varepsilon ^{-1}) + \sigma$, with suitable constants $\alpha, \beta$ and for $\lambda > 0, \sigma \in \mathbb R$ (cf. \cite{GlJa}). In these references it is then shown that for $\sigma$ sufficiently large compared to $\lambda$ (``weak coupling'') the moments of $\mu_{\Lambda,\varepsilon}$ converge as $\varepsilon \downarrow 0$, $\Lambda \uparrow \mathbb R^3$ to the moments of a unique probability measure $\mu_{\rm E}$.
The limit satisfies the axioms of a Euclidean model and by analytic continuation a relativistic model is obtained. 
$\mu_E$ is non-Gaussian, its moments have an asymptotic expansion in powers of $\lambda$ to all orders \cite{BoFe}, its Borel summability is also proven \cite{MaSe}.
On the other hand, non-uniqueness of the limit for sufficiently small $\sigma$ is shown in \cite{FrSiSp}. 

\begin{rem}
Another approach was developed in \cite{Pa2} for the case where $\Lambda$ is the $3$-dimensional torus and $\mu_0$ is looked upon as a probability measure on the corresponding ${\mathcal S}'(\Lambda)$ space.
On the basis of estimates in \cite{Pa2} (Theorem 1.1(c), Theorem 3.5) and \cite{Fe} (Theorem 1d) it is argued to be unique.
The coincidence of the limits when $\Lambda=[-L,L]^d$, $L \uparrow \infty$, (extending functions on $\Lambda$ periodically with period $(2L)^d$) of the moments of $\mu_\Lambda$ defined in \cite{Pa2} with the moments of the Euclidean invariant measure $\mu _{\rm Eu}$ discussed in \cite{FeOs} and \cite{MaSe} in the ``weak coupling case'' is only hinted to in \cite{Pa2}.
Another result on the $\Phi_3^4$-model on the $3$-dimensional torus is in \cite{Pa1}, where the homogeneous term $\frac{\lambda}{4}\varphi^4$ is replaced by $\frac{\lambda}{4}\varphi^4-\sigma\varphi^2-\mu\varphi$, with $\sigma > 0, \mu \in {\mathbb R}$.
Here a corresponding $\mu _\Lambda$ is constructed by first replacing the $\Lambda$ by a lattice $\Lambda _\delta$ of mesh $\delta > 0$, then letting $\delta \to 0$, and showing (Corollary 4.3) the convergence of the moments of $\mu_{\Lambda_\delta}$ to the moments of a unique limit measure $\tilde \mu_\Lambda$. 
$\tilde \mu_\Lambda$ is then studied in the limit $\Lambda \uparrow {\mathbb R}^3$ and brought in contact with the above Euclidean measure $\mu_{\rm Eu}$ (on ${\mathcal S}'(\mathbb R^3)$, as discussed in \cite{FeOs} and \cite{MaSe}), in the case where $\sigma$ is sufficiently large compared to $\lambda$ (which corresponds to the above weak coupling case).
Further results on the $\Phi_3^4$-model are presented, e.g. in \cite{AlLi2}, \cite{Bat}, \cite{BCGNOPS}, \cite{BDH}, \cite{BrFrSo}, \cite{MWX} and \cite{So}.
\end{rem}

Recent important developments initiated by M. Hairer \cite{Ha} are concerned with the construction of an SPDE of the heuristic form (\ref{eq:intro3}) below, and as such being related, in the case where $V(y)= \lambda y^4/4$ ($y\in {\mathbb R}$ and $\lambda >0$) with the heuristic expansion for the probability measure $\mu _{\rm Eu}$ of the Euclidean approach to the $\Phi ^4_d$-model, in the sense that $\mu _{\rm Eu}$ is a candidate for an invariant measure for the solution of (\ref{eq:intro3}) for such a $V$.
The general idea of considering an SDE having a measure of interest as an invariant measure goes back to work by G. Parisi and Y. S. Wu \cite{PaWu}.
In the context of quantum field theory this has taken the name of ``stochastic quantization method''.
For the case of the structure $(\Phi _{\rm Eu}, \mu _{\rm Eu})$ associated to the classical equation (\ref{eq:intro1}) the stochastic quantization method yields the equation
\begin{equation}\label{eq:intro3}
dX_\tau = [(\triangle -m_0^2)X_\tau - V'(X_\tau )] dt + dW_\tau
\end{equation}
where $dW_\tau$ is a Gaussian white noise in the new $\tau \in [0,\infty )$-variable and in the old space-time variables $(t,\vec x)\in {\mathbb R}\times {\mathbb R}^\sigma = {\mathbb R}^d$, relative to which $\triangle$ is taken.
Thus $X_{\tau}(t,\vec x)$ is for any given $\tau$ thought as a random field in the Euclidean space-time variables $(t,\vec x)$.
$\tau$ is thought as a ``computer time''.
Heuristically, assuming that the solution flow to (\ref{eq:intro3}) exists and is ergodic one can compute $\mu _{\rm Eu}$-averages like $\int F d\mu _{\rm Eu}$, for suitable integrable $F$, from limits of $\tau$-averages $\frac 1T \int _0^T F(X_\tau ) d\tau$ as $T\rightarrow \infty$.
This program has been implemented mathematically for $d=1$ and $V$, e.g. of the type of those in the $\Phi ^4_d$-model, in \cite{Iwa} where existence and uniqueness of solutions of (\ref{eq:intro3}) and their properties have been discussed (see also \cite{KaRo}).

In the case $d=2$ correspondingly as for the construction mentioned above of a Euclidean measure for models over ${\mathbb R}^2$, one achieves the construction of solutions of (\ref{eq:intro3}) for $V$, e.g. of the form $V(y)= \lambda y^4/4$ for $y\in {\mathbb R}$ with $\lambda >0$ (or more generally for the class mentioned in Remark \ref{rem:intro1}\ref{rem:intro1-1}), by suitably modifying the nonlinear term $V'$ in (\ref{eq:intro3}).
E.g. for the above quartic $V$ one replaces $-\lambda X_\tau ^3$ in (\ref{eq:intro3}) by a Wick ordered version $-\lambda :X_\tau ^3:$ of it.
The first solution by a Wick ordered version of the so modified (\ref{eq:intro3}), both on a $2$-dimensional torus and on ${\mathbb R}^2$ has been realized in \cite{AR} by the method of Dirichlet forms (see also \cite{BChM} and \cite{Mi}) (solutions are here in the weak probabilistic sense), for quasi-every initial conditions.
Solutions in a strong sense have been obtained by other essentially analytic methods in \cite{DPTu}.
In \cite{DPDe} G. Da Prato and A. Debussche introduced the method of exploiting the Ornstein-Uhlenbeck process $Z_\tau$ associated with the linear part in (\ref{eq:intro3}) and replacing the process of $Z_{\tau}$ arising from the nonlinear term in $X_{\tau}= (X_{\tau} -Z_{\tau}) + Z_{\tau}$ by corresponding Wick powers; this method has been extended to more singular SPDEs by Hairer and Gubinelli, see below.
In \cite{DPDe} ergodicity results for the solution process have been obtained.
See also \cite{AMR} for a survey of results on the stochastic quantization equation for the $\Phi ^4_2$-model and discussion of uniqueness problems.
For a proof of restricted Markov uniqueness of dynamics associated with the $\Phi ^4_2$-model see \cite{RZZ}, which uses also results of \cite{MW2}, \cite{MW2} providing also a new construction of strong solutions in certain negative index Besov spaces for this stochastic quantization equation.
For a derivation of the stochastic quantization equation from Kac-Ising models see \cite{FrRu}, \cite{GLP}, \cite{HaIb} and \cite{MW17}.
For work on Gaussian white noise driven PDEs related to other models of quantum fields in $2$-dimensional space-time see \cite{AKaMR}, \cite{AHaRu} and \cite{HaSh}. 
Let us also add that much work has been done on related SPDEs with more regular noise and having as a common invariant measure the $\Phi ^4_2$-measure $\mu$ (see \cite{BChM}, \cite{JoMi} and the references in \cite{AMR}).

The situation with the stochastic quantization of the $\Phi ^4_3$-model remained open for a long time, except for a partial result in \cite{ALZ} until the ground breaking work by M. Hairer \cite{Ha, Ha2}.
Hairer's methods are essentially PDE's ones in spaces of generalized functions ($C^\alpha$ with $\alpha$ negative) and are rooted in Gubinelli's extension of T. Lyons' rough path methods to the case of multidimensional time \cite{Gu1, Gu2}.
Hairer's break through in producing solutions of the stochastic quantization equation to the $\Phi ^4_3$-model (which following his work is also named (equation of the) dynamical $\Phi ^4_3$-model) generated an intensive activity in the area of singular SPDE, also for other SPDEs, in particular using Gubinelli's adaption of the method of paracontrolled distributions for SPDEs (see e.g. \cite{CaCh}, \cite{FuG}, \cite{GuHo} and \cite{GIP}).

We shall limit ourselves here to mention work specifically related to the $\Phi ^4_3$-model.
The original work by Hairer proved the existence of local (in time) solutions of (\ref{eq:intro3}) on the $3$-dimensional torus ${\mathbb T}^3$, after a renormalization procedure inspired by the one used for the construction of the $\Phi ^4_3$-measure $\mu _{\rm Eu}$, in the weak coupling case.
The space on which the solutions are located is a $C^\alpha$-space, for any $\alpha \in (-2/3, -1/2)$, of generalized functions, for initial conditions which are also in the same $C^\alpha$.
Various approximation results for the solutions have been derived subsequently, see \cite{HaX} (from other interaction terms) and \cite{HaMa}, \cite{ZZ1} (from a lattice approximation).
The local well-posedness of (\ref{eq:intro3}) on ${\mathbb T}^3$ has also been proved successively by other methods (see \cite{Kup}).
Existence and uniqueness of local solutions on ${\mathbb T}^3$ have been obtained in \cite{CaCh} by the method of paracontrolled distributions.
The extension to local solutions of (\ref{eq:intro3}) on ${\mathbb R}^3$ (the case associated with the original resp. Euclidean model) was discussed in \cite{HaLa} and \cite{HaMa} by introducing suitable weights.
The extension from local to global solutions in the case of ${\mathbb T}^3$ is discussed in \cite{Ha}, and in \cite{MW3} by an interplay of the paracontrolled approach in \cite{GIP} with Bourgain's method, exploiting the presence of the candidate for an invariant measure, namely the weak coupling case $\Phi^4_3$-measure as discussed in \cite{BrFrSo}.
It is asserted in the abstract of \cite{MW3} that the existence of invariant measures follows from the proven uniform bounds on solutions ``via the Krylov-Bogoliubov method'' (details are not given in the paper).
For the relation of such invariant measures with ``the $\Phi ^4_3$-measure" of quantum field theory see \cite{HaMa} and \cite{MW3}.

\begin{rem}
\begin{enumerate}

\item In \cite{HaMatt} a method for establishing the strong Feller property of processes associated with SPDEs of the form (\ref{eq:intro3}) is presented.
In particular, the strong Feller property of the process of the stochastic quantization equation on ${\mathbb T}^3$ constructed in \cite{HaMa} and \cite{MW3} is established, for initial data of suitable regularity.

\item To the best of our knowledge, all papers discussing invariant measures for the stochastic quantization equation (over ${\mathbb R}^3$ and ${\mathbb T}^3$) use a `` $\Phi ^4_3$-measure" as presented in constructive quantum field theory, rather than constructing them directly; one exception being \cite{MW3}, in which as we already commented above an invariant measure is considered to follow from the proven uniform bounds on global solutions in the relevant Besov spaces.
One main aim of the present paper -and of the partly related paper \cite{GuHo} - is precisely to provide a direct construction of invariant measures, see below.
The uniqueness of invariant measures remains open in all approaches.

\item \cite{ZZ2} introduces a Dirichlet form associated with the solution process of the dynamical $\Phi ^4_3$-model over ${\mathbb T}^3$ discussed in \cite{MW3}, whereas \cite{AlYo} relates to work in \cite{ALZ} by associating a family of positive bilinear forms to the weak coupling $\Phi ^4_3$-measure on ${\mathbb R}^3$.

\item Results of the type of those of \cite{RZZ} established for the restricted Markov uniqueness of the dynamical $\Phi ^4_2$-model, seem however, to remain open for the $\Phi ^4_3$-model, both on ${\mathbb T}^3$ and on ${\mathbb R}^3$.

\end{enumerate}
\end{rem}

In the present paper, we consider the stochastic quantization (\ref{eq:intro3}) with $(t, \vec x) \in \Lambda = {\mathbb T}^3$, $V(y)= \lambda y^4/4$ for $y\in {\mathbb R}$ and $\lambda >0$ ($\Phi ^4_3$-model on ${\mathbb T}^3$).
Differently from other approaches, we do not consider pointwise initial conditions for the regularized equation, but rather a family of finite-dimensional SDEs approximations with their invariant measures as initial condition.
More precisely, we consider the well-defined finite-sum approximation $\{ \mu _N; N\in {\mathbb N}\}$ (defined at the beginning of Section \ref{sec:main}) of the Fourier expansion of the (heuristic) $\Phi ^4_3$-measure on ${\mathbb T}^3$, and discuss the nonlinear stochastic partial differential equations given by the stochastic quantization of the approximation measures $\mu _N$.
Denote by $X^N$ the solution to the finite-dimensional approximation equations with the initial distribution $\mu _N$.
The difference from all other approaches to the study of the stochastic quantization equation of the $\Phi ^4_3$-model mentioned above is that in our case the initial distribution of $X^N$ is given by $\mu _N$.
In our setting we have then the advantage of being able to exploit the stationarity of $X^N$. 
To construct a limit process we will prove a uniform estimate for $\{ X^N\}$, which implies the tightness of its laws.
For this we use Hairer's reconstruction method of singular stochastic partial differential equations.
The renormalization will appear in the reconstruction.
The tightness yields a limit process for a suitable subsequence of $\{ X^N\}$.
In particular, we obtain the convergence of the marginal distribution of the subsequence, which is the limit of the subsequence of $\{ \mu _N\}$ in view of the stationarity of $X^N$.
This is the strategy for our direct construction of an invariant measure and a flow associated with the $\Phi ^4_3$-stochastic quantization equation on ${\mathbb T}^3$.
The strategy seems natural being much in the spirit of the treatment of stochastic differential equations based on It\^o calculus, and in this sense it is a natural extension of it.
This seems to be a natural method also in relation to the variational approach to SPDEs (for a related approach see \cite{GuHo}). 
It is expected that our method can be extended to other singular semilinear SPDEs with Gaussian white noise, having finite-dimensional approximations with invariant measures.
The extension will be model-dependent and will however require separate estimates.

The organization of the present paper is as follows.
The material in Section \ref{sec:Besov} and \ref{sec:OU} is introductory.
Although it is related to \cite{BCD}, \cite{CaCh}, \cite{GuHo}, \cite{GIP}, \cite{Ha}, \cite{Ha2}, \cite{MW17}, \cite{MW3}, \cite{MW2} and \cite{MWX}, many detailed estimates needed for our main results are not to be found in these references.
In Section \ref{sec:Besov} we give the definition of Besov spaces and the notation of paraproducts.
Paraproducts appear when we consider the partial differential equation reconstructed from (\ref{eq:intro3}), and we solve the reconstructed equation in Besov spaces that are useful for our later deductions.
We also prepare some function inequalities, which are applied for obtaining estimates of each term in the reconstructed partial differential equations.
In Section \ref{sec:OU} we introduce the infinite-dimensional Ornstein-Uhlenbeck process solving the linear part of the stochastic quantization equation (\ref{eq:intro3}) and the polynomials associated with this process.
The polynomials of the Ornstein-Uhlenbeck process also were used in Hairer's reconstruction method and related works, and their renormalization is required for proving the convergence in the Besov spaces which we need in the rest of the paper. 
In Section \ref{sec:main} we consider the stochastic quantization equations associated to the measures which approximate the (candidate for a) $\Phi ^4_3$-measure on ${\mathbb T}^3$.
This constitutes the main part of the present paper.
We first apply Hairer's reconstruction method, and obtain a solvable partial differential equation with random coefficients.
Next we prove many estimates for each term in the partial differential equation and an associated energy functional, which appears in the typical approach to dissipative nonlinear partial differential equations and enables us to control the nonlinear terms.
In the estimate for the energy functional new terms appear.
So, we reiterate the procedure to be able to estimate the new terms which appeared, and then keep repeating the procedure until finally obtain a uniform estimate, which yields the tightness of the solutions to the approximation equations.
From this our main results follow in a natural way.

\section{Besov spaces and estimates of functions}\label{sec:Besov}

In this section, we introduce the Besov spaces relevant for our work, as well as the paraproducts and functional inequalities that we shall use.
Let $\Lambda$ be the $3$-dimensional torus, i.e. $({\mathbb R}/2\pi {\mathbb Z})^3$ with the natural Lebesgue measure $dx$ induced from the one on ${\mathbb R}^3$.
Let $L^p$ and $W^{s,p}$ be the corresponding $p$th-order integrable function space and the Sobolev space on $\Lambda$, for $s\in {\mathbb R}$ and $p\in [1,\infty ]$, respectively.
Let $\chi$ and $\varphi$ be functions in $C^\infty ([0,\infty );[0,1])$ such that the supports of $\chi$ and $\varphi$ are included by $[0,4/3)$ and $[3/4, 8/3]$ respectively,
\begin{align*}
&\chi (r ) + \sum _{j=0}^\infty \varphi (2^{-j}r ) =1, \quad r\in [0,\infty ),\\
&\varphi (2^{-j}r )  \varphi (2^{-k}r ) =0, \quad r \in [0,\infty ),\ j,k \in {\mathbb N}\cup \{ 0\} \ \mbox{such that}\ |j-k| \geq 2,\\
&\chi (r ) \varphi (2^{-j}r ) =0, \quad r \in [0,\infty),\ j \in {\mathbb N}.
\end{align*}
For the existence of $\chi$ and $\varphi$, see Proposition 2.10 in \cite{BCD}.
Throughout this paper, we fix $\chi$ and $\varphi$.
Moreover, even if the constants that will appear in the estimates below depend on $\chi$ and $\varphi$, we do not mention explicitly this dependence.

Let ${\mathcal S}({\mathbb R}^3)$ and ${\mathcal S}'({\mathbb R}^3)$ be the Schwartz space and the space of tempered distributions on ${\mathbb R}^3$, respectively.
For $f\in {\mathcal D}'(\Lambda )$ where ${\mathcal D}'(\Lambda )$ is the topological dual of $C^\infty (\Lambda )$, we can define the periodic extension $\widetilde{f} \in {\mathcal S}'({\mathbb R}^3)$ (see Section 3.2 in \cite{ScTr}).
By means of this extension, we define the (Littlewood-Paley) nonhomogeneous dyadic blocks $\{ \Delta _j; j\in {\mathbb N} \cup \{ -1, 0\}\}$ by setting
\[
\begin{array}{lll}
\Delta _{-1} f (x)&= \left[ {\mathcal F}^{-1} \left( \chi (|\cdot |) {\mathcal F} \widetilde{f} \right) \right] (x), & x\in \Lambda \\
\Delta _j f (x)&= \left[ {\mathcal F}^{-1} \left( \varphi (2^{-j} |\cdot |){\mathcal F} \widetilde{f} \right) \right] (x), & x\in \Lambda, \ j \in {\mathbb N}\cup \{ 0\} ,
\end{array}
\]
where ${\mathcal F}$ and ${\mathcal F}^{-1}$ are the Fourier transform and inverse Fourier transform operators, i,e. $\mathcal F$ is the automorphism of ${\mathcal S}'({\mathbb R}^3)$ given by the extension of the map
\[
g \mapsto \widehat g (\xi )= \frac{1}{(2\pi)^{3/2}}\int _{{\mathbb R}^3} g(x) e^{-\sqrt{-1} x\cdot \xi} dx , \quad g\in {\mathcal S}({\mathbb R}^3),
\]
where $x\cdot \xi := \sum _{j=1}^3 x_j \xi _j$ for $x=(x_1,x_2,x_3), \xi =(\xi _1, \xi _2, \xi _3) \in {\mathbb R}^3$, and ${\mathcal F}^{-1}$ is the inverse operator of ${\mathcal F}$, respectively (see Section 1.2 in \cite{BCD}).
As a family of pseudo-differential operator, $\{ \Delta _j; j\in {\mathbb N} \cup \{ -1, 0\}\}$ is given by
\[
\Delta _{-1} f = \chi \left( \sqrt{-\triangle} \right) f, \quad \Delta _j f = \varphi \left( 2^{-j} \sqrt{-\triangle} \right) f \quad j \in {\mathbb N}\cup \{ 0\}
\]
where $\triangle$ is the Laplace operator for the functions on $\Lambda$, i.e.
\[
\triangle f (x) := \left( \frac{\partial ^2}{\partial x_1^2} + \frac{\partial ^2}{\partial x_2^2} + \frac{\partial ^2}{\partial x_3^2} \right) f(x), \quad x =(x_1,x_2,x_3) \in {\Lambda },\ f\in C^\infty (\Lambda ).
\]

We define the Besov norm $\| \cdot \| _{B_{p,r}^s}$ and the Besov space $B_{p,r}^s$ on $\Lambda$ with $s \in {\mathbb R}$ and $p,r \in [1,\infty]$ by
\begin{align*}
\| f \| _{B_{p,r}^s} &:= \left\{ \begin{array}{ll}
\displaystyle \left( \sum _{j=-1}^\infty 2^{jsr} \| \Delta _j f \| _{L^p}^r \right) ^{1/r} , & r\in [1,\infty) ,\\
\displaystyle \sup _{j \in {\mathbb N}\cup \{ -1,0\}} 2^{js} \| \Delta _j f \| _{L^p} , & r= \infty ,
\end{array} \right. \\
B_{p,r}^s& := \{ f\in {\mathcal D}'(\Lambda ); \| f \| _{B_{p,r}^s} <\infty \} .
\end{align*}
It is easy to see that $B_{p_1,r_1}^{s_1} \subset B_{p_2,r_2}^{s_2}$ for $s_1,s_2 \in {\mathbb R}$ and $p_1,p_2, r_1,r_2\in [1,\infty ]$ such that $s_1\geq s_2$, $p_1\geq p_2$ and $r_1\leq r_2$.
It is known that $B_{p,p}^s =W^{s,p}$ for $s\in {\mathbb R}\setminus {\mathbb Z}$ and $p\in [1,\infty ]$.
It is also known that $B_{p,\infty}^s \subset B_{p,1}^{s'}$ for $p\in [0,\infty ]$ and $s,s' \in {\mathbb R}$ such that $s'<s$
(see Corollary 2.96 in \cite{BCD}).
For simplicity of notation, we denote $B_{p,\infty }^s$ by $B_p^s$ for $s \in {\mathbb R}$ and $p \in [1,\infty]$.

Next we prepare the notation and estimates of paraproducts by following Chapter 2 in \cite{BCD}.
Let
\[
S_j f := \sum _{k =-1}^{j-1} \Delta _{k} f, \quad j\in {\mathbb N}\cup \{ 0\} .
\]
For simplicity of notation, let $\Delta _{-2}f :=0$ and $S_{-1}f :=0$.
We define
\begin{align*}
f \mbox{\textcircled{\scriptsize$<$}} g &:= \sum _{j=0}^\infty (S_j f) \Delta _{j+1} g ,\quad f \mbox{\textcircled{\scriptsize$>$}} g := g \mbox{\textcircled{\scriptsize$<$}} f, \\
f \mbox{\textcircled{\scriptsize$=$}} g &:= \sum _{j=-1}^\infty \Delta _{j} f  \left( \Delta _{j-1} g + \Delta _{j} g + \Delta _{j+1} g\right) 
\end{align*}
By the definitions of $\{ \Delta _j\}$, $\{ S_j\}$, $\mbox{\textcircled{\scriptsize$<$}}$, $\mbox{\textcircled{\scriptsize$=$}}$, and $\mbox{\textcircled{\scriptsize$>$}}$, we have
\[
fg = f \mbox{\textcircled{\scriptsize$<$}} g + f \mbox{\textcircled{\scriptsize$=$}} g + f \mbox{\textcircled{\scriptsize$>$}} g
\]
Let $f \mbox{\textcircled{\scriptsize$\leqslant$}} g := f \mbox{\textcircled{\scriptsize$<$}} g + f \mbox{\textcircled{\scriptsize$=$}} g$ and $f \mbox{\textcircled{\scriptsize$\geqslant$}} g := f \mbox{\textcircled{\scriptsize$>$}} g + f \mbox{\textcircled{\scriptsize$=$}} g$.

We summarize the fundamental estimates of Besov norms and paraproducts in the following proposition.
Note that here and in the whole paper constants $C$ appearing on the right-hand side of estimates are always meant as positive, without mention it.

\begin{prop}\label{prop:paraproduct}
\begin{enumerate}
\item \label{prop:paraproduct1} For $s\in {\mathbb R}$ and $p_1, p_2 ,r_1, r_2 \in [1,\infty ]$ such that $p_1\leq p_2$ and $r_1\leq r_2$,
\[
\| f \| _{B_{p_2,r_2}^{s-3(1/p_1-1/p_2)}} \leq C \| f\| _{B_{p_1,r_1}^s} , \quad f \in B_{p_1,r_1}^{s} ,
\]
where $C$ is a constant depending on $s$.

\item \label{prop:paraproduct2} For $s\in {\mathbb R}$ and $p, p_1, p_2 ,r \in [1,\infty ]$ such that $1/p = 1/p_1 + 1/p_2$,
\[
\| f \mbox{\textcircled{\scriptsize$<$}} g \| _{B_{p,r}^s} \leq C\| f\| _{L^{p_1}} \| g\| _{B_{p_2,r}^s}, \quad f \in L^{p_1} ,\ g \in B_{p_2,r}^s ,
\]
where $C$ is a constant depending on $s$.

\item \label{prop:paraproduct3} For $s\in {\mathbb R}$, $t\in (-\infty ,0)$, and $p, p_1, p_2 , r, r_1,r_2 \in [1,\infty ]$, such that
\[
\frac 1p = \frac{1}{p_1} + \frac{1}{p_2} \quad \mbox{and}\quad \frac 1r = \min \left\{ 1, \frac{1}{r_1} + \frac{1}{r_2} \right\},
\]
it holds that
\[
\| f \mbox{\textcircled{\scriptsize$<$}} g \| _{B_{p,r}^{s+t}} \leq C\| f\| _{B_{p_1,r_1}^t} \| g\| _{B_{p_2,r_2}^s}, \quad f \in B_{p_1 ,r_1}^t ,\ g \in B_{p_2,r_2}^s ,
\]
where $C$ is a constant depending on $s$ and $t$.

\item \label{prop:paraproduct4} For $s_1 ,s_2\in {\mathbb R}$ such that $s_1+s_2>0$, and $p, p_1, p_2, r, r_1,r_2 \in [1,\infty ]$, such that
\[
\frac 1p = \frac{1}{p_1} + \frac{1}{p_2} \quad \mbox{and}\quad \frac 1r = \frac{1}{r_1} + \frac{1}{r_2},
\]
it holds that
\[
\| f \mbox{\textcircled{\scriptsize$=$}} g \| _{B_{p,r}^{s_1+s_2}} \leq C\| f\| _{B_{p_1,r_1}^{s_1}} \| g\| _{B_{p_2,r_2}^{s_2}}, \quad f \in B_{p_1,r_1}^{s_1} ,\ g \in B_{p_2,r_2}^{s_2} ,
\]
where $C$ is a constant depending on $s_1$ and $s_2$.

\item \label{prop:paraproduct5}
For $s\in (0,\infty )$, $\varepsilon \in (0,1)$ and $p, p_1, p_2 ,r \in [1,\infty ]$ such that $1/p = 1/p_1 + 1/p_2$,
\[
\| f^2\| _{B_{p,r}^s} \leq C \| f\| _{L^{p_1}} \| f\| _{B_{p_2,r}^{s+\varepsilon }} , \quad f\in L^{p_1}\cap B_{p_2,r}^{s+\varepsilon}
\]
where $C$ is a constant depending on $s$, $\varepsilon$, $p_1$, $p_2$ and $r$.

\item \label{prop:paraproduct5.5}
For $s\in (0,\infty )$, $\varepsilon \in (0,1)$ and $p,r \in [1,\infty ]$,
\[
\| f^3\| _{B_{p,r}^s} \leq C \| f\| _{L^{4p}}^2 \| f\| _{B_{2p,r}^{s+\varepsilon }} , \quad f\in L^{4p}\cap B_{2p,r}^{s+\varepsilon}
\]
where $C$ is a constant depending on $s$, $\varepsilon$, $p$ and $r$.

\item \label{prop:paraproduct6} For $s_1, s_2 \in {\mathbb R}$, $p, p_1, p_2 , r, r_1,r_2 \in [1,\infty ]$ such that $1/p = 1/p_1+1/p_2$ and $1/r = 1/r_1 + 1/r_2$ and $\theta \in (0,1)$,
\[
\| f\| _{B_{p,r}^{\theta s_1 + (1-\theta )s_2}} \leq \| f\| _{B_{p_1,r_1}^{s_1}}^{\theta } \| f\| _{B_{p_2,r_2}^{s_2}}^{1-\theta}  , \quad f\in B_{p_1,r_1}^{s_1} \cap B_{p_2,r_2}^{s_2} .
\]

\item \label{prop:paraproduct7}
 For $s\in {\mathbb R}$ and $p_1, p_2 ,r_1,r_2 \in [1,\infty ]$ such that $1= 1/p_1+1/p_2$ and $1= 1/r_1 + 1/r_2$, there exists a constant $C$ depending on $s$, satisfying
\[
\left| \int _\Lambda f(x) g(x) dx \right| \leq C \| f\| _{B_{p_1,r_1}^{-s}} \| g\| _{B_{p_2,r_2}^{s}} , \quad f\in B_{p_1,r_1}^{-s} ,\ g \in B_{p_2,r_2}^{s} .
\]

\item \label{prop:paraproduct8}
For $\alpha \in {\mathbb R}$, $\beta \in [0,\infty )$, and $p,r\in [1,\infty ]$, there exists a constant $C$ depending on $\alpha$ and $\beta$, such that
\[
\| e^{t\triangle} f\| _{B_{p,r}^{\alpha}} \leq C (1+t^{-\beta}) \| f\| _{B_{p,r}^{\alpha -2 \beta}} ,\quad f\in B_{p,r}^{\alpha - 2\beta},
\]
where $\{ e^{t\triangle }; t\geq 0\}$ is the heat semigroup generated by $\triangle $ on $L^2(\Lambda ;{\mathbb C})$.
\end{enumerate}
\end{prop}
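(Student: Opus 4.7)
The plan is to establish each of the nine estimates by standard Littlewood--Paley techniques, treating the family as a catalog of tools whose proofs share common ingredients (Bernstein's inequality, Hölder's inequality on each dyadic block, almost-orthogonality of the supports of $\varphi(2^{-j}|\cdot|)$). Many parts overlap substantially with material in Chapter 2 of \cite{BCD}, so I would lean heavily on those arguments, adapting them from ${\mathbb R}^3$ to the torus $\Lambda$ via the periodic extension $\widetilde f$ which is compatible with the dyadic decomposition by construction.

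For \ref{prop:paraproduct1}, I would invoke Bernstein's inequality for functions with Fourier support in an annulus of scale $2^j$: $\|\Delta_j f\|_{L^{p_2}}\leq C\,2^{3j(1/p_1-1/p_2)}\|\Delta_j f\|_{L^{p_1}}$, then sum in $j$ against the weight $2^{j(s-3(1/p_1-1/p_2))}$ and use $\ell^{r_1}\hookrightarrow\ell^{r_2}$. Estimate \ref{prop:paraproduct2} follows by noting that $S_j f\,\Delta_{j+1}g$ has Fourier support essentially in an annulus of scale $2^{j+1}$, so the blocks $\Delta_k(f\mbox{\textcircled{\scriptsize$<$}}g)$ only receive contributions from $j\sim k$; a Hölder estimate $\|S_j f\,\Delta_{j+1}g\|_{L^p}\leq \|S_jf\|_{L^{p_1}}\|\Delta_{j+1}g\|_{L^{p_2}}\leq \|f\|_{L^{p_1}}\|\Delta_{j+1}g\|_{L^{p_2}}$ then yields the bound after reassembling the $B^s_{p,r}$-norm. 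For \ref{prop:paraproduct3}, the same scheme applies but now $\|S_j f\|_{L^{p_1}}$ is no longer controlled by $\|f\|_{L^{p_1}}$; instead, since $t<0$ one has $\|S_j f\|_{L^{p_1}}\leq \sum_{k\leq j-1}\|\Delta_k f\|_{L^{p_1}}\leq C\,2^{-jt}\|f\|_{B^t_{p_1,r_1}}$ by summing a geometric series, which introduces the needed shift $t$ in the regularity index. Estimate \ref{prop:paraproduct4} uses the different geometric fact that in $\Delta_j f\,\Delta_{j'}g$ with $|j-j'|\leq 1$ the Fourier support sits in a \emph{ball} of radius $\sim 2^j$, so the block $\Delta_k(f\mbox{\textcircled{\scriptsize$=$}}g)$ only sees contributions from $j\geq k-N$; the condition $s_1+s_2>0$ is precisely what makes the resulting sum $\sum_{j\geq k-N}2^{-j(s_1+s_2)}$ converge.

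Items \ref{prop:paraproduct5} and \ref{prop:paraproduct5.5} follow by Bony's decomposition $f^2=2\,f\mbox{\textcircled{\scriptsize$<$}}f+f\mbox{\textcircled{\scriptsize$=$}}f$ applied to \ref{prop:paraproduct2} (for the low-high parts) and to \ref{prop:paraproduct4} with indices $s_1=-\varepsilon/2$, $s_2=s+\varepsilon/2$ (for the resonance part, where the loss of $\varepsilon$ produces the positivity of $s_1+s_2$ required); for the cubic case one writes $f^3=f\cdot f^2$ and applies the product estimates again, using Hölder $1/p=1/(2p)+1/(2p)$ for the product of $f$ with itself and distributing the $L^{4p}$ norms appropriately. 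The interpolation \ref{prop:paraproduct6} is a direct consequence of Hölder's inequality in $j$: write $2^{j(\theta s_1+(1-\theta)s_2)}\|\Delta_j f\|_{L^p}=(2^{js_1}\|\Delta_j f\|_{L^{p_1}})^\theta(2^{js_2}\|\Delta_j f\|_{L^{p_2}})^{1-\theta}$ after using Hölder with exponents $p_1/p,p_2/p$ on $\Delta_j f$, and then Hölder in $\ell^r$ with exponents $r_1/(\theta r),r_2/((1-\theta) r)$. The duality \ref{prop:paraproduct7} is obtained by Parseval-type expansion $\int fg\,dx=\sum_{|j-k|\leq 1}\int \Delta_j f\,\Delta_k g\,dx$ using $\chi+\sum\varphi(2^{-j}\cdot)=1$ and orthogonality of far-apart annuli, then Hölder in $x$ and in $j$. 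Finally \ref{prop:paraproduct9}, the heat semigroup estimate, rests on the spectral bound $\|e^{t\triangle}\Delta_j f\|_{L^p}\leq C e^{-c t 2^{2j}}\|\Delta_j f\|_{L^p}$ for $j\geq 0$ (and a trivial bound for $j=-1$), after which $2^{j\alpha}\|e^{t\triangle}\Delta_j f\|_{L^p}\leq C\,2^{2j\beta}e^{-ct 2^{2j}}\cdot 2^{j(\alpha-2\beta)}\|\Delta_j f\|_{L^p}\leq C(1+t^{-\beta})\,2^{j(\alpha-2\beta)}\|\Delta_j f\|_{L^p}$, and one takes the sup or $\ell^r$ norm in $j$.

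The routine nature of all nine statements means there is no single \emph{hard} step; the main obstacle is rather bookkeeping: ensuring that the torus version of each estimate goes through via the periodic extension $\widetilde f$, and being careful with the index conditions (the sign of $t$ in \ref{prop:paraproduct3}, the positivity of $s_1+s_2$ in \ref{prop:paraproduct4}, the strict positivity of $\varepsilon$ in \ref{prop:paraproduct5} and \ref{prop:paraproduct5.5}) since these are exactly the places where convergence of geometric series versus their divergence decides whether the argument works. The proofs above can be given in a few lines each by referring to the corresponding results in \cite{BCD}, noting any adjustments needed for the torus setting and for the specific combinations of $p,r$-exponents used later in the paper.
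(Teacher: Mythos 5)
Your proposal is correct and follows essentially the same route as the paper: parts (i)--(iv), (vii)--(ix) are the standard Littlewood--Paley estimates of \cite{BCD} (and \cite{MW3}) transported to the torus, and parts (v), (vi) are derived from Bony's decomposition together with (ii)/(iii), (iv) and the embedding $L^{p}\hookrightarrow B^{-\varepsilon}_{p,\infty}$, exactly as the paper does. The only differences are cosmetic: the paper cites the references where you sketch the proofs, the paper happens to route the low-high term through (iii) where you use (ii) (both work), and your final item is mislabeled (it should refer to the last item of the list, \ref{prop:paraproduct8}).
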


\begin{proof}
The proofs of  \ref{prop:paraproduct1}, \ref{prop:paraproduct4}, \ref{prop:paraproduct6} and \ref{prop:paraproduct7} are similar to the case of the functions on the whole space ${\mathbb R}^3$.
See Proposition 2.71, Theorem 2.85, Theorem 2.80 and Proposition 2.76 in \cite{BCD} for \ref{prop:paraproduct1}, \ref{prop:paraproduct4}, \ref{prop:paraproduct6} and \ref{prop:paraproduct7}, respectively.
For \ref{prop:paraproduct2}, \ref{prop:paraproduct3} and \ref{prop:paraproduct8} , see Proposition A.7 and A.13 in \cite{MW3}.
By \ref{prop:paraproduct3} and \ref{prop:paraproduct4}, we have
\begin{align*}
\| f^2\| _{B_{p,r}^s} &\leq \| f\mbox{\textcircled{\scriptsize$=$}} f\| _{B_{p,r}^s} + 2 \| f\mbox{\textcircled{\scriptsize$<$}} f\| _{B_{p,r}^s} \\
&\leq C \| f\| _{B_{p_1,\infty }^{-\varepsilon}} \| f\| _{B_{p_2,r}^{s+\varepsilon }}.
\end{align*}
Hence, the fact that $L^{p_1} \subset W^{-\varepsilon ,p_1} \subset B_{p_1,\infty }^{-\varepsilon}$ yields \ref{prop:paraproduct5}.
To prove \ref{prop:paraproduct5.5}, applying \ref{prop:paraproduct2}, \ref{prop:paraproduct3} and \ref{prop:paraproduct4} again, we have
\begin{align*}
\| f^3\| _{B_{p,r}^s} &\leq \| f^2 \mbox{\textcircled{\scriptsize$<$}} f\| _{B_{p,r}^s} + \| f^2 \mbox{\textcircled{\scriptsize$=$}} f\| _{B_{p,r}^s} + \| f\mbox{\textcircled{\scriptsize$<$}} f^2\| _{B_{p,r}^s} \\
&\leq C \| f^2\| _{B_{2p,\infty }^{-\varepsilon}} \| f\| _{B_{2p,r}^{s+\varepsilon }} + C \| f\| _{L^{4p}} \| f^2\| _{B_{4p/3,r}^{s}}.
\end{align*}
Hence, \ref{prop:paraproduct5} and the fact that $L^{2p} \subset W^{-\varepsilon ,2p} \subset B_{2p,\infty }^{-\varepsilon}$ yield
\[
\| f^3\| _{B_{p,r}^s} \leq C \| f^2\| _{L^{2p}} \| f\| _{B_{2p,r}^{s+\varepsilon }} + C \| f\| _{L^{4p}}^2 \| f\| _{B_{2p,r}^{s}}.
\]
This inequality implies \ref{prop:paraproduct5.5}.
\end{proof}

Now we prepare some functional inequalities which we shall apply in the proof of our main theorem.

\begin{lem}\label{lem:Lpestimates1}
\begin{enumerate}
\item \label{lem:Lpestimates1-1} For $\theta \in (0,9/16)$ there exists a constant $C$ depending on $\theta$ such that for $\delta \in (0,1]$ and $f\in L^4\cap B_2^{15/16}$
\[
\| f\| _{B_2^{\theta }} \leq \delta \left( \| f\| _{L^4}^4 + \| f\| _{B_2^{15/16}}^2\right) ^{7/8} + C\delta ^{-16/19} .
\]

\item \label{lem:Lpestimates1-2}For $\theta \in (0,9/16)$ there exists a constant $C$ depending on $\theta$ such that for $\delta \in (0,1]$ and $f\in L^4\cap B_2^{15/16}$
\[
\| f^2\| _{B_{4/3}^{\theta }} \leq \delta \left( \| f\| _{L^4}^4 + \| f\| _{B_2^{15/16}}^2\right) ^{7/8} + C\delta ^{-26/9} . 
\]

\item \label{lem:Lpestimates1-3} For $\theta \in (0,9/16)$ there exists a constant $C$ depending on $\theta$ such that for $\delta \in (0,1]$ and $f\in L^4\cap B_2^{15/16}$
\[
\| f^3\| _{B_1^{\theta}} \leq \delta \left( \| f\| _{L^4}^4 + \| f\| _{B_2^{15/16}}^2\right) + C \delta ^{-10} .
\]
\end{enumerate}
\end{lem}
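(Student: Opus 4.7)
The plan is to derive all three estimates from a single template that combines the product estimates of Proposition \ref{prop:paraproduct}, parts \ref{prop:paraproduct5} and \ref{prop:paraproduct5.5}, with the auxiliary interpolation inequality
\[
\|f\|_{B_2^{s}} \le C\|f\|_{L^4}^{1-16s/15}\|f\|_{B_2^{15/16}}^{16s/15}, \qquad s\in[0,15/16),
\]
obtained by specializing Proposition \ref{prop:paraproduct}\ref{prop:paraproduct6} to the fixed integrability index $p=2$ and combining it with the elementary embedding $L^4\hookrightarrow L^2=B_{2,2}^0\subset B_2^0$ available on the bounded torus $\Lambda$. Writing $E:=\|f\|_{L^4}^4+\|f\|_{B_2^{15/16}}^2$, so that $\|f\|_{L^4}\le E^{1/4}$ and $\|f\|_{B_2^{15/16}}\le E^{1/2}$, the final step in each case will be Young's inequality in the form $x^\alpha\le\delta x^\beta+C(\alpha,\beta)\delta^{-\alpha/(\beta-\alpha)}$, valid whenever $0\le\alpha<\beta$.

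For part (i), inserting $s=\theta$ in the interpolation inequality yields $\|f\|_{B_2^{\theta}}\le CE^{1/4+4\theta/15}$ with exponent strictly below $2/5$ whenever $\theta<9/16$, and Young's inequality with target $\beta=7/8$ produces the $\delta^{-16/19}$ remainder, matching the worst case $(2/5)/(7/8-2/5)=16/19$ attained as $\theta\uparrow 9/16$. For part (ii), fix $\varepsilon\in(0,9/16-\theta)$ and apply Proposition \ref{prop:paraproduct}\ref{prop:paraproduct5} with $p=4/3$, $p_1=4$, $p_2=2$ to obtain $\|f^2\|_{B_{4/3}^{\theta}}\le C\|f\|_{L^4}\|f\|_{B_2^{\theta+\varepsilon}}$; plugging in the interpolation estimate and $\|f\|_{L^4}\le E^{1/4}$ gives $\|f^2\|_{B_{4/3}^{\theta}}\le CE^{1/2+4(\theta+\varepsilon)/15}$ with exponent below $13/20$, and Young's against $E^{7/8}$ delivers the $\delta^{-26/9}=(13/20)/(7/8-13/20)$ remainder. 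Part (iii) follows the same scheme: Proposition \ref{prop:paraproduct}\ref{prop:paraproduct5.5} with $p=1$ (so $L^{4p}=L^4$ and $B_{2p,r}^{s+\varepsilon}=B_{2,r}^{\theta+\varepsilon}$) gives $\|f^3\|_{B_1^{\theta}}\le C\|f\|_{L^4}^2\|f\|_{B_2^{\theta+\varepsilon}}$, which via the same interpolation and $\|f\|_{L^4}^2\le E^{1/2}$ becomes $\|f^3\|_{B_1^{\theta}}\le CE^{3/4+4(\theta+\varepsilon)/15}$ with exponent strictly below $9/10$; Young's with target exponent $1$ then yields a $\delta^{-9}$ remainder, comfortably absorbed into $C\delta^{-10}$ by $\delta\in(0,1]$.

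The only genuine obstacle is exponent bookkeeping. The thresholds $\theta<9/16$ and the ``test regularity'' $15/16$ are tailored precisely so that the interpolation parameter $16\theta/15$ stays in $(0,3/5)$ and the resulting power of $E$ remains strictly below the Young target ($7/8$ for parts (i) and (ii); $1$ for part (iii)); saturation of any of these would return $E^{7/8}$ or $E$ multiplied by a constant rather than by $\delta$, defeating the purpose of the estimate. No analytic input beyond the Besov calculus already catalogued in Proposition \ref{prop:paraproduct} is required.
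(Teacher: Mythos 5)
Your proof is correct and follows essentially the same route as the paper: interpolation (Proposition~\ref{prop:paraproduct}\ref{prop:paraproduct6}) to express the left-hand side as a power of $E:=\|f\|_{L^4}^4+\|f\|_{B_2^{15/16}}^2$ strictly below the Young target, then (\ref{eq:lemLpestimates1-1}). The only differences are cosmetic --- you interpolate with $\theta$-dependent weights between the fixed endpoints $0$ and $15/16$, while the paper uses fixed weights $2/5,3/5$ with moving endpoint $5\theta/3\le 15/16$; and in parts (ii)--(iii) you apply the multiplication estimate of Proposition~\ref{prop:paraproduct}\ref{prop:paraproduct5}/\ref{prop:paraproduct5.5} before interpolating, whereas the paper interpolates on $f^2$ (resp.\ $f^3$) first and then multiplies --- and these steps commute, leading to the same power of $E$ and the same remainders. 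One remark worth flagging: in part (iii) what you actually prove (and what the paper's own proof produces) is $\|f^3\|_{B_1^\theta}\le\delta\bigl(\|f\|_{L^4}^4+\|f\|_{B_2^{15/16}}^2\bigr)+C\delta^{-10}$, with $\|f\|_{B_2^{15/16}}$ squared; the unsquared $\|f\|_{B_2^{15/16}}$ in the lemma's display is evidently a typo, since Young applied to $CE^{9/10}$ necessarily returns $\delta E$ and not the strictly smaller quantity with the square removed.
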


\begin{proof}
Let $f\in L^4\cap B_2^\theta$.
By Proposition \ref{prop:paraproduct}\ref{prop:paraproduct6} we have
\[
\| f\| _{B_2^{\theta }} \leq C \| f\|_{L^2}^{2/5} \| f\| _{B_2^{5\theta /3}}^{3/5} \leq C \| f\|_{L^2}^{2/5} \| f\| _{B_2^{15/16}}^{3/5}
\]
where $C$ is a positive constant depending on $\theta $.
By applying the fact that
\begin{equation}\label{eq:lemLpestimates1-1}
ab \leq \theta a^{1/ \tilde \theta} + (1- \tilde \theta) b^{1/(1- \tilde \theta )} \leq a^{1/\tilde \theta} + b^{1/(1- \tilde \theta )},  \quad a,b\in [0,\infty ),\ \tilde \theta \in (0,1)
\end{equation}
we have
\[
\| f\| _{B_2^{\theta }} \leq C \left( \| f\|_{L^2}^{8/5} + \| f\| _{B_2^{15/16}}^{4/5} \right) \leq C \left( \| f\|_{L^4}^4 + \| f\| _{B_2^{15/16}}^2 \right) ^{2/5} + C.
\]
Hence, applying (\ref{eq:lemLpestimates1-1}) again, we obtain \ref{lem:Lpestimates1-1}.

Next, we show that \ref{lem:Lpestimates1-2} holds.
By Proposition \ref{prop:paraproduct}\ref{prop:paraproduct5} and \ref{prop:paraproduct6}, and (\ref{eq:lemLpestimates1-1}) we have
\begin{align*}
\| f^2\| _{B_{4/3}^{\theta}} &\leq C \| f^2\| _{L^{4/3}}^{2/5} \| f^2\| _{B_{4/3}^{5\theta/3}} ^{3/5} \leq C \| f\| _{L^{8/3}}^{4/5} \| f\| _{L^4}^{3/5}  \| f\| _{B_{2}^{15/16}}^{3/5} \\
&\leq C \| f\| _{L^4}^{7/5} \| f\| _{B_{2}^{15/16}}^{3/5} \leq C \left( \| f\| _{L^4}^{13/5} + \| f\| _{B_{2}^{15/16}}^{13/10} \right) \\
&\leq C \left( \| f\| _{L^4}^4 + \| f\| _{B_{2}^{15/16}}^{2} \right) ^{13/20} + C.
\end{align*}
Hence, applying (\ref{eq:lemLpestimates1-1}) again, we obtain \ref{lem:Lpestimates1-2}.

Finally, we show that \ref{lem:Lpestimates1-3} holds.
By Proposition \ref{prop:paraproduct}\ref{prop:paraproduct5.5} and \ref{prop:paraproduct6}, and (\ref{eq:lemLpestimates1-1}) we have
\begin{align*}
\| f^3\| _{B_1^\theta} &\leq C \| f^3\| _{L^1}^{2/5} \| f^3\| _{B_1^{5\theta /3}}^{3/5} \leq C \| f\| _{L^3}^{6/5} \| f\| _{L^4}^{6/5} \| f\| _{B_{2}^{15/16}} ^{3/5}\\
&\leq C \| f\| _{L^4}^{12/5} \| f\| _{B_{2}^{15/16}} ^{3/5} \leq C\left( \| f\| _{L^4}^{18/5} + \| f\| _{B_{2}^{15/16}}^{9/5} \right) \\
&\leq C\left( \| f\| _{L^4}^4 + \| f\| _{B_{2}^{15/16}}^2 \right) ^{9/10} + C.
\end{align*}
Hence, applying (\ref{eq:lemLpestimates1-1}) again, we obtain \ref{lem:Lpestimates1-3}.
\end{proof}

\begin{lem}\label{lem:timeint}
Let $\alpha , \beta \in [0,1)$.
Then, for any nongegative measurable function $f$ and $s,t \in [0,\infty )$ such that $s<t$, we have
\[
\int _s^t (t-v)^{-\alpha} \left( \int _s^v (v-u)^{-\beta } f(u) du \right) dv = B(\alpha , \beta ) \int _s^t (t-u)^{-(\alpha + \beta ) +1} f(u) du
\]
where $B(\alpha , \beta)$ is the beta function with indices $\alpha$ and $\beta$.
\end{lem}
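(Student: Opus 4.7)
The proof is a direct application of Fubini--Tonelli together with the beta-function identity, and there is essentially no analytical obstacle because nonnegativity of $f$ lets us work in $[0,\infty]$ without worrying about integrability a priori.

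First I would rewrite the left-hand side as the double integral
\[
\int_s^t \int_s^v (t-v)^{-\alpha}(v-u)^{-\beta} f(u)\, du\, dv
\]
over the triangular region $\{(u,v): s\le u\le v\le t\}$. Since the integrand is nonnegative and measurable, Fubini--Tonelli applies unconditionally and I may swap the order of integration, obtaining
\[
\int_s^t f(u) \left( \int_u^t (t-v)^{-\alpha}(v-u)^{-\beta}\, dv \right) du.
\]

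Next I would evaluate the inner $v$-integral by the substitution $v = u + r(t-u)$, $r\in[0,1]$, which gives $t-v = (1-r)(t-u)$, $v-u = r(t-u)$, and $dv = (t-u)\,dr$. Collecting powers of $(t-u)$ yields
\[
\int_u^t (t-v)^{-\alpha}(v-u)^{-\beta}\, dv
\;=\;(t-u)^{1-\alpha-\beta} \int_0^1 r^{-\beta}(1-r)^{-\alpha}\, dr,
\]
and the hypothesis $\alpha,\beta\in[0,1)$ guarantees the convergence of the last integral, which is precisely the beta-function value appearing in the statement. Plugging this back into the swapped integral produces the desired identity.

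The whole argument is essentially bookkeeping; the only point deserving a brief mention is that Fubini--Tonelli requires nothing beyond measurability and nonnegativity, so both sides of the asserted equality are well-defined as elements of $[0,\infty]$ even without any integrability assumption on $f$, and the identity holds in that generality. I do not anticipate any genuine obstacle.
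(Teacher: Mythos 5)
Your proof is correct and follows exactly the route the paper indicates: swap the order of integration by Fubini--Tonelli (justified by nonnegativity) and evaluate the inner integral by the substitution that exposes the beta function. The paper's proof is just the one-line remark that this is a simple application of Fubini--Tonelli; you have merely spelled out the bookkeeping.
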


\begin{proof}
The assertion is obtained by a simple application of Fubini-Tonelli's theorem on changing the order of integration.
\end{proof}

The following Propositions \ref{prop:com2} and \ref{prop:com3} are about the estimate of commutators of paraproducts and the heat semigroup, respectively.

\begin{prop}\label{prop:com2}
Let $\alpha \in (0,1)$, and $\beta, \gamma \in {\mathbb R}$ and $p, p_1,p_2,p_3 \in [1,\infty ]$.
Assume that
\[
\beta + \gamma <0, \quad \alpha + \beta + \gamma >0, \quad \frac{1}{p} = \frac{1}{p_1} + \frac{1}{p_2} + \frac{1}{p_3}.
\]
Then,
\[
\| (f \mbox{\textcircled{\scriptsize$<$}} g) \mbox{\textcircled{\scriptsize$=$}} h - f (g \mbox{\textcircled{\scriptsize$=$}} h)\| _{B_{p}^{\alpha + \beta + \gamma}} \leq C \| f\| _{B_{p_1}^{\alpha}} \| g\| _{B_{p_2}^{\beta}} \| h\| _{B_{p_3}^{\gamma}}
\]
for $f\in B_{p_1}^{\alpha}$, $g\in B_{p_2}^{\beta}$ and $h\in B_{p_3}^{\gamma}$ where $C$ is a constant depending on $\alpha$, $\beta$, $\gamma$, $p_1$, $p_2$, $p_3$.
\end{prop}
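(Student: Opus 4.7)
The plan is to prove this as a version of the classical Bony commutator estimate (as in Gubinelli--Imkeller--Perkowski \cite{GIP} Lemma 2.4, or Mourrat--Weber), adapted to the periodic Besov scales just defined. Note that the hypothesis $\beta+\gamma<0$ means the resonance $g \mbox{\textcircled{\scriptsize$=$}} h$ is \emph{not} classically defined via Proposition \ref{prop:paraproduct}\ref{prop:paraproduct4}, so the two summands on the left are individually ill-defined; their difference, however, can be reorganized so that the positive regularity of $f$ (the condition $\alpha+\beta+\gamma>0$) compensates the shortfall and yields a bounded tri-linear expression.

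First I would expand both terms dyadically. Writing $f \mbox{\textcircled{\scriptsize$<$}} g = \sum_{k\ge 0} (S_k f)(\Delta_{k+1}g)$ and using the definition of $\mbox{\textcircled{\scriptsize$=$}}$,
$$(f\mbox{\textcircled{\scriptsize$<$}} g)\mbox{\textcircled{\scriptsize$=$}} h \;=\; \sum_{j,k}\Delta_j\!\bigl((S_k f)(\Delta_{k+1}g)\bigr)\,\widetilde\Delta_j h, \qquad \widetilde\Delta_j := \Delta_{j-1}+\Delta_j+\Delta_{j+1},$$
while $f\,(g\mbox{\textcircled{\scriptsize$=$}} h) = \sum_k f\,(\Delta_k g)(\widetilde\Delta_k h)$. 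Frequency support considerations force the contributing indices to satisfy $j\sim k$ in the first sum (say $|j-(k+1)|\le N$ for a fixed $N$), so both expressions reduce to sums of products of blocks at comparable dyadic scales.

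The algebraic heart is then the identity
$$\Delta_j\!\bigl((S_k f)(\Delta_{k+1}g)\bigr)\,\widetilde\Delta_j h \;-\; f\,(\Delta_{k+1}g)(\widetilde\Delta_{k+1} h) \;=\; \Delta_j\!\bigl((S_k f - f)(\Delta_{k+1}g)\bigr)\,\widetilde\Delta_j h \;+\; R_{j,k},$$
where the first right-hand term isolates the ``low-frequency difference'' $f-S_kf = \sum_{\ell\ge k-1}\Delta_\ell f$, whose Besov regularity gives the decisive bound $\|f-S_k f\|_{L^{p_1}}\le C\,2^{-k\alpha}\|f\|_{B_{p_1}^\alpha}$, and $R_{j,k}$ is a spectrally localized ``commutator-of-$\Delta_j$-with-multiplication'' remainder that can be treated the same way (or absorbed after a summation by parts in $k$). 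Now fix $i$ and apply $\Delta_i$ to the full difference $C(f,g,h)$: only finitely many pairs $(j,k)$ with $j,k\sim i$ survive, and for each one Hölder's inequality together with Bernstein's inequality gives
$$\|\Delta_i[\cdots]\|_{L^p}\;\lesssim\; 2^{-i\alpha}\|f\|_{B_{p_1}^\alpha}\cdot 2^{-i\beta}\|g\|_{B_{p_2}^\beta}\cdot 2^{-i\gamma}\|h\|_{B_{p_3}^\gamma}.$$
Multiplying by $2^{i(\alpha+\beta+\gamma)}$ and taking $\sup_i$ yields the claimed estimate.

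The principal obstacle is the bookkeeping of frequency supports for the remainder $R_{j,k}$ and, relatedly, the summation in $k$: the contribution of $\Delta_j((S_kf-f)\Delta_{k+1}g)\widetilde\Delta_j h$ for a given output frequency $2^i$ involves a $k$-sum of terms of order $2^{-k\alpha}2^{-k\beta}2^{-k\gamma}$, which converges precisely because $\alpha+\beta+\gamma>0$, while the condition $\beta+\gamma<0$ ensures that the naive bound (without using the $\alpha$-regularity of $f$) would diverge and hence forces the commutator structure to do real work. Finally, the dependence of the constants on the three regularity exponents and on $(p_1,p_2,p_3)$ is tracked through the use of Proposition \ref{prop:paraproduct}\ref{prop:paraproduct2}--\ref{prop:paraproduct4}; the resulting constant $C$ depends on $\alpha,\beta,\gamma,p_1,p_2,p_3$ as stated.
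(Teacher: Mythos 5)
The paper does not carry out a proof of Proposition~\ref{prop:com2}; it simply cites Proposition~A.9 of \cite{MW3}. What you have written is therefore an independent reconstruction, and the overall strategy is the right one: this is a Bony--Gubinelli commutator estimate, and the mechanism you identify --- that the positive regularity $\alpha$ of $f$ compensates the deficit $\beta+\gamma<0$ of the naive resonance, while $\alpha+\beta+\gamma>0$ makes the dyadic sum converge --- is exactly what makes the trilinear form bounded.

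There is, however, a genuine gap at what you call the ``algebraic heart.'' Your proposed identity pairs a doubly indexed term $\Delta_j\bigl((S_kf)(\Delta_{k+1}g)\bigr)\widetilde\Delta_j h$ against the singly indexed $f(\Delta_{k+1}g)(\widetilde\Delta_{k+1}h)$; since for each $k$ roughly $2N+1$ values of $j$ contribute to $(f\,\mbox{\textcircled{\scriptsize$<$}}\,g)\,\mbox{\textcircled{\scriptsize$=$}}\,h$, summing your decomposition over $(j,k)$ would subtract $f(g\,\mbox{\textcircled{\scriptsize$=$}}\,h)$ about $2N+1$ times, not once. The correct decomposition first commutes $\Delta_j$ past the low-frequency factor, comparing $\Delta_j\bigl((S_kf)(\Delta_{k+1}g)\bigr)$ with $(S_{j-1}f)\,\Delta_j(\Delta_{k+1}g)$, and only then collapses the inner $k$-sum (so that $\sum_k\Delta_j\Delta_{k+1}g=\Delta_j g$) before subtracting $f\,\Delta_j g\,\widetilde\Delta_j h$. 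This isolates two contributions: (i) $(S_{j-1}f-f)\,\Delta_j g\,\widetilde\Delta_j h$, which your sketch handles correctly and where $\alpha>0$ yields the gain $2^{-j\alpha}$; and (ii) a genuine commutator $[\Delta_j, S_{j-1}f]\Delta_{k+1}g$, which is what your $R_{j,k}$ must really contain. You dismiss (ii) with ``can be treated the same way (or absorbed after a summation by parts in $k$),'' but that does not hold: estimating $[\Delta_j, S_{j-1}f]$ requires the dedicated kernel bound, in the style of Lemma~2.97 of \cite{BCD} and of the argument in the proof of Proposition~\ref{prop:com3} here, namely $\|[\Delta_j,\phi]u\|_{L^p}\lesssim 2^{-j}\|\nabla\phi\|_{L^{p_1}}\|u\|_{L^{p_2}}$ combined with $\|\nabla S_{j-1}f\|_{L^{p_1}}\lesssim 2^{j(1-\alpha)}\|f\|_{B_{p_1}^\alpha}$. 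The interplay of these two bounds --- and this is precisely where $\alpha<1$ enters the hypotheses --- produces the required $2^{-j\alpha}$ on the remainder; summation by parts in $k$ plays no role. Your closing steps (H\"older on each block, summing $j\gtrsim i$, geometric convergence from $\alpha+\beta+\gamma>0$) are fine once the preceding block estimates are secured.
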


\begin{proof}
See Proposition A.9 in \cite{MW3}.
\end{proof}

In the following we shall need the Fourier expansion of functions in $L^2(\Lambda; {\mathbb C})$.
Let
\[
\langle f,g \rangle := \int _{\Lambda} f(x) \overline{g(x)} dx , \quad f,g \in L^2(\Lambda ;{\mathbb C})
\]
where $\bar z$ is the complex conjugate of $z\in {\mathbb C}$.
For $k \in {\mathbb Z}^3$, define $e_k \in L^2(\Lambda ; {\mathbb C})$ by $e_k(x) := e^{\sqrt{-1} k\cdot x}/(2\pi )^{3/2}$.
Then, $\{ e_k; k\in {\mathbb Z}^3\}$ is a complete, orthogonal and normal system of $L^2(\Lambda ;{\mathbb C})$, and $e_k$ is an eigenfunction of $-\triangle + m_0^2$ acting as a self-adjoint operator ($\geq m_0^2$) in $L^2(\Lambda ,{\mathbb C})$ with pure discrete spectrum consisting of the eigenvalues $k^2+m_0^2$ where $k^2 := \sum _{j=1}^3 k_j^2$ (with $m_0>0$ as before, $k\in {\mathbb Z}^3$).
Let $\psi ^{(1)}$ be a nonincreasing $C^\infty $-function on $[0,\infty)$ such that $\psi ^{(1)}(r) =1$ for $r\in [0,1]$ and $\psi ^{(1)}(r)=0$ for $r\in [2,\infty )$, and let $\psi ^{(2)}$ be a nonincreasing function on $[0,\infty)$ such that $\psi ^{(2)}(r) =1$ for $r\in [0,2]$ and $\psi ^{(2)}(r)=0$ for $r\in [4,\infty )$.
We remark that $\psi ^{(2)}$ is not necessary continuous.

For $i=1,2$ and $N\in {\mathbb N}$ define $P_N^{(i)}$ by the mapping from ${\mathcal D}'(\Lambda)$ to $L^2(\Lambda ;{\mathbb C})$ given by
\[
P_N ^{(i)} \phi := \sum _{k\in {\mathbb Z}^3} \psi ^{(i)}(2^{-N}|k_1|) \psi ^{(i)} (2^{-N}|k_2|) \psi ^{(i)} (2^{-N}|k_3|) \langle \phi , e_k \rangle e_k .
\]
For $n\in {\mathbb N}$, denote $\{ j\in {\mathbb Z}; |j|\leq 2^n\}$ by ${\mathbb Z}_n$.
We remark that the terms in the sum are equal to $0$ unless $k\in {\mathbb Z}_{N+i}^3$ for $i=1,2$, and hence, $P_N ^{(i)}\phi$ is a real-valued and smooth function for any $\phi \in {\cal D}'(\Lambda)$.
Moreover, it holds that
\[
P_N ^{(1)}P_N ^{(2)} = P_N ^{(2)} P_N ^{(1)} = P_N ^{(1)} .
\]
This property is very important in arguments in Section \ref{sec:main}.
The theory of Fourier transforms of periodic distributions (see Section 3.2.3 in \cite{ScTr}) implies that for $f\in {\mathcal D}'(\Lambda)$
\[
h(\nabla ) f = \sum _{k\in {\mathbb Z}^3} h(k) \langle f, e_k\rangle e_k
\]
for any continuous function $h$ such that the right-hand side is an $L^2(\Lambda ;{\mathbb C})$ function.
In particular, 
\[
P_N ^{(i)} f (x) = \psi ^{(i)} (2^{-N}|\cdot |) ^{\otimes 3}(\nabla ) f (x)= {\mathcal F}^{-1} \left[ \psi ^{(i)} (2^{-N}|\cdot |) ^{\otimes 3} {\mathcal F} \widetilde{f} \right] (x), \quad x\in \Lambda
\]
with the periodic extension $\widetilde{f}$ of $f$, where
\[
\psi ^{(i)} (2^{-N}|\cdot |) ^{\otimes 3}(\xi ):= \psi ^{(i)} (2^{-N}|\xi _1|) \psi ^{(i)} (2^{-N}|\xi _2|) \psi ^{(i)} (2^{-N}|\xi _3|) , \quad \xi =(\xi_1, \xi_2, \xi_3) \in {\mathbb R}^3.
\]
In the rest of the paper, we fix $\psi ^{(1)}$ and $\psi ^{(2)}$ and do not mention explicitly dependence on them, even if the constants that will appear in the estimates below are depending on them. 

\begin{prop}\label{prop:bddp}
For $p\in (1,\infty)$ and $s\in {\mathbb R}$, there exists a constant $C_p$ depending on $p$ such that
\[
\| P_N ^{(i)} f \| _{B_p^s} \leq C_p \| f \|_{B_p^s}
\]
for $f\in B_p^s$, $N\in {\mathbb N}$ and $i=1,2$.
\end{prop}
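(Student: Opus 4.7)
The plan is to reduce the Besov-norm estimate to a uniform $L^p$-estimate on $P_N^{(i)}$. Since $\Delta_j$ and $P_N^{(i)}$ are both Fourier multiplier operators on $\Lambda$, they commute, so $\Delta_j P_N^{(i)} f = P_N^{(i)} \Delta_j f$ and therefore
\[
\|P_N^{(i)} f\|_{B_p^s} = \sup_{j\geq -1} 2^{js}\|P_N^{(i)} \Delta_j f\|_{L^p}.
\]
Consequently it suffices to show that the symbol
\[
m_N^{(i)}(\xi) := \psi^{(i)}(2^{-N}|\xi_1|)\psi^{(i)}(2^{-N}|\xi_2|)\psi^{(i)}(2^{-N}|\xi_3|)
\]
defines an $L^p(\Lambda)$-Fourier multiplier whose operator norm is bounded by a constant $C_p$ depending only on $p$, independently of $N\in\mathbb N$ and of $i\in\{1,2\}$; the Besov estimate then follows by taking the supremum over $j$.

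For $i=1$, the symbol $m_N^{(1)}$ is $C^\infty$ with compact support in $[-2^{N+1},2^{N+1}]^3$, and because $\psi^{(1)}$ is constant near $0$, the derivatives $\partial^\alpha m_N^{(1)}$ are supported where $|\xi_k|\asymp 2^N$ for each coordinate $k$ actually differentiated, with $|\partial^\alpha m_N^{(1)}(\xi)|\leq C_\alpha 2^{-N|\alpha|}$. Hence the H\"ormander--Mikhlin condition $|\xi|^{|\alpha|}|\partial^\alpha m_N^{(1)}(\xi)|\leq C_\alpha$ holds uniformly in $N$, and the Mikhlin multiplier theorem on $\mathbb R^3$ together with transference (for instance via periodisation of the convolution kernel on $\mathbb R^3$, whose $L^1$ norm is invariant under the scaling $2^N$) yields the desired uniform bound for every $p\in(1,\infty)$.

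For $i=2$ the argument is more delicate because $\psi^{(2)}$ is not assumed continuous and Mikhlin does not apply. Nevertheless, $m_N^{(2)}$ is a tensor product of three one-dimensional symbols $\xi_k\mapsto \psi^{(2)}(2^{-N}|\xi_k|)$, each of which is bounded by $1$ and of total variation equal to $2$ on $\mathbb R$, uniformly in $N$, because $\psi^{(2)}$ is nonincreasing and drops from $1$ to $0$. By the one-dimensional Marcinkiewicz multiplier theorem each factor is an $L^p(\mathbb R)$-multiplier whose norm depends only on $p\in(1,\infty)$; iterating coordinate by coordinate (which is legitimate thanks to the tensor-product structure) yields a uniform $L^p(\mathbb R^3)$-bound on $m_N^{(2)}$, and transference to the torus (valid because $m_N^{(2)}$ is continuous off a set of measure zero) concludes the proof. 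Equivalently, one may use the Riemann--Stieltjes representation $\psi^{(2)}(r)=\int_{[2,4]}\mathbf{1}_{[0,t)}(r)\,d\nu(t)$ for a probability measure $\nu$ to exhibit $m_N^{(2)}$ as a convex superposition of rectangular cutoffs $\prod_k \mathbf{1}_{\{|\xi_k|<2^N t_k\}}$, each of which is uniformly $L^p$-bounded via iterated modulated Hilbert transforms. The principal obstacle is precisely this $i=2$ case: the possible genuine discontinuity of $\psi^{(2)}$ rules out any Mikhlin-type smoothness argument, and one must appeal to a Marcinkiewicz/bounded-variation substitute that exploits both the tensor-product structure of $m_N^{(2)}$ and the uniform total-variation bound guaranteed by the monotonicity of $\psi^{(2)}$.
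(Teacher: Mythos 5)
Your proof is correct and follows the same essential strategy as the paper's: commute $P_N^{(i)}$ with $\Delta_j$ to reduce the Besov bound to a uniform $L^p$-multiplier bound for the symbol $m_N^{(i)}$, and then observe that the one-dimensional slices $\xi\mapsto\psi^{(i)}(2^{-N}|\xi|)$ have total variation equal to $2$ on $\mathbb R$, uniformly in $N$, so that a Marcinkiewicz-type (bounded-variation) multiplier theorem gives a $C_p$ independent of $N$. The paper does this in a single stroke for both $i=1$ and $i=2$ by invoking a periodic multiplier theorem (Theorem 3, Section 3.4.3 of Schmeisser--Triebel), precisely because the total-variation argument does not require any continuity or smoothness of $\psi^{(i)}$; your case split is therefore unnecessary. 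The separate Mikhlin argument you give for $i=1$ is valid but redundant, and it is the $i=2$ branch of your proof --- Marcinkiewicz on each coordinate via the tensor-product structure, then transference to the torus --- that captures the real content. The only difference in presentation is that you build the periodic bound from the Euclidean Marcinkiewicz theorem plus transference, while the paper cites a theorem stated directly for periodic spaces; these are equivalent routes to the same estimate.
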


\begin{proof}
Let $f\in B_p^s$ and $i=1,2$.
From the definition of Besov spaces and the commutativity of $P_N^{(i)}$ and $\Delta _j$, we have
\begin{equation}\label{eq:propbddp}
\| P_N ^{(i)} f \| _{B_p^s} = \sup _{j \in {\mathbb N}\cup \{ -1,0\}} 2^{js} \| P_N ^{(i)} \Delta _j f \| _{L^p}.
\end{equation}
Note that for $N\in {\mathbb N}$, the total variation of the function $\xi \mapsto \psi ^{(i)}(2^{-N}|\xi |)$ on ${\mathbb R}$ equals $2$.
In particular, the total variation is uniformly bounded for $N\in {\mathbb N}$.
In view of Theorem 3 of Section 3.4.3 in \cite{ScTr}, there exists a constant $C_p$ depending on $p$, such that
\[
\| P_N ^{(i)} \Delta _j f \| _{L^p} \leq C_p \| \Delta _j f\| _{L^p}, \quad j \in {\mathbb N}\cup \{ -1,0\}, \ N\in {\mathbb N},\ f\in {B_p^s}.
\]
By this inequality and (\ref{eq:propbddp}) we obtain the assertion.
\end{proof}

Let $\{ e^{t\triangle }; t\geq 0\}$ be the heat semigroup generated by $\triangle $ on $L^2(\Lambda ;{\mathbb C})$.
Then, we have the following. 

\begin{prop}\label{prop:com3}
For $\alpha \in (-\infty ,1)$, $\beta , \gamma \in {\mathbb R}$ such that $\gamma \geq \alpha + \beta$, $\varepsilon \in (0, \max\{ 1,1-\alpha \}]$, and $p, p_1, p_2 \in [1,\infty]$ such that $1/p = 1/p_1 + 1/p_2$, there exists a constant $C_{\psi ^{(1)}, \gamma ,\varepsilon}$ depending on $\psi ^{(1)}$, $\gamma$ and $\varepsilon$ such that
\[
\| e^{t\triangle }(P_N^{(1)} )^2 (f \mbox{\textcircled{\scriptsize$<$}} g) - f \mbox{\textcircled{\scriptsize$<$}} (e^{t\triangle } (P_N^{(1)} )^2 g) \| _{B_p^{\gamma }} \leq C_{\psi ^{(1)}, \gamma ,\varepsilon} t^{-(\gamma -\alpha -\beta )/2} \| f \|_{B_{p_1}^{\alpha +\varepsilon}} \| g \|_{B_{p_2}^{\beta }} 
\]
for $f,g \in C^\infty (\Lambda )$, $t\in (0,\infty )$ and $N\in {\mathbb N}$, where $\psi ^{(1)}$ is defined before, between Proposition \ref{prop:com2} and Proposition \ref{prop:bddp}.
\end{prop}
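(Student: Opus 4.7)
The plan is to adapt the standard paraproduct--multiplier commutator argument (cf.\ Proposition A.9 of \cite{MW3} and Chapter 2 of \cite{BCD}) to the Fourier multiplier $T_t^N := e^{t\triangle}(P_N^{(1)})^2$, whose symbol is
\[
m_t^N(\xi)\;:=\;e^{-t|\xi|^2}\prod_{k=1}^3\psi^{(1)}(2^{-N}|\xi_k|)^2,
\]
paying particular attention to uniformity in $N\in{\mathbb N}$. Since $T_t^N$ is a Fourier multiplier on $\Lambda$, it commutes with every Littlewood--Paley block $\Delta_k$ and every partial sum $S_j$. Writing $f\mbox{\textcircled{\scriptsize$<$}} g = \sum_{j\geq 0}(S_j f)\,\Delta_{j+1}g$, I would decompose
\[
T_t^N(f\mbox{\textcircled{\scriptsize$<$}} g) - f\mbox{\textcircled{\scriptsize$<$}} T_t^N g \;=\; \sum_{j\geq 0}R_j,\qquad R_j := T_t^N\bigl[(S_j f)\,\Delta_{j+1}g\bigr] - (S_j f)\,T_t^N\Delta_{j+1}g.
\]
Each $R_j$ has Fourier spectrum in an annulus of radius proportional to $2^j$, so $\Delta_i R_j=0$ whenever $|i-j|>C_0$ for some universal $C_0$, and the task reduces to a bound of the form $\|R_j\|_{L^p}\leq C\,t^{(\gamma-\alpha-\beta)/2}\cdot 2^{-j\gamma}\|f\|_{B_{p_1}^{\alpha+\varepsilon}}\|g\|_{B_{p_2}^\beta}$ for every $j$.

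The next step is to express $T_t^N$ as convolution with a kernel $K_t^N$ on $\Lambda$ (obtained via the Fourier series from $m_t^N$), so that
\[
R_j(x) \;=\; \int_{\Lambda}K_t^N(z)\bigl[(S_j f)(x-z) - (S_j f)(x)\bigr]\Delta_{j+1}g(x-z)\,dz,
\]
whence Minkowski and H\"older give $\|R_j\|_{L^p}\leq \|\Delta_{j+1}g\|_{L^{p_2}}\int|K_t^N(z)|\,\|(S_j f)(\cdot-z)-S_j f\|_{L^{p_1}}\,dz$. A first-order Taylor expansion combined with Bernstein's inequality applied to $S_j f$ (Fourier-supported in $\{|\xi|\lesssim 2^j\}$) gives $\|(S_j f)(\cdot-z)-S_j f\|_{L^{p_1}}\leq C\min(1,|z|\cdot 2^j)^\delta\|S_j f\|_{L^{p_1}}$ for every $\delta\in(0,1]$, and the $L^{p_1}$-norm of $S_j f$ and the $L^{p_2}$-norm of $\Delta_{j+1}g$ are controlled by the usual low-frequency summations against $\|f\|_{B_{p_1}^{\alpha+\varepsilon}}$ and $\|g\|_{B_{p_2}^\beta}$. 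If $\gamma-\alpha-\beta>1$, one replaces the first-order Taylor expansion by a second-order one, the offending linear term being killed by the symmetry $m_t^N(-\xi)=m_t^N(\xi)$, which makes $zK_t^N(z)$ odd so that $\int zK_t^N(z)\,dz=0$. Everything then reduces to bounding the kernel moment $M_\delta(t,N):=\int|z|^\delta|K_t^N(z)|\,dz$ and interpolating with the dual bound $\|T_t^N h\|_{L^p}\leq Ce^{-ct\,2^{2j}}\|h\|_{L^p}$ for $h$ Fourier-localised at scale $2^j$ (which follows from $|m_t^N(\xi)|\leq e^{-t|\xi|^2}$ and standard multiplier theory); the advertised power $t^{(\gamma-\alpha-\beta)/2}$ then emerges after optimising $\delta$ in $(0,\min(\varepsilon,1)]$.

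The main obstacle will be establishing $M_\delta(t,N)\leq Ct^{\delta/2}$ \emph{uniformly} in $N$. Writing $K_t^N = G_t\ast q_N$ on ${\mathbb R}^3$, with $G_t$ the heat kernel and $q_N$ the kernel of $(P_N^{(1)})^2$, and using $\|q_N\|_{L^1}\leq C$ together with $\int|y|^\delta|q_N(y)|\,dy\leq C 2^{-N\delta}$ (the extra factor $2^{-N}$ coming from each derivative of the cut-off $\prod_k\psi^{(1)}(2^{-N}|\xi_k|)^2$), one only gets the weaker bound $M_\delta(t,N)\leq C(t^{\delta/2}+2^{-N\delta})$. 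To remove the spurious $2^{-N\delta}$ contribution I would split according to regime: when $2^{j+2}\leq 2^N$ the cut-off factor is identically $1$ on the Fourier support of the relevant piece of $R_j$ and $T_t^N$ acts as the pure heat semigroup, so $M_\delta(t,N)\leq Ct^{\delta/2}$ with no $N$ dependence; in the opposite regime the Gaussian factor $e^{-ct\,2^{2j}}$ in $m_t^N$ is small enough to absorb $2^{j-N}\geq 1$ and the regularity cost from $\|\nabla S_j f\|_{L^{p_1}}$. Combining the two regimes and summing over $|j-i|\leq C_0$ while taking the supremum in $i$ yields the asserted estimate.
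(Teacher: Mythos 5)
Your overall architecture mirrors the paper's: you decompose the commutator into frequency-localized pieces $R_j$, represent $T_t^N$ by a convolution kernel, extract a modulus-of-continuity factor from $S_j f$, and reduce to kernel moment estimates; your treatment of the regime $2^N \gtrsim 2^j$, where the cut-off is identically $1$ on the annulus and $T_t^N$ acts as the pure heat semigroup, is sound. The gap is in the complementary regime. You claim that when $2^{j+2}>2^N$ the Gaussian factor $e^{-ct\,2^{2j}}$ in $m_t^N$ ``is small enough to absorb $2^{j-N}\ge1$''. This fails to be uniform in $t$: when $t\,2^{2j}\lesssim 1$ the Gaussian factor is $\gtrsim 1$ while $2^{j-N}$ can be arbitrarily large as $j-N\to\infty$, so nothing is absorbed. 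The correct observation (and the one the paper uses) is that once $j$ exceeds $N$ by a fixed number of units, the supports of $\prod_k\psi^{(1)}(2^{-N}|\xi_k|)$ and of the annulus $|\xi|\sim 2^j$ become \emph{disjoint}, so the localized symbol --- and hence $R_j$ --- vanishes identically. In the remaining bounded transition range $|j-N|=O(1)$ the factor $2^{j-N}$ is $O(1)$, the $\xi$-derivatives of the cut-off cost nothing, and the Gaussian decay is used only to make the sum over $j$ converge and to produce the power of $t$, not to tame $2^{j-N}$.

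A second, related difficulty is your factorization $K_t^N = G_t*q_N$ and the ensuing bound $M_\delta(t,N)\lesssim t^{\delta/2}+2^{-N\delta}$. Because this convolution-splitting discards the knowledge that $K_t^N$ is being tested only against frequencies $\sim 2^j$, it cannot by itself produce the localized factor $e^{-ct\,2^{2j}}$ that encodes the smoothing. Your proposed fix --- interpolating with the dual bound $\|T_t^N h\|_{L^p}\le Ce^{-ct\,2^{2j}}\|h\|_{L^p}$ for $h$ localized at scale $2^j$ --- is left vague, and the na\"ive geometric-mean interpolation between the moment bound (which carries the $(|z|2^j)^\delta$ gain but no Gaussian) and the operator-norm bound (which has the Gaussian but no modulus-of-continuity gain) sacrifices part of the $\delta$-gain. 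The paper avoids this entirely by inserting the annulus cut-off $\rho(2^{-j}|\cdot|)$ into the symbol \emph{before} inverting the Fourier transform, and then bounding the weighted kernel moment of the $j$-localized kernel $\mathcal F^{-1}(h_{t,N,j})$ directly via the $\triangle_\xi^2$/$(\cos(\xi\cdot z)-1)$ integration-by-parts device; this yields both the modulus-of-continuity gain and the factor $e^{-ct\,2^{2j}}$ in one estimate, with the support dichotomy ($2^N$ small versus $2^N\gtrsim 2^j$) handling uniformity in $N$. As a side remark, the stated exponent $t^{(\gamma-\alpha-\beta)/2}$ appears to be a sign typo --- the paper's own proof and its application in Lemma \ref{lem:com1} give $t^{(\alpha+\beta-\gamma)/2}$, a negative power when $\gamma>\alpha+\beta$; your mechanism in fact produces this same exponent, so you should not expect the advertised sign.
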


\begin{proof}
We prove the proposition by following the proofs of Lemmas 2.97 and 2.99 in \cite{BCD}.
We have, recalling the definition of paraproducts,
\begin{align*}
&e^{t\triangle }(P_N^{(1)} )^2 (f \mbox{\textcircled{\scriptsize$<$}} g)- f \mbox{\textcircled{\scriptsize$<$}} (e^{t\triangle } (P_N^{(1)} )^2 g)\\
&= \sum _{j=0}^\infty \left( e^{t\triangle } (P_N^{(1)} )^2 \left[ (S_j f) \Delta _{j+1} g \right] - (S_j f) \Delta _{j+1} e^{t\triangle } (P_N^{(1)} )^2 g \right) .
\end{align*}
We shall now use the notation $\widetilde{f}$ as defined as above.
Since the supports of ${\mathcal F}(\widetilde{S_j f})$ and ${\mathcal F}\widetilde{\Delta _{j+1}g}$ are included by $\{ \xi \in {\mathbb R}^3; |\xi |\leq 2^{j-1}(8/3)\}$ and $\{ \xi \in {\mathbb R}^3; 2^{j+1}(3/4)\leq |\xi|\leq 2^{j+1}(8/3)\}$ for $j\in {\mathbb N}\cup \{ 0\}$ respectively, the support of ${\mathcal F}\left[ (\widetilde{S_j f}) \widetilde{\Delta _{j+1} g} \right]$ is included by $\{ \xi \in {\mathbb R}^3; 1/6 \leq 2^{-j}|\xi|\leq 20/3\}$ for $j\in {\mathbb N}\cup \{ 0\}$.
Hence, in view of Lemma 2.69 in \cite{BCD}, it is sufficient to show that
\begin{equation}\label{eq:propcom3-1}\begin{array}{l}
\displaystyle \sup _{j\in {\mathbb N}\cup \{ 0\}} 2^{j \gamma}\left\| e^{t\triangle } (P_N^{(1)} )^2 \left[ (S_j f) \Delta _{j+1} g \right] - (S_j f) \Delta _{j+1} e^{t\triangle } (P_N^{(1)} )^2 g \right\| _{L^p} \\
\displaystyle \leq C_{\psi ^{(1)}, \gamma ,\varepsilon} t^{-(\gamma -\alpha -\beta )/2} \| f \|_{B_{p_1}^{\alpha +\varepsilon}} \| g \|_{B_{p_2}^{\beta }} .
\end{array}\end{equation}
Let $\rho \in C^\infty ([0,\infty );[0,1])$ such that $\rho (r) =1$ for $r \in  [1/6, 20/3]$ and $\rho (r)=0$ for $r \not \in [1/12, 40/3]$.
Define 
\[
h_{t,N,j}(\xi ):= e^{-t|\xi |^2} \left[ \psi ^{(1)} (2^{-N}|\cdot |) ^{\otimes 3}(\xi ) \right] ^2 \rho (2^{-j} |\xi| ) , \quad \xi \in {\mathbb R}^3.
\]
Noting that 
\[
e^{t\triangle } (P_N^{(1)} )^2 \phi = {\mathcal F}^{-1} \left( e^{-t|\cdot |^2} \left[ \psi ^{(1)} (2^{-N}|\cdot |) ^{\otimes 3}\right]^2 {\mathcal F} \widetilde{\phi} \right) , \quad \phi \in C(\Lambda ),
\]
and that $e^{t\triangle } P_N$ and $\Delta _{j+1}$ commute, we have for any $j\in {\mathbb N}\cup \{ 0\}$, $N\in {\mathbb N}$ and $x\in \Lambda$;
\begin{align*}
&e^{t\triangle } (P_N^{(1)} )^2 \left[ (S_j f) \Delta _{j+1} g \right] (x) - (S_j f) (x) \Delta _{j+1} e^{t\triangle } (P_N^{(1)} )^2 g (x)\\
&= {\mathcal F}^{-1} \left( h_{t,N,j} {\mathcal F} \left[ (\widetilde{S_j f}) \widetilde{\Delta _{j+1} g} \right]\right) (x) - (S_j f) (x) {\mathcal F}^{-1} \left( h_{t,N,j} {\mathcal F} \widetilde{\Delta _{j+1} g} \right) (x) \\
&= \int _{{\mathbb R}^3} {\mathcal F}^{-1}(h_{t,N,j})(x-y) \left( \widetilde{S_j f}(y) - \widetilde{S_j f}(x) \right) \widetilde{\Delta _{j+1} g}(y) dy .
\end{align*}
Since $\widetilde{S_j f}(\cdot + 2\pi k) = \widetilde{S_j f}$ and $\widetilde{\Delta _{j+1} g}(\cdot + 2\pi k) = \widetilde{\Delta _{j+1} g}$ for $k\in {\mathbb Z}^3$, we get then
\begin{align*}
&\left| e^{t\triangle } (P_N^{(1)} )^2 \left[ (S_j f) \Delta _{j+1} g \right] (x) - (S_j f) (x) \Delta _{j+1} e^{t\triangle } (P_N^{(1)} )^2 g (x) \right|\\
&= \left| \sum _{k\in {\mathbb Z}^3} \int _{[0,2\pi ]^3} {\mathcal F}^{-1}(h_{t,N,j})(x-y-2\pi k) \left( \widetilde{S_j f}(y) - \widetilde{S_j f}(x) \right) \widetilde{\Delta _{j+1} g}(y) dy \right| \\
&\leq \sum _{k\in {\mathbb Z}^3} \int _{[0,2\pi ]^3} \int _0^1 |x-y| \, \left| {\mathcal F}^{-1}(h_{t,N,j})(x-y-2\pi k)\right| \left| \widetilde{\nabla S_j f}((1-\tau )x + \tau y) \right| \, \left| \widetilde{\Delta _{j+1} g}(y) \right| d\tau dy\\
&=  \sum _{k\in {\mathbb Z}^3} \int _{[0,4\pi ]^3} \int _0^1 |z| \, \left|{\mathcal F}^{-1}(h_{t,N,j})(z-2\pi k)\right| \, \left| \widetilde{\nabla S_j f}(x - \tau z)\right| \, \left| \widetilde{\Delta _{j+1} g}(x-z)\right| {\mathbb I}_{\Lambda} (x-z)d\tau dz.
\end{align*}
Hence, H\"older's inequality applied to the right-hand side yields
\begin{equation}\label{eq:propcom3-2}
\begin{array}{l}
\displaystyle \left\| e^{t\triangle } (P_N^{(1)} )^2 \left[ (S_j f) \Delta _{j+1} g \right] - (S_j f) (x) \Delta _{j+1} e^{t\triangle } (P_N^{(1)} )^2 g \right\| _{L^p} \\
\displaystyle \leq \sum _{k\in {\mathbb Z}^3} \int _{[0,4\pi ]^3} \int _0^1 \left| z{\mathcal F}^{-1}(h_{t,N,j})(z-2\pi k)\right| \left\| \widetilde{\nabla S_j f}(\cdot -\tau z) \right\| _{L^{p_1}} \left\| \widetilde{\Delta _{j+1} g}(\cdot -z) \right\| _{L^{p_2}} d\tau dz.
\end{array}
\end{equation}
The periodicity of $\widetilde{\nabla S_j f}$ and $\widetilde{\Delta _{j+1} g}$ implies
\[
\left\| \widetilde{\nabla S_j f}(\cdot -\tau z) \right\| _{L^{p_1}} \leq \left\| \nabla S_j f \right\| _{L^{p_1}},\quad \left\| \widetilde{\Delta _{j+1} g}(\cdot -z) \right\| _{L^{p_2}} \leq \left\| \Delta _{j+1} g \right\| _{L^{p_2}}
\]
for $\tau \in [0,1]$ and $z\in [0,4\pi ]^3$.
These equalities and (\ref{eq:propcom3-2}) imply
\begin{equation}\label{eq:propcom3-3}
\begin{array}{l}
\displaystyle \left\| e^{t\triangle } (P_N^{(1)} )^2 \left[ (S_j f) \Delta _{j+1} g \right] - (S_j f) (x) \Delta _{j+1} e^{t\triangle } (P_N^{(1)} )^2 g \right\| _{L^p} \\
\displaystyle \leq \sum _{k\in {\mathbb Z}^3} \int _{[0,4\pi ]^3} \left| z{\mathcal F}^{-1}(h_{t,N,j})(z-2\pi k)\right| dz \left\| \nabla S_j f \right\| _{L^{p_1}} \left\| \Delta _{j+1} g \right\| _{L^{p_2}}.
\end{array}
\end{equation}
On the other hand, by the integration by parts formula we have
\begin{align*}
&\sum _{k\in {\mathbb Z}^3} \int _{[0,4\pi ]^3} \left| z{\mathcal F}^{-1}(h_{t,N,j})(z-2\pi k)\right|  dz\\
&\leq 8 \int _{{\mathbb R}^3} \min\{ |z|,2\pi \} \left| {\mathcal F}^{-1}(h_{t,N,j})(z)\right|  dz\\
&= \frac{2\sqrt 2}{\pi ^{3/2}} \int _{{\mathbb R}^3} |z|^{-4} \min\{ |z|,2\pi \} \left| \int _{{\mathbb R}^3}e^{-t|\xi |^2} \left[ \psi ^{(1)}(2^{-N}|\cdot |) ^{\otimes 3}(\xi )\right]^2 \rho (2^{-j} |\xi| ) |z|^4 e^{\sqrt{-1} \xi \cdot z} d\xi \right| dz\\
&= \frac{2\sqrt 2}{\pi ^{3/2}} \int _{{\mathbb R}^3} |z|^{-4} \min\{ |z|,2\pi \} \\
&\quad \hspace{2cm} \times  \left| \int _{{\mathbb R}^3}e^{-t|\xi |^2} \left[ \psi ^{(1)}(2^{-N}|\cdot |) ^{\otimes 3}(\xi )\right]^2 \rho (2^{-j} |\xi| ) \triangle _{\xi}^2 [\cos (\xi \cdot z) -1] d\xi \right| dz\\
&= \frac{2\sqrt 2}{\pi ^{3/2}} \int _{{\mathbb R}^3} |z|^{-4} \min\{ |z|,2\pi \} \\
&\quad \hspace{1cm} \times \left| \int _{{\mathbb R}^3} \left[ \triangle _{\xi}^2 \left( e^{-t|\xi |^2} \left[ \psi ^{(1)} (2^{-N}|\cdot |) ^{\otimes 3}(\xi )\right]^2 \rho (2^{-j} |\xi| )\right) \right] \left( \cos ( \xi \cdot z) -1\right) d\xi \right| dz\\
&\leq \frac{2\sqrt 2}{\pi ^{3/2}} \int _{{\mathbb R}^3} \left| \triangle _{\xi}^2 \left( e^{-t|\xi |^2} \left[ \psi ^{(1)} (2^{-N}|\cdot |) ^{\otimes 3}(\xi )\right]^2 \rho (2^{-j} |\xi| )\right) \right| \\
&\quad \hspace{5cm} \times \int _{{\mathbb R}^3} |z|^{-4} \min\{ |z|,2\pi \} \left| \cos (\xi \cdot z) -1\right| dz d\xi .
\end{align*}
Noting that for $\xi \in {\mathbb R}^3$,
\begin{align*}
&\int _{{\mathbb R}^3} |z|^{-4} \min\{ |z|,2\pi \} \left| \cos (\xi \cdot z) -1\right| dz \\
&\leq 2^{1-\varepsilon /3} \int _{|z|\leq 2\pi } |z|^{-3}  \left| \cos (\xi \cdot z) -1\right| ^{\varepsilon /3} dz + 4\pi \int _{|z|>2\pi } |z|^{-4} dz\\
&\leq C_{\varepsilon} (1+|\xi |^{2\varepsilon /3})
\end{align*}
where $C_{\varepsilon}$ is a constant depending on $\varepsilon$, we have
\begin{equation}\label{eq:propcom3-4}\begin{array}{l}
\displaystyle \sum _{k\in {\mathbb Z}^3} \int _{[0,4\pi ]^3} |z{\mathcal F}^{-1}(h_{t,N,j})(z-2\pi k)| dz \\
\displaystyle \leq  C_{\varepsilon} \int _{{\mathbb R}^3} (1+|\xi |^{2\varepsilon /3}) \left| \triangle _{\xi}^2 \left( e^{-t|\xi |^2} \left[ \psi ^{(1)} (2^{-N}|\cdot |) ^{\otimes 3}(\xi )\right]^2 \rho (2^{-j} |\xi| )\right) \right| d\xi .
\end{array}\end{equation}
When $2^{N} < 2^{j-3}/3$, the supports of $\psi ^{(1)}$ and $\rho$ imply that $\left[ \psi ^{(1)} (2^{-N}|\cdot |) ^{\otimes 3} \right]^2 \rho (2^{-j} |\cdot | ) =0$.
When $2^{N} \geq 2^{j-3}/3$, an explicit calculation implies that for $\xi \in {\mathbb R}^3$ and $t\in [0,\infty )$
\begin{align*}
& \left| \triangle _{\xi}^2 \left( e^{-t|\xi |^2} \left[ \psi ^{(1)} (2^{-N}|\cdot |) ^{\otimes 3}(\xi )\right]^2 \rho (2^{-j} |\xi| )\right) \right| \\
& \leq C \left( t^4|\xi |^4 + |\xi |^{-4} + 2^{-4j}\right) e^{-t|\xi |^2} \sum _{k=0}^4 (\partial ^k \rho ) (2^{-j} |\xi |)
\end{align*}
where $C$ is a constant depending on the bounds of derivatives of $\psi ^{(1)}$ up to order $4$.
Hence, in view of the support of $\rho$ we have
\begin{align*}
&\int _{{\mathbb R}^3} (1 + |\xi |^{2\varepsilon /3}) \left| \triangle _{\xi}^2 \left( e^{-t|\xi |^2} \left[ \psi ^{(1)} (2^{-N}|\cdot |) ^{\otimes 3}(\xi )\right]^2 \rho (2^{-j} |\xi| )\right) \right| d\xi \\
&\leq C_1 (t^2+ 2^{-4j}) \int _{\{ \xi \in {\mathbb R}^3; 2^{-j} |\xi |\in [1/12, 40/3]\} } (1+|\xi |^{2\varepsilon /3}) e^{-t|\xi |^2 /2} d\xi \\
&\leq C_2 (t^2+ 2^{-4j}) 2^{j(3 + 2\varepsilon /3)} e^{-t2^{2j}/24}
\end{align*}
where $C_1$ and $C_2$ are absolute constants.
This inequality and (\ref{eq:propcom3-4}) imply
\begin{equation}\label{eq:propcom3-5}
\sum _{k\in {\mathbb Z}^3} \int _{[0,4\pi ]^3} |z{\mathcal F}^{-1}(h_{t,N,j})(z-2\pi k)| dz  \leq C_{\psi ^{(1)}, \varepsilon} (t^2+ 2^{-4j}) 2^{j(3+2\varepsilon /3)} e^{-t2^{2j}/24} .
\end{equation}
In view of Proposition 2.78 in \cite{BCD} we have, on the other hand,
\begin{align*}
\left\| \nabla S_j f \right\| _{L^{p_1}} &\leq C_{\varepsilon } 2^{j(1-\alpha - 2\varepsilon /3 )} \| \nabla f \| _{B_{p_1}^{\alpha + 2\varepsilon /3 -1}} \leq C_{\varepsilon}' 2^{j(1-\alpha - 2\varepsilon /3 )} \| \nabla f \| _{W^{\alpha + 2\varepsilon /3 -1, p_1}}\\
&\leq C_{\varepsilon}' 2^{j(1-\alpha - 2\varepsilon /3 )} \| f \| _{W^{\alpha + 2\varepsilon /3 , p_1}} \leq C_{\varepsilon}'' 2^{j(1-\alpha - 2\varepsilon /3 )} \| f \| _{B_{p_1}^{\alpha + \varepsilon}}
\end{align*}
where $C_{\varepsilon}$, $C_{\varepsilon}'$ and $C_{\varepsilon}''$ are constants depending on $\varepsilon$.
From this inequality, (\ref{eq:propcom3-3}) and (\ref{eq:propcom3-5}) we obtain
\begin{align*}
&2^{j \gamma } \left\| e^{t\triangle } (P_N^{(1)} )^2 \left[ (S_j f) \Delta _{j+1} g \right] - (S_j f) (x) \Delta _{j+1} e^{t\triangle } (P_N^{(1)} )^2 g \right\| _{L^p} \\
&\leq C_{\psi ^{(1)}, \varepsilon} (t^2+ 2^{-4j}) 2^{j(4 +\gamma -\alpha -\beta )} e^{-t2^{2j}/24} \| f \| _{B_{p_1}^{\alpha +\varepsilon}} \cdot 2^{j\beta }\left\| \Delta _{j+1} g \right\| _{L^{p_2}} \\
&\leq C_{\psi ^{(1)}, \varepsilon} t^{(\alpha + \beta -\gamma)/2} (t2^{2j})^{(\gamma -\alpha -\beta )/2} \left( 1+(t2^{2j})^2 \right) e^{-t2^{2j}/24} \| f \| _{B_{p_1}^{\alpha +\varepsilon}} \cdot 2^{j\beta }\left\| \Delta _{j+1} g \right\| _{L^{p_2}} \\
&\leq C_\gamma C_{\psi ^{(1)}, \varepsilon} t^{(\alpha + \beta -\gamma)/2} \| f \| _{B_{p_1}^{\alpha +\varepsilon}} \cdot 2^{j\beta }\left\| \Delta _{j+1} g \right\| _{L^{p_2}}
\end{align*}
where $C_{\psi ^{(1)}, \varepsilon}$ is a constant depending on $\psi ^{(1)}$ and $\varepsilon$, and $C_\gamma$ is a constant depending on $\gamma$.
Thus, we have proven (\ref{eq:propcom3-1}) and this, as we mentioned in relation with (\ref{eq:propcom3-1}) suffices for the proof of Proposition \ref{prop:com3}.
\end{proof}

\section{Infinite-dimensional Ornstein-Uhlenbeck process}\label{sec:OU}

In this section we introduce the relevant infinite-dimensional Ornstein-Uhlenbeck process and its polynomials.
The polynomials of the Ornstein-Uhlenbeck process appear in the renormalization and the reconstruction of the stochastic partial differential equation associated to the $\Phi ^4_3$-model.
 
Let $\tilde \mu _0$ be the centered Gaussian measure on ${\cal D}'(\Lambda )$ with the covariance operator $[2(-\triangle + m_0^2)]^{-1}$ where $m_0>0$ as before, and let $Z_t$ be the solution to the stochastic partial differential equation on the $3$-dimensional torus $\Lambda$:
\begin{equation}\label{eq:SDEZ}\left\{ \begin{array}{rll}
d Z_t(x) &= d W_t(x) - (-\triangle +m_0^2)Z_t(x) dt, & (t,x)\in (-\infty ,\infty) \times \Lambda \\
Z_0(x)& =\zeta (x), &x\in \Lambda
\end{array}\right.\end{equation}
where $dW_t(x)$ is a Gaussian white noise with parameter $(t,x)\in  (-\infty ,\infty) \times \Lambda$ and $\zeta$ is a random variable which has $\tilde \mu _0$ as its law and is independent of $W_t$ (see Remark \ref{rem:free} below for this notation and the relation with the $\mu _0$ of Section \ref{sec:intro}).
We remark that (\ref{eq:SDEZ}) is an equation not only for positive $t$, but also for negative $t$.
$W_t$ can be looked upon as a $C((-\infty ,\infty) ; {\mathcal S}'(\Lambda ))$-Brownian motion.
Then, $\langle Z_t ,e_k \rangle$ satisfies the one-dimensional stochastic differential equation of the Ornstein-Uhlenbeck type 
\begin{equation}\label{eq:SDEZ2}
d \langle Z_t ,e_k \rangle = d \langle W_t , e_k\rangle - (k^2 +m_0^2) \langle Z_t ,e_k \rangle dt ,
\end{equation}
for each $k \in {\mathbb Z}^3$, and hence we obtain the solution as
\begin{equation}\label{eq:solZ}
\langle Z_t ,e_k \rangle = e^{-t(k^2 +m_0^2)} \langle Z_0 ,e_k \rangle + \int _0^t e^{(s-t)(k^2 +m_0^2)} d\langle W_s , e_k\rangle,
\end{equation}
for each $k \in {\mathbb Z}^3$.
We remark that $(\langle W_t ,e_k \rangle , t\in (-\infty ,\infty) )$ is a $1$-dimensional standard Brownian motion, $(\langle Z_t ,e_k\rangle ; t\in (-\infty ,\infty) , k \in{\mathbb Z}^d )$ is a Gaussian system and the law of $Z_t$ coincides with $\tilde \mu _0$ for all $t\in (-\infty ,\infty) $.

\begin{rem}\label{rem:free}
If we replace $dW_t(x)$ by $\sqrt 2 dW_t(x)$ in (\ref{eq:SDEZ}), then the solution $Z_t$ will be the Ornstein-Uhlenbeck process which has the Nelson's Euclidean free field measure $\mu _0$ (of Section \ref{sec:intro}, with covariance operator $(-\triangle +m_0^2)^{-1}$) with mass $m_0$ as the stationary measure.
Some authors define the Ornstein-Uhlenbeck process to be the solution to the equation (\ref{eq:SDEZ}) with this replacement.
However, in the present paper, in order to adjust (also for reader's convenience) our setting to those of other recent papers (e.g. \cite{CaCh}, \cite{Ha}, \cite{MW3} and \cite{MWX}), we define $Z_t$ as above.
We remark that even if we replace $dW_t(x)$ by $\sqrt 2 dW_t(x)$ in (\ref{eq:SDEZ}), the arguments below run almost in the same way, with some powers of constants entering in estimates, that do not change the conclusions. 
\end{rem}

For square-integrable complex-valued random variables $\xi_1$, $\xi_2$, we define ${\rm Cov}(\xi _1, \xi_2)$ by
\[
{\rm Cov}(\xi_1, \xi _2) := E\left[ (\xi _1-E[\xi _1]) (\xi _2-E[\xi _2]) \right]
\]
($E$ denoting as usual the expectation).
Then, it is easy to see that
\begin{align}
\label{eq:expZ} E[\langle Z_t , e_k \rangle ] &= 0,\\
\label{eq:covZ} {\rm Cov}(\langle Z_{t}, e_k \rangle , \langle Z_{s} , e_l \rangle ) &= \frac{e^{-|t-s|(k^2+m_0^2)}}{2(k^2+m_0^2)} {\mathbb I}_{k+l=0} ,
\end{align}
for $s,t \in (-\infty ,\infty)$ and $k,l\in {\mathbb Z}^3$.
Let 
\begin{align*}
C_1^{(N)} &:= \frac{1}{2(2\pi )^3} \sum _{k\in {\mathbb Z}^3} \frac{\left[ \psi ^{(1)} (2^{-N}|\cdot |)^{\otimes 3}(k) \right]^2}{k^2+m_0^2} \\
C_2^{(N)} &:= \frac{1}{2(2\pi )^6} \sum _{l_1,l_2 \in {\mathbb Z}^3} \frac{\left[ \psi ^{(1)}(2^{-N}|\cdot |)^{\otimes 3}(l_1) \right]^2 \left[ \psi ^{(1)} (2^{-N}|\cdot |)^{\otimes 3}(l_2) \right]^2 \left[ \psi ^{(1)}(2^{-N}|\cdot |)^{\otimes 3}(l_1+l_2) \right]^2}{(l_1^2+m_0^2)(l_2^2+m_0^2)(l_1^2 + l_2^2 + (l_1+l_2)^2 +3m_0^2)}
\end{align*}
and define
\begin{align*}
{\mathcal Z}^{(1,N)}_t &:= P_N ^{(1)} Z_t ,\\
{\mathcal Z}^{(2,N)}_t &:= (P_N ^{(1)} Z_t) ^2 - C_1^{(N)} , \\
{\mathcal Z}^{(3,N)}_t &:= (P_N ^{(1)} Z_t) ^3 - 3 C_1^{(N)} P_N ^{(1)} Z_t , \\
{\mathcal Z}^{(0,2,N)}_t &:= \int _{-\infty}^t e^{(t-s)(\triangle -m_0^2)} P_N ^{(1)} {\mathcal Z}^{(2,N)}_s ds , \\
{\mathcal Z}^{(0,3,N)}_t &:= \int _{-\infty}^t e^{(t-s)(\triangle -m_0^2)} P_N ^{(1)} {\mathcal Z}^{(3,N)}_s ds , \\
{\mathcal Z}^{(2,2,N)}_t &:= {\mathcal Z}^{(2,N)}_t \mbox{\textcircled{\scriptsize$=$}} P_N^{(1)} {\mathcal Z}^{(0,2,N)}_t -C_2^{(N)}, \\
{\mathcal Z}^{(2,3,N)}_t &:= {\mathcal Z}^{(2,N)}_t \mbox{\textcircled{\scriptsize$=$}} P_N^{(1)} {\mathcal Z}^{(0,3,N)}_t - 3C_2^{(N)}{\mathcal Z}^{(1,N)}_t ,
\end{align*}
for $t\in (-\infty , \infty)$ and $N\in {\mathbb N}$.

\begin{rem}\label{rem:rconst}
For $t \in (-\infty ,\infty )$ and $N\in {\mathbb N}$ it holds that
\begin{align}
\label{eq:remrconst1} &E\left[ ( P_N ^{(1)} Z_t ) ^2\right] -C_1^{(N)} =0 ,\\
\label{eq:remrconst2} &E\left[ {\mathcal Z}^{(2,N)}_t \mbox{\textcircled{\scriptsize$=$}} P_N^{(1)} {\mathcal Z}^{(0,2,N)}_t \right] -C_2^{(N)} =0 .
\end{align}
The proofs of (\ref{eq:remrconst1}) and (\ref{eq:remrconst2}) are mentioned at the beginning of the proof of Proposition \ref{prop:Z} below.
The definition of $C_2^{(N)}$ is a little different from other known results (e.g. \cite{CaCh}, \cite{MW3}).
However, the asymptotics are same and it can be replaced by the one in other known results. 
\end{rem}

The following proposition holds.

\begin{prop}\label{prop:Z}
Let $\varepsilon \in (0,1]$ and $p\in [1,\infty )$.
Then for all $T\in (0,\infty )$ the following properties hold:
\begin{enumerate}
\item \label{prop:Z1} $\{ {\mathcal Z}^{(1,N)}; N\in {\mathbb N}\}$ and $\{ P_N^{(2)} Z; N\in {\mathbb N}\}$ converge almost surely in $C([0,\infty ); B_{\infty}^{-1/2-\varepsilon })$ and satisfies
\[
\sup _{N\in {\mathbb N}} E\left[ \sup_{t\in [0,T]}\| {\mathcal Z}_t^{(1,N)}\| _{B_{\infty}^{-1/2-\varepsilon }}^p\right] < \infty ,\quad \sup _{N\in {\mathbb N}} E\left[ \sup_{t\in [0,T]}\| P_N^{(2)} Z_t \| _{B_{\infty}^{-1/2-\varepsilon }}^p \right] <\infty ;
\]

\item \label{prop:Z2} $\{ {\mathcal Z}^{(2,N)}; N\in {\mathbb N}\}$ converges almost surely in $C([0,\infty ); B_{\infty}^{-1-\varepsilon })$ and satisfies
\[
\sup _{N\in {\mathbb N}} E\left[ \sup _{t\in [0,T]} \left\| {\mathcal Z}^{(2,N)}_t \right\| _{B_{\infty}^{-1-\varepsilon}}^p \right] < \infty ;
\]

\item \label{prop:Z02} $\{ {\mathcal Z}^{(0,2,N)}; N\in {\mathbb N}\}$ converges almost surely in $C([0,\infty ); B_{\infty}^{1-\varepsilon })$ and satisfies
\[
\sup _{N\in {\mathbb N}} E\left[ \sup _{t\in [0,T]} \left\| {\mathcal Z}^{(0,2,N)}_t \right\| _{B_{\infty}^{1-\varepsilon}}^p \right] < \infty ;
\]

\item \label{prop:Z03} $\{ {\mathcal Z}^{(0,3,N)}; N\in {\mathbb N}\}$ converges almost surely in $C([0,\infty ); B_{\infty}^{1/2-\varepsilon })$ and satisfies
\[
\sup _{N\in {\mathbb N}} E\left[  \sup _{t\in [0,T]} \left\| {\mathcal Z}^{(0,3,N)}_t \right\| _{B_{\infty }^{1/2-\varepsilon}}^p\right] < \infty .
\]
Moreover, for $\gamma \in (0,1/4)$, ${\mathcal Z}^{(0,3,N)}$ is $\gamma$-H\"older continuous in time almost surely for $N\in {\mathbb N}$ and
\[
\sup _{N\in {\mathbb N}} E\left[ \left( \sup _{s,t \in [0,T]} \frac{\left\| {\mathcal Z}^{(0,3,N)}_t - {\mathcal Z}^{(0,3,N)}_s \right\| _{L^2}^2}{(t-s)^{\gamma}} \right) ^p \right] < \infty ;
\]

\item \label{prop:Z22} $\{ {\mathcal Z}^{(2,2,N)}; N\in {\mathbb N}\}$ converges almost surely in $C([0,\infty ); B_{\infty}^{-\varepsilon })$ and satisfies
\[
\sup _{N\in {\mathbb N}} E\left[  \sup _{t\in [0,T]}  \left\| {\mathcal Z}^{(2,2,N)}_t \right\| _{B_{\infty }^{-\varepsilon}}^p\right] < \infty ;
\]

\item \label{prop:Z23} $\{ {\mathcal Z}^{(2,3,N)}; N\in {\mathbb N}\}$ converges almost surely in $C([0,\infty ); B_{\infty}^{-1/2-\varepsilon })$ and satisfies
\[
\sup _{N\in {\mathbb N}} E\left[ \sup _{t\in [0,T]} \left\| {\mathcal Z}^{(2,3,N)}_t \right\| _{B_{\infty }^{-1/2-\varepsilon}}^p\right] < \infty ;
\]

\item \label{prop:Z13} $\{ {\mathcal Z}^{(1,N)} {\mathcal Z}^{(0,3,N)}; N\in {\mathbb N}\}$ satisfies
\[
\sup _{N\in {\mathbb N}} E\left[ \sup _{t\in [0,T]} \left\| {\mathcal Z}_t^{(1,N)} P_N^{(1)} {\mathcal Z}^{(0,3,N)}_t \right\| _{B_{\infty }^{-1/2-\varepsilon}}^p\right] < \infty ;
\]

\item \label{prop:Z133} $\{ {\mathcal Z}^{(1,N)} \left( {\mathcal Z}^{(0,3,N)} \right) ^2; N\in {\mathbb N}\}$ satisfies
\[
\sup _{N\in {\mathbb N}} E\left[ \sup _{t\in [0,T]} \left\| {\mathcal Z}_t^{(1,N)} \left( P_N^{(1)} {\mathcal Z}^{(0,3,N)}_t \right) ^2 \right\| _{B_{\infty }^{-1/2-\varepsilon}}^p \right] < \infty .
\]
\end{enumerate}
\end{prop}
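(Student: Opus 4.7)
The overarching strategy is that every object ${\mathcal Z}^{(*,N)}_t$, after subtracting its Wick renormalization where applicable, is a finite sum of elements lying in fixed Wiener chaoses of order at most six with respect to the white noise driving $Z$. I would exploit three ingredients uniformly: (a) Nelson's hypercontractivity, which upgrades $L^2(\Omega)$ covariance bounds to arbitrary $L^p(\Omega)$ moment bounds on each fixed chaos; (b) the embedding $B_{p,p}^s \hookrightarrow B_\infty^{s-3/p}$ furnished by Proposition \ref{prop:paraproduct}\ref{prop:paraproduct1}, which converts $L^p$ bounds on dyadic blocks into Besov bounds of slightly worse regularity by taking $p$ large; and (c) Kolmogorov's continuity criterion, applied both in space (through the Littlewood--Paley decomposition) and in time, to obtain almost-sure continuity in $C([0,T]; B_\infty^s)$ for every $T$. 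Convergence as $N\to\infty$ is then established by applying the same estimates to differences ${\mathcal Z}^{(*,N)}-{\mathcal Z}^{(*,M)}$, whose Fourier truncations vanish on large dyadic annuli, yielding Cauchy sequences in the relevant Besov norms and hence the claimed convergence almost surely.

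For the lower-order items \ref{prop:Z1}--\ref{prop:Z03}, the computations are explicit. Using (\ref{eq:expZ})--(\ref{eq:covZ}) one expresses $E\|\Delta_j {\mathcal Z}^{(1,N)}_t\|_{L^2}^2$ as a Fourier sum of $(k^2+m_0^2)^{-1}$ weighted by Littlewood--Paley and projector cutoffs, which scales like $2^j$; hypercontractivity together with the Besov embedding then deliver spatial bounds in $B_\infty^{-1/2-\varepsilon}$ uniformly in $N$. For \ref{prop:Z2}--\ref{prop:Z03} the same scheme applies with multi-propagator Fourier sums coming from Wick's theorem: the key three-dimensional convolution $\sum_{l_1+l_2=k}(l_1^2+m_0^2)^{-1}(l_2^2+m_0^2)^{-1}$ decays like $|k|^{-1}$, producing the expected two-derivative loss for Wick squares and cubes, counterbalanced by the two-derivative gain from the deterministic factor $(k^2+m_0^2)^{-1}$ arising from $\int_{-\infty}^{t} e^{(t-s)(\triangle-m_0^2)}\cdots ds$. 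Temporal regularity would follow from $E\|\Delta_j({\mathcal Z}^{(*,N)}_t-{\mathcal Z}^{(*,N)}_s)\|_{L^2}^2$ estimates using (\ref{eq:solZ}), whose explicit Gaussian form provides the necessary Hölder exponent and in particular yields the $\gamma<1/4$ assertion of \ref{prop:Z03}.

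The main difficulty is concentrated in the resonant products \ref{prop:Z22}--\ref{prop:Z23} and the ordinary pointwise products \ref{prop:Z13}--\ref{prop:Z133}. For ${\mathcal Z}^{(2,2,N)}$ and ${\mathcal Z}^{(2,3,N)}$ I would decompose ${\mathcal Z}^{(2,N)}_t \mbox{\textcircled{\scriptsize$=$}} (\cdot )$ into Wiener chaoses of orders $0,2,4$ (respectively $1,3,5$): a direct Wick computation identifies the chaos-$0$ part with $C_2^{(N)}$ (respectively the chaos-$1$ part with $3C_2^{(N)}{\mathcal Z}^{(1,N)}_t$), which establishes (\ref{eq:remrconst2}) and its analogue, while the remaining nontrivial components require careful Fourier bookkeeping, since $\Delta_j({\mathcal Z}^{(2,N)}_t \mbox{\textcircled{\scriptsize$=$}} \cdot )$ is a sum over pairs $(j_1,j_2)$ with $|j_1-j_2|\leq 1$ and $j_i \leq j+O(1)$ that must be controlled, together with the four propagator factors, against $2^{2j\varepsilon}$. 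For \ref{prop:Z13}--\ref{prop:Z133} the diagram formula decomposes each pointwise product into chaoses of orders up to four and seven respectively; because ${\mathcal Z}^{(0,3,N)}$ has strictly positive regularity $1/2-\varepsilon$, the contractions are sufficiently summable, and each chaos component admits an $L^2(\Omega)$ estimate whose dyadic scaling matches the target $B_\infty^{-1/2-\varepsilon}$. The principal obstacle is therefore the combinatorial Feynman-diagram-type bookkeeping needed to establish the uniform-in-$N$ Fourier summations in \ref{prop:Z22}--\ref{prop:Z133}; once those are in place, hypercontractivity and Kolmogorov's criterion deliver the remaining conclusions essentially routinely.
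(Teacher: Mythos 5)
Your proposal matches the paper's approach essentially step for step: explicit Fourier/Wick moment computations to obtain $L^2(\Omega)$ bounds on dyadic blocks, Gaussian hypercontractivity to upgrade these to arbitrary $L^p(\Omega)$ moments, Besov embedding (from $W^{s,p}$ or $B_{p,p}^s$ into $B_\infty^{s'}$) to land in the target spaces, a Kolmogorov-type criterion for temporal continuity, and telescoping differences in $N$ to produce almost-sure Cauchy sequences yielding the limits, with the chaos/diagram decomposition handling the resonant and pointwise products in \ref{prop:Z22}--\ref{prop:Z133} and identifying the Wick constants $C_1^{(N)}, C_2^{(N)}$. The paper's sketch is similarly brief, delegating the detailed Feynman-diagram bookkeeping to the references it cites, so your outline is a faithful reconstruction of the intended argument.
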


For the proof of Proposition \ref{prop:Z}, we shall need a lot of explicit calculations.
Since, on the other hand, the results have been essentially derived before we shall only present here a sketch of the proof.
The proof uses methods of \cite{GIP}.
For more details on the explicit calculation used for establishing the estimates in Proposition \ref{prop:Z} see \cite{CaCh}, \cite{MW3} and \cite{MWX}, noticing also that corresponding calculations in the setting of regularity structures can be found in Section 7 of \cite{Ha} and \cite{Ha2}.

\vspace{3mm}

\noindent {\it Sketch of Proof of Proposition \ref{prop:Z}.}
The proofs of \ref{prop:Z1}, \ref{prop:Z2}, \ref{prop:Z02} and \ref{prop:Z03} are done by explicit calculation by the Fourier expansion.
We consider \ref{prop:Z2}.
First we prove (\ref{eq:remrconst1}).
Note that
\[
E\left[ \left( P_N ^{(1)} Z_t \right) ^2\right] = \frac{1}{(2\pi )^{3/2}}\sum _{k, l\in {\mathbb Z}^3} \psi ^{(1)} (2^{-N}|\cdot |)^{\otimes 3}(k) \psi ^{(1)} (2^{-N}|\cdot |)^{\otimes 3}(l) E\left[ \langle Z_t , e_k \rangle \langle Z_t , e_l \rangle \right] e_{k+l}.
\]
We calculate this sum by using (\ref{eq:expZ}), (\ref{eq:covZ}) and Theorem \ref{thm:AG}, and then easily obtain (\ref{eq:remrconst1}).
Let $\varepsilon , \varepsilon ' \in (0,1]$ such that $\varepsilon ' < \varepsilon$, and define $\tilde \varepsilon = (\varepsilon + \varepsilon')/2$.
In view of (\ref{eq:remrconst1}), to prove \ref{prop:Z2}, we first calculate
\begin{equation}\label{eq:propZ2-1}
\begin{array}{l}
\displaystyle E\left[ \left\| {\mathcal Z}^{(2,N)}_t- {\mathcal Z}^{(2,N)}_s\right\| _{W^{-1-\tilde \varepsilon, 2}}^2\right] \\
\displaystyle = E\left[ \left| \langle (P_N ^{(1)} Z_t)^2, 1 \rangle - \langle (P_N ^{(1)} Z_s)^2, 1 \rangle \right| ^2 \right] \\
\displaystyle \quad + \sum _{k\in {\mathbb Z}^3 \setminus \{ 0\}}\frac{1}{(1+k^2)^{1+\tilde \varepsilon }}E\left[ \left| \langle (P_N ^{(1)} Z_t)^2, e_k \rangle - \langle (P_N ^{(1)} Z_s)^2, e_k \rangle\right| ^2 \right]
\end{array}
\end{equation}
for $s,t\in [0,T]$.
Using the Fourier expansion of $P_N ^{(1)} Z_t$ we can express the right-hand side of (\ref{eq:propZ2-1}) by the expectation of a fourth order polynomial of complex-valued Gaussian random variables.
Hence, by (\ref{eq:expZ}), (\ref{eq:covZ}) and Theorem \ref{thm:AG} we are able to calculate it explicitly, and as a result we have the bound
\[
\sup _{N\in {\mathbb N}} E\left[ \left\| {\mathcal Z}^{(2,N)}_t- {\mathcal Z}^{(2,N)}_s\right\| _{W^{-1-\tilde \varepsilon, 2}}^2\right] \leq C|t-s|^{\varepsilon'}, \quad s,t \in [0,T],
\]
where $C$ is a constant depending on $\varepsilon$ and $\varepsilon'$.
Applying the hypercontractivity of polynomials of Gaussian random variables (see Proposition 2.14 in \cite{Shi} or Theorem 2.7.2 in \cite{NoPe}), for $p\in (1,\infty)$ we have 
\[
\sup _{N\in {\mathbb N}} E\left[ \left\| {\mathcal Z}^{(2,N)}_t- {\mathcal Z}^{(2,N)}_s\right\| _{W^{-1-\tilde \varepsilon, p}}^p\right] \leq C|t-s|^{\varepsilon' p/2}, \quad s,t \in [0,T]
\]
where $C$ is a constant depending on $p$, $\varepsilon$ and $\varepsilon'$.
The Besov embedding theorem (see Proposition 2.71 in \cite{BCD}) implies that for sufficiently large $p$, $W^{-1-\tilde \varepsilon, p}$ is embedded in $B_{\infty}^{-1-\varepsilon }$.
Hence, for sufficiently large $p$
\begin{equation}\label{eq:propZ2-2}
\sup _{N\in {\mathbb N}} E\left[ \left\| {\mathcal Z}^{(2,N)}_t- {\mathcal Z}^{(2,N)}_s\right\| _{B_{\infty}^{-1-\varepsilon }}^p\right] \leq C|t-s|^{\varepsilon' p/2}, \quad s,t \in [0,T]
\end{equation}
where $C$ is a constant depending on $p$, $\varepsilon$ and $\varepsilon'$.
On the other hand, by a similar calculation as above, we have
\[
\sum_{N\in {\mathbb N}} E\left[ \left\| {\mathcal Z}^{(2,N+1)}_t- {\mathcal Z}^{(2,N)}_t\right\| _{B_{\infty}^{-1-\varepsilon }}^p\right] ^{1/p}< \infty
\] 
for $t\in [0,T]$ and $p\in [1,\infty )$.
This implies that ${\mathcal Z}^{(2,N)}_t$ converges to a random variable ${\mathcal Z}^{(2,\infty )}_t$ almost surely for $t\in [0,T]$.
This convergence and (\ref{eq:propZ2-2}) yield the tightness of the laws of $\{ {\mathcal Z}^{(2,N)}; N\in {\mathbb N}\}$ as probability measures on $C([0,\infty ); B_{\infty}^{-1-\varepsilon })$ (see Theorem 4.3 in Chapter I of \cite{IW}).
In view of (\ref{eq:propZ2-2}) and the Kolmogorov criterion, ${\mathcal Z}^{(2,\infty )}$ has a modification $\tilde{\mathcal Z}^{(2,\infty )}$ which is continuous in time almost surely.
Therefore, by applying Proposition \ref{prop:AC} to $\{ {\mathcal Z}^{(2,N)}; N\in {\mathbb N}\}$ and $\tilde{\mathcal Z}^{(2,\infty )}$, we obtain \ref{prop:Z2}.

Similarly we prove \ref{prop:Z1}, \ref{prop:Z02} and \ref{prop:Z03}.
The proof of \ref{prop:Z1} is simpler. On the other hand, the proof of \ref{prop:Z03} is more complicated, because the order is higher and we also need to calculate the action of the semigroup and the integral in time, in order to get the result.

To prove \ref{prop:Z22} and \ref{prop:Z23}, we need to calculate paraproducts.
Since $e_k$ is an eigenfunction of $-\triangle$ with eigenvalue $|k|^2$, the expression of $\varphi \left( 2^{-j}\sqrt{-\triangle }\right) e_k$ by the spectral decomposition of $-\triangle$ and the definition of $\Delta _j$ imply
\[
\Delta _j e_k = \varphi \left( 2^{-j}\sqrt{-\triangle }\right) e_k = \varphi (2^{-j}|k|) e_k, \quad k\in {\mathbb Z}^3. 
\]
Similarly $\Delta _{-1} e_k = \chi (|k|)e_k$ for $k\in {\mathbb Z}^3$.
From this, the Fourier expansions of ${\mathcal Z}^{(2,2,N)}_t$ and ${\mathcal Z}^{(2,3,N)}_t$ can then be calculated explicitly.
Hence, \ref{prop:Z22} and \ref{prop:Z23} are proved similarly as we did for \ref{prop:Z2}.

The proofs of \ref{prop:Z13} and \ref{prop:Z133} are done also by explicit calculation as above.
See Section 1.2 of \cite{MW3} for details. 
\qed

\section{Construction of the invariant measure and flow}\label{sec:main}

In this section, we will construct an invariant probability measure and a flow associated to the $\Phi ^4_3$-measure.
We use the same notations as in Sections \ref{sec:Besov} and \ref{sec:OU}.

Let $\lambda _0 \in (0,\infty )$ and $\lambda \in (0,\lambda _0]$ be fixed.
Define a function $U_N$ on ${\cal D}'(\Lambda)$ by
\[
U_N(\phi ) = \int _{\Lambda} \left\{ \frac{\lambda}{4} (P_N^{(1)} \phi ) (x)^4 - \frac{3\lambda }{2}\left( C_1^{(N)} -3\lambda C_2^{(N)}\right) (P_N ^{(1)} \phi )(x)^2 \right\} dx ,
\]
and consider the probability measure $\mu _N$ on ${\cal D}'(\Lambda)$ given by
\[
\mu _N (d\phi ) = Z_N ^{-1} \exp \left( -U_N(\phi ) \right) \tilde \mu _0 (d\phi)
\]
where $Z_N$ is the normalizing constant.
We remark that $\{ \mu _N\}$ is an approximation sequence for the $\Phi ^4_3$-measure which will be constructed below as an invariant probability measure of the flow associated with the stochastic quantization equation.

Consider the stochastic partial differential equation on $\Lambda$
\begin{equation}\label{eq:SDEN1}\begin{array}{rl}
\displaystyle d Y_t^{N}(x)
&\displaystyle = d W_t(x) - (-\triangle +m_0^2)Y_t^N(x) dt \\[3mm]
&\displaystyle \quad - \lambda P_N ^{(1)} \left\{ (P_N ^{(1)} Y_t^N)^3 (x) -3 \left( C_1^{(N)} -3\lambda C_2^{(N)}\right) P_N ^{(1)} Y_t^N(x) \right\} dt
\end{array}\end{equation}
where $dW_t(x)$ is a white noise with parameter $(t,x)\in [0,\infty)\times \Lambda$.
First, we prove that this SPDE is associated to $\mu _N$, in the sense that $\mu _N$ is the invariant measure for $Y_t^N$.

\begin{thm}\label{thm:invN}
Let $\alpha \in (1/2,\infty)$.
For each $N$, (\ref{eq:SDEN1}) has a unique global solution as a stochastic process on $W^{-\alpha ,2}(\Lambda)$ almost surely for all initial values $Y^N_0\in W^{-\alpha ,2}(\Lambda)$.
Moreover, $\mu _N$ is the invariant measure with respect to $Y^N$.
\end{thm}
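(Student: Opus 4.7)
The proof rests on the fact that the nonlinear drift in (\ref{eq:SDEN1}) is genuinely \emph{finite-dimensional}: since $P_N^{(1)}$ is a smoothed Fourier cutoff whose range lies in the finite-dimensional subspace $V_N := \operatorname{span}\{e_k : k\in\mathbb{Z}^3,\ |k_i|\leq 2^{N+1},\ i=1,2,3\}$ and which vanishes on $V_N^\perp$, the nonlinear term depends only on $\pi_N Y^N_t$ and takes values in $V_N$, where $\pi_N$ denotes the $L^2$-orthogonal projection onto $V_N$ (extended to $\mathcal{D}'(\Lambda)$ via Fourier series). Since the linear part $B := -\triangle + m_0^2$ commutes with $\pi_N$, the first step of the plan is to show that (\ref{eq:SDEN1}) decouples orthogonally into
\begin{align*}
dX^N_t &= \pi_N dW_t - B X^N_t \, dt - \lambda P_N^{(1)}\bigl\{(P_N^{(1)} X^N_t)^3 - 3(C_1^{(N)} - 3\lambda C_2^{(N)}) P_N^{(1)} X^N_t\bigr\} dt, \\
dR^N_t &= (I - \pi_N) dW_t - B R^N_t \, dt,
\end{align*}
for $X^N_t := \pi_N Y^N_t \in V_N$ and $R^N_t := (I - \pi_N) Y^N_t \in V_N^\perp$, driven by the independent cylindrical Brownian motions $\pi_N W$ and $(I-\pi_N)W$.

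For the $R^N$-equation I would invoke classical infinite-dimensional Ornstein--Uhlenbeck theory: the stochastic convolution gives a unique global mild solution with paths in $C([0,\infty); W^{-\alpha,2})$ for any $\alpha > 1/2$, and the Gaussian marginal of $\tilde \mu_0$ on $V_N^\perp$ is its invariant measure. For the $X^N$-equation, which is a finite-dimensional It\^o SDE on $V_N$ with smooth polynomial drift, local existence and pathwise uniqueness are immediate from standard SDE theory. To upgrade to global existence I would use the Lyapunov function
\[
H_N(x) := \tfrac{1}{2}\langle Bx, x\rangle + U_N(x), \qquad x \in V_N,
\]
which is coercive on $V_N$ because the positive quartic term $\tfrac{\lambda}{4}\|P_N^{(1)} x\|_{L^4}^4$ dominates the indefinite Wick counterterm, and whose gradient equals (minus) the drift of the $X^N$-equation. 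Applying It\^o's formula to $H_N(X^N_t)$ along a localising sequence yields an estimate of the form $E[H_N(X^N_{t\wedge\tau_n})] \leq H_N(X^N_0) + Ct$, ruling out finite-time blow-up.

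For invariance I would exploit the fact that $U_N(\phi) = U_N(\pi_N\phi)$, so $\mu_N$ factorises as the product of its $V_N$-marginal $\mu_N^{V_N}$ (a Gibbs measure on $V_N$ with density proportional to $e^{-U_N}$ times the restriction of $\tilde\mu_0$ to $V_N$) and the Gaussian marginal of $\tilde\mu_0$ on $V_N^\perp$. On $V_N$ the finite-dimensional SDE is a Langevin-type system whose drift is a gradient, and a direct Fokker--Planck / integration-by-parts computation (carried out with the standard stochastic quantization normalisation) confirms that $\mu_N^{V_N}$ is infinitesimally invariant for the generator of $X^N$. Combined with the well-known invariance of $\tilde\mu_0|_{V_N^\perp}$ for the OU part and the independence of the two driving noises, this yields the invariance of $\mu_N$ under $Y^N$.

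The main obstacle I anticipate is not any single analytic step but the careful bookkeeping of the decoupling in the low-regularity space $W^{-\alpha,2}$: one has to check that splitting $Y^N_0 = \pi_N Y^N_0 + (I-\pi_N)Y^N_0$ makes sense distributionally, that the $V_N$-valued process $X^N$ actually admits the coercive Lyapunov estimate with a constant independent of the localisation, and that the mild solutions of the two subproblems recombine to give a bona fide solution of (\ref{eq:SDEN1}) in the sense required by the theorem statement. Once these points are settled, both non-explosion and invariance reduce to standard finite-dimensional It\^o calculus and elementary Gaussian computations.
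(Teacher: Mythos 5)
Your plan --- Fourier-decompose $Y^N$ into a finite-dimensional nonlinearly interacting block and a decoupled Ornstein--Uhlenbeck tail, establish non-explosion of the block by an It\^o/Lyapunov estimate with a localising sequence, and deduce invariance of $\mu_N$ from the gradient form of the finite-dimensional drift together with the Gaussian stationarity of the tail --- is precisely the strategy of the paper's proof. The only variation is essentially cosmetic: for non-explosion the paper applies It\^o to $\|Y^N_t\|_{L^2}^2$, exploits the sign of the cubic term to drop the manifestly nonpositive contribution $-2\lambda\int_\Lambda (P_N^{(1)}Y^N_t)^4\,dx$, and closes with a linear Gronwall bound, whereas you use the Hamiltonian $H_N=\tfrac12\langle Bx,x\rangle+U_N(x)$ --- which also works, though since $\triangle H_N$ grows quadratically in $x$ your stated estimate $E[H_N(X^N_{t\wedge\tau_n})]\le H_N(X^N_0)+Ct$ should really be a Gronwall bound $\le(H_N(X^N_0)+C)e^{Ct}$, and the coercivity of $H_N$ on $V_N$ actually comes from $\tfrac12\langle Bx,x\rangle\ge\tfrac{m_0^2}{2}\|x\|_{L^2}^2$ rather than from the quartic term alone, since $P_N^{(1)}$ has a nontrivial kernel inside $V_N$.
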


\begin{proof}
To simplify notation, we denote $\psi ^{(1)} (2^{-N}|\cdot |) ^{\otimes 3}$ by $\psi _N ^{(1)}$.
Denote $\langle Y^N_t, e_k \rangle$ and $\langle W_t, e_k \rangle$ by ${\widehat Y}_t^{N,k}$ and ${\widehat W}_t^k$, respectively, for $k\in {\mathbb Z}^3$, and consider the Fourier expansion of $Y^N_t$ as
\begin{equation}\label{eq:FourierXN}
Y^N_t(x)=\sum_{k\in {\mathbb Z}^3} {\widehat Y}_t^{N,k} e_k(x).
\end{equation}
Then, the stochastic differential equation associated to (\ref{eq:SDEN1}) is given by
\begin{equation}\label{eq:SDEN2}\begin{array}{rl}
\displaystyle d{\widehat Y}_t^{N,k}
&\displaystyle = d{\widehat W}_t^k - (k^2+m_0^2){\widehat Y}_t^{N,k}dt \\[3mm]
&\displaystyle \quad - \frac{\lambda}{(2\pi )^3} \sum_{\substack{l_1,l_2,l_3\in {\mathbb Z}^3;\\ l_1+l_2+l_3=-k}} \psi _N ^{(1)} (l_1) \psi _N ^{(1)} (l_2) \psi _N ^{(1)} (l_3) \psi _N ^{(1)} (k) {\widehat Y}_t^{N,l_1} {\widehat Y}_t^{N,l_2} {\widehat Y}_t^{N,l_3} dt \\
&\displaystyle \quad +3\lambda \left( C_1^{(N)} -3\lambda C_2^{(N)}\right) \psi _N ^{(1)} (k) ^2 {\widehat Y}_t^{N,k} dt 
\end{array}\end{equation}
where $k\in {\mathbb Z}^3$.
Once we prove the existence and the uniqueness of the solution to (\ref{eq:SDEN2}), we obtain the existence and the uniqueness of the solution to (\ref{eq:SDEN1}) by means of (\ref{eq:FourierXN}).

If $k\not \in {\mathbb Z}_{2N}^3$, (\ref{eq:SDEN2}) reduces to
\[
d{\widehat Y}_t^{N,k} = d{\widehat W}_t^k - (k^2+m_0^2){\widehat Y}_t^{N,k}dt.
\]
This implies that ${\widehat Y}^{N,k}$ with $k\not \in {\mathbb Z}_{2N}^3$ has no interaction with the other components, and the solution ${\widehat Y}^{N,k}$ exists and is unique almost surely for $k\not \in {\mathbb Z}_{2N}^3$.
In particular, similarly as for (\ref{eq:SDEZ2}),
\begin{equation}\label{eq:flow01}
{\widehat Y}_t^{N,k} = e^{-t(k^2 +m_0^2)} {\widehat Y}_0^{N,k} + \int _0^t e^{(s-t)(k^2+m_0^2)}d{\widehat W}_s^k
\end{equation}
for $k\not \in {\mathbb Z}_{2N}^3$.
In view of this fact, we can regard (\ref{eq:SDEN2}) with $k \in {\mathbb Z}_{2N}^3$ as a finite-dimensional stochastic differential equation, from now on.
The existence and the pathwise uniqueness of the local solution in time to (\ref{eq:SDEN2}) with $k \in {\mathbb Z}_{2N}^3$ immediately follow from the local Lipschitz continuity of the coefficients of (\ref{eq:SDEN2}).
Now we show that the global existence of the solution holds.
Let $T>0$.
Define a stopping time $\tau _M := \min\{ T, \inf \{t>0; \sum_{k\in {\mathbb Z}_{2N}^3}|{\widehat Y}_t^{N,k}|^2 > M\}\}$ for $M\in [0,\infty)$.
Then, by It\^o's formula we have for any $\tilde t \in [0,\infty)$
\begin{align*}
&E\left[ \sup_{t\in [0,\tilde t \wedge \tau _M]} \sum _{k\in {\mathbb Z}_{2N}^3} \left| {\widehat Y}_t^{N,k}\right| ^2 \right] - E\left[ \sum _{k\in {\mathbb Z}_{2N}^3} \left| {\widehat Y}_0^{N,k}\right| ^2 \right]\\
&\leq E\left[ \sup_{t\in [0,\tilde t \wedge \tau _M]} \sum _{k\in {\mathbb Z}_{2N}^3} \left( \int _0^t {\widehat Y}_s^{N,k} d{\widehat W}_s^{-k} + \int _0^t {\widehat Y}_s^{N,-k} d {\widehat W}_s^k\right) \right] + (2^{N+2}+1)^3 \tilde t \\
&\quad + 2E\left[ \sup_{t\in [0,\tilde t \wedge \tau _M]} \sum _{k\in {\mathbb Z}_{2N}^3} [-(k^2+m_0^2)] \int _0^t |{\widehat Y}_s^{N,k}|^2 ds \right] \\
&\quad + E\left[ \sup_{t\in [0,\tilde t \wedge \tau _M]} \left( -\frac{\lambda}{(2\pi )^3} \right) \sum _{k\in {\mathbb Z}_{2N}^3} \sum_{\substack{l_1,l_2,l_3\in {\mathbb Z}_{2N}^3;\\ l_1+l_2+l_3=-k}} \psi _N ^{(1)} (l_1) \psi _N ^{(1)} (l_2) \psi _N ^{(1)} (l_3) \psi _N ^{(1)} (k) \right. \\
&\quad \hspace{2cm} \left. \phantom{\sum _{k\in {\mathbb Z}_{2N}^3}} \times \int _0^t ({\widehat Y}_s^{N,l_1} {\widehat Y}_s^{N,l_2} {\widehat Y}_s^{N,l_3} {\widehat Y}_s^{N,-k} + {\widehat Y}_s^{N,-l_1} {\widehat Y}_s^{N,-l_2} {\widehat Y}_s^{N,-l_3} {\widehat Y}_s^{N,k})ds \right] \\
&\quad + 6\lambda E\left[ \sup_{t\in [0,\tilde t \wedge \tau _M]}\left( C_1^{(N)} - 3\lambda C_2^{(N)}\right) \sum _{k\in {\mathbb Z}_{2N}^3} \psi _N ^{(1)} (k) ^2 \int _0^t |{\widehat Y}_s^{N,k}|^2 ds \right] \\
&\leq  \sum _{k\in {\mathbb Z}_{2N}^3} E\left[  \left\langle \int _0^\cdot {\widehat Y}_s^{N,k} d{\widehat W}_s^{N,-k} + \int _0^\cdot {\widehat Y}_s^{N,-k} d {\widehat W}_s^{N,k}\right\rangle _{\tilde t \wedge \tau _M} \right] ^{1/2} + (2^{N+2}+1)^3 \tilde t\\
&\quad +E\left[ \sup_{t\in [0,\tilde t \wedge \tau _M]} \left( -\frac{2\lambda}{(2\pi )^{3/2}} \right) \int _0^t \left\langle ( P_N ^{(1)} Y^N_s) ^4 , 1 \right\rangle ds \right] +6 \lambda C_1^{(N)} E\left[ \sum _{k\in {\mathbb Z}_{2N}^3} \int _0^{\tilde t \wedge \tau _M} |{\widehat Y}_s^{N,k}|^2 ds \right] \\
&\leq \sum _{k\in {\mathbb Z}_{2N}^3} E\left[ \int _0^{\tilde t \wedge \tau _M} |{\widehat Y}_s^{N,k}|^2 ds \right] ^{1/2} + 6\lambda C_1^{(N)} E\left[ \sum _{k\in {\mathbb Z}_{2N}^3} \int _0^{\tilde t \wedge \tau _M} |{\widehat Y}_s^{N,k}|^2 ds \right] + (2^{N+2}+1)^3 \tilde t\\
&\leq (2^{N+2}+1)^3 (1+\tilde t) +\left( 1+ 6\lambda C_1^{(N)}\right) E\left[ \int _0^{\tilde t} \sup _{r\in [0,s \wedge \tau _M]} \sum _{k\in {\mathbb Z}_{2N}^3} |{\widehat Y}_r^{N,k}|^2 ds \right]
\end{align*}
where $\langle \cdot \rangle $ here means the quadratic variation.
Hence, by Gronwall's inequality we have for $\tilde t \in [0,\infty)$
\[
E\left[ \sup_{t\in [0,\tilde t \wedge \tau _M]} \sum _{k\in {\mathbb Z}_{2N}^3} \left| {\widehat Y}_t^{N,k}\right| ^2 \right] \leq (2^{N+2}+1)^3\left( 1 + \tilde t + E\left[ \sum _{k\in {\mathbb Z}_{2N}^3} \left| {\widehat Y}_0^{N,k}\right| ^2 \right] e^{ 6\lambda C_1^{(N)} \tilde t} \right) .
\]
By letting $\tilde t = T$ and $M\uparrow \infty$ in this inequality and combining it with (\ref{eq:flow01}) we get
\[
\sum _{k\in {\mathbb Z}^3}(1+ k^2)^{-\alpha} E\left[ \sup_{t\in [0,T]} \left| \widehat{Y}_t^{N,k}\right| ^2 \right]  \leq C \left( 1+ \sum _{k\in {\mathbb Z}^3}(1+ k^2)^{-\alpha} E \left[ \left| \widehat{Y}_0^{N,k}\right| ^2 \right] \right) e^{CT},
\]
where $C$ is a constant depending on $N$ and $m_0$.
Thus, we have the unique global solution as a stochastic process $Y^N$ on $W^{-\alpha ,2}(\Lambda)$ almost surely for all initial values $Y^N_0\in W^{-\alpha ,2}(\Lambda)$.

For the invariance of $\mu _N$ under the solution of (\ref{eq:SDEN1}), consider the differential operator
\[
A_N :=   \sum_{k\in {\mathbb Z}_{2N}^3} \exp \left( \frac{1}{2} \sum _{l\in {\mathbb Z}_{2N}^3} (l^2+m_0^2) |x_l|^2 + V(x) \right)\frac{\partial }{\partial \bar{x}_k} \exp \left( -  \frac{1}{2} \sum _{l\in {\mathbb Z}_{2N}^3} (l^2+m_0^2) |x_l|^2 -V(x) \right) \frac{\partial }{\partial x_k}
\]
for $x=(x_k) _{k\in {\mathbb Z}_{2N}^3}$ and $x_k \in {\mathbb C}$, where
\begin{align*}
V(x) &= \frac{\lambda}{4(2\pi )^3} \sum_{\substack{l_1,l_2,l_3,l_4\in {\mathbb Z}_{2N}^3;\\ l_1+l_2+l_3+l_4=0}} \psi _N ^{(1)} (l_1) \psi _N ^{(1)} (l_2) \psi _N ^{(1)} (l_3) \psi _N ^{(1)} (l_4) x_{l_1} x_{l_2} x_{l_3} x_{l_4} \\
&\quad \hspace{2cm} - \frac{3\lambda}{2} \left(  C_1^{(N)} - 3 \lambda C_2^{(N)}\right)  \sum _{\substack{l_1,l_2\in {\mathbb Z}_{2N}^3;\\ l_1+l_2 =0}} \psi _N ^{(1)} (l_1) \psi _N ^{(1)} (l_2) x_{l_1} x_{l_2}\\
\frac{\partial }{\partial x_k}&:= \frac 12 \left( \frac{\partial}{\partial {\rm Re}x_k} - \sqrt{-1}\frac{\partial}{\partial {\rm Im}x_k} \right) , \quad \frac{\partial }{\partial \bar{x}_k}:= \frac 12 \left( \frac{\partial}{\partial {\rm Re}x_k} + \sqrt{-1} \frac{\partial}{\partial {\rm Im}x_k} \right) .
\end{align*}
Then, by the standard argument by conformal martingales (see Section 6 of Chapter III in \cite{IW}) and Dirichlet forms (see \cite{FOT}) we see that $A_N$ is the generator of $({\widehat Y}^{N,k}; k \in {\mathbb Z}_{2N}^3)$ and the measure
\[
{\widehat \mu}_N^{(1)} (dx) := \left( {\widehat Z}_N^{(1)} \right) ^{-1} \exp \left( - \frac{1}{2} \sum _{l\in {\mathbb Z}_{2N}^3} (l^2+m_0^2) |x_l|^2 -V(x) \right) \prod _{k\in {\mathbb Z}_{2N}^3} dx_k,
\]
where $dx_k$ is the Lebesgue measure on ${\mathbb C}$ for $k\in {\mathbb Z}^3$ and ${\widehat Z}_N^{(1)}$ is a normalization constant, is the unique invariant measure associated to $({\widehat Y}^{N,k}; k \in {\mathbb Z}_{2N}^3)$.
For $k \in {\mathbb Z}^3 \setminus {\mathbb Z}_{2N}^3$, as we have seen above, ${\widehat Y}^{N,k}$ has no interaction with other components and satisfies (\ref{eq:flow01}).
Moreover, it is easy to see that for $k \in {\mathbb Z}^3 \setminus {\mathbb Z}_{2N}^3$
\[
{\widehat \mu}_N^{(2),k} (dx) := \left( {\widehat Z}_N^{(2)} \right) ^{-1} \exp \left( - \frac{k^2+m_0^2}{2}|x|^2\right) dx
\]
where ${\widehat Z}_N^{(2)}$ is a normalization constant, is the invariant measure associated to ${\widehat Y}^{N,k}$.
Hence,
\[
{\widehat \mu}_N^{(1)} \otimes \prod _{k \in {\mathbb Z}^3 \setminus {\mathbb Z}_{2N}^3} {\widehat \mu}_N^{(2),k}
\]
is the invariant measure associated to $({\widehat Y}^{N,k}; k \in {\mathbb Z}^3)$.
Therefore, $\mu _N$ is the invariant measure associated to $Y^N$.
\end{proof}

For each $N\in {\mathbb N}$, consider a stochastic process $Y_t^N$ given by (\ref{eq:SDEN1}) with initial law $\mu _N$.
We extend $W$ appeared in (\ref{eq:SDEN1}) to a white noise for $(t,x) \in (-\infty ,\infty ) \times \Lambda$, define $Z_t$ by (\ref{eq:SDEZ}) with $W$, and assume that $Y_0^N$ and $Z_0$ are independent.
Then, in view of Theorem \ref{thm:invN}, $Y_t^N$ and $Z_t$ are stationary processes.
In particular, each of the families of laws $\{ Y_t^N ; t\in [0,\infty )\}$ and $\{ Z_t; t\in [0,\infty )\}$ are tight.
Corollary \ref{cor:AT} implies the laws of the pair $\{(Y_t^N, Z_t); t\in [0,\infty )\}$ are also tight.
Hence, by Proposition \ref{prop:Ainv} we have an invariant probability measure for the system $(Y_t^N, Z_t)$.
Let $(\xi _N ,\zeta)$ be a pair of random variables whose law is the invariant probability measure.

Now we fix a pair of random variables $(\xi _N ,\zeta)$.
Consider the stochastic partial differential equation on $\Lambda$
\begin{equation}\label{eq:SDEN3}\left\{ \begin{array}{rl}
\displaystyle d \tilde X_t^{N}(x)
&\displaystyle = d W_t(x) - (-\triangle +m_0^2) \tilde X_t^N(x) dt \\[3mm]
&\displaystyle \quad - \lambda P_N ^{(1)} \left\{ (P_N ^{(1)} \tilde X_t^N)^3 (x) -3 \left( C_1^{(N)} -3\lambda C_2^{(N)}\right) P_N ^{(1)} \tilde X_t^N(x) \right\} dt\\
\displaystyle \tilde X_0^{N}(x)
&\displaystyle = \xi _N (x)
\end{array}\right. \end{equation}
where $W_t$ is a white noise independent of $(\xi _N ,\zeta)$.
Note that (\ref{eq:SDEN3}) is the equation with time evolution the same as (\ref{eq:SDEN1}) with initial law $\mu_N$.
Let $X^{N} := P_N ^{(2)} \tilde X^N$ for $N \in {\mathbb N}$.
Then, in view of the fact that $P_N^{(2)}P_N^{(1)} = P_N^{(1)}$, $X^{N}$ satisfies the stochastic partial differential equation
\begin{equation}\label{eq:SDEN4}\left\{ \begin{array}{rl}
\displaystyle d X_t^{N}(x)
&\displaystyle = P_N^{(2)} dW_t(x) - (-\triangle +m_0^2) X_t^{N}(x) dt \\[3mm]
&\displaystyle \quad - \lambda P_N^{(1)} \left\{ (P_N ^{(1)} X_t^N)^3 (x) -3 (C_1^{(N)}- 3\lambda C_2^{(N)}) P_N^{(1)} X_t^N(x) \right\} dt\\
\displaystyle X_0^{N}(x)
&\displaystyle = P_N^{(2)} \xi _N (x).
\end{array}\right. \end{equation}
By the definition, $X^{N} \in C([0,\infty); B_{p}^s)$ for $s\in {\mathbb R}$ and $p\in [1,\infty ]$.
Since for $k\in {\mathbb Z}_{N+2}^3$
\begin{align*}
&E[ | \langle P_N^{(2)} \xi _N , e_k\rangle | ^2] \\
&= \frac{1}{Z_N} \int _{{\mathcal S}'(\Lambda)} ( \psi ^{(2)} (2^{-N}|\cdot |) ^{\otimes 3}(k) )^2 | \langle \phi , e_k\rangle | ^2\\
&\quad \hspace{1cm} \times \exp \left( - \int _{\Lambda} \left(  \frac{\lambda}{4} (P_N ^{(1)} \phi ) ^4 - \frac{3\lambda }{2}(C_1^{(N)}-3\lambda C_2^{(N)}) (P_N^{(1)} \phi )^2 \right) dx\right) \mu _0(d\phi ) \\
&\leq \frac{1}{Z_N} \exp \left( \frac{9\lambda }{4} (C_1^{(N)}-3\lambda C_2^{(N)})^2 \right) \int _{{\mathcal S}'(\Lambda)} | \langle \phi , e_k\rangle | ^2 \mu _0(d\phi ) \\
&= \frac{1}{(k^2+m_0^2)Z_N} \exp \left( \frac{9\lambda }{4} (C_1^{(N)}-3\lambda C_2^{(N)})^2 \right) ,
\end{align*}
by the invariance of $\mu _N$ with respect to $\tilde X^N$ we have
\begin{equation}\label{eq:invXN}
E\left[ \| X_t^{N} \| _{L^2}^2 \right] = \sum _{k\in {\mathbb Z}_{N+2}^3} \left[ \psi ^{(2)}(2^{-N}|\cdot |) ^{\otimes 3}(k) \right] ^2 E\left[ | \langle \tilde X_t^N, e_k \rangle |^2 \right] = E\left[ \| X_0^{N} \| _{L^2}^2 \right] < \infty 
\end{equation}
for $t\in [0,\infty )$.

Now we shall investigate the tightness of the laws of  $\{ X^{N}\}$ (see Theorem \ref{thm:tight2} below).
To solve (\ref{eq:SDEN4}) we apply to our equation a method inspired by the one used by M. Hairer in his setting in \cite{Ha}, however we keep entirely in the paracontrolled decomposition setting.
We use the notation of paraproducts and of polynomials of the Ornstein-Uhlenbeck process as in Sections \ref{sec:Besov} and \ref{sec:OU}, respectively.
In particular, we extend $W$ in (\ref{eq:SDEN3}) to a white noise for $(t,x) \in (-\infty ,\infty ) \times \Lambda$ and define $Z_t$ by (\ref{eq:SDEZ}) with the extended $W$, where $\zeta$ is the random variable defined above.
We remark that the pair $(X_t^N, Z_t)$ is a stationary process by the construction of $(\xi _N, \zeta)$.
Similarly to (\ref{eq:invXN}) we have
\begin{equation}\label{eq:invZN}
E\left[ \|P_N^{(2)}Z_t \| _{L^2}^2 \right] = E\left[ \| P_N^{(2)}Z_0 \| _{L^2}^2 \right] < \infty 
\end{equation}
for $t\in [0,\infty )$.
Let $X^{N,(1)}_t:= X_t^{N} - P_N^{(2)}Z_t$ for $t\in [0,\infty )$.
From (\ref{eq:SDEN1}) and (\ref{eq:SDEZ}) we have
\begin{align*}
&d X^{N,(1)}_t + (- \triangle +m_0^2) X^{N,(1)}_t dt\\
&= -\lambda P_N^{(1)} \left[ (P_N^{(1)} X^{N,(1)}_t + {\mathcal Z}_t^{(1,N)}) ^3 \right] dt \\
&\quad + 3 \lambda \left( C_1^{(N)} - 3\lambda C_2^{(N)}\right) P_N^{(1)} (P_N^{(1)} X^{N,(1)}_t + {\mathcal Z}_t^{(1,N)}) dt\\
&= -\lambda P_N^{(1)} \left[ (P_N^{(1)} X^{N,(1)}_t)^3 \right] dt - 3\lambda P_N^{(1)} \left[ {\mathcal Z}_t^{(1,N)} (P_N^{(1)} X^{N,(1)}_t)^2 \right] dt\\
&\quad  - 3\lambda P_N^{(1)} \left[ {\mathcal Z}^{(2,N)}_t P_N^{(1)} X^{N,(1)}_t \right] dt -\lambda P_N^{(1)} {\mathcal Z}^{(3,N)}_t dt - 9 \lambda ^2 C_2^{(N)} P_N^{(1)}  (P_N^{(1)} X^{N,(1)}_t +{\mathcal Z}_t^{(1,N)}) dt.
\end{align*}
Let 
\[
X^{N,(2)}_t := X_t^{N} - P_N^{(2)}Z_t + \lambda {\mathcal Z}^{(0,3,N)}_t, \quad t\in [0,\infty ).
\]
Then, we have
\begin{align*}
&d X^{N,(2)}_t + (- \triangle +m_0^2) X^{N,(2)}_t dt\\
&= -\lambda P_N^{(1)} \left[ \left( P_N^{(1)} X^{N,(2)}_t - \lambda P_N^{(1)} {\mathcal Z}^{(0,3,N)}_t \right) ^3 \right] dt \\
&\quad - 3\lambda P_N^{(1)}  \left[ {\mathcal Z}_t^{(1,N)} \left( P_N^{(1)} X^{N,(2)}_t - \lambda P_N^{(1)} {\mathcal Z}^{(0,3,N)}_t \right) ^2 \right] dt \\
&\quad - 3\lambda P_N^{(1)}  \left[ {\mathcal Z}^{(2,N)}_t \left( P_N^{(1)} X^{N,(2)}_t - \lambda P_N^{(1)} {\mathcal Z}^{(0,3,N)}_t \right) \right] dt \\
&\quad - 9\lambda ^2 C_2^{(N)} P_N^{(1)}  \left( P_N^{(1)} X^{N,(2)}_t + {\mathcal Z}_t^{(1,N)} - \lambda P_N^{(1)} {\mathcal Z}^{(0,3,N)}_t \right) dt.
\end{align*}
Hence, the pair $(X^{N,(2),<}_t, X^{N,(2),\geqslant}_t)$ defined by 
\begin{align*}
X^{N,(2),<}_t &:= -3\lambda \int _0^t e^{(t-s)(\triangle -m_0^2)} P_N^{(1)} \left[ \left( P_N^{(1)} X^{N,(2)}_s - \lambda P_N^{(1)} {\mathcal Z}^{(0,3,N)}_s \right) \mbox{\textcircled{\scriptsize$<$}} {\mathcal Z}^{(2,N)}_s \right] ds \\
X^{N,(2),\geqslant}_t & := X^{N,(2)}_t - X^{N,(2),<}_t
\end{align*}
is the solution to the following partial differential equation
\begin{equation} \label{PDEpara}\left\{ \begin{array}{l}
\displaystyle (\partial _t - \triangle + m_0^2) X^{N,(2),<}_t \\
\displaystyle = -3\lambda P_N^{(1)} \left[  \left( P_N^{(1)} X^{N,(2),<}_t + P_N^{(1)} X^{N,(2),\geqslant}_t - \lambda P_N^{(1)} {\mathcal Z}^{(0,3,N)}_t \right) \mbox{\textcircled{\scriptsize$<$}} {\mathcal Z}^{(2,N)}_t \right] \\[5mm]
\displaystyle (\partial _t - \triangle + m_0^2) X^{N,(2),\geqslant}_t \\
\displaystyle  = - \lambda P_N^{(1)}  \left[ \left( P_N^{(1)} X^{N,(2),<}_t + P_N^{(1)} X^{N,(2),\geqslant}_t - \lambda P_N^{(1)} {\mathcal Z}^{(0,3,N)}_t \right) ^3 \right] \\
\displaystyle \quad - 3\lambda P_N^{(1)}  \left[ {\mathcal Z}_t^{(1,N)} \left( P_N^{(1)} X^{N,(2),<}_t + P_N^{(1)} X^{N,(2),\geqslant}_t - \lambda P_N^{(1)} {\mathcal Z}^{(0,3,N)}_t \right) ^2 \right] \\
\displaystyle \quad - 3\lambda P_N^{(1)}  \left[ \left( P_N^{(1)} X^{N,(2),<}_t + P_N^{(1)} X^{N,(2),\geqslant}_t - \lambda P_N^{(1)} {\mathcal Z}^{(0,3,N)}_t \right) \mbox{\textcircled{\scriptsize$\geqslant$}} {\mathcal Z}^{(2,N)}_t \right] \\
\displaystyle \quad - 9\lambda ^2 C_2^{(N)} P_N^{(1)}  \left( P_N^{(1)} X^{N,(2),<}_t + P_N^{(1)} X^{N,(2),\geqslant}_t + {\mathcal Z}_t^{(1,N)} - \lambda P_N^{(1)} {\mathcal Z}^{(0,3,N)}_t \right)
\end{array}\right. \end{equation}
with initial condition $(X^{N,(2),<}_0, X^{N,(2),\geqslant}_0) = (0,X^{N,(2)}_0)$.
Now, we change (\ref{PDEpara}) for another equivalent equation by using the calculus of paraproducts.
By denoting
\begin{align*}
\Psi _t^{(1)} (w) &:= \int _0^t e^{(t-s)(\triangle -m_0^2)} (P_N^{(1)} )^2  \left[ \left( w_s - \lambda P_N^{(1)} {\mathcal Z}^{(0,3,N)}_s \right) \mbox{\textcircled{\scriptsize$<$}} {\mathcal Z}^{(2,N)}_s \right] ds\\
&\qquad - \left( w_t - \lambda P_N^{(1)} {\mathcal Z}^{(0,3,N)}_t \right) \mbox{\textcircled{\scriptsize$<$}} \int _0^t e^{(t-s)(\triangle -m_0^2)} (P_N^{(1)} )^2 {\mathcal Z}^{(2,N)}_s ds , \\
\Psi _t^{(2)} (w) &:= \left[ \left( w_t - \lambda P_N^{(1)} {\mathcal Z}^{(0,3,N)}_t \right) \mbox{\textcircled{\scriptsize$<$}} \int _0^t e^{(t-s)(\triangle -m_0^2)} (P_N^{(1)} )^2 {\mathcal Z}^{(2,N)}_s ds \right] \mbox{\textcircled{\scriptsize$=$}} {\mathcal Z}^{(2,N)}_t\\
&\qquad - \left( w_t - \lambda P_N^{(1)} {\mathcal Z}^{(0,3,N)}_t \right) \left[ \int _0^t e^{(t-s)(\triangle -m_0^2)} (P_N^{(1)} )^2 {\mathcal Z}^{(2,N)}_s ds \mbox{\textcircled{\scriptsize$=$}} {\mathcal Z}^{(2,N)}_t \right] 
\end{align*}
for $w \in C([0,\infty ); L^\infty (\Lambda))$, we have
\begin{align*}
&(P_N^{(1)} X^{N,(2),<}_t) \mbox{\textcircled{\scriptsize$=$}} {\mathcal Z}^{(2,N)}_t\\
&= -3\lambda \left( P_N^{(1)} \int _0^t e^{(t-s)(\triangle -m_0^2)} P_N^{(1)} \left[ \left( P_N^{(1)} X^{N,(2)}_t - \lambda P_N^{(1)} {\mathcal Z}^{(0,3,N)}_s \right) \mbox{\textcircled{\scriptsize$<$}} {\mathcal Z}^{(2,N)}_s \right] ds \right) \mbox{\textcircled{\scriptsize$=$}} {\mathcal Z}^{(2,N)}_t\\
&= -3\lambda \left( P_N^{(1)} X^{N,(2)}_t - \lambda P_N^{(1)} {\mathcal Z}^{(0,3,N)}_t \right) \left[ \int _0^t e^{(t-s)(\triangle -m_0^2)} (P_N^{(1)} )^2 {\mathcal Z}^{(2,N)}_s ds \mbox{\textcircled{\scriptsize$=$}} {\mathcal Z}^{(2,N)}_t \right] \\
&\quad -3\lambda \Psi _t^{(1)} ( P_N^{(1)} X^{N,(2)} ) \mbox{\textcircled{\scriptsize$=$}} {\mathcal Z}^{(2,N)}_t -3\lambda \Psi _t^{(2)} ( P_N^{(1)} X^{N,(2)})
\end{align*}
for $t\in [0,\infty )$.
In view of this equality, (\ref{PDEpara}) is equivalent to
\begin{equation} \label{PDEpara2}\left\{ \begin{array}{l}
\displaystyle (\partial _t - \triangle + m_0^2) X^{N,(2),<}_t \\
\displaystyle = -3\lambda P_N^{(1)}  \left[ \left( P_N^{(1)} X^{N,(2),<}_t + P_N^{(1)} X^{N,(2),\geqslant}_t - \lambda P_N^{(1)} {\mathcal Z}^{(0,3,N)}_t \right) \mbox{\textcircled{\scriptsize$<$}} {\mathcal Z}^{(2,N)}_t \right] \\[4mm]
\displaystyle (\partial _t - \triangle + m_0^2) X^{N,(2),\geqslant}_t \\[1mm]
\displaystyle  = - \lambda P_N^{(1)} \left[ \left( P_N^{(1)} X^{N,(2),<}_t + P_N^{(1)} X^{N,(2),\geqslant}_t \right) ^3 \right] \\
\displaystyle \quad + \lambda P_N^{(1)} \Phi _t^{(1)}(P_N^{(1)} X^{N,(2),<} + P_N^{(1)} X^{N,(2),\geqslant}) \\
\displaystyle \quad + \lambda P_N^{(1)} \Phi _t^{(2)}(P_N^{(1)} X^{N,(2),<} + P_N^{(1)} X^{N,(2),\geqslant}) \\
\displaystyle \quad + \lambda P_N^{(1)}  \Phi _t ^{(3)} (P_N^{(1)} X^{N,(2),<} + P_N^{(1)} X^{N,(2),\geqslant}) - 3\lambda P_N^{(1)} \left[ ( P_N^{(1)} X^{N,(2),\geqslant}_t) \mbox{\textcircled{\scriptsize$=$}} {\mathcal Z}^{(2,N)}_t \right] \\[1mm]
\displaystyle \quad + 9\lambda ^2 P_N^{(1)} \left[ \Psi _t^{(1)} (P_N^{(1)} X^{N,(2),<} + P_N^{(1)} X^{N,(2),\geqslant} ) \mbox{\textcircled{\scriptsize$=$}} {\mathcal Z}^{(2,N)}_t \right] \\
\displaystyle \quad + 9\lambda ^2 P_N^{(1)} \Psi _t^{(2)} (P_N^{(1)} X^{N,(2),<} + P_N^{(1)} X^{N,(2),\geqslant})
\end{array}\right. \end{equation}
where for $w \in C([0,\infty ); L^\infty (\Lambda))$
\begin{align*}
\Phi _t^{(1)}(w)
&:= - 3 \left( {\mathcal Z}_t^{(1,N)} - \lambda P_N^{(1)} {\mathcal Z}^{(0,3,N)}_t\right) \mbox{\textcircled{\scriptsize$\leqslant$}} w_t^2 + 3 \lambda \left[ \left( 2{\mathcal Z}_t^{(1,N)} - \lambda P_N^{(1)} {\mathcal Z}^{(0,3,N)}_t \right) {\mathcal Z}^{(0,3,N)}_t \right] \mbox{\textcircled{\scriptsize$\leqslant$}} w_t ,\\
\Phi _t ^{(2)} (w)
&:= - 3 \left( w_t - \lambda P_N^{(1)} {\mathcal Z}^{(0,3,N)}_t \right) \mbox{\textcircled{\scriptsize$>$}} {\mathcal Z}^{(2,N)}_t + 3 \lambda {\mathcal Z}^{(2,3,N)}_t \\
%&\quad -3\lambda {\mathcal Z}^{(2,N)}_t\mbox{\textcircled{\scriptsize$=$}}\int _{-\infty }^0 e^{(t-s)(\triangle -m_0^2)} \left( P_N^{(1)}\right) ^2 {\mathcal Z}^{(3,N)}_s ds\\
&\quad + 9\lambda \left( w_t - \lambda P_N^{(1)} {\mathcal Z}^{(0,3,N)}_t \right) \\
&\quad \hspace{2cm} \times \left( {\mathcal Z}^{(2,2,N)}_t - {\mathcal Z}^{(2,N)}_t\mbox{\textcircled{\scriptsize$=$}}\int _{-\infty }^0 e^{(t-s)(\triangle -m_0^2)} \left( P_N^{(1)}\right) ^2 {\mathcal Z}^{(2,N)}_s ds\right) \\
&\quad - \lambda ^2 \left( 3{\mathcal Z}_t^{(1,N)} - \lambda P_N^{(1)} {\mathcal Z}^{(0,3,N)}_t \right) \left( P_N^{(1)} {\mathcal Z}^{(0,3,N)}_t \right) ^2, \\
\Phi _t ^{(3)} (w)
&:= -3 \left( {\mathcal Z}_t^{(1,N)} - \lambda P_N^{(1)} {\mathcal Z}^{(0,3,N)}_t\right) \mbox{\textcircled{\scriptsize$>$}} w_t^2 \\
&\quad \hspace{5cm} + 3 \lambda \left[ \left( 2{\mathcal Z}_t^{(1,N)} - \lambda P_N^{(1)} {\mathcal Z}^{(0,3,N)}_t \right) P_N^{(1)} {\mathcal Z}^{(0,3,N)}_t \right] \mbox{\textcircled{\scriptsize$>$}} w_t .
\end{align*}
For $\eta \in [0,1)$, $\gamma \in (0,1/4)$ and $\varepsilon \in (0,1]$ define ${\mathfrak X}_{\eta ,\gamma }^N (t)$ and  ${\mathfrak Y}_{\varepsilon}^N (t)$ by
\begin{align*}
{\mathfrak X}_{\lambda , \eta ,\gamma }^N (t) &:= \int _0^t \left( \left\| \nabla X_s^{N,(2),\geqslant}\right\| _{L^2}^2 + \left\| X_s^{N,(2)}\right\| _{L^2}^2 + \lambda \left\| P_N^{(1)} X_s^{N,(2)}\right\| _{L^4}^4 \right) ds \\
&\quad + \sup _{s',t' \in [0,t]; s'<t'} \frac{(s')^\eta \left\| X^{N,(2)}_{t'} - X^{N,(2)}_{s'} \right\| _{L^{4/3}}}{(t'-s')^{\gamma }}\\
{\mathfrak Y}_{\varepsilon}^N (t)&:= \int _0^t \left\| X^{N,(2),<}_s \right\| _{B_{4}^{1-(\varepsilon /12)}}^3 ds + \int _0^t \left\| X^{N,(2),\geqslant}_s \right\| _{B_{4/3}^{1+\varepsilon }} ds
\end{align*}
respectively.
We are going to estimate $E\left[ {\mathfrak X}_{\lambda , \eta ,\gamma}^N (T)\right]$ and $E\left[ {\mathfrak Y}_{\varepsilon}^N (T) ^q\right]$ for given $T \in (0,\infty )$ and $q\in (1,8/7)$.
To simplify the notation, we denote by $Q$ a positive polynomial built with the following quantities 
\begin{align*}
&\sup_{t\in [0,T]}\| {\mathcal Z}_t^{(1,N)}\| _{B_{\infty}^{-(1+\varepsilon )/2}}, \quad \sup_{t\in [0,T]}\| P_N^{(2)} Z_t \| _{B_{\infty}^{-(1+\varepsilon )/2}}, \quad \sup _{t\in [0,T]} \left\| {\mathcal Z}^{(2,N)}_t \right\| _{B_{\infty}^{-1-\varepsilon /24}},\\
&\sup _{t\in [0,T]}  \left\| {\mathcal Z}^{(2,2,N)}_t \right\| _{B_{\infty }^{-\varepsilon /4}}, \quad \sup _{t\in [0,T]} \left\| {\mathcal Z}^{(0,2,N)}_t \right\| _{B_{\infty }^{1-\varepsilon /2}}, \quad \sup _{t\in [0,T]} \left\| {\mathcal Z}^{(0,3,N)}_t \right\| _{B_{\infty }^{1/2-\varepsilon /4}}, \\
&\sup _{t\in [0,T]} \left\| {\mathcal Z}^{(2,3,N)}_t \right\| _{B_{\infty }^{-(1+\varepsilon )/2}}, \quad \sup _{t\in [0,T]} \left\| {\mathcal Z}_t^{(1,N)} \left( P_N^{(1)} {\mathcal Z}^{(0,3,N)}_t \right) \right\| _{B_{\infty }^{-(1+\varepsilon )/2}}, \\
&\sup _{t\in [0,T]} \left\| {\mathcal Z}_t^{(1,N)} \left( P_N^{(1)} {\mathcal Z}^{(0,3,N)}_t \right) ^2\right\| _{B_{\infty }^{-(1+\varepsilon )/2}} \quad \mbox{and} \quad \sup _{s,t \in [0,T]} \frac{\left\| {\mathcal Z}^{(0,3,N)}_t - {\mathcal Z}^{(0,3,N)}_s \right\| _{L^\infty }}{(t-s)^{\gamma }},
\end{align*}
with coefficients depending on $\lambda _0 $, $\varepsilon$, $\eta$, $\gamma$ and $T$, and we also denote by $C$ a positive constant depending on $\lambda _0$, $\varepsilon$, $\eta$, $\gamma$ and $T$.
A constant depending on an extra parameter $\delta$ is denoted by $C_\delta$.
We remark that $Q$, $C$ and $C_\delta$ can be different from line to line and that Proposition \ref{prop:Z} implies $E[Q] \leq C$ for some $C$.

\begin{lem}\label{lem:v3}
For $\varepsilon \in (0,1/4]$, $t\in [0,T]$, and $\delta \in (0,1]$, the following inequality holds for some positive $Q$ as above:
\[
\int _0^t \| X_s^{N,(2),<}\| _{B_4^{1-\varepsilon /12}} ^3 dt \leq \delta \left( \int _0^t \left\| P_N^{(1)} X^{N,(2)}_s \right\| _{L^4} ^4 ds \right) ^{7/8} + Q \delta ^{-6}
\]
almost surely.
\end{lem}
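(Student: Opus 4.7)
The plan is first to bound $\|X^{N,(2),<}_s\|_{B_4^{1-\varepsilon/12}}$ pointwise-in-time as a fractional time-convolution, and then to convert this via Hardy--Littlewood--Sobolev and a weighted Young's inequality into the claimed integrated estimate, where the exponents $7/8$ and $\delta^{-6}$ arise naturally.

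Starting from the explicit formula
\[
X^{N,(2),<}_s = -3\lambda \int_0^s e^{(s-u)(\triangle-m_0^2)} P_N^{(1)}\!\left[(P_N^{(1)}X^{N,(2)}_u - \lambda{\mathcal Z}^{(0,3,N)}_u)\mbox{\textcircled{\scriptsize$<$}}{\mathcal Z}^{(2,N)}_u\right]du,
\]
I would apply Proposition~\ref{prop:paraproduct}\ref{prop:paraproduct8} with $\alpha=1-\varepsilon/12$ and $\beta=1-\varepsilon/48$ (so $\alpha-2\beta=-1-\varepsilon/24$ and $\beta<1$), Proposition~\ref{prop:bddp} for the boundedness of $P_N^{(1)}$ on $B_4^{-1-\varepsilon/24}$, and the paraproduct estimate Proposition~\ref{prop:paraproduct}\ref{prop:paraproduct2} with $p_1=4$, $p_2=\infty$, $r=\infty$. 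Together with the uniform control $\|{\mathcal Z}^{(2,N)}_u\|_{B_\infty^{-1-\varepsilon/24}}\leq Q$ and the embedding $\|{\mathcal Z}^{(0,3,N)}_u\|_{L^4}\leq C\|{\mathcal Z}^{(0,3,N)}_u\|_{B_\infty^{1/2-\varepsilon/4}}\leq CQ$, this yields
\[
\|X^{N,(2),<}_s\|_{B_4^{1-\varepsilon/12}} \leq CQ\int_0^s (s-u)^{-(1-\varepsilon/48)}\,h(u)\,du,\qquad h(u) := \|P_N^{(1)}X^{N,(2)}_u\|_{L^4}+\lambda Q.
\]

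Since the singularity $1-\varepsilon/48$ is strictly subcritical, the Hardy--Littlewood--Sobolev inequality (with smoothing index $\varepsilon/48\in(0,1)$ and input exponent $p=4$) controls the time-convolution in $L^q([0,T])$ with $q:=48/(12-\varepsilon)>4$; combining this with H\"older in time on $[0,t]$ (using $q>3$) and the elementary bound $\|h\|_{L^4([0,t])}^4\leq C(A+tQ^4)$ with $A:=\int_0^t\|P_N^{(1)}X^{N,(2)}_u\|_{L^4}^4\,du$ gives
\[
\int_0^t \|X^{N,(2),<}_s\|_{B_4^{1-\varepsilon/12}}^3\,ds \leq CQ^3 A^{3/4}+CQ^6.
\]

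Finally, I apply the weighted Young's inequality $ab\leq\delta b^{p'}+C\delta^{-p/p'}a^p$ with conjugate exponents $p=7$, $p'=7/6$ (forced by $(A^{3/4})^{p'}=A^{7/8}$, so that $-p/p'=-6$), which converts $CQ^3\cdot A^{3/4}$ into $\delta A^{7/8}+CQ^{21}\delta^{-6}$; since $\delta\in(0,1]$ the residual $CQ^6$ is bounded by $CQ^6\delta^{-6}$, and bundling $CQ^{21}+CQ^6$ into a single positive polynomial $Q$ of the stochastic quantities in the list above yields the claim. The main technical point is the HLS step, where subcriticality of the kernel yields a small gain above $L^4$-integrability in time (just enough because $q>3$); the Young exponents are then uniquely pinned down by the target $A^{7/8}$, with the $\delta^{-6}$ emerging automatically.
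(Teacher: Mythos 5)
Your proof is correct, and the overall structure matches the paper's: start from the mild-form representation of $X^{N,(2),<}$ in (\ref{PDEpara2}), apply the heat-semigroup smoothing estimate (Proposition \ref{prop:paraproduct}\ref{prop:paraproduct8}) with $\alpha - 2\beta = -1-\varepsilon/24$, the boundedness of $P_N^{(1)}$ (Proposition \ref{prop:bddp}), and the paraproduct estimate (Proposition \ref{prop:paraproduct}\ref{prop:paraproduct2}) to obtain a Volterra convolution bound, then pass to an integrated-in-time estimate, and finish with H\"older and the weighted Young inequality (\ref{eq:lemLpestimates1-1}) which pins down the exponents $7/8$ and $\delta^{-6}$ exactly as you computed.

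The one step where you deviate from the paper is in handling the singular time-convolution: you invoke the Hardy--Littlewood--Sobolev inequality to get $L^q$ control with $q=48/(12-\varepsilon)>4$, and then apply H\"older to come down to $L^3$. The paper instead uses Young's convolution inequality in the form $\|K*h\|_{L^3([0,t])}\leq\|K\|_{L^1([0,t])}\|h\|_{L^3([0,t])}$, which is available because the kernel $(t-s)^{-1+\varepsilon/48}$ is already integrable on the compact interval $[0,T]$. This is more elementary and gives directly what is needed (only $L^3$-in-time control is required before the final H\"older/Young step). Your HLS route is valid and yields the same intermediate bound $CQ^3 A^{3/4}+CQ^6$ with $A=\int_0^t\|P_N^{(1)}X_s^{N,(2)}\|_{L^4}^4\,ds$, but it carries a bit more machinery than the finite-interval setting requires; the only thing it buys is a strictly stronger integrability index $q>4$ which is then immediately discarded by H\"older. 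Either way, the final weighted Young step with conjugate exponents $p=7$, $p'=7/6$ is the right computation, and the $\delta^{-6}$ loss in the remainder is forced just as you say.
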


\begin{proof}
By (\ref{PDEpara2}) and Propositions \ref{prop:paraproduct} and \ref{prop:bddp} we have for $t\in [0,T]$
\begin{align*}
\| X_t^{N,(2),<}\| _{B_4^{1-\varepsilon /12}}
&\leq C \int _0^t (t-s)^{-1+\varepsilon /48} \left\| \left( P_N^{(1)} X^{N,(2)}_s - \lambda P_N^{(1)} {\mathcal Z}^{(0,3,N)}_s \right) \mbox{\textcircled{\scriptsize$<$}} {\mathcal Z}_s^{(2,N)} \right\| _{B_4^{-1-\varepsilon /24}} ds \\
&\leq Q \int _0^t (t-s)^{-1+\varepsilon /48} \left\| P_N^{(1)} X^{N,(2)}_s \right\| _{L^4} ds + Q.
\end{align*}
Hence, by Young's inequality we obtain for $t\in [0,T]$
\[
\int _0^t \| X_s^{N,(2),<}\| _{B_4^{1-\varepsilon /12}} ^3 ds \leq Q \int _0^t \left\| P_N^{(1)} X^{N,(2)}_s \right\| _{L^4} ^3 ds + Q.
\]
This yields the desired inequality through H\"older's inequality.
\end{proof}

\begin{lem}\label{lem:com1}
For $\gamma \in (0,1/8)$, $\varepsilon \in (0, \gamma /2)$, $p\in [1,2]$, $t\in [0,T]$ and $\theta \in (0,1]$
\begin{align*}
&\left\| \Psi _t^{(1)} (P_N^{(1)} X^{N,(2)}) \right\| _{B_p^{1+2\varepsilon }} \\
&\leq Q \int _0^t (t-s)^{-21/32} \left\| P_N^{(1)} X^{N,(2)}_s \right\| _{B_{p}^{15/16}} ds\\
&\quad + Q \left( \sup _{s\in [0,t]} \frac{s^\eta \left\| P_N^{(1)} X^{N,(2)}_t - P_N^{(1)} X^{N,(2)}_s \right\| _{L^p} }{(t-s)^{\gamma }} \right)^{\theta}\\
&\quad \hspace{1cm} \times \left(\| P_N^{(1)} X^{N,(2)}_t \| _{L^p}^{1-\theta} + \int _0^t s^{-\eta \theta }(t-s)^{\theta \gamma - 1 -3\varepsilon /2} \left\| P_N^{(1)} X^{N,(2)}_s \right\| _{L^p} ^{1-\theta} ds \right) + Q
\end{align*}
almost surely.
\end{lem}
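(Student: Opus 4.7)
The plan is to rewrite $\Psi_t^{(1)}$ through a commutator trick. Set $\tilde w_s := P_N^{(1)} X^{N,(2)}_s - \lambda \mathcal{Z}_s^{(0,3,N)}$. Bringing the ``free'' term $\tilde w_t \mbox{\textcircled{\scriptsize$<$}} \int_0^t e^{(t-s)(\triangle - m_0^2)}(P_N^{(1)})^2\mathcal{Z}_s^{(2,N)}ds$ under the time integral defining $\Psi_t^{(1)}$, and then adding and subtracting $\tilde w_s \mbox{\textcircled{\scriptsize$<$}} [e^{(t-s)(\triangle-m_0^2)}(P_N^{(1)})^2 \mathcal{Z}^{(2,N)}_s]$ inside the integrand, one obtains $\Psi_t^{(1)}(P_N^{(1)} X^{N,(2)}) = I_t + J_t$, with
\begin{align*}
I_t &= \int_0^t \Bigl\{ e^{(t-s)(\triangle-m_0^2)}(P_N^{(1)})^2[\tilde w_s \mbox{\textcircled{\scriptsize$<$}} \mathcal{Z}_s^{(2,N)}] - \tilde w_s \mbox{\textcircled{\scriptsize$<$}} [e^{(t-s)(\triangle-m_0^2)}(P_N^{(1)})^2\mathcal{Z}_s^{(2,N)}]\Bigr\}\,ds,\\
J_t &= \int_0^t (\tilde w_s - \tilde w_t) \mbox{\textcircled{\scriptsize$<$}} [e^{(t-s)(\triangle-m_0^2)}(P_N^{(1)})^2\mathcal{Z}_s^{(2,N)}]\,ds.
\end{align*}

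For $I_t$, I would apply the commutator estimate Proposition~\ref{prop:com3} to the integrand with $\gamma=1+2\varepsilon$, $\beta=-1-\varepsilon/24$ (matching the regularity of $\mathcal{Z}^{(2,N)}$ supplied by Proposition~\ref{prop:Z}\ref{prop:Z2}), $p_1=p$, $p_2=\infty$, and $\alpha<1$ together with $\varepsilon_0>0$ satisfying $\alpha + \varepsilon_0 = 15/16$. Because $\mathcal{Z}^{(0,3,N)}$ has regularity only $1/2-\varepsilon/4$, the factor $\tilde w_s$ must be split into $w_s := P_N^{(1)} X^{N,(2)}_s$ and $-\lambda \mathcal{Z}_s^{(0,3,N)}$: the first piece yields the norm $\|w_s\|_{B_p^{15/16}}$, while the second is handled by a separate application of Proposition~\ref{prop:com3} with $\alpha+\varepsilon_0=1/2-\varepsilon/4$ and $p_1=\infty$, $p_2=p$. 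In both cases the resulting power $(t-s)^{(1+2\varepsilon-\alpha-\beta)/2}$ is nonnegative; since $t-s\leq T$, it may be majorized by a $T$-dependent constant times $(t-s)^{-21/32}$, the extra constant being absorbed into $Q$. This yields the first term of the lemma and a bounded contribution going into the final $+Q$.

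For $J_t$, I would control $\|\tilde w_s - \tilde w_t\|_{L^p}$ in two complementary ways: (a) the weighted time-H\"older estimate $(s^{-\eta}H_t + \lambda Q)(t-s)^\gamma$, where $H_t := \sup_{s'\leq t}\frac{(s')^\eta\|w_t-w_{s'}\|_{L^p}}{(t-s')^\gamma}$ and Proposition~\ref{prop:Z}\ref{prop:Z03} together with $L^\infty\subset L^p$ gives the $\mathcal{Z}^{(0,3,N)}$-contribution, and (b) the crude bound $\|\tilde w_s\|_{L^p}+\|\tilde w_t\|_{L^p}$. Taking the $\theta$-geometric mean and using $(a+b)^\theta\leq a^\theta+b^\theta$ produces factors $s^{-\eta\theta}(t-s)^{\theta\gamma}H_t^\theta$ (plus a $Q^\theta$-piece) multiplied by $\|w_s\|_{L^p}^{1-\theta}+\|w_t\|_{L^p}^{1-\theta}+Q^{1-\theta}$. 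For the paraproduct I would apply Proposition~\ref{prop:paraproduct}\ref{prop:paraproduct3} with a slightly negative first Besov index (using $L^p\hookrightarrow B_p^0$), then Proposition~\ref{prop:paraproduct}\ref{prop:paraproduct8} to smooth $e^{(t-s)(\triangle-m_0^2)}(P_N^{(1)})^2\mathcal{Z}^{(2,N)}_s$ up to regularity about $1+3\varepsilon-\varepsilon/24$ in $B_\infty$, at the price of the kernel $(t-s)^{-1-3\varepsilon/2}$. Multiplying through gives four cross-terms: the $H_t^\theta\cdot\|w_s\|_{L^p}^{1-\theta}$ one produces the integral term $\int_0^t s^{-\eta\theta}(t-s)^{\theta\gamma-1-3\varepsilon/2}\|w_s\|_{L^p}^{1-\theta}ds$; the $H_t^\theta\cdot\|w_t\|_{L^p}^{1-\theta}$ one comes out of the $s$-integral as the boundary term $\|P_N^{(1)} X^{N,(2)}_t\|_{L^p}^{1-\theta}$; and the two $Q^\theta$-pieces reduce, after bounding $\|w\|_{L^p}^{1-\theta}\leq 1+\|w\|_{L^p}$ and $\|w\|_{L^p}\leq\|w\|_{B_p^{15/16}}$ and again majorizing mild powers of $(t-s)$ by $(t-s)^{-21/32}$ modulo $T$-factors, to contributions absorbed into the first term of the lemma or into $+Q$.

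The principal obstacle is the careful bookkeeping to ensure that every residual term produced by the interpolation fits one of the three templates on the right-hand side of the lemma, rather than introducing new ones. The constraints $\gamma\in(0,1/8)$ and $\varepsilon\in(0,\gamma/2)$ are there precisely to guarantee convergence of integrals of the form $\int_0^t s^{-\eta\theta}(t-s)^{\theta\gamma-1-3\varepsilon/2}ds$ in the admissible range of $\theta$, and the constraint $\alpha<1$ in Proposition~\ref{prop:com3} forces the splitting of $\tilde w_s$ because $\mathcal{Z}^{(0,3,N)}$ does not attain regularity $15/16$.
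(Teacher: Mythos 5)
Your decomposition into $I_t$ (commutator at equal times) and $J_t$ (time-difference paraproduct), the further splitting of $\tilde w_s$ into $P_N^{(1)}X_s^{N,(2)}$ and $-\lambda\mathcal{Z}^{(0,3,N)}_s$ so that Proposition~\ref{prop:com3} can be applied piecewise, and the $\theta$-interpolation in $J_t$ between the weighted H\"older modulus and the crude $L^p$ bound, all coincide with the paper's own proof, so the route is essentially the same.

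One point, however, deserves correction. You assert that Proposition~\ref{prop:com3} yields a \emph{nonnegative} power $(t-s)^{(1+2\varepsilon-\alpha-\beta)/2}$, reading the exponent from the displayed statement of the proposition. Tracing the proposition's own proof, the last chain of inequalities actually produces $t^{(\alpha+\beta-\gamma)/2}$ (one pulls out $t^{(\alpha+\beta-\gamma)/2}$ and leaves the bounded factor $(t2^{2j})^{(\gamma-\alpha-\beta)/2}(1+(t2^{2j})^2)e^{-t2^{2j}/24}$); the displayed exponent in the statement has the sign reversed. Since $\gamma=1+2\varepsilon>\alpha+\beta$ in your application, the commutator term in $I_t$ really carries a \emph{singular} kernel of order $(t-s)^{-17/32-O(\varepsilon)}$ for the $P_N^{(1)}X^{N,(2)}_s$-piece and $(t-s)^{-3/4-O(\varepsilon)}$ for the $\mathcal{Z}^{(0,3,N)}_s$-piece, and the point of the condition $\varepsilon<\gamma/2<1/16$ is precisely that both singularities stay integrable and below the $-21/32$ threshold that appears in the lemma. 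Your majorization by $(t-s)^{-21/32}$ thus still produces the right bound, but for the opposite reason you gave — it is a genuine (integrable) blow-up, not a positive power being wastefully bounded. This sign also explains why the lemma's statement must carry the $(t-s)^{-21/32}$ kernel rather than a plain $\int_0^t\|\cdot\|\,ds$: the commutator contribution is singular on the diagonal $s=t$.
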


\begin{proof}
Let $s\in [0,t)$.
Then, Propositions \ref{prop:paraproduct}, \ref{prop:bddp} and \ref{prop:com3} imply that
\begin{align*}
&\left\| e^{(t-s)(\triangle - m_0)} (P_N^{(1)})^2 \left[ \left( P_N^{(1)} X^{N,(2)}_s - \lambda P_N^{(1)} {\mathcal Z}^{(0,3,N)}_s \right) \mbox{\textcircled{\scriptsize$<$}} {\mathcal Z}_s^{(2,N)} \right] \phantom{\left[ \left( X^{N,(2)}_t - \lambda P_N^{(1)} {\mathcal Z}^{(0,3,N)}_t \right) \right] }\right. \\
&\quad \hspace{2cm} \left. - \left( P_N^{(1)} X^{N,(2)}_t - \lambda P_N^{(1)} {\mathcal Z}^{(0,3,N)}_t \right) \mbox{\textcircled{\scriptsize$<$}} \left( e^{(t-s)(\triangle - m_0)}  (P_N^{(1)})^2 {\mathcal Z}_s^{(2,N)} \right) \right\| _{B_{p}^{1+2\varepsilon }} \\
&\leq \left\| e^{(t-s)(\triangle - m_0)}  (P_N^{(1)})^2 \left[P_N^{(1)} X^{N,(2)}_s \mbox{\textcircled{\scriptsize$<$}} {\mathcal Z}_s^{(2,N)} \right] \right. \\
&\quad \left. \hspace{3cm} - \left( P_N^{(1)} X^{N,(2)}_s \right) \mbox{\textcircled{\scriptsize$<$}} \left( e^{(t-s)(\triangle - m_0)}  (P_N^{(1)})^2 {\mathcal Z}_s^{(2,N)} \right) \right\| _{B_{p}^{1+2\varepsilon }} \\
&\quad + \lambda \left\| e^{(t-s)(\triangle - m_0)}  (P_N^{(1)})^2 \left[ \left( P_N {\mathcal Z}^{(0,3,N)}_s\right) \mbox{\textcircled{\scriptsize$<$}} {\mathcal Z}_s^{(2,N)} \right] \right. \\
&\quad \hspace{4cm} \left. - \left( P_N {\mathcal Z}^{(0,3,N)}_s \right) \mbox{\textcircled{\scriptsize$<$}} \left( e^{(t-s)(\triangle - m_0)}  (P_N^{(1)})^2 {\mathcal Z}_s^{(2,N)} \right) \right\| _{B_{p}^{1+2\varepsilon }} \\
&\quad + \left\| \left( P_N^{(1)} X^{N,(2)}_t - P_N^{(1)} X^{N,(2)}_s - \lambda P_N^{(1)}{\mathcal Z}^{(0,3,N)}_t + \lambda P_N^{(1)}{\mathcal Z}^{(0,3,N)}_s \right) \right. \\
&\quad \hspace{7cm} \left. \mbox{\textcircled{\scriptsize$<$}} \left( e^{(t-s)(\triangle - m_0)}  (P_N^{(1)})^2 {\mathcal Z}_s^{(2,N)} \right) \right\| _{B_{p}^{1+2\varepsilon }} \\
&\leq Q (t-s)^{-17/32-2\varepsilon } \left\| P_N^{(1)} X^{N,(2)}_s \right\| _{B_{p}^{15/16} } + Q(t-s)^{-3/4-3\varepsilon } \\
&\quad + Q (t-s)^{-1-3\varepsilon /2} \left( \left\| P_N^{(1)} X^{N,(2)}_t - P_N^{(1)} X^{N,(2)}_s \right\| _{L^p} + \left\| {\mathcal Z}^{(0,3,N)}_t - {\mathcal Z}^{(0,3,N)}_s \right\| _{L^p} \right).
\end{align*}
Hence, we have
\begin{align*}
& \left\| \int _0^t e^{(t-s)(\triangle - m_0)}  (P_N^{(1)})^2 \left[ \left( P_N^{(1)} X^{N,(2)}_s - \lambda P_N^{(1)} {\mathcal Z}^{(0,3,N)}_s \right) \mbox{\textcircled{\scriptsize$<$}} {\mathcal Z}_s^{(2,N)} \right] ds \right. \\
&\quad \hspace{2cm} \left. - \left( P_N^{(1)} X^{N,(2)}_t - \lambda P_N^{(1)} {\mathcal Z}^{(0,3,N)}_t \right) \mbox{\textcircled{\scriptsize$<$}} \int _0^{t} e^{(t-s)(\triangle - m_0)}  (P_N^{(1)})^2 {\mathcal Z}_s^{(2,N)} ds \right\| _{B_{p}^{1+2\varepsilon }}\\
&\leq \int _0^t \left\| e^{(t-s)(\triangle - m_0)}  (P_N^{(1)})^2 \left[ \left( P_N^{(1)} X^{N,(2)}_s - \lambda P_N^{(1)} {\mathcal Z}^{(0,3,N)}_s \right) \mbox{\textcircled{\scriptsize$<$}} {\mathcal Z}_s^{(2,N)} \right] \phantom{\left[ \left( X^{N,(2)}_t - {\mathcal Z}^{(0,3,N)}_t \right) \right] } \right. \\
&\quad \hspace{2cm} \left. - \left( P_N^{(1)} X^{N,(2)}_t - \lambda P_N^{(1)} {\mathcal Z}^{(0,3,N)}_t \right) \mbox{\textcircled{\scriptsize$<$}} \left( e^{(t-s)(\triangle - m_0)}  (P_N^{(1)})^2 {\mathcal Z}_s^{(2,N)} \right) \right\| _{B_{p}^{1+2\varepsilon }} ds \\
&\leq Q \int _0^t (t-s)^{-17/32-2\varepsilon } \left\| P_N^{(1)} X^{N,(2)}_s \right\| _{B_{p}^{15/16}} ds + Q\\
&\quad + Q \int _0^t s^{-\eta \theta}(t-s)^{\theta \gamma -1-3\varepsilon /2} \left( \left\| P_N^{(1)} X^{N,(2)}_t \right\| _{L^p} + \left\| P_N^{(1)} X^{N,(2)}_s \right\| _{L^p} \right) ^{1-\theta} \\
&\quad \hspace{6cm} \times \left( \frac{s^\eta \left\| P_N^{(1)} X^{N,(2)}_t - P_N^{(1)} X^{N,(2)}_s \right\| _{L^p} }{(t-s)^{\gamma}} \right)^{\theta} ds\\
&\quad + Q \int _0^t (t-s)^{-(1-\gamma )-3\varepsilon /2}\frac{\left\| {\mathcal Z}^{(0,3,N)}_t - {\mathcal Z}^{(0,3,N)}_s \right\| _{L^p}}{(t-s)^{\gamma }} ds\\
&\leq Q \int _0^t (t-s)^{-21/32} \left\| P_N^{(1)} X^{N,(2)}_s \right\| _{B_{p}^{15/16}} ds + Q\\
&\quad + Q \left( \sup _{s\in [0,t]} \frac{s^\eta \left\| P_N^{(1)} X^{N,(2)}_t - P_N^{(1)} X^{N,(2)}_s \right\| _{L^p} }{(t-s)^{\gamma }} \right)^{\theta}\\
&\quad \hspace{1cm} \times \left( C \| P_N^{(1)} X^{N,(2)}_t \| _{L^p}^{1-\theta} + \int _0^t s^{-\eta \theta}(t-s)^{\theta \gamma - 1 -3\varepsilon /2} \left\| P_N^{(1)} X^{N,(2)}_s \right\| _{L^p} ^{1-\theta} ds \right) .
\end{align*}
This proves the assertion in Lemma \ref{lem:com1}.
\end{proof}

\begin{lem}\label{lem:estcom}
For $\gamma \in (0,1/8)$, $\varepsilon \in (0,\gamma /2)$, $p\in [1,2]$, $t\in [0,T]$, $\theta \in (0,1]$ and $\delta \in (0,1]$
\begin{align*}
&\left\| \Psi _t^{(1)} (P_N^{(1)} X^{N,(2)}) \mbox{\textcircled{\scriptsize$=$}} {\mathcal Z}^{(2,N)}_t \right\| _{B_p^{\varepsilon}} \\
&\leq Q \int _0^t (t-s)^{-21/32} \left\| P_N^{(1)} X^{N,(2)}_s \right\| _{B_{p}^{15/16}} ds \\
&\quad + Q \left( \sup _{s\in [0,t]} \frac{s^\eta \left\| P_N^{(1)} X^{N,(2)}_t - P_N^{(1)} X^{N,(2)}_s \right\| _{L^p} }{(t-s)^{\gamma }} \right)^{\theta}\\
&\quad \hspace{0.5cm} \times \left(\| P_N^{(1)} X^{N,(2)}_t \| _{L^p}^{1-\theta} + \int _0^t s^{-\eta \theta} (t-s)^{\theta \gamma - 1 -3\varepsilon /2} \left\| P_N^{(1)} X^{N,(2)}_s \right\| _{L^p} ^{1-\theta} ds \right) + Q,\\
&\left\| \Psi _t^{(2)} (P_N^{(1)} X^{N,(2)}) \right\| _{B_{p}^{\varepsilon}} \leq \delta \left( \| P_N^{(1)} X^{N,(2)}_t\| _{L^4}^4 + \| X^{N,(2)}_t\| _{B_2^{15/16}}^2 \right) ^{7/8}  + (1+t^{-\varepsilon })Q\delta ^{-16/19}
\end{align*}
almost surely.
\end{lem}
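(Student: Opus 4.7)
The two inequalities are treated independently, each reducing to a ``tensor'' of tools already in hand.

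For the first estimate, the plan is to treat $\Psi^{(1)}_t(P_N^{(1)} X^{N,(2)})$ as the remainder of regularity $B_p^{1+2\varepsilon}$ furnished by Lemma \ref{lem:com1}, and then to contract it with ${\mathcal Z}^{(2,N)}_t$ via the resonant paraproduct estimate. Concretely, apply Proposition \ref{prop:paraproduct}\ref{prop:paraproduct4} to the pair $(\Psi^{(1)}_t, {\mathcal Z}^{(2,N)}_t)$ with indices $s_1 = 1+2\varepsilon$, $s_2 = -1-\varepsilon/24$, and integrability parameters $p_1 = p$, $p_2 = \infty$. The constraint $s_1+s_2 = 47\varepsilon/24 > 0$ is satisfied and the output sits in $B_p^{47\varepsilon/24} \hookrightarrow B_p^{\varepsilon}$. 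Since Proposition \ref{prop:Z}\ref{prop:Z2} gives $\|{\mathcal Z}^{(2,N)}_t\|_{B_\infty^{-1-\varepsilon/24}} \le Q$, the estimate reduces to Lemma \ref{lem:com1}, whose right-hand side already matches the one claimed here.

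For the second estimate, the key observation is that, by inspection of its definition,
\[
\Psi^{(2)}_t(P_N^{(1)} X^{N,(2)}) \;=\; (a \mbox{\textcircled{\scriptsize$<$}} b) \mbox{\textcircled{\scriptsize$=$}} c - a\,(b \mbox{\textcircled{\scriptsize$=$}} c),
\]
with
$a := P_N^{(1)} X^{N,(2)}_t - \lambda {\mathcal Z}^{(0,3,N)}_t$,
$b := \int_0^t e^{(t-s)(\triangle - m_0^2)}(P_N^{(1)})^2 {\mathcal Z}^{(2,N)}_s\,ds$,
$c := {\mathcal Z}^{(2,N)}_t$,
so Proposition \ref{prop:com2} applies directly. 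Choose $\alpha = 2\varepsilon$, $\beta = 1-\varepsilon/2$, $\gamma = -1-\varepsilon/24$, so that $\beta+\gamma = -13\varepsilon/24 < 0$ and $\alpha+\beta+\gamma = 35\varepsilon/24 > \varepsilon$, together with $p_1=p$ and $p_2=p_3=\infty$. The factor $\|c\|_{B_\infty^{\gamma}}$ is part of $Q$; the factor $\|b\|_{B_\infty^{\beta}} \le Q$ follows from Proposition \ref{prop:paraproduct}\ref{prop:paraproduct8} applied with exponent $1-11\varepsilon/48 < 1$ (integrable in time), combined with the exponential decay contributed by the mass term and the bound on ${\mathcal Z}^{(2,N)}$.

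It remains to control $\|a\|_{B_p^{2\varepsilon}}$. Since $\Lambda$ is compact and $p\le 2$, one has $B_2^{2\varepsilon} \hookrightarrow B_p^{2\varepsilon}$. Writing $a$ as the sum of $P_N^{(1)} X^{N,(2)}_t$ and $-\lambda {\mathcal Z}^{(0,3,N)}_t$, the second summand is in $B_\infty^{1/2-\varepsilon/4} \hookrightarrow B_2^{2\varepsilon}$ and is bounded by $Q$, while Lemma \ref{lem:Lpestimates1}\ref{lem:Lpestimates1-1} applied with $\theta = 2\varepsilon$ to $P_N^{(1)} X^{N,(2)}_t$ (and Proposition \ref{prop:bddp} to replace $\|P_N^{(1)} X^{N,(2)}_t\|_{B_2^{15/16}}$ by $C\|X^{N,(2)}_t\|_{B_2^{15/16}}$) yields precisely a bound of the form $\delta' (\|P_N^{(1)} X^{N,(2)}_t\|_{L^4}^4 + \|X^{N,(2)}_t\|_{B_2^{15/16}}^2)^{7/8} + C\delta'^{-16/19}$. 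Multiplying through by the $Q$-factor coming from $\|b\|_{B_\infty^\beta}\|c\|_{B_\infty^\gamma}$ and rescaling $\delta' \leftrightarrow \delta$ to absorb these $Q$'s (the leftover ends up in the coefficient of $\delta^{-16/19}$, which remains of the form $Q$) gives the announced estimate.

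The proof is essentially bookkeeping of indices; the only delicate points are (a) confirming that the index budget of Proposition \ref{prop:com2} fits with the regularity $-1-\varepsilon/24$ of ${\mathcal Z}^{(2,N)}$ (the margin is narrow but suffices), and (b) performing the rescaling of $\delta$ in Lemma \ref{lem:Lpestimates1}\ref{lem:Lpestimates1-1} so that the multiplicative stochastic polynomial $Q$ produced by the commutator step does not spoil the advertised scaling $\delta^{-16/19}$.
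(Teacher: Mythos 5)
Your proof takes essentially the same route as the paper: for the first estimate, contract $\Psi^{(1)}_t$ (placed in $B_p^{1+2\varepsilon}$) against ${\mathcal Z}^{(2,N)}_t$ via the resonant-paraproduct bound (Proposition~\ref{prop:paraproduct}\ref{prop:paraproduct4}) and invoke Lemma~\ref{lem:com1}; for the second, apply the commutator estimate Proposition~\ref{prop:com2} directly to the commutator structure of $\Psi^{(2)}_t$ and then control $\|P_N^{(1)}X_t^{N,(2)}-\lambda{\mathcal Z}^{(0,3,N)}_t\|_{B_p^\theta}$ with Lemma~\ref{lem:Lpestimates1}\ref{lem:Lpestimates1-1} plus Proposition~\ref{prop:bddp}. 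The only differences are cosmetic index choices (you use $\alpha=2\varepsilon$ where the paper uses $3\varepsilon$, and $B_\infty^{-1-\varepsilon/24}$ for ${\mathcal Z}^{(2,N)}$ where the paper writes $B_\infty^{-1-\varepsilon}$), all within the admissible ranges; the argument is correct.
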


\begin{proof}
By Proposition \ref{prop:paraproduct} we have, for a positive constant $C$,
\[
\left\| \Psi _t^{(1)} (P_N^{(1)} X^{N,(2)}) \mbox{\textcircled{\scriptsize$=$}} {\mathcal Z}^{(2,N)}_t \right\| _{B_{p}^{\varepsilon}} \leq C\left\| {\mathcal Z}^{(2,N)}_t \right\| _{B_{\infty}^{-1-\varepsilon}} \left\| \Psi _t^{(1)} (P_N^{(1)} X^{N,(2)})  \right\| _{B_{p}^{1+2\varepsilon}} .
\]
Hence, by applying Lemma \ref{lem:com1} we have 
\begin{align*}
&\left\| \Psi _t^{(1)} (P_N^{(1)} X^{N,(2)}) \mbox{\textcircled{\scriptsize$=$}} {\mathcal Z}^{(2,N)}_t \right\| _{B_p^{\varepsilon}} \\
&\leq Q \int _0^t (t-s)^{-21/32} \left\| P_N^{(1)} X^{N,(2)}_s \right\| _{B_{p}^{15/16}} ds \\
&\quad + C \left( \sup _{s\in [0,t]} \left\| {\mathcal Z}_s^{(2,N)} \right\| _{B_{\infty}^{-1-\varepsilon }}\right) ^2 \left( \sup _{s\in [0,t]} \frac{s^\eta \left\| P_N^{(1)} X^{N,(2)}_t - P_N^{(1)} X^{N,(2)}_s \right\| _{L^p} }{(t-s)^{\gamma }} \right)^{\theta}\\
&\quad \hspace{1cm} \times \left(\| P_N^{(1)} X^{N,(2)}_t \| _{L^p}^{1-\theta} + \int _0^t s^{- \eta \theta} (t-s)^{\theta \gamma - 1 -3\varepsilon /2} \left\| P_N^{(1)} X^{N,(2)}_s \right\| _{L^p} ^{1-\theta} ds \right) + Q
\end{align*}
almost surely.
Thus, the first estimate is proven.

By Proposition \ref{prop:com2} we have
\begin{align*}
&\left\| \Psi _t^{(2)} (P_N^{(1)} X^{N,(2)})  \right\| _{B_{p}^{\varepsilon}} \\
&\leq C \left\| {\mathcal Z}_t^{(2,N)} \right\| _{B_{\infty}^{-1-\varepsilon}} \left\| \int _0^t e^{(t-s)(\triangle -m_0^2)} \left( P_N^{(1)}\right) ^2 {\mathcal Z}_s^{(2,N)} ds \right\| _{B_{\infty}^{1-\varepsilon}} \\
&\quad \hspace{5cm} \times \left\| P_N^{(1)} X^{N,(2)}_t - \lambda P_N^{(1)} {\mathcal Z}^{(0,3,N)}_t \right\| _{B_p^{3\varepsilon }} \\
&\leq Q \left\| {\mathcal Z}_t^{(0,2,N)} - {\mathcal Z}_0^{(0,2,N)} \right\| _{B_{\infty}^{1-\varepsilon}} \left\| P_N^{(1)} X^{N,(2)}_t - \lambda P_N^{(1)} {\mathcal Z}^{(0,3,N)}_t \right\| _{B_p^{3\varepsilon }}.
\end{align*}
Hence, by Proposition \ref{prop:paraproduct} and Lemma \ref{lem:Lpestimates1} we also have proven the second estimate.
\end{proof}

\begin{lem}\label{lem:Phi1}
For $\varepsilon \in (0,1/16)$, $t\in [0,T]$ and $\delta \in (0,1]$, the following bound holds almost surely:
\[
\left\| \Phi _t ^{(1)} (P_N^{(1)} X^{N,(2)}) \right\| _{L^{4/3}} \leq \delta \left( \| P_N^{(1)} X^{N,(2)}_t\| _{L^4}^4 + \| P_N^{(1)} X^{N,(2)}_t\| _{B_2^{15/16}}^2 \right) ^{7/8} + \delta ^{-26/9}Q .
\]
\end{lem}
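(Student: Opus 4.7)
Set $w_t := P_N^{(1)} X_t^{N,(2)}$ and split
\[
\Phi_t^{(1)}(w) = A_t + B_t,
\]
where $A_t := -3 \bigl( \mathcal{Z}_t^{(1,N)} - \lambda \mathcal{Z}_t^{(0,3,N)} \bigr) \mbox{\textcircled{\scriptsize$\leqslant$}} w_t^2$ (the piece quadratic in $w$) and $B_t := 3\lambda \bigl[ \bigl( 2 \mathcal{Z}_t^{(1,N)} - \lambda \mathcal{Z}_t^{(0,3,N)}\bigr) \mathcal{Z}_t^{(0,3,N)} \bigr] \mbox{\textcircled{\scriptsize$\leqslant$}} w_t$ (the piece linear in $w$). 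For any $\delta_0 > 0$ the embedding $B_{4/3,\infty}^{\delta_0} \hookrightarrow B_{4/3,1}^{0} \hookrightarrow L^{4/3}$ reduces the task to bounding each of $A_t$ and $B_t$ in $B_{4/3,\infty}^{\delta_0}$; the hypothesis $\varepsilon < 1/16$ will be used precisely to allow a single small $\delta_0$ for which every Besov exponent below stays strictly under the critical threshold $9/16$ required by Lemma~\ref{lem:Lpestimates1}.

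For $A_t$, I expand $\mbox{\textcircled{\scriptsize$\leqslant$}} = \mbox{\textcircled{\scriptsize$<$}} + \mbox{\textcircled{\scriptsize$=$}}$ and apply Proposition~\ref{prop:paraproduct}(iii) (for the $\mbox{\textcircled{\scriptsize$<$}}$ part) and (iv) (for the $\mbox{\textcircled{\scriptsize$=$}}$ part), with $p_1 = \infty$, $p_2 = 4/3$. The $\mathcal{Z}^{(1,N)}$-contribution is the dangerous one: using $\|\mathcal{Z}_t^{(1,N)}\|_{B_\infty^{-1/2-\varepsilon}} \le Q$ from Proposition~\ref{prop:Z}, it forces the exponent $s_2 = 1/2 + \varepsilon + \delta_0$ on $w_t^2$. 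The $\mathcal{Z}^{(0,3,N)}$-contribution is easier thanks to its positive regularity $1/2-\varepsilon$ and is handled identically (or by its $L^\infty$ bound via the embedding $B_\infty^{1/2-\varepsilon} \hookrightarrow L^\infty$). Altogether
\[
\|A_t\|_{B_{4/3}^{\delta_0}} \le C\, Q\, \|w_t^2\|_{B_{4/3}^{1/2 + \varepsilon + \delta_0}}.
\]
Since $\varepsilon < 1/16$, choose $\delta_0 \in (0, 1/16 - \varepsilon)$ so that $1/2 + \varepsilon + \delta_0 < 9/16$; Lemma~\ref{lem:Lpestimates1}(ii) then bounds the right-hand side in the desired form.

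For $B_t$, I first observe that the coefficient $F_t := (2\mathcal{Z}_t^{(1,N)} - \lambda \mathcal{Z}_t^{(0,3,N)}) \mathcal{Z}_t^{(0,3,N)}$ lies in $B_\infty^{-(1+\varepsilon)/2}$ with norm bounded by $Q$: the mixed product $\mathcal{Z}_t^{(1,N)} \mathcal{Z}_t^{(0,3,N)}$ is explicitly one of the ingredients of $Q$, and the square $(\mathcal{Z}_t^{(0,3,N)})^2$ lies in $B_\infty^{1/2-\varepsilon} \subset B_\infty^{-(1+\varepsilon)/2}$ by Proposition~\ref{prop:paraproduct}(ii) and (iv) together with Proposition~\ref{prop:Z}(iv). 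Proposition~\ref{prop:paraproduct}(iii), (iv) then give
\[
\|B_t\|_{B_{4/3}^{\delta_0}} \le C\, Q\, \|w_t\|_{B_{4/3}^{(1+\varepsilon)/2 + \delta_0}} \le C\, Q\, \|w_t\|_{B_{2}^{(1+\varepsilon)/2 + \delta_0}},
\]
the last step using the embedding $B_2^s \hookrightarrow B_{4/3}^s$ that comes from $L^2 \hookrightarrow L^{4/3}$ on the bounded torus (applied blockwise to the Littlewood--Paley pieces). The same $\delta_0 < 1/16 - \varepsilon$ ensures $(1+\varepsilon)/2 + \delta_0 < 9/16$, so Lemma~\ref{lem:Lpestimates1}(i) controls $\|w_t\|_{B_2^{(1+\varepsilon)/2+\delta_0}}$.

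Finally, in each estimate I invoke the corresponding case of Lemma~\ref{lem:Lpestimates1} with auxiliary parameter $\tilde\delta := \delta/Q$ (upon replacing $Q$ by $Q \vee 1$, which is again a polynomial of the listed $\mathcal{Z}$-norms). This cancels the factor $Q$ in front of the main $(\|w\|_{L^4}^4 + \|w\|_{B_2^{15/16}}^2)^{7/8}$ term and produces a remainder of the form $C Q^{\kappa_1} \delta^{-\kappa_2}$ with $\kappa_2 \in \{16/19,\, 26/9\}$. Since $16/19 \le 26/9$ and $\delta \in (0,1]$, the exponent $26/9$ dominates and, after renaming $Q$, yields the stated bound. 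The main obstacle is the joint regularity bookkeeping: securing one $\delta_0$ that satisfies the two constraints $1/2 + \varepsilon + \delta_0 < 9/16$ and $(1+\varepsilon)/2 + \delta_0 < 9/16$ simultaneously---which is exactly the role of the quantitative hypothesis $\varepsilon < 1/16$---and the clean $\tilde\delta = \delta/Q$ rescaling that decouples the coefficient of the energy term from the stochastic data.
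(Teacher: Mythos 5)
Your proof is correct and follows essentially the same route as the paper: split $\Phi_t^{(1)}$ into the piece quadratic in $w$ and the piece linear in $w$, apply the paraproduct estimates of Proposition~\ref{prop:paraproduct} with $p_1=\infty$ to land on $\|w_t^2\|_{B_{4/3}^{\theta}}$ and $\|w_t\|_{B_{4/3}^{\theta}}$ with $\theta < 9/16$, and then invoke Lemma~\ref{lem:Lpestimates1}(i)--(ii) with the auxiliary $\tilde\delta = \delta/Q$ rescaling. The only cosmetic difference is that you pass explicitly through the embedding chain $B_{4/3,\infty}^{\delta_0}\hookrightarrow B_{4/3,1}^0\hookrightarrow L^{4/3}$ with a separately chosen $\delta_0$, whereas the paper uses the $\varepsilon/2$ slack directly in the $L^{4/3}$ bound; the underlying mechanism (and the role of $\varepsilon<1/16$ in keeping the exponent below $9/16$) is the same.
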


\begin{proof}
Proposition \ref{prop:paraproduct} implies
\begin{align*}
&\left\| \Phi _t ^{(1)} (P_N^{(1)} X^{N,(2)}) \right\| _{L^{4/3}} \\
&\leq C \left( \left\| {\mathcal Z}_t^{(1,N)} \right\|  _{B_{\infty}^{-1/2-\varepsilon /2}} + \left\| {\mathcal Z}^{(0,3,N)}_t \right\| _{B_{\infty}^{-1/2-\varepsilon/2}} \right) \left\| (P_N^{(1)} X^{N,(2)}_t ) ^2\right\| _{B_{4/3}^{1/2+\varepsilon}} \\
&\quad + C \left( \left\| {\mathcal Z}_t^{(1,N)} {\mathcal Z}^{(0,3,N)}_t \right\| _{B_{\infty}^{-1/2-\varepsilon/2}} + \left\| {\mathcal Z}^{(0,3,N)}_t\right\| _{L^{\infty }} ^2 \right) \left\| P_N^{(1)} X^{N,(2)}_t \right\| _{B_{4/3}^{1/2+\varepsilon}} .
\end{align*}
Hence, by Lemma \ref{lem:Lpestimates1} we have the assertion.
\end{proof}

\begin{lem}\label{lem:Phi1-2}
For $t\in [0,T]$ and $\delta \in (0,1]$, the following bound holds almost surely:
\[
\left| \int _\Lambda (P_N^{(1)} X_t^{N,(2)}) \Phi _t ^{(1)} (P_N^{(1)} X^{N,(2)}) dx \right| \leq \delta \left( \| P_N^{(1)} X^{N,(2)}_t\| _{L^4}^4 + \| P_N^{(1)} X^{N,(2)}_t\| _{B_2^{15/16}}^2 \right) + C_\delta Q .
\]
\end{lem}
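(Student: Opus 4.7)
Set $u := P_N^{(1)} X_t^{N,(2)}$ and $A := \|u\|_{L^4}^4 + \|u\|_{B_2^{15/16}}^2$. Following the definition of $\Phi_t^{(1)}$ I would split
\[
\Phi_t^{(1)}(P_N^{(1)} X^{N,(2)}) = H_1 \mbox{\textcircled{\scriptsize$\leqslant$}} u^2 + H_2 \mbox{\textcircled{\scriptsize$\leqslant$}} u ,
\]
where $H_1 := -3({\mathcal Z}_t^{(1,N)} - \lambda {\mathcal Z}_t^{(0,3,N)})$ and $H_2 := 3\lambda(2{\mathcal Z}_t^{(1,N)} - \lambda {\mathcal Z}_t^{(0,3,N)}) {\mathcal Z}_t^{(0,3,N)}$. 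By items \ref{prop:Z1}, \ref{prop:Z03} and \ref{prop:Z13} of Proposition \ref{prop:Z}, the factors $H_1, H_2$ are bounded in $B_\infty^{-(1+\varepsilon)/2}$ by some $Q$ almost surely (the renormalization ensuring that the product in $H_2$ is a legitimate element of this Besov space).

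The idea is to exploit the fact that $u$ appears as an integration variable by applying the duality in Proposition \ref{prop:paraproduct}\ref{prop:paraproduct7}, together with the trivial embedding $L^4 \hookrightarrow B_{4,1}^{-\varepsilon/2}$ (via Corollary 2.96 in \cite{BCD}), to deduce, for each $i$ and with $v_1 := u^2$, $v_2 := u$,
\[
\left| \int _\Lambda u \, (H_i \mbox{\textcircled{\scriptsize$\leqslant$}} v_i)\, dx \right|
\leq C \|u\|_{L^4}\, \|H_i \mbox{\textcircled{\scriptsize$\leqslant$}} v_i\|_{B_{4/3}^{\varepsilon/2}} .
\]
The paraproduct norm is then estimated via Proposition \ref{prop:paraproduct}\ref{prop:paraproduct3} (for $\mbox{\textcircled{\scriptsize$<$}}$, using $t=-1/2-\varepsilon/2<0$) and \ref{prop:paraproduct4} (for $\mbox{\textcircled{\scriptsize$=$}}$, since $-1/2-\varepsilon/2+1/2+\varepsilon = \varepsilon/2>0$), yielding $\|H_i \mbox{\textcircled{\scriptsize$\leqslant$}} v_i\|_{B_{4/3}^{\varepsilon/2}} \leq CQ \|v_i\|_{B_{4/3}^{1/2+\varepsilon}}$.

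Next I would control $\|v_i\|_{B_{4/3}^{1/2+\varepsilon}}$ in terms of $A$ via sharp interpolation. For $v_1 = u^2$ Proposition \ref{prop:paraproduct}\ref{prop:paraproduct5} (with $p_1=4$, $p_2=2$) gives $\|u^2\|_{B_{4/3}^{1/2+\varepsilon}} \leq C\|u\|_{L^4}\|u\|_{B_2^{1/2+2\varepsilon}}$; for $v_2=u$ the Besov embedding $B_2^{15/16} \hookrightarrow B_{4/3}^{15/16}$ (from Proposition \ref{prop:paraproduct}\ref{prop:paraproduct1}) together with interpolation between $L^{4/3}$ and $B_{4/3}^{15/16}$ (Proposition \ref{prop:paraproduct}\ref{prop:paraproduct6}) gives the analogous bound. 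A further interpolation between $L^2$ (controlled by $\|u\|_{L^4}$) and $B_2^{15/16}$ at height $1/2+2\varepsilon$ yields, for $\varepsilon$ small, $\|u\|_{B_2^{1/2+2\varepsilon}} \leq CA^{23/60}$, so altogether
\[
\|u^2\|_{B_{4/3}^{1/2+\varepsilon}} \leq C A^{19/30}, \qquad \|u\|_{B_{4/3}^{1/2+\varepsilon}} \leq C A^{23/60}.
\]
Combined with the factor $\|u\|_{L^4} \leq A^{1/4}$ extracted by duality, both contributions are dominated by $CQ A^{53/60}$. Since $53/60<1$, Young's inequality (inequality (\ref{eq:lemLpestimates1-1})) converts $CQA^{53/60}$ into $\delta A + C_\delta Q^{60/7}$, which is of the required form once polynomial factors of $Q$ are absorbed into a new $Q$.

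The main obstacle to anticipate is that the naive chain $|\int u\,\Phi_t^{(1)}dx| \leq \|u\|_{L^4}\|\Phi_t^{(1)}\|_{L^{4/3}}$ applied with Lemma \ref{lem:Phi1} only produces $\delta A^{9/8}$, too weak for the linear-in-$A$ right-hand side required for the subsequent energy estimate. The essential trick is therefore to exploit the paraproduct structure of $\Phi_t^{(1)}$ and use $u$ as a \emph{test function} via the duality of Proposition \ref{prop:paraproduct}\ref{prop:paraproduct7}, paying only one factor of $\|u\|_{L^4} = A^{1/4}$ and leaving the remaining $u$'s inside a sharper interpolation estimate with exponent strictly below $3/4$.
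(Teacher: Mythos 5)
Your proof is essentially correct, but it takes a genuinely different route from the paper's, and the diagnosis of ``the main obstacle'' is based on a misreading of how Lemma~\ref{lem:Phi1} is intended to be used.

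The paper's own argument is in fact the ``naive chain'' you dismiss: it applies H\"older to get $\|u\|_{L^4}\,\|\Phi_t^{(1)}\|_{L^{4/3}}$ and then invokes Lemma~\ref{lem:Phi1} \emph{with a state-dependent choice of} $\delta$, namely $\delta_0 := \min\{\|u\|_{L^4}^{-1},1\}$. Since the inequality in Lemma~\ref{lem:Phi1} holds pointwise for every $\delta\in(0,1]$, this substitution is legitimate and yields
\[
\|u\|_{L^4}\|\Phi_t^{(1)}\|_{L^{4/3}} \leq \|u\|_{L^4}\delta_0 A^{7/8} + \|u\|_{L^4}\delta_0^{-26/9}Q \leq A^{7/8} + Q\,(1+\|u\|_{L^4}^{35/9}),
\]
with $A := \|u\|_{L^4}^4 + \|u\|_{B_2^{15/16}}^2$. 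Both $7/8<1$ and $35/9<4$, so a single application of Young's inequality (\ref{eq:lemLpestimates1-1}) finishes the proof. Your remark that a \emph{fixed} $\delta$ only yields $\delta A^{9/8}$ is correct, but the paper never fixes $\delta$; making $\delta$ depend on the (random) norm $\|u\|_{L^4}$ is precisely what collapses the exponent from $9/8$ to $7/8$.

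Your alternative avoids this trick entirely: you split $\Phi_t^{(1)}$ into its two paraproduct pieces, pay $\|u\|_{L^4}$ only once via the duality of Proposition~\ref{prop:paraproduct}\ref{prop:paraproduct7}, and estimate the remaining factor in $B_{4/3}^{\varepsilon/2}$ by combining Proposition~\ref{prop:paraproduct}\ref{prop:paraproduct3}, \ref{prop:paraproduct4} and \ref{prop:paraproduct5} with interpolation. The resulting exponent $53/60<1$ closes by Young, so the argument works. A couple of minor points: the embedding $B_2^{15/16}\hookrightarrow B_{4/3}^{15/16}$ you invoke is not an instance of Proposition~\ref{prop:paraproduct}\ref{prop:paraproduct1} (which requires $p_1\le p_2$); it follows from the elementary embedding stated just after the Besov space definition (valid because $\Lambda$ is bounded). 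Also, the interpolation step you describe ``between $L^{4/3}$ and $B_{4/3}^{15/16}$'' is more cleanly done in $B_2$ (as in the proof of Lemma~\ref{lem:Lpestimates1}) before descending to $B_{4/3}$. With these fixes your route is sound and perhaps more transparent about \emph{why} one factor of $\|u\|_{L^4}$ suffices, but it is considerably longer than the paper's three-line proof, and it does not expose the state-dependent $\delta$ device which the paper reuses systematically (e.g., in Lemma~\ref{lem:Phi3} and Proposition~\ref{prop:global2-2}); spotting that device is worth internalizing.
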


\begin{proof}
H\"older's inequality implies
\[
\left\| (P_N^{(1)} X_t^{N,(2)}) \Phi _t ^{(1)} (P_N^{(1)} X^{N,(2)}) \right\| _{L^1} \leq \left\| P_N^{(1)} X_t^{N,(2)}\right\| _{L^4} \left\| \Phi _t ^{(1)} (P_N^{(1)} X^{N,(2)}) \right\| _{L^{4/3}}
\]
Applying Lemma \ref{lem:Phi1} with replacing $\delta$ by 
\[
\min \left\{ \left\| P_N^{(1)} X_t^{N,(2)}\right\| _{L^4}^{-1}, 1\right\} ,
\]
we obtain
\begin{align*}
& \left\| (P_N^{(1)} X_t^{N,(2)}) \Phi _t ^{(1)} (P_N^{(1)} X^{N,(2)}) \right\| _{L^1} \\
& \leq \left( \| P_N^{(1)}  X^{N,(2)}_t\| _{L^4}^4 + \|P_N^{(1)}  X^{N,(2)}_t\| _{B_2^{15/16}}^2 \right) ^{7/8} + Q \left\|P_N^{(1)}  X_t^{N,(2)}\right\| _{L^4}^{26/9}
\end{align*}
almost surely.
This inequality and (\ref{eq:lemLpestimates1-1}) imply the assertion.
\end{proof}

\begin{lem}\label{lem:Phi2}
For $p\in [1,2]$, $\varepsilon \in (0,1/16)$, $t\in ( 0,T]$ and $\delta \in (0,1]$, the following bounds hold almost surely:
\begin{align*}
&\left\| \Phi _t ^{(2)} (P_N^{(1)} X^{N,(2)}) \right\| _{B_{p}^{-1/2-\varepsilon}} \\
&\leq \delta \left( \| P_N^{(1)} X^{N,(2)}_t\| _{L^4}^4 + \| P_N^{(1)} X^{N,(2)}_t\| _{B_2^{15/16}}^2 \right) ^{7/8} + \delta ^{-16/19} t^{-2\varepsilon} Q, \\
&\left| \int _\Lambda (P_N^{(1)} X_t^{N,(2)}) \Phi _t ^{(2)} (P_N^{(1)} X^{N,(2)}) dx \right| \\
&\leq \delta \left( \| P_N^{(1)} X^{N,(2)}_t\| _{L^4}^4 + \| P_N^{(1)} X^{N,(2)}_t\| _{B_2^{15/16}}^2 \right) ^{7/8}+ C_\delta t^{-12\varepsilon} Q,
\end{align*}
for a positive constant $C_\delta$ and a positive polynomial $Q$.
\end{lem}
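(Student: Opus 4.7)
The plan is to decompose $\Phi_t^{(2)}(P_N^{(1)}X^{N,(2)})$ into its five additive constituents and estimate each using the paraproduct calculus (Proposition \ref{prop:paraproduct}), the stochastic bounds (Proposition \ref{prop:Z}), and the interpolation Lemma \ref{lem:Lpestimates1}. For the first term, $-3(P_N^{(1)}X^{N,(2)}_t-\lambda{\mathcal Z}^{(0,3,N)}_t)\mbox{\textcircled{\scriptsize$>$}}{\mathcal Z}^{(2,N)}_t$, I rewrite the paraproduct as ${\mathcal Z}^{(2,N)}_t\mbox{\textcircled{\scriptsize$<$}}(\cdots)$ and apply Proposition \ref{prop:paraproduct}\ref{prop:paraproduct3} with the regularity $-1-\varepsilon/24$ of ${\mathcal Z}^{(2,N)}$ paired against a regularity slightly above $1/2-\varepsilon$ for the other factor, so that the output lies in $B^{-1/2-\varepsilon}_p$; the $B^\theta_2$-norm with $\theta<9/16$ of $P_N^{(1)}X^{N,(2)}_t$ is then estimated by Lemma \ref{lem:Lpestimates1}\ref{lem:Lpestimates1-1}, yielding the $\delta(\cdots)^{7/8}+\delta^{-16/19}Q$ structure, and the ${\mathcal Z}^{(0,3,N)}_t$-piece is absorbed into $Q$. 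The terms $3\lambda{\mathcal Z}^{(2,3,N)}_t$ and $-\lambda^2(3{\mathcal Z}^{(1,N)}_t-{\mathcal Z}^{(0,3,N)}_t)({\mathcal Z}^{(0,3,N)}_t)^2$ are purely stochastic and are directly bounded by $Q$ via Proposition \ref{prop:Z}\ref{prop:Z23} and Proposition \ref{prop:Z}\ref{prop:Z133}, together with the embedding $B^{-1/2-\varepsilon/2}_\infty\hookrightarrow B^{-1/2-\varepsilon}_p$.

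The critical ingredient is the third term $-3\lambda{\mathcal Z}^{(2,N)}_t\mbox{\textcircled{\scriptsize$=$}}\int_{-\infty}^{0}e^{(t-s)(\triangle-m_0^2)}P_N^{(1)}{\mathcal Z}^{(3,N)}_s\,ds$. The resonant product between ${\mathcal Z}^{(2,N)}_t\in B^{-1-\varepsilon/24}_\infty$ and an object of regularity only $1/2-\varepsilon/4$ is not covered by Proposition \ref{prop:paraproduct}\ref{prop:paraproduct4} (the regularity sum is negative). The key observation is the semigroup identity
\[
\int_{-\infty}^{0}e^{(t-s)(\triangle-m_0^2)}P_N^{(1)}{\mathcal Z}^{(3,N)}_s\,ds = e^{t(\triangle-m_0^2)}{\mathcal Z}^{(0,3,N)}_0.
\]
Proposition \ref{prop:paraproduct}\ref{prop:paraproduct8} with $\beta\in(1/4,1/4+\varepsilon)$ lifts ${\mathcal Z}^{(0,3,N)}_0\in B^{1/2-\varepsilon/4}_\infty$ to $B^{1+\varepsilon/12}_\infty$ at the cost of $t^{-1/4-\varepsilon}$, after which Proposition \ref{prop:paraproduct}\ref{prop:paraproduct4} applies (the exponent sum is now strictly positive) and produces the announced $t^{-1/4-\varepsilon}Q$ contribution. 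The fourth term $9\lambda(w_t-\lambda{\mathcal Z}^{(0,3,N)}_t)(\cdots)$ is handled analogously: the identity $\int_{-\infty}^{0}e^{(t-s)(\triangle-m_0^2)}P_N^{(1)}{\mathcal Z}^{(2,N)}_s\,ds = e^{t(\triangle-m_0^2)}{\mathcal Z}^{(0,2,N)}_0$ combined with the renormalized ${\mathcal Z}^{(2,2,N)}_t$ from Proposition \ref{prop:Z}\ref{prop:Z22} yields, after a similar smoothing-for-time-singularity trade, a $(1+t^{-12\varepsilon})Q$ bound for the bracket in $B^{-\varepsilon/4}_\infty$; the remaining product with $w_t-\lambda{\mathcal Z}^{(0,3,N)}_t$ is split into the three paraproducts $\mbox{\textcircled{\scriptsize$<$}}$, $\mbox{\textcircled{\scriptsize$=$}}$, $\mbox{\textcircled{\scriptsize$>$}}$ and bounded via Proposition \ref{prop:paraproduct}\ref{prop:paraproduct2}--\ref{prop:paraproduct4} together with Lemma \ref{lem:Lpestimates1}.

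For the second assertion, I pair the five terms with $P_N^{(1)}X^{N,(2)}_t$ and estimate the resulting integrals by the duality bound Proposition \ref{prop:paraproduct}\ref{prop:paraproduct7}, i.e.\ $|\int fg\,dx|\leq C\|f\|_{B^{1/2+\varepsilon}_2}\|g\|_{B^{-1/2-\varepsilon}_2}$ with the Hölder conjugates $p_1=p_2=2$. The $B^{1/2+\varepsilon}_2$-norm of $P_N^{(1)}X^{N,(2)}_t$ is controlled by Lemma \ref{lem:Lpestimates1}\ref{lem:Lpestimates1-1}, and the other factor by the first assertion (taken with $p=2$). To avoid a forbidden $A^{7/4}$ power of the energy $A:=\|P_N^{(1)}X^{N,(2)}_t\|_{L^4}^4+\|P_N^{(1)}X^{N,(2)}_t\|_{B^{15/16}_2}^2$, I invoke the adaptive-$\delta$ device already used in the proof of Lemma \ref{lem:Phi1-2}: taking $\delta$ of the form $\min\{\|P_N^{(1)}X^{N,(2)}_t\|_{L^4}^{-\kappa},1\}$ for a suitable $\kappa>0$ absorbs one $A^{7/8}$ factor into the other and leaves a polynomial-in-$L^4$-norm excess that is dominated by the Young inequality \eqref{eq:lemLpestimates1-1}. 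This yields the stated $(\cdots)^{7/8}$ structure together with the $C_\delta(t^{-1/4-\varepsilon}+t^{-12\varepsilon})Q$ remainder. The main obstacle throughout is the semigroup smoothing trick of the third step (and its analogue in the fourth): without identifying the integrals from $-\infty$ to $0$ as $e^{t(\triangle-m_0^2)}{\mathcal Z}^{(0,\ast,N)}_0$, the resonant $\mbox{\textcircled{\scriptsize$=$}}$ paraproducts would be undefined in the standard paracontrolled framework, and this identification is precisely how the stationarity of the initial data translates into a controllable short-time singularity in $t$.
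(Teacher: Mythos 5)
Your treatment of the \emph{first} estimate follows the paper closely: the same five-term decomposition of $\Phi^{(2)}$, the crucial identities $\int_{-\infty}^0 e^{(t-s)(\triangle-m_0^2)}P_N^{(1)}\mathcal{Z}^{(j,N)}_s\,ds = e^{t(\triangle-m_0^2)}\mathcal{Z}^{(0,j,N)}_0$ for $j=2,3$, the heat-semigroup smoothing (Proposition~\ref{prop:paraproduct}\ref{prop:paraproduct8}) to make the resonant products well defined at the cost of an integrable $t$-singularity, and Lemma~\ref{lem:Lpestimates1}\ref{lem:Lpestimates1-1} to turn $\|P_N^{(1)}X^{N,(2)}_t\|_{B_2^{\theta}}$, $\theta<9/16$, into the $\delta(\cdots)^{7/8}+\delta^{-16/19}Q$ form. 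That part is sound and matches the paper's argument.

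The \emph{second} estimate as you propose it has a genuine gap. You intend to write
$\bigl|\int f\,\Phi^{(2)}\bigr| \lesssim \|f\|_{B_2^{1/2+\varepsilon}}\,\|\Phi^{(2)}\|_{B_2^{-1/2-\varepsilon}}$
and insert, for the second factor, the \emph{first assertion}'s bound $\delta_0 A^{7/8}+\delta_0^{-16/19}t^{-1/4-\varepsilon}Q$ where $A:=\|f\|_{L^4}^4+\|f\|_{B_2^{15/16}}^2$. Since $\|f\|_{B_2^{1/2+\varepsilon}}\lesssim \|f\|_{B_2^{9/16}} \lesssim A^{2/5}$, the leading part of the product is of order $A^{2/5}\cdot A^{7/8}=A^{51/40}$, strictly above $A^1$; no Young-type absorption can bring a supralinear power of $A$ back to $A^{7/8}$ without sending the $t$-exponent out of the allowed range. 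Concretely, the choice $\delta_0=\delta/\|f\|_{B_2^{1/2+\varepsilon}}$ kills the $A^{51/40}$ term but leaves $\delta^{-16/19}\|f\|_{B_2^{1/2+\varepsilon}}^{35/19}t^{-1/4-\varepsilon}Q \lesssim \delta^{-16/19}A^{14/19}t^{-1/4-\varepsilon}Q$, and absorbing $A^{14/19}$ into $A^{7/8}$ via \eqref{eq:lemLpestimates1-1} raises $t^{-1/4-\varepsilon}$ to the power $19/3$, producing $t^{-19/12-19\varepsilon/3}$, which is not even locally integrable, let alone dominated by $t^{-1/4-\varepsilon}+t^{-12\varepsilon}$. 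The specific device you invoke, $\delta=\min\{\|f\|_{L^4}^{-\kappa},1\}$, does not help here: $\|f\|_{L^4}^{-\kappa}$ only controls the $\|f\|_{L^4}^4$ contribution to $A$, and the $A^{7/4}$-type term is uncontrolled when $\|f\|_{B_2^{15/16}}$ is large while $\|f\|_{L^4}$ stays bounded (in Lemma~\ref{lem:Phi1-2} the device works because the first factor there is precisely $\|f\|_{L^4}$, not a power of $A$).

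The paper avoids this by \emph{not} invoking the first assertion when proving the second. Instead it isolates the intermediate estimate
$\|\Phi_t^{(2)}(P_N^{(1)}X^{N,(2)})\|_{B_p^{-(1+\varepsilon)/2}} \le Q\,t^{-\varepsilon}\,\|P_N^{(1)}X^{N,(2)}_t\|_{B_p^{1/2}} + t^{-1/4-\varepsilon}Q,$
which is \emph{linear} in the Besov norm of $f$. Pairing then produces $Qt^{-\varepsilon}\|f\|_{B_2^{9/16}}^2\lesssim Qt^{-\varepsilon}A^{4/5}$ plus a lower-order term; since $4/5<7/8$, Young's inequality with exponent $35/32$ yields $\delta A^{7/8}+C_\delta(Qt^{-\varepsilon})^{35/3}$, whose $t$-exponent $35\varepsilon/3<12\varepsilon$ stays within the claimed budget. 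To close the gap, you should extract this linear-in-$\|f\|$ intermediate bound from your term-by-term analysis (which in fact already contains it, before the application of Lemma~\ref{lem:Lpestimates1}\ref{lem:Lpestimates1-1}) and use it, rather than the pre-processed $\delta A^{7/8}+\dots$ bound, in the duality pairing.
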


\begin{proof}
By Proposition \ref{prop:paraproduct} we have
\begin{align*}
&\left\| \left( P_N^{(1)} X^{N,(2)}_t - \lambda P_N^{(1)}{\mathcal Z}^{(0,3,N)}_t \right) \mbox{\textcircled{\scriptsize$>$}} {\mathcal Z}^{(2,N)}_t \right\| _{B_{p}^{-(1+\varepsilon )/2}} \\
&\leq C \left\| {\mathcal Z}^{(2,N)}_t \right\| _{B_{\infty}^{-1-\varepsilon /4}} \left\| P_N^{(1)} X^{N,(2)}_t - \lambda P_N^{(1)} {\mathcal Z}^{(0,3,N)}_t \right\|  _{B_{p}^{1/2-\varepsilon /4}} ,\\
%
%&\left\| {\mathcal Z}_t^{(2,N)}\mbox{\textcircled{\scriptsize$=$}} \int _{-\infty}^0 e^{(t-s)(\triangle -m_0^2)} \left( P_N^{(1)}\right) ^2 {\mathcal Z}_s^{(3,N)} ds \right\| _{B_{p}^{\varepsilon}} \\
%&\leq C t^{-1/4-\varepsilon} \left\| {\mathcal Z}^{(2,N)}_t \right\| _{B_{\infty}^{-1-\varepsilon /2}} \left\| {\mathcal Z}_0^{(0,3,N)} \right\|_{B_{\infty}^{1/2-\varepsilon /2}} ,\\
%
&\left\| \left( P_N^{(1)} X^{N,(2)}_t - \lambda P_N^{(1)} {\mathcal Z}^{(0,3,N)}_t\right) \left( {\mathcal Z}^{(2,2,N)}_t - {\mathcal Z}_t^{(2,N)}\mbox{\textcircled{\scriptsize$=$}} \int _{-\infty}^0 e^{(t-s)(\triangle -m_0^2)} P_N {\mathcal Z}_s^{(2,N)} ds\right)   \right\| _{B_{p}^{-(1+\varepsilon )/2}} \\
& \leq C \left( \left\| {\mathcal Z}^{(2,2,N)}_t \right\| _{B_{\infty}^{-\varepsilon /2}} + t^{-\varepsilon} \left\| {\mathcal Z}^{(2,N)}_t \right\| _{B_{\infty}^{-1-\varepsilon /2}} \left\| {\mathcal Z}^{(0,2,N)}_0 \right\| _{B_{\infty}^{1-\varepsilon /2}} \right) \\
&\quad \hspace{7cm} \times \left\| P_N^{(1)} X^{N,(2)}_t - \lambda P_N^{(1)} {\mathcal Z}^{(0,3,N)}_t \right\| _{B_p^{1/2-\varepsilon }} .
\end{align*}
Hence, we have
\begin{equation}\label{eq:lemPhi2-1}
\left\| \Phi _t ^{(2)} (P_N^{(1)} X^{N,(2)}) \right\| _{B_{p}^{-(1+\varepsilon )/2}} \leq Q t^{-\varepsilon} \left\| P_N^{(1)} X^{N,(2)}_t \right\|  _{B_{p}^{1/2}} + t^{-\varepsilon} Q.
\end{equation}
This inequality and Lemma \ref{lem:Lpestimates1} imply the first inequality.

For the second inequality, by Proposition \ref{prop:paraproduct}, and (\ref{eq:lemPhi2-1}) we have
\begin{align*}
\left| \int _\Lambda (P_N^{(1)} X_t^{N,(2)}) \Phi _t ^{(2)} (P_N^{(1)} X^{N,(2)}) dx \right| 
&\leq t^{-\varepsilon} Q + t^{-\varepsilon} Q \left\| P_N^{(1)} X^{N,(2)}_t \right\|  _{B_{2}^{9/16}} ^2\\
&\leq t^{-\varepsilon} Q + t^{-\varepsilon} Q \left\| P_N^{(1)} X^{N,(2)}_t \right\|  _{L^2} ^{4/5} \left\| P_N^{(1)} X^{N,(2)}_t \right\|  _{B_{2}^{15/16}}^{6/5} .
\end{align*}
Hence, by (\ref{eq:lemLpestimates1-1}) we have
\begin{align*}
&\left| \int _\Lambda (P_N^{(1)} X_t^{N,(2)}) \Phi _t ^{(2)} (P_N^{(1)} X^{N,(2)}) dx \right| \\
&\leq t^{-\varepsilon} Q + Q t^{-\varepsilon} \left( \left\| P_N^{(1)} X^{N,(2)}_t \right\|  _{L^2} ^{16/5} + \left\| P_N^{(1)} X^{N,(2)}_t \right\| _{B_{2}^{15/16}}^{8/5} \right) \\
&\leq Q t^{-\varepsilon} \left( \| P_N^{(1)} X^{N,(2)}_t\| _{L^4}^4 + \| P_N^{(1)} X^{N,(2)}_t\| _{B_2^{15/16}}^2 \right) ^{4/5} + t^{-\varepsilon} Q .
\end{align*}
Thus, we have the second inequality.
\end{proof}

\begin{lem}\label{lem:Phi3}
For $p\in [1,2]$, $\varepsilon \in (0,1/16)$, $t\in [0,T]$ and $\delta \in (0,1]$, the following bounds hold almost surely:
\begin{align*}
\left\| \Phi _t ^{(3)} (P_N^{(1)} X^{N,(2)}) \right\| _{B_{p}^{-(1+\varepsilon)/2}} &\leq \delta \| P_N^{(1)} X^{N,(2)}_t\| _{L^4}^{7/2} + \delta ^{-4/3} Q, \\
\left| \int _\Lambda (P_N^{(1)} X_t^{N,(2)}) \Phi _t ^{(3)} (P_N^{(1)} X^{N,(2)}) dx \right| &\leq \delta \left( \| P_N^{(1)} X^{N,(2)}_t\| _{L^4}^4 + \| P_N^{(1)} X^{N,(2)}_t\| _{B_2^{15/16}}^2 \right) ^{7/8}+ C_\delta Q .
\end{align*}
\end{lem}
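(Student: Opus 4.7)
\textbf{Proof plan for Lemma \ref{lem:Phi3}.}
My plan is to decompose $\Phi_t^{(3)}(u)$ into its two paraproduct components, rewrite each via the identity $a \mbox{\textcircled{\scriptsize$>$}} b = b \mbox{\textcircled{\scriptsize$<$}} a$, and then estimate using Proposition \ref{prop:paraproduct}, Lemma \ref{lem:Lpestimates1}, and Young's inequality. Writing $u = P_N^{(1)} X^{N,(2)}$, $F_t := {\mathcal Z}^{(1,N)}_t - \lambda {\mathcal Z}^{(0,3,N)}_t$ and $G_t := (2{\mathcal Z}^{(1,N)}_t - \lambda {\mathcal Z}^{(0,3,N)}_t){\mathcal Z}^{(0,3,N)}_t$, we have
$\Phi_t^{(3)}(u) = -3\,(u^2 \mbox{\textcircled{\scriptsize$<$}} F_t) + 3\lambda\,(u \mbox{\textcircled{\scriptsize$<$}} G_t)$.
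The norms $\|F_t\|_{B_\infty^{-(1+\varepsilon)/2}}$ and $\|G_t\|_{B_\infty^{-(1+\varepsilon)/2}}$ are both dominated by $Q$: for $F_t$ this is immediate from the list of quantities entering $Q$, and for $G_t$ it follows from the bound on $\|{\mathcal Z}^{(1,N)}_t {\mathcal Z}^{(0,3,N)}_t\|_{B_\infty^{-(1+\varepsilon)/2}}$ together with the Besov embedding $B_\infty^{1/2 - \varepsilon/4} \subset L^\infty$, which controls $\|({\mathcal Z}^{(0,3,N)}_t)^2\|_{L^\infty}$.

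For the first inequality, I apply Proposition \ref{prop:paraproduct}\ref{prop:paraproduct2} to bound
\[
\|u^2 \mbox{\textcircled{\scriptsize$<$}} F_t\|_{B_p^{-(1+\varepsilon)/2}} \le C \|u^2\|_{L^p}\|F_t\|_{B_\infty^{-(1+\varepsilon)/2}} \le Q\|u\|_{L^{2p}}^{2} \le Q\|u\|_{L^4}^{2},
\]
where the last step uses $p \le 2$ and the inclusion $L^4 \subset L^{2p}$ on the bounded domain $\Lambda$; analogously $\|u \mbox{\textcircled{\scriptsize$<$}} G_t\|_{B_p^{-(1+\varepsilon)/2}} \le Q\|u\|_{L^4}$. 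A case-split at $\|u\|_{L^4} = \delta^{-4/3}$ yields $\|u\|_{L^4}^{2} \le \delta \|u\|_{L^4}^{7/2} + \delta^{-4/3}$, and similarly $\|u\|_{L^4} \le \delta \|u\|_{L^4}^{7/2} + \delta^{-2/5}$; summing and rescaling $\delta$ to absorb $Q$ gives the first estimate.

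For the second inequality, I apply the duality bound Proposition \ref{prop:paraproduct}\ref{prop:paraproduct7} with $p_1 = p_2 = 2$ to get
\[
\Bigl|\int_\Lambda u\, \Phi_t^{(3)}(u)\,dx\Bigr| \le C \|u\|_{B_2^{(1+\varepsilon)/2}}\, \|\Phi_t^{(3)}(u)\|_{B_2^{-(1+\varepsilon)/2}},
\]
use the first inequality on the second factor, and interpolate the first factor via Proposition \ref{prop:paraproduct}\ref{prop:paraproduct6} between $L^2$ and $B_2^{15/16}$ with exponent $\theta = (7-8\varepsilon)/15$, combined with $\|u\|_{L^2} \le C\|u\|_{L^4}$. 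The product has $u$-degree $3$, strictly less than the degree $7/2$ of $A^{7/8}$, where $A := \|u\|_{L^4}^4 + \|u\|_{B_2^{15/16}}^2$. To absorb the result into $\delta A^{7/8} + C_\delta Q$, I exploit the Besov embedding $B_2^{15/16} \subset L^{16/3} \subset L^4$, which gives $\|u\|_{L^4} \le C\|u\|_{B_2^{15/16}}$ and hence $\|u\|_{L^4}^4 \le C\|u\|_{B_2^{15/16}}^4$; this allows mixed powers of $\|u\|_{L^4}$ and $\|u\|_{B_2^{15/16}}$ to be rewritten so that Young's inequality applies with exponents below $7/8$. The linear term $\int u \,(u \mbox{\textcircled{\scriptsize$<$}} G_t)\,dx$ is only quadratic in $u$, hence easier.

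The main obstacle will be the careful exponent accounting in the second step: a direct interpolation gives a bound of the form $Q\,X^{(37-8\varepsilon)/60}\,Y^{(16+16\varepsilon)/60}$ with total exponent $(53+8\varepsilon)/60$, which exceeds $7/8 = 52.5/60$. Reaching the exponent $7/8$ requires using the embedding $X \le CY^2$ above to trade factors of $\|u\|_{L^4}$ for factors of $\|u\|_{B_2^{15/16}}$ before applying Young's inequality, at which point the product is controlled by an admissible power of $A$.
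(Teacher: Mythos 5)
Your treatment of the first inequality is correct and matches the paper's argument: decompose $\Phi^{(3)}$ into the two paraproducts $u^2\mbox{\textcircled{\scriptsize$<$}} F_t$ and $u\mbox{\textcircled{\scriptsize$<$}} G_t$, bound them by Proposition \ref{prop:paraproduct}\ref{prop:paraproduct2}, and split with Young's inequality (\ref{eq:lemLpestimates1-1}).

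For the second inequality, however, the proposed repair of the exponent overshoot does not work. You correctly observe that duality plus interpolation leaves a term $Q\,X^{a}Y^{b}$ with total exponent $a+b=(53+8\varepsilon)/60 > 7/8$, where $X = \|u\|_{L^4}^4$, $Y = \|u\|_{B_2^{15/16}}^2$. But the embedding $X \le CY^2$ cannot close this gap: substituting $X^a \le C^aY^{2a}$ gives $X^aY^b \le C^aY^{2a+b}$ with $2a+b = 3/2$, i.e.\ the effective exponent \emph{increases}. The inequality $X\le CY^2$ only bounds $X$ from above by $Y^2$, so there is no direction in which it trades a power of $Y$ for a smaller power of $X$, and no constraint it imposes rules out the worst case. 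Concretely, along $X = Y = R^2$ (achievable, compatible with $X \le CY^2$, realized for instance by a single Fourier mode $u = a\,e_k$ with $a,|k|$ tuned so that $\|u\|_{L^4}=R^{1/2}$, $\|u\|_{B_2^{15/16}}=R$), one has $X^aY^b \asymp R^{(53+8\varepsilon)/30}$ while $A^{7/8}\asymp R^{7/4}=R^{52.5/30}$, so no constant $C_\delta$ can make $\delta A^{7/8}+C_\delta Q$ dominate. For what it is worth, the paper's own one-line derivation (substituting $\delta\mapsto(1+Q\|u\|_{L^4}^2)^{-1}\delta$ in Lemma \ref{lem:Lpestimates1}\ref{lem:Lpestimates1-1} and multiplying through) leaves a residual $\propto \delta^{-16/19}(1+Q\|u\|_{L^4}^2)^{35/19}$ of $u$-degree $70/19>7/2$, so it faces the same obstruction; the exponent $7/8$ in the statement is best read as intended to be some $\kappa\in[9/10,1)$. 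That weaker bound is both obtainable (e.g.\ via the three-term Young step $X^aY^b\cdot 1^{1-(a+b)}\le aX+bY+(1-a-b)$ with $\delta$-weights, giving $\delta A + C_\delta Q$) and sufficient downstream, since Proposition \ref{prop:global2-2} applies a further Young inequality $A^\kappa\le \delta'A+C_{\delta'}$ anyway. Your proof should therefore target $\delta A^{9/10}+C_\delta Q$ (or $\delta A + C_\delta Q$) rather than try to force $7/8$.
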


\begin{proof}
By Proposition \ref{prop:paraproduct} we have
\begin{equation}\label{eq:lemPhi3-1}\begin{array}{rl}
\left\| \Phi _t ^{(3)} (P_N^{(1)} X^{N,(2)}) \right\| _{B_{p}^{-(1+\varepsilon)/2}} &\leq Q\left( \left\| \left(P_N^{(1)} X_t^{N,(2)} \right) ^2 \right\| _{L^p} + \| P_N^{(1)} X_t^{N,(2)}\| _{L^p}\right) \\
&\leq Q \| P_N^{(1)} X_t^{N,(2)}\| _{L^4}^2 + Q.
\end{array}\end{equation}
This and (\ref{eq:lemLpestimates1-1}) imply the first inequality.

By Proposition \ref{prop:paraproduct} and (\ref{eq:lemPhi3-1}) we have
\[
\left| \int _\Lambda (P_N^{(1)} X_t^{N,(2)}) \Phi _t ^{(3)} (P_N^{(1)} X^{N,(2)}) dx \right| \leq Q \| P_N^{(1)} X_t^{N,(2)}\| _{L^4}^2 \| P_N^{(1)} X_t^{N,(2)}\| _{B_2^{1/2+\varepsilon}} .
\]
Applying Lemma \ref{lem:Lpestimates1} with replacing $\delta$ by $(1+ Q \| P_N^{(1)} X_t^{N,(2)}\| _{L^4}^2)^{-1} \delta$, we obtain the second inequality.
\end{proof}

\begin{lem}\label{lem:=Z2}
For $\varepsilon \in (0,1/16)$, $p\in [1,2]$, $t\in [0,T]$ and $\delta \in (0,1]$,
\begin{align*}
&\left\| (P_N^{(1)} X^{N,(2),\geqslant}_t) \mbox{\textcircled{\scriptsize$=$}} {\mathcal Z}^{(2,N)}_t \right\| _{B_{p}^{\varepsilon /8}} \\
&\leq \delta \left( \left\| \nabla X^{N,(2),\geqslant}_t \right\| _{L^2}^2 + \left\| P_N^{(1)} X^{N,(2)}_t \right\| _{L^4} ^4 \right) ^{7/8} + \delta \left\| X^{N,(2),<}_t \right\| _{L^p}^{7/4} + \delta \left\| X^{N,(2),\geqslant}_t \right\| _{B_{p}^{1+\varepsilon }}^{5/6} + \delta ^{-82/23}Q.
\end{align*}
\end{lem}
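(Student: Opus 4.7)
The plan is to combine the resonance paraproduct estimate with Besov interpolation and an iterated Young's inequality, controlling the resulting low-regularity Besov norm through the decomposition $X^{N,(2),\geqslant}_t = X^{N,(2)}_t - X^{N,(2),<}_t$ and Lemma~\ref{lem:Lpestimates1}.

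First, Proposition~\ref{prop:paraproduct}\ref{prop:paraproduct4} applied with parameters $s_1 = 1+\varepsilon/4$ and $s_2 = -1-\varepsilon/8$ (so that $s_1+s_2 = \varepsilon/8 > 0$), combined with Proposition~\ref{prop:bddp} to absorb the multiplier $P_N^{(1)}$ and Proposition~\ref{prop:Z}\ref{prop:Z2} to absorb $\|{\mathcal Z}^{(2,N)}_t\|_{B_\infty^{-1-\varepsilon/8}}$ into $Q$, yields
\[
\bigl\|(P_N^{(1)} X^{N,(2),\geqslant}_t)\mbox{\textcircled{\scriptsize$=$}} {\mathcal Z}^{(2,N)}_t\bigr\|_{B_p^{\varepsilon/8}} \le C\,Q\,\|X^{N,(2),\geqslant}_t\|_{B_p^{1+\varepsilon/4}}.
\]
Next, I interpolate via Proposition~\ref{prop:paraproduct}\ref{prop:paraproduct6} between $B_p^{s_0}$ (with $s_0$ slightly below $1$) and $B_p^{1+\varepsilon}$,
\[
\|X^{N,(2),\geqslant}_t\|_{B_p^{1+\varepsilon/4}} \le \|X^{N,(2),\geqslant}_t\|_{B_p^{s_0}}^{\theta}\,\|X^{N,(2),\geqslant}_t\|_{B_p^{1+\varepsilon}}^{1-\theta},
\]
and apply Young's inequality $xy \le \delta x^r + C\delta^{-1/(r-1)} y^{r/(r-1)}$ with $x = \|X^{N,(2),\geqslant}_t\|_{B_p^{1+\varepsilon}}^{1-\theta}$ and $r = 5/(6(1-\theta))$, chosen so that $x^r = \|X^{N,(2),\geqslant}_t\|_{B_p^{1+\varepsilon}}^{5/6}$. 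Setting $\theta = 22/63$ (and correspondingly $s_0 = 1 - 101\varepsilon/88$) gives $r = 105/82$, whence $-1/(r-1) = -82/23$, which is exactly the stated $\delta$-exponent.

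The remaining factor $\|X^{N,(2),\geqslant}_t\|_{B_p^{s_0}}^{110/69}$ (with exponent $110/69 = \theta\cdot 105/23$) is then bounded using the Besov embedding $B_2^{s_0}\hookrightarrow B_p^{s_0}$, valid because $p \le 2$ and $\Lambda$ is bounded, together with the splittings $X^{N,(2),\geqslant}_t = X^{N,(2)}_t - X^{N,(2),<}_t$ and $X^{N,(2)}_t = P_N^{(1)} X^{N,(2)}_t + (I - P_N^{(1)}) X^{N,(2)}_t$, where the intermediate-frequency remainder $(I-P_N^{(1)})X^{N,(2)}_t$ is handled using the bounded Fourier support of $X^{N,(2)}_t$ and Proposition~\ref{prop:Z} applied to the $\lambda\mathcal{Z}^{(0,3,N)}_t$ and $P_N^{(2)}Z_t$ contributions. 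For the piece involving $P_N^{(1)} X^{N,(2)}_t$, Lemma~\ref{lem:Lpestimates1}\ref{lem:Lpestimates1-1} provides a bound in terms of $\|P_N^{(1)} X^{N,(2)}_t\|_{L^4}^4$ and $\|P_N^{(1)} X^{N,(2)}_t\|_{B_2^{15/16}}^2$, the latter controlled by $\|\nabla X^{N,(2),\geqslant}_t\|_{L^2}^2$ modulo further $Q$-polynomial terms; this produces the $7/8$-power term in $\bigl(\|\nabla X^{N,(2),\geqslant}_t\|_{L^2}^2 + \|P_N^{(1)}X^{N,(2)}_t\|_{L^4}^4\bigr)^{7/8}$. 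The $X^{N,(2),<}_t$ piece contributes $\|X^{N,(2),<}_t\|_{L^p}^{7/4}$ after a further Young's step distributing the resulting product.

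The main technical obstacle is the precise bookkeeping of exponents through the chain of interpolation and Young's inequality, so that the final bound exhibits exactly the exponents $7/8$, $7/4$, $5/6$, and $-82/23$. The choice $\theta = 22/63$ is what engineers the conjugate Young exponent to be $-82/23$, and then the intermediate power $110/69$ on $\|X^{N,(2),\geqslant}_t\|_{B_p^{s_0}}$ must be split compatibly among the three target terms via the decomposition and embedding described above, with the residual $Q$-powers absorbed into a (possibly enlarged) polynomial $Q$.
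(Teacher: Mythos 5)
Your opening step (the resonance paraproduct estimate via Proposition~\ref{prop:paraproduct}\ref{prop:paraproduct4}, Proposition~\ref{prop:bddp} for $P_N^{(1)}$, and absorbing $\|{\mathcal Z}^{(2,N)}_t\|_{B_\infty^{-1-\varepsilon/8}}$ into $Q$) matches the paper. The divergence, and the gap, is in how you run the interpolation and Young's inequality. The paper interpolates $B_p^{1+\varepsilon/4}$ between $B_p^{1-\varepsilon/8}$ and $B_p^{1+\varepsilon}$ with weight $\theta = 2/3$, so Young's inequality produces $\delta\|\cdot\|_{B_p^{1-\varepsilon/8}}^{7/4} + C\delta^{-8/13}\|\cdot\|_{B_p^{1+\varepsilon}}^{7/13}$. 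The $7/4$-power term carries the small coefficient $\delta$ and is then bounded through $\|\cdot\|_{W^{1,p}} = \|\cdot\|_{L^p} + \|\nabla\cdot\|_{L^p}$ and the decomposition $X^{N,(2),\geqslant} = X^{N,(2)} - X^{N,(2),<}$; the leftover $\delta^{-8/13}\|\cdot\|^{7/13}$ is run through a \emph{second} Young, $\delta^{-8/13}\|\cdot\|^{7/13}\leq \delta\|\cdot\|^{5/6}+\delta^{-82/23}$, in which $\delta^{-82/23}$ multiplies a bare constant, later absorbed into $Q$.

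Your choice $\theta=22/63$ collapses the two Young steps into one, giving $\delta\|\cdot\|_{B_p^{1+\varepsilon}}^{5/6} + C\delta^{-82/23}\|\cdot\|_{B_p^{s_0}}^{110/69}$. This is where the argument breaks. The blow-up factor $\delta^{-82/23}$ now multiplies a genuine norm of $X^{N,(2),\geqslant}_t$, not a constant. If you then decompose $\|\cdot\|_{B_p^{s_0}}^{110/69}$ into the energy quantities as you propose, those quantities inherit the coefficient $\delta^{-82/23}$ rather than $\delta$; this is not the stated bound, and it would be useless where the lemma is applied (Propositions~\ref{prop:global2-2}--\ref{prop:estsup} absorb the $\delta$-weighted energy terms back onto the left-hand side, which is impossible with a large coefficient). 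If you instead try to peel a $\delta$ off that term with a further Young step, the tail degrades: $\delta^{-82/23}\|\cdot\|^{110/69}\leq\delta\|\cdot\|^{7/4}+C\delta^{-2162/43}$, and $2162/43\approx 50.3$ is far worse than the required $82/23\approx 3.6$, so the precise exponent cannot be recovered. A separate problem: you invoke Lemma~\ref{lem:Lpestimates1}\ref{lem:Lpestimates1-1}, which applies only for regularity index $\theta\in(0,9/16)$, but $s_0 = 1-101\varepsilon/88 > 1-101/1408 > 9/16$ for every $\varepsilon\in(0,1/16)$, so that lemma does not apply to $B_p^{s_0}$. The paper sidesteps this entirely by dominating $\|\cdot\|_{B_p^{1-\varepsilon/8}}$ by $\|\cdot\|_{W^{1,p}}$, which requires no interpolation lemma at all.
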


\begin{proof}
By Proposition \ref{prop:paraproduct}\ref{prop:paraproduct4} we have
\begin{equation}\label{eq:lem=Z2-1}
\left\| (P_N^{(1)} X^{N,(2),\geqslant}_t) \mbox{\textcircled{\scriptsize$=$}} {\mathcal Z}^{(2,N)}_t \right\| _{B_{p}^{\varepsilon /8}} \leq C\left\| {\mathcal Z}^{(2,N)}_t \right\| _{B_{\infty}^{-1-\varepsilon /8}} \left\| P_N^{(1)} X^{N,(2),\geqslant}_t \right\| _{B_{p}^{1+\varepsilon /4}} .
\end{equation}
Proposition \ref{prop:paraproduct}\ref{prop:paraproduct6} and (\ref{eq:lemLpestimates1-1}) imply
\begin{align*}
\left\| P_N^{(1)} X^{N,(2),\geqslant}_t \right\| _{B_{p}^{1+\varepsilon /4}}
&\leq C \left\| P_N^{(1)} X^{N,(2),\geqslant}_t \right\| _{B_{p}^{1-\varepsilon /8}}^{2/3} \left\| P_N^{(1)} X^{N,(2),\geqslant}_t \right\| _{B_{p}^{1+\varepsilon }}^{1/3} \\
&\leq \delta \left\| P_N^{(1)}X^{N,(2),\geqslant}_t \right\| _{B_{p}^{1-\varepsilon /8}}^{7/4} + C \delta ^{-8/13} \left\| P_N^{(1)} X^{N,(2),\geqslant}_t \right\| _{B_{p}^{1+\varepsilon }}^{7/13} .
\end{align*}
Since
\begin{align*}
&\left\| P_N^{(1)} X^{N,(2),\geqslant}_t \right\| _{B_{p}^{1-\varepsilon /8}}^{7/4}
\leq C \left\| P_N^{(1)} X^{N,(2),\geqslant}_t \right\| _{W^{1-\varepsilon /8,p}}^{7/4}\\
&\leq C \left(  \left\| P_N^{(1)} X^{N,(2)}_t \right\| _{L^p} + \left\| \nabla P_N^{(1)} X^{N,(2),\geqslant}_t \right\| _{L^p} \right) ^{7/4} + C \left\| P_N^{(1)} X^{N,(2),<}_t \right\| _{L^p}^{7/4} +C,
\end{align*}
from (\ref{eq:lem=Z2-1}) we have
\begin{align*}
&\left\| (P_N^{(1)} X^{N,(2),\geqslant}_t) \mbox{\textcircled{\scriptsize$=$}} {\mathcal Z}^{(2,N)}_t \right\| _{B_{p}^{\varepsilon /8}} \\
&\leq C \delta \left\| {\mathcal Z}^{(2,N)}_t \right\| _{B_{\infty}^{-1-\varepsilon /8}} \left[ \left(  \left\| P_N^{(1)} X^{N,(2)}_t \right\| _{L^p} + \left\| \nabla P_N^{(1)} X^{N,(2),\geqslant}_t \right\| _{L^p} \right) ^{7/4} + \left\| P_N^{(1)} X^{N,(2),<}_t \right\| _{L^p}^{7/4} \right] \\
&\quad + C \delta ^{-8/13} \left\| {\mathcal Z}^{(2,N)}_t \right\| _{B_{\infty}^{-1-\varepsilon /8}} \left\| P_N^{(1)} X^{N,(2),\geqslant}_t \right\| _{B_{p}^{1+\varepsilon }}^{7/13} + Q.
\end{align*}
On the other hand, (\ref{eq:lemLpestimates1-1}) implies
\[
\delta ^{-8/13} \left\| P_N^{(1)} X^{N,(2),\geqslant}_t \right\| _{B_{p}^{1+\varepsilon }}^{7/13} \leq \delta \left\| P_N^{(1)} X^{N,(2),\geqslant}_t \right\| _{B_{p}^{1+\varepsilon }}^{5/6} + \delta ^{-82/23}.
\]
Hence, by replacing $\delta$ by
\[
\delta \min \left\{ C\left\| {\mathcal Z}^{(2,N)}_t \right\| _{B_{\infty}^{-1-\varepsilon /8}} ^{-1} ,1 \right\}
\]
and applying Proposition \ref{prop:bddp} we obtain the assertion.
\end{proof}

\begin{lem}\label{lem:vw}
For $t\in [0,T]$ and $\delta \in (0,1]$ it holds that, for some positive $C$ and $Q$,
\begin{align*}
&\left| \int _\Lambda (P_N^{(1)} X^{N,(2),\geqslant}_t) \left[ ( P_N^{(1)} X^{N,(2)}_t - \lambda P_N^{(1)} {\mathcal Z}^{(0,3,N)}_t) \mbox{\textcircled{\scriptsize$<$}} {\mathcal Z}^{(2,N)}_t \right] dx \right| \\
&\leq \delta \left( \| \nabla X_t^{N,(2),\geqslant}\| _{L^2}^2 + \| P_N^{(1)} X_t^{N,(2)}\| _{L^4}^4 \right) + \delta \| X_t^{N,(2),\geqslant}\| _{B_{4/3}^{1+\varepsilon}} + C \| X_t^{N,(2),<}\| _{L^{4/3}} ^{5/3} + C_ \delta Q
\end{align*}
almost surely.
\end{lem}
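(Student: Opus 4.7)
I shall apply Besov duality (Proposition \ref{prop:paraproduct}\ref{prop:paraproduct7}) with the Hölder-dual pair $(p_1,r_1)=(4,\infty)$, $(p_2,r_2)=(4/3,1)$ and index $s=1+\varepsilon/24$, to bound the integral $I_t$ (denoting the LHS of the inequality) by $C\|P_N^{(1)} X^{N,(2),\geqslant}_t\|_{B_{4/3,1}^{1+\varepsilon/24}} \cdot \|(P_N^{(1)} X^{N,(2)}_t - \lambda{\mathcal Z}^{(0,3,N)}_t) \mbox{\textcircled{\scriptsize$<$}} {\mathcal Z}^{(2,N)}_t\|_{B_{4,\infty}^{-1-\varepsilon/24}}$. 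Proposition \ref{prop:paraproduct}\ref{prop:paraproduct2} (with $\tfrac14=\tfrac14+\tfrac1\infty$) together with Proposition \ref{prop:Z} then estimate the paraproduct factor by $CQ(\|P_N^{(1)} X^{N,(2)}_t\|_{L^4}+Q)$, while Corollary 2.96 in \cite{BCD}, the inclusion $B_{4/3}^{1+\varepsilon} \subset B_{4/3}^{1+\varepsilon/12}$, and Proposition \ref{prop:bddp} yield $\|P_N^{(1)} X^{N,(2),\geqslant}_t\|_{B_{4/3,1}^{1+\varepsilon/24}} \leq C\|X^{N,(2),\geqslant}_t\|_{B_{4/3}^{1+\varepsilon}}$. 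Thus
\[
|I_t| \leq CQ\,\|X^{N,(2),\geqslant}_t\|_{B_{4/3}^{1+\varepsilon}} \bigl(\|P_N^{(1)} X^{N,(2)}_t\|_{L^4}+Q\bigr).
\]

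I next split $P_N^{(1)} X^{N,(2)}_t = P_N^{(1)} X^{N,(2),<}_t + P_N^{(1)} X^{N,(2),\geqslant}_t$ in the $L^4$-factor. The $X^{\geqslant}$-piece is controlled by the Sobolev embedding $H^1 \hookrightarrow L^4$ on the three-dimensional torus via $\|P_N^{(1)} X^{N,(2),\geqslant}_t\|_{L^4} \leq C(\|\nabla X^{N,(2),\geqslant}_t\|_{L^2}+\|X^{N,(2)}_t\|_{L^2})$, the second summand being bounded in turn by $\|P_N^{(1)} X^{N,(2)}_t\|_{L^4}$ through Hölder on the bounded torus. For the $X^<$-piece, I exploit the defining integral representation of $X^{N,(2),<}_t$ together with Proposition \ref{prop:paraproduct}\ref{prop:paraproduct8} (heat-semigroup smoothing) and Proposition \ref{prop:bddp} ($L^{4/3}$-boundedness of $P_N^{(1)}$) to reduce the estimate to one involving $\|X^{N,(2),<}_t\|_{L^{4/3}}$.

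Finally I apply Young's inequalities with exponents tailored to the target form: the $(2,2)$-Young on factors of $\|\nabla X^{\geqslant}\|_{L^2}$ produces $\delta\|\nabla X^{\geqslant}\|_{L^2}^2$; the $(4,4/3)$-Young on factors of $\|P_N^{(1)} X^{N,(2)}\|_{L^4}$ produces $\delta\|P_N^{(1)} X^{N,(2)}\|_{L^4}^4$; and, crucially, the dual $(5/2,5/3)$-Young (with $\tfrac{2}{5}+\tfrac{3}{5}=1$) on factors of $\|X^{N,(2),<}_t\|_{L^{4/3}}$ produces the required $5/3$-power. Residual higher powers of $\|X^{\geqslant}\|_{B_{4/3}^{1+\varepsilon}}$ are reduced to first power by a further Young against the available $\|P_N^{(1)} X^{N,(2)}\|_{L^4}^4$, and all random factors $Q$ are collected into a $\delta$-dependent constant.

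The main obstacle is extracting $\|X^{N,(2),\geqslant}_t\|_{B_{4/3}^{1+\varepsilon}}$ to the \emph{first} power with a small $\delta$-coefficient: a naive two-factor Young on $Q\cdot \|X^{\geqslant}\|_{B_{4/3}^{1+\varepsilon}}$ fails because one cannot reduce a product involving an unbounded random variable to $\delta$ times a linear term without a third controllable factor. The three-factor Young applied to the triple product $Q \cdot \|X^{\geqslant}\|_{B_{4/3}^{1+\varepsilon}} \cdot \|P_N^{(1)} X^{N,(2)}\|_{L^4}$ (and its variants after the decomposition of the $L^4$-factor) is therefore the essential tool, with the $5/3$-exponent on $\|X^<\|_{L^{4/3}}$ emerging as the Hölder dual of the $5/2$-power that arises naturally from balancing the other two factors.
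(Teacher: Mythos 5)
Your duality-plus-paraproduct opening is correct in spirit and matches the paper's first two steps, but the way you then package the factors breaks down at exactly the point you flag as ``the main obstacle,'' and the remedy you propose does not actually work.

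After the duality step you land on
\[
|I_t| \;\leq\; CQ\,\|X^{N,(2),\geqslant}_t\|_{B_{4/3}^{1+\varepsilon}}\bigl(\|P_N^{(1)} X^{N,(2)}_t\|_{L^4}+Q\bigr),
\]
i.e.\ the \emph{highest} Besov norm $\|X^{\geqslant}\|_{B_{4/3}^{1+\varepsilon}}$ already sits at power $1$, multiplied by the unbounded random $Q$ and an $L^4$-factor. From this configuration no Young inequality, two-factor or three-factor, can yield $\delta\|X^{\geqslant}\|_{B_{4/3}^{1+\varepsilon}}$: in $abc\leq a^p/p+b^q/q+c^r/r$ one would need $q=1$ to keep $b$ linear, forcing $1/p+1/r=0$, which is impossible. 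Your sentence ``residual higher powers of $\|X^{\geqslant}\|_{B_{4/3}^{1+\varepsilon}}$ are reduced to first power by a further Young'' cannot be carried out, since $a^\alpha\leq\delta a+C_\delta$ only holds for $\alpha<1$, and your factorization never produces a sub-unit power of the $B_{4/3}^{1+\varepsilon}$-norm. What the paper does instead is to take a strictly \emph{lower} norm $B_{4/3}^{1+\varepsilon/8}$ in the duality pairing, then Young with exponents $(3,3/2)$ so that this lower norm appears at power $3/2$, then \emph{interpolate} $\|\cdot\|_{B_{4/3}^{1+\varepsilon/8}}^{3/2}\lesssim\|\cdot\|_{B_{4/3}^{1-\varepsilon/20}}^{5/4}\|\cdot\|_{B_{4/3}^{1+\varepsilon}}^{1/4}$, so that $B_{4/3}^{1+\varepsilon}$ arrives at the sub-unit power $1/4$; only then does a weighted Young (exponents $(4/3,4)$) legitimately produce $\delta\|X^{\geqslant}\|_{B_{4/3}^{1+\varepsilon}}+C\delta^{-1/3}\|X^{\geqslant}\|_{B_{4/3}^{1-\varepsilon/20}}^{5/3}$. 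This interpolation step is the missing idea, and it is what makes the linear-in-$B_{4/3}^{1+\varepsilon}$ term with small coefficient attainable.

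A second, subordinate issue: your treatment of the $X^<$-piece tries to pass from $\|P_N^{(1)}X^{<}_t\|_{L^4}$ to $\|X^{<}_t\|_{L^{4/3}}$ ``via the defining integral representation and heat-smoothing.'' But that integral representation reintroduces $X^{N,(2)}_s$ for $s<t$ inside the Duhamel convolution, so you would get a time-integrated quantity (as in Lemma~\ref{lem:v3}), not a pointwise-in-$t$ bound in terms of $\|X^{<}_t\|_{L^{4/3}}$; and $L^{4/3}\not\hookrightarrow L^4$, so the desired inequality cannot be obtained purely by embedding. The paper avoids this entirely: the $X^{<}$-piece emerges only at the final stage, inside the $W^{1,4/3}$-decomposition of $\|P_N^{(1)}X^{\geqslant}\|_{B_{4/3}^{1-\varepsilon/20}}^{5/3}$ produced by the interpolation, where one writes $P_N^{(1)}X^{\geqslant}=P_N^{(1)}X^{(2)}-P_N^{(1)}X^{<}$ and the $\|X^{<}\|_{L^{4/3}}^{5/3}$-term simply drops out with a fixed constant. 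Adopting the paper's interpolation and decomposition would close both gaps simultaneously.
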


\begin{proof}
By Proposition \ref{prop:paraproduct} and (\ref{eq:lemLpestimates1-1}) we have
\begin{align*}
&\left| \int _\Lambda (P_N^{(1)} X^{N,(2),\geqslant}_t) \left[ ( P_N^{(1)} X^{N,(2)}_t - \lambda P_N^{(1)} {\mathcal Z}^{(0,3,N)}_t) \mbox{\textcircled{\scriptsize$<$}} {\mathcal Z}^{(2,N)}_t \right] dx \right| \\
&\leq C \left\| P_N^{(1)} X^{N,(2),\geqslant}_t \right\| _{B_{4/3}^{1+\varepsilon /8}} \left\| (P_N^{(1)} X^{N,(2)}_t - \lambda P_N^{(1)} {\mathcal Z}^{(0,3,N)}_t) \mbox{\textcircled{\scriptsize$<$}} {\mathcal Z}^{(2,N)}_t \right\| _{B_4^{-1-\varepsilon /12}} \\
&\leq C \left\| P_N^{(1)} X^{N,(2),\geqslant}_t \right\| _{B_{4/3}^{1+\varepsilon /8}}^{3/2} + \left\| (P_N^{(1)} X^{N,(2)}_t - \lambda P_N^{(1)} {\mathcal Z}^{(0,3,N)}_t) \mbox{\textcircled{\scriptsize$<$}} {\mathcal Z}^{(2,N)}_t \right\| _{B_4^{-1-\varepsilon /12}}^3 \\
&\leq C \left\| P_N^{(1)} X^{N,(2),\geqslant}_t \right\| _{B_{4/3}^{1+\varepsilon /8}}^{3/2} + \left( \sup _{t\in [0,T]}\| {\mathcal Z}^{(2,N)}_t\| _{B_\infty ^{-1-\varepsilon /12}}\right) ^3 \left\| P_N^{(1)} X^{N,(2)}_t \right\| _{L^4}^3 + Q.
\end{align*}
Hence, we have for $\delta \in (0,1]$
\begin{equation}\label{eq:lemvw1}\begin{array}{l}
\displaystyle \left| \int _\Lambda (P_N^{(1)} X^{N,(2),\geqslant}_t) \left[ ( P_N^{(1)} X^{N,(2)}_t - \lambda P_N^{(1)} {\mathcal Z}^{(0,3,N)}_t) \mbox{\textcircled{\scriptsize$<$}} {\mathcal Z}^{(2,N)}_t \right] dx \right| \\
\displaystyle \leq \delta \left\| P_N^{(1)} X^{N,(2)}_t \right\| _{L^4}^4 + C \left\| P_N^{(1)} X^{N,(2),\geqslant}_t \right\| _{B_{4/3}^{1+\varepsilon /8}}^{3/2} + C_ \delta Q.
\end{array}\end{equation}
On the other hand, by Proposition \ref{prop:paraproduct} and (\ref{eq:lemLpestimates1-1}) we have
\begin{align*}
\| P_N^{(1)} X_t^{N,(2),\geqslant}\| _{B_{4/3}^{1+\varepsilon /8}}^{3/2} &\leq C \| P_N^{(1)} X_t^{N,(2),\geqslant}\| _{B_{4/3}^{1-\varepsilon /20}}^{5/4} \| P_N^{(1)} X_t^{N,(2),\geqslant}\| _{B_{4/3}^{1+\varepsilon }}^{1/4} \\
&\leq \delta \| P_N^{(1)} X_t^{N,(2),\geqslant}\| _{B_{4/3}^{1+\varepsilon }} + C \delta ^{-1/3} \| P_N^{(1)} X_t^{N,(2),\geqslant}\| _{B_{4/3}^{1-\varepsilon /20}}^{5/3} .
\end{align*}
Since
\begin{align*}
&\| P_N^{(1)} X_t^{N,(2),\geqslant}\| _{B_{4/3}^{1-\varepsilon /20}}^{5/3} \leq C \| P_N^{(1)} X_t^{N,(2),\geqslant}\| _{W^{1,4/3}}^{5/3} \\
&\quad \leq \delta ^{4/3}\left( \| \nabla P_N^{(1)} X_t^{N,(2),\geqslant}\| _{L^{4/3}}^2 + \| P_N^{(1)} X_t^{N,(2)}\| _{L^{4/3}}^2 \right) + C \| P_N^{(1)} X_t^{N,(2),<}\| _{L^{4/3}} ^{5/3} + C \delta ^{-\alpha }
\end{align*}
for some $\alpha \in (0,\infty )$, by Proposition \ref{prop:bddp} we have
\begin{align*}
&\| P_N^{(1)} X_t^{N,(2),\geqslant}\| _{B_{4/3}^{1+\varepsilon /8}}^{3/2} \\
&\leq \delta \| X_t^{N,(2),\geqslant}\| _{B_{4/3}^{1+\varepsilon }} + \delta \left( \| \nabla X_t^{N,(2),\geqslant}\| _{L^{4/3}}^2 + \| P_N^{(1)} X_t^{N,(2)}\| _{L^{4/3}}^2 \right) + C \| X_t^{N,(2),<}\| _{L^{4/3}} ^{5/3} + C \delta ^{-\alpha } .
\end{align*}
This inequality and (\ref{eq:lemvw1}) yield the assertion.
\end{proof}

Now we prepare a pathwise estimate of the energy functional on the left-hand side.

\begin{prop}\label{prop:global2-2}
For $\gamma \in (0,1/8)$, $\varepsilon \in (0,\gamma /2)$, $\eta \in [0,1)$ and $t\in [0,T]$
\begin{align*}
&\| X_t^{N,(2)}\| _{L^2} ^2 - \| X_0^{N,(2)}\| _{L^2} ^2 + \int _0^t \left( \left\| \nabla X_s^{N,(2),\geqslant}\right\| _{L^2}^2 + \left\| X_s^{N,(2)}\right\| _{L^2}^2 + \lambda \left\| P_N^{(1)} X_s^{N,(2)}\right\| _{L^4}^4 \right) ds\\
&\leq \| X_t^{N,(2),<}\| _{L^2}^2 + {\mathfrak Y}_{\varepsilon}^N (t) + Q\left( \sup _{s',t' \in [0,t]; s'<t'} \frac{(s')^\eta \left\| P_N^{(1)} X^{N,(2)}_{t'} - P_N^{(1)} X^{N,(2)}_{s'} \right\| _{L^{4/3}}}{(t'-s')^{\gamma }} \right) ^{4/5} +Q.
\end{align*}
\end{prop}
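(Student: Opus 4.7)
The plan is to derive a pathwise energy identity for $\|X_t^{N,(2),\geqslant}\|_{L^2}^2$ from the second equation of (\ref{PDEpara2}) --- which is deterministic once the stochastic data have been prescribed --- and then transfer it to an estimate for $\|X_t^{N,(2)}\|_{L^2}^2$ via the decomposition $X^{N,(2)}=X^{N,(2),<}+X^{N,(2),\geqslant}$ together with $X_0^{N,(2),<}=0$. Differentiating in time and integrating by parts yields
\[
\tfrac12\tfrac{d}{dt}\|X_t^{N,(2),\geqslant}\|_{L^2}^2+\|\nabla X_t^{N,(2),\geqslant}\|_{L^2}^2+m_0^2\|X_t^{N,(2),\geqslant}\|_{L^2}^2=\langle X_t^{N,(2),\geqslant},\mathrm{NL}_\geqslant(t)\rangle,
\]
where $\mathrm{NL}_\geqslant(t)$ collects the six nonlinear terms on the right of the $\geqslant$ equation.

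The desired $L^4$-dissipation is extracted from the cubic contribution: using the self-adjointness of $P_N^{(1)}$ and the splitting $P_N^{(1)}X^{N,(2),\geqslant}=P_N^{(1)}X^{N,(2)}-P_N^{(1)}X^{N,(2),<}$, one finds
\[
-\lambda\langle X_t^{N,(2),\geqslant},P_N^{(1)}[(P_N^{(1)}X_t^{N,(2)})^3]\rangle=-\lambda\|P_N^{(1)}X_t^{N,(2)}\|_{L^4}^4+\lambda\int_\Lambda (P_N^{(1)}X_t^{N,(2),<})(P_N^{(1)}X_t^{N,(2)})^3\,dx,
\]
and H\"older + Young reabsorb a fraction $\delta\lambda\|P_N^{(1)}X^{N,(2)}\|_{L^4}^4$ into the LHS, while the residual $\int_0^t\!\|P_N^{(1)}X_s^{N,(2),<}\|_{L^4}^4\,ds$ is controlled by Lemma~\ref{lem:v3} and the Besov embeddings of Proposition~\ref{prop:paraproduct} into ${\mathfrak Y}_\varepsilon^N(t)+Q$. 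Each of the five remaining nonlinear contributions in $\mathrm{NL}_\geqslant$ --- the $\Phi^{(1)},\Phi^{(2)},\Phi^{(3)}$ terms, the $\Psi^{(1)}\mbox{\textcircled{\scriptsize$=$}}{\mathcal Z}^{(2,N)}$ and $\Psi^{(2)}$ terms, and the $(P_N^{(1)}X^{N,(2),\geqslant})\mbox{\textcircled{\scriptsize$=$}}{\mathcal Z}^{(2,N)}$ term --- is then bounded by splitting the pairing as $\langle X^{N,(2),\geqslant},\cdot\rangle=\langle X^{N,(2)},\cdot\rangle-\langle X^{N,(2),<},\cdot\rangle$: the $X^{N,(2)}$-pairings follow directly from Lemmas~\ref{lem:Phi1-2}, \ref{lem:Phi2}, \ref{lem:Phi3}, \ref{lem:estcom}, \ref{lem:=Z2}, and the $X^{N,(2),<}$-pairings from Besov duality (Proposition~\ref{prop:paraproduct}\ref{prop:paraproduct7}) against the Besov-norm bounds established in the same lemmas. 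After time integration the singular factors $s^{-1/4-\varepsilon}$ and $s^{-12\varepsilon}$ from Lemma~\ref{lem:Phi2} are integrable, and the $(\cdot)^{4/5}$ power of the H\"older-in-time seminorm in the final estimate arises from a Young step applied to the $(\cdot)^\theta$ factor produced by Lemma~\ref{lem:estcom} with $p=4/3$.

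To pass from $X^{N,(2),\geqslant}$ to $X^{N,(2)}$ one uses the identity $\|X_t^{N,(2)}\|_{L^2}^2=\|X_t^{N,(2),\geqslant}\|_{L^2}^2+2\langle X_t^{N,(2),\geqslant},X_t^{N,(2),<}\rangle+\|X_t^{N,(2),<}\|_{L^2}^2$ together with $\|X_0^{N,(2)}\|_{L^2}^2=\|X_0^{N,(2),\geqslant}\|_{L^2}^2$; the indefinite cross term is handled by a small Young step whose residual is absorbed back into the LHS, and the integrated mass term $\int_0^t\!\|X_s^{N,(2)}\|_{L^2}^2\,ds$ is produced from $m_0^2\int_0^t\!\|X_s^{N,(2),\geqslant}\|_{L^2}^2\,ds$ in the same way, with $\int_0^t\!\|X_s^{N,(2),<}\|_{L^2}^2\,ds$ absorbed into ${\mathfrak Y}_\varepsilon^N(t)+Q$ through $L^4\hookrightarrow L^2$ on the torus followed by Jensen. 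The main obstacle is the simultaneous consolidation of the many small-$\delta$ fractions of $\int\|\nabla X^{N,(2),\geqslant}\|_{L^2}^2$, $\lambda\int\|P_N^{(1)}X^{N,(2)}\|_{L^4}^4$ and $\|X^{N,(2)}\|_{B_2^{15/16}}^2$ produced by the various nonlinear estimates: a single small $\delta$ must be chosen after all contributions have been gathered so that absorption remains possible, while the interpolation exponents ($7/8,15/16,4/5,\ldots$) appearing throughout have to be matched consistently with the time-Hölder seminorm bookkeeping.
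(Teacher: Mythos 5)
Your plan to run an energy identity for $\|X^{N,(2),\geqslant}\|_{L^2}^2$ alone and then transfer to $\|X^{N,(2)}\|_{L^2}^2$ diverges from the paper's argument at a point that actually breaks the proof. The paper does \emph{not} test the $\geqslant$-equation against $X^{N,(2),\geqslant}$. It uses the asymmetric decomposition
\[
\frac{d}{dt}\|X^{N,(2)}_t\|_{L^2}^2 = 2\langle X^{N,(2),<}_t,\partial_t X^{N,(2),<}_t\rangle + 2\langle X^{N,(2),\geqslant}_t,\partial_t X^{N,(2),<}_t\rangle + 2\langle X^{N,(2)}_t,\partial_t X^{N,(2),\geqslant}_t\rangle ,
\]
so that the cubic term $-\lambda P_N^{(1)}[(P_N^{(1)}X^{N,(2)})^3]$ in the $\geqslant$-equation is tested against the \emph{full} $X^{N,(2)}$. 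Using the self-adjointness of $P_N^{(1)}$, this gives exactly $-2\lambda\|P_N^{(1)}X_t^{N,(2)}\|_{L^4}^4$ with no residual; the price is the cross Laplacian term $4\int_0^t\langle X^{N,(2),<},(\triangle-m_0^2)X^{N,(2),\geqslant}\rangle\,ds$, which is mild because $X^{N,(2),<}$ has almost one full derivative of extra regularity and is exactly what ${\mathfrak Y}_\varepsilon^N$ is designed to absorb.

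Your route, by contrast, tests the cubic against $X^{N,(2),\geqslant}=X^{N,(2)}-X^{N,(2),<}$ and produces the residual $\lambda\int_0^t\int_\Lambda (P_N^{(1)}X^{N,(2),<})(P_N^{(1)}X^{N,(2)})^3\,dx\,ds$. After H\"older and Young this becomes $\delta\lambda\int_0^t\|P_N^{(1)}X^{N,(2)}\|_{L^4}^4\,ds + C_\delta\lambda\int_0^t\|P_N^{(1)}X^{N,(2),<}\|_{L^4}^4\,ds$. The claim that the second term is ``controlled by Lemma~\ref{lem:v3} and Besov embeddings into ${\mathfrak Y}_\varepsilon^N(t)+Q$'' does not hold: ${\mathfrak Y}_\varepsilon^N(t)$ carries only the \emph{cube} $\int_0^t\|X^{N,(2),<}\|_{B_4^{1-\varepsilon}}^3$, and a fourth power cannot be bounded by a third power pathwise. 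One cannot instead feed the fourth power through the mild-form bound of Lemma~\ref{lem:v3} and absorb it into the dissipation either: that produces $\int_0^t\|X^{N,(2),<}_s\|_{L^4}^4\,ds \le Q\int_0^t\|P_N^{(1)}X^{N,(2)}_s\|_{L^4}^4\,ds + Q$ with a \emph{random} prefactor $Q$, which cannot be made small and absorbed against $\lambda\int_0^t\|P_N^{(1)}X^{N,(2)}\|_{L^4}^4$. This is the essential gap; the remaining steps you describe (transferring the gradient and mass terms, controlling the indefinite cross term $2\langle X_t^{N,(2),\geqslant},X_t^{N,(2),<}\rangle$, handling the $\Phi^{(i)},\Psi^{(i)}$ pairings by duality, extracting the $4/5$ power from the H\"older-in-time seminorm) are in the spirit of what the paper does, but they only work once the $L^4$ dissipation has been produced without the uncontrollable $\|X^{N,(2),<}\|_{L^4}^4$ residual, and that requires the paper's particular choice of test function for the $\geqslant$-equation.
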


\begin{proof}
Proposition \ref{prop:paraproduct} implies that for $\delta \in (0,1]$
\begin{align*}
\left| \int _{\Lambda} f (\triangle -m_0^2) g dx \right| & \leq \left\| f \right\| _{B_{4}^{1-\varepsilon /12}} \left\| g\right\| _{B_{4/3}^{1+\varepsilon /6}}\\
&\leq C \delta ^{-2} \left\| f \right\| _{B_{4}^{1-\varepsilon /12}}^3 + \delta \left\| g\right\| _{B_{4/3}^{1+\varepsilon /6}}^{3/2} .
\end{align*}
Hence, by the integration by parts formula, the facts that $X_0^{N,(2),<} =0$ and that $\int _{\Lambda} f (P_N^{(1)}g) dx = \int _{\Lambda} (P_N^{(1)}f) g dx$ for $f,g \in L^2$, and Proposition \ref{prop:bddp}, we have
\begin{align*}
&\| X_t^{N,(2)}\| _{L^2} ^2 - \| X_0^{N,(2)}\| _{L^2} ^2\\
&= 2 \int _0^t \int _{\Lambda} \left( X_s^{N,(2),<} \partial _s X_s^{N,(2),<} + X_s^{N,(2), \geqslant} \partial _s X_s^{N,(2),<} + X_s^{N,(2)} \partial _s X_s^{N,(2), \geqslant} \right) dx ds\\
&= \| X_t^{N,(2),<}\| _{L^2}^2 + 4 \int _0^t \int _\Lambda X_s^{N,(2),<} (\triangle -m_0^2) X_s^{N,(2), \geqslant} dx ds \\
&\quad - 6 \lambda \int _0^t \int _{\Lambda} X_s^{N,(2), \geqslant} P_N^{(1)} \left[ \left( P_N^{(1)} X_s^{N,(2)} - \lambda P_N^{(1)} {\mathcal Z}^{(0,3,N)}_s \right) \mbox{\textcircled{\scriptsize$<$}} {\mathcal Z}^{(2,N)}_s \right] dx ds \\
&\quad - 2 \int _0^t \int _{\Lambda} \left( |\nabla X_s^{N,(2),\geqslant}|^2 +m_0^2 |X_s^{N,(2),\geqslant}|^2 + \lambda |P_N^{(1)} X_s^{N,(2)}|^4 \right) dx ds \\
&\quad - 6 \lambda \int _0^t  \int _{\Lambda} X_s^{N,(2)} P_N^{(1)} \left[ \left( {\mathcal Z}_s^{(1,N)} - \lambda P_N^{(1)} {\mathcal Z}^{(0,3,N)}_s\right) \mbox{\textcircled{\scriptsize$>$}} \left[ \left( P_N^{(1)} X^{N,(2)}_s \right) ^2 \right] \right] dx ds \\
&\quad + 2 \lambda \int _0^t  \int _{\Lambda} X_s^{N,(2)} P_N^{(1)} \Phi _s^{(1)} (P_N^{(1)} X^{N,(2)}) dx ds +2 \lambda \int _0^t \int _{\Lambda} X_s^{N,(2)} P_N^{(1)} \Phi ^{(2)}_s (P_N^{(1)} X^{N,(2)}) dx ds\\
&\quad + 6 \lambda \int _0^t \int _{\Lambda} X_s^{N,(2)} P_N^{(1)} \left[ (P_N^{(1)} X_s^{N,(2), \geqslant}) \mbox{\textcircled{\scriptsize$=$}} {\mathcal Z}^{(2,N)}_s \right] dx ds \\
&\quad -18 \lambda ^2 \int _0^t \int _{\Lambda} X_s^{N,(2)} P_N^{(1)} \left[ \Psi _{s}^{(1)} (P_N^{(1)} X^{N,(2)}) \mbox{\textcircled{\scriptsize$=$}} {\mathcal Z}^{(2,N)}_s \right] dx ds\\
&\quad -18 \lambda ^2 \int _0^t \int _{\Lambda} X_s^{N,(2)} P_N^{(1)} \Psi _s^{(2)} (P_N^{(1)} X^{N,(2)}) dx ds\\
&\leq - 2 \int _0^t \left( \left\| \nabla X_s^{N,(2), \geqslant}\right\| _{L^2}^2 + m_0^2 \left\| X_s^{N,(2)} \right\| _{L^2}^2 + \lambda \left\| P_N^{(1)} X_s^{N,(2)}\right\| _{L^4}^4 \right) ds \\
&\quad + \| X_t^{N,(2),<}\| _{L^2}^2 + C \delta ^{-2} \int _0^t \left\| X_s^{N,(2),<} \right\| _{B_{4}^{1-\varepsilon /12}}^3 ds + \delta \int _0^t \left\| X_s^{N,(2), \geqslant} \right\| _{B_{4/3}^{1+\varepsilon /6}}^{3/2} ds \\
&\quad + C \int _0^t \left| \int _\Lambda \left( P_N^{(1)} X_s^{N,(2), \geqslant} \right) \left[ \left( P_N^{(1)} X_s^{N,(2)} - \lambda P_N^{(1)} {\mathcal Z}^{(0,3,N)}_s \right) \mbox{\textcircled{\scriptsize$<$}} {\mathcal Z}^{(2,N)}_s \right] dx \right| ds \\
&\quad + C \lambda \int _0^t \left| \int _\Lambda (P_N^{(1)} X_s^{N,(2)} ) \Phi _s^{(1)} (P_N^{(1)} X^{N,(2)} ) dx \right| ds \\
&\quad + C \lambda \int _0^t \left| \int _\Lambda (P_N^{(1)} X_s^{N,(2)}) \Phi ^{(2)}_s (P_N^{(1)} X^{N,(2)}) dx \right|   ds \\
&\quad + C \lambda \int _0^t \left| \int _\Lambda (P_N^{(1)} X_s^{N,(2)}) \Phi ^{(3)}_s (P_N^{(1)} X^{N,(2)}) dx \right|   ds \\
&\quad + C \lambda \int _0^t \left\| P_N^{(1)} X_s^{N,(2)}\right\| _{L^4} \left\| (P_N^{(1)} X_s^{N,(2), \geqslant}) \mbox{\textcircled{\scriptsize$=$}} {\mathcal Z}^{(2,N)}_s \right\| _{L^{4/3}} ds \\
&\quad + C \lambda \int _0^t \left\| P_N^{(1)} X_s^{N,(2)} \right\| _{L^4} \left\| \Psi_s^{(1)} (P_N^{(1)} X^{N,(2)}) \mbox{\textcircled{\scriptsize$=$}} {\mathcal Z}^{(2,N)}_s \right\| _{L^{4/3}} ds \\
&\quad + C \lambda \int _0^t \left\| P_N^{(1)} X_s^{N,(2)} \right\| _{L^4} \left\| \Psi _s^{(2)} (P_N^{(1)} X^{N,(2)}) \right\| _{L^{4/3}} du .
\end{align*}
By a similar way to the proof of Lemma \ref{lem:vw} it is obtained that
\begin{align*}
\| X_t^{N,(2),\geqslant}\| _{B_{4/3}^{1+\varepsilon /6}}^{3/2} & \leq \delta \| X_t^{N,(2),\geqslant}\| _{B_{4/3}^{1+\varepsilon }} + \delta \left( \| \nabla X_t^{N,(2),\geqslant}\| _{L^{4/3}}^2 + \| X_t^{N,(2)}\| _{L^{4/3}}^2 \right) \\
& \quad + C \| X_t^{N,(2),<}\| _{L^{4/3}} ^{5/3} + C \delta ^{-\alpha }
\end{align*}
for some $\alpha \in (0,\infty )$, and by (\ref{eq:lemLpestimates1-1}) and Lemma \ref{lem:timeint} it holds that
\begin{align*}
&\int _0^t \left( \left\| P_N^{(1)} X_s^{N,(2)}\right\| _{L^4} \int _0^s (s-u)^{-21/32} \left\| P_N^{(1)} X^{N,(2)}_u \right\| _{B_{p}^{15/16}} du \right) ds \\
&\leq \delta \int _0^t \left\| P_N^{(1)} X_s^{N,(2)}\right\| _{L^4}^4 ds + C_\delta \int _0^t \left( \int _0^s (s-u)^{-21/32} \left\| P_N^{(1)} X^{N,(2)}_u \right\| _{B_{p}^{15/16}} du \right) ^{4/3} ds \\
&\leq \delta \int _0^t \left\| P_N^{(1)} X_s^{N,(2)}\right\| _{L^4}^4 ds + C_\delta \int _0^t \| P_N^{(1)} X_s^{N,(2)}\| _{B_2^{15/16}}^{4/3} ds.
\end{align*}
Thus, applying Lemmas \ref{lem:v3}, \ref{lem:Phi1-2}, \ref{lem:Phi2}, \ref{lem:Phi3} and \ref{lem:vw}, and Lemmas \ref{lem:estcom} and \ref{lem:=Z2} with replacing $\delta$ by $\delta \min \left\{ \left\| P_N^{(1)} X_s^{N,(2)} \right\| _{L^4} ^{-1},1\right\}$, we have
\begin{align*}
&\| X_t^{N,(2)}\| _{L^2} ^2 - \| X_0^{N,(2)}\| _{L^2} ^2\\
&\leq - 2 \int _0^t \left( \left\| \nabla X_s^{N,(2),\geqslant}\right\| _{L^2}^2 + m_0^2 \left\| X_s^{N,(2)} \right \| _{L^2}^2 + \lambda \left\| P_N^{(1)} X_s^{N,(2)}\right\| _{L^4}^4 \right) ds + \| X_t^{N,(2),<}\| _{L^2}^2\\
&\quad + C \delta \int _0^t \left( \left\| \nabla X_s^{N,(2),\geqslant}\right\| _{L^2}^2 + \left\| X_s^{N,(2)} \right \| _{L^2}^2 + \lambda \left\| P_N^{(1)} X_s^{N,(2)}\right\| _{L^4}^4 \right) ds + \delta {\mathfrak Y}_{\varepsilon}^N (t) \\
&\quad + \lambda Q\left( \sup _{s',t'\in [0,t]; s'<t'} \frac{(s')^\eta \left\| P_N^{(1)} X^{N,(2)}_{t'} - P_N^{(1)} X^{N,(2)}_{s'} \right\| _{L^{4/3}}}{(t'-s')^{\gamma }} \right) ^{1/2} \\
&\quad \hspace{1cm} \times \left( \int _0^t \left\| P_N^{(1)} X_u^{N,(2)} \right\| _{L^{4/3}} ^{1/2} \left\|P_N^{(1)} X_u^{N,(2)} \right\| _{L^{4}} du \phantom{\int _0^t} \right.\\
&\quad \hspace{2cm} \left . + \int _0^t \left\| P_N^{(1)} X_u^{N,(2)} \right\| _{L^{4}} \left( \int _0^u r^{-\eta /2} (u-r)^{\gamma /2 -1 -3\varepsilon /2} \left\| P_N^{(1)} X^{N,(2)}_r \right\| _{L^{4/3}} ^{1/2} dr \right) du \right) \\
&\quad + \delta ^{-\alpha} Q
\end{align*}
almost surely, for some $\alpha \in (0,\infty )$.
Since for $\tilde \delta \in (0,1]$
\begin{align*}
&\int _0^t \left\| P_N^{(1)} X_u^{N,(2)} \right\| _{L^{4/3}} ^{1/2} \left\| P_N^{(1)} X_u^{N,(2)} \right\| _{L^{4}} du \\
&\leq \int _0^t \left\| P_N^{(1)} X_u^{N,(2)} \right\| _{L^4} ^{3/2} du \leq \tilde \delta \int _0^t \left\| P_N^{(1)} X_u^{N,(2)} \right\| _{L^4} ^{4} du + C\tilde \delta ^{-3/5} ,
\end{align*}
and 
\begin{align*}
&\int _0^t \left\| P_N^{(1)} X_u^{N,(2)} \right\| _{L^4} \left( \int _0^u r^{-\eta /2} (u-r)^{\gamma /2 -1 -3\varepsilon /2} \left\| P_N^{(1)} X^{N,(2)}_r \right\| _{L^{4/3}} ^{1/2} dr \right) du\\
&\leq C \int _0^t \int _0^t r^{-\eta /2} |u-r|^{\gamma /2 -1 -3\varepsilon /2} \left\| P_N^{(1)} X^{N,(2)}_r \right\| _{L^4} ^{1/2} \left\| P_N^{(1)} X_u^{N,(2)} \right\| _{L^4} dr du \\
&\leq C \int _0^t \int _0^t r^{-\eta /2} |u-r|^{\gamma /2 -1 -3\varepsilon /2} \left( \tilde \delta ^{-1/5}\left\| P_N^{(1)} X^{N,(2)}_r \right\| _{L^4} + \tilde \delta ^{1/5}\left\| P_N^{(1)} X_u^{N,(2)} \right\| _{L^4}^2 \right) dr du \\
&\leq C \tilde \delta ^{-1/5} \int _0^t \left( \int _0^t |u-r|^{\gamma /2 -1 -3\varepsilon /2} du \right) r^{-\eta /2} \left\| P_N^{(1)} X^{N,(2)}_r \right\| _{L^4} dr \\
&\quad + C \tilde \delta ^{1/5} \int _0^t \left( \int _0^t r^{-\eta /2} |u-r|^{\gamma /2 -1 -3\varepsilon /2} dr \right) \left\| P_N^{(1)} X^{N,(2)}_u \right\| _{L^4}^2 du \\
&\leq C \left[ \tilde \delta ^{-1/5} \left( \int _0^t r^{-2 \eta /3} dr \right) ^{3/4} \left( \int _0^t \left\| P_N^{(1)} X^{N,(2)}_u \right\| _{L^4} ^4 du \right) ^{1/4} + \tilde \delta ^{1/5} \int _0^t\left\| P_N^{(1)} X^{N,(2)}_u \right\| _{L^4}^2 du\right] \\
&\leq \tilde \delta \int _0^t\left\| P_N^{(1)} X^{N,(2)}_u \right\| _{L^4}^4 du + C \tilde \delta ^{-3/5},
\end{align*}
by applying these inequalities with letting 
\[
\tilde \delta := \max\left\{ \delta \left( \sup _{s',t'\in [0,t]; s'<t'} \frac{(s')^\eta \left\| P_N^{(1)} X^{N,(2)}_{t'} - P_N^{(1)} X^{N,(2)}_{s'} \right\| _{L^{4/3}}}{(t'-s')^{\gamma }} \right) ^{-1/2} , \frac 12 \right\}
\]
we obtain
\begin{align*}
&\| X_t^{N,(2)}\| _{L^2} ^2 - \| X_0^{N,(2)}\| _{L^2} ^2\\
&\leq - 2 \int _0^t \left( \left\| \nabla X_u^{N,(2),\geqslant}\right\| _{L^2}^2 + m_0^2 \left\| X_u^{N,(2)} \right \| _{L^2}^2 + \lambda \left\| P_N^{(1)} X_u^{N,(2)}\right\| _{L^4}^4 \right) du + \| X_t^{N,(2),<}\| _{L^2}^2\\
&\quad + \delta Q \int _0^t \left( \left\| \nabla X_u^{N,(2),\geqslant}\right\| _{L^2}^2 + \left\| X_u^{N,(2)} \right \| _{L^2}^2 + \lambda \left\| P_N^{(1)} X_u^{N,(2)}\right\| _{L^4}^4 \right) du + \delta {\mathfrak Y}_{\varepsilon}^N (t)\\
&\quad + \delta ^{-1} Q \left( \sup _{s',t'\in [0,t]; s'<t'} \frac{(s')^\eta \left\| P_N^{(1)} X^{N,(2)}_{t'} - P_N^{(1)} X^{N,(2)}_{s'} \right\| _{L^{4/3}}}{(t'-s')^{\gamma }} \right) ^{4/5} + \delta ^{-\alpha} Q
\end{align*}
almost surely.
Now by taking sufficiently small $\delta $ so that $Q\delta \leq 1/2$, we obtain
\begin{align*}
&\| X_t^{N,(2)}\| _{L^2} ^2 - \| X_0^{N,(2)}\| _{L^2} ^2 + \int _0^t \left( \left\| \nabla X_u^{N,(2)}\right\| _{L^2}^2 + \left\| X_u^{N,(2)}\right\| _{L^2}^2 + \lambda \left\| P_N^{(1)} X_u^{N,(2)}\right\| _{L^4}^4 \right) du\\
&\leq \| X_t^{N,(2),<}\| _{L^2}^2 + {\mathfrak Y}_{\varepsilon}^N (t) + Q\left( \sup _{s',t'\in [0,t]; s'<t'} \frac{(s')^\eta \left\| P_N^{(1)} X^{N,(2)}_{t'} - P_N^{(1)} X^{N,(2)}_{s'} \right\| _{L^{4/3}}}{(t'-s')^{\gamma }} \right) ^{4/5} +Q.
\end{align*}
Therefore, the assertion of Proposition \ref{prop:global2-2} holds.
\end{proof}

The expectation of the energy functional is estimated as follows.
We remark that in the proof of the following proposition we apply the stationarity of the process $X^N$.

\begin{prop}\label{prop:global2-2exp}
For $\gamma \in (0,1/8)$, $\varepsilon \in (0,\gamma /2)$, $\eta \in [0,1)$, $t\in [0,T]$ and $\delta \in (0,1]$
\begin{align*}
&E\left[ \int _0^t \left( \left\| \nabla X_u^{N,(2),\geqslant}\right\| _{L^2}^2 + \left\| X_u^{N,(2)}\right\| _{L^2}^2 + \lambda \left\| P_N^{(1)} X_u^{N,(2)}\right\| _{L^4}^4 \right) du \right] \\
&\leq \delta E\left[ \sup _{s',t'\in [0,t]; s'<t'} \frac{(s')^\eta \left\| P_N^{(1)} X^{N,(2)}_{t'} - P_N^{(1)} X^{N,(2)}_{s'} \right\| _{L^{4/3}}}{(t'-s')^{\gamma }} \right] + CE\left[ \| X^{N,(2),<}_t \| _{L^2}^2 \right] \\
&\quad + C E\left[ {\mathfrak Y}_{\varepsilon}^N (t) \right] + C\sup _{s\in [0,t]} E\left[ \left\| X_s^{N,(2)} \right\| _{B_1^{-1/2+\varepsilon}} ^q\right]  + C_\delta .
\end{align*}
\end{prop}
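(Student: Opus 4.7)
The plan is to integrate the pathwise energy estimate of Proposition \ref{prop:global2-2} with respect to the probability measure, exploit the stationarity of the pair $(X^N,Z)$ (and hence of $X^{N,(2)}$) to cancel the boundary terms, and then split the random coefficients $Q$ from the analytic quantities via a combination of H\"older and Young's inequalities.

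First I would take $E[\,\cdot\,]$ in Proposition \ref{prop:global2-2}. By construction the initial value $(\xi_N,\zeta)$ was drawn from the invariant measure of the system $(Y^N_t, Z_t)$, so $(X^N_t, Z_t)$ is stationary; since $\mathcal{Z}^{(0,3,N)}_t$ is a stationary functional of $Z$, the process $X^{N,(2)}_t = X^N_t - P_N^{(2)}Z_t + \lambda\mathcal{Z}^{(0,3,N)}_t$ is stationary as well. For fixed $N$, (\ref{eq:invXN}), (\ref{eq:invZN}) together with Proposition \ref{prop:Z}\ref{prop:Z03} guarantee that $E[\|X^{N,(2)}_0\|_{L^2}^2]<\infty$, hence $E[\|X^{N,(2)}_t\|_{L^2}^2]=E[\|X^{N,(2)}_0\|_{L^2}^2]$, and these boundary terms cancel on the left. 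The remaining positive time-integral on the left is precisely the one to be estimated.

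The two terms $E[\|X^{N,(2),<}_t\|_{L^2}^2]$ and $E[\mathfrak{Y}_\varepsilon^N(t)]$ from the right-hand side of Proposition \ref{prop:global2-2} transfer directly to the bound, with a constant $C$. For the cross-term $E[Q\cdot(\sup)^{4/5}]$, I would apply H\"older's inequality with exponents $5$ and $5/4$ (using Jensen's inequality for the concave function $x\mapsto x^{4/5}$) to obtain
\[
E\!\left[Q\cdot(\sup)^{4/5}\right]\leq E[Q^5]^{1/5}\,E[\sup]^{4/5},
\]
and then Young's inequality with the same exponents to arrive at $\delta\,E[\sup]+C_\delta E[Q^5]$. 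Proposition \ref{prop:Z} gives $E[Q^5]<\infty$ uniformly in $N$, absorbing this contribution into $C_\delta$. The residual additive term $E[Q]$ produced by Proposition \ref{prop:global2-2} is bounded similarly.

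The fourth term, $C\sup_{s\in[0,t]}E[\|X^{N,(2)}_s\|_{B_1^{-1/2+\varepsilon}}^q]$, is produced when one of the mixed contributions (for instance arising from refining $E[\|X^{N,(2),<}_t\|_{L^2}^2]$ through the paraproduct representation of $X^{N,(2),<}$, or from redistributing integrals involving $X^{N,(2)}$ against the random drivers $\mathcal{Z}^{(2,N)}$ via Proposition \ref{prop:paraproduct}\ref{prop:paraproduct3} and the heat-kernel smoothing of Proposition \ref{prop:paraproduct}\ref{prop:paraproduct8}) is split by H\"older with exponents $q\in(1,8/7)$ and its conjugate $q'>8$; the $q'$-th moment of the random factor is finite by Proposition \ref{prop:Z}, and the regularity $-1/2+\varepsilon$ in the $B_1$ scale is exactly what the paraproduct and semigroup estimates leave on the $X^{N,(2)}$ side. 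The main obstacle is to perform these H\"older/Young splittings in such a way that every coefficient multiplying $E[\sup]$ is proportional to $\delta$ and every coefficient multiplying a moment of $Q$ is controlled uniformly in $N$; this is delicate because the $Q$-norms produced in Proposition \ref{prop:global2-2} enter with fractional powers and must be tuned against the analytic exponents $4$, $15/16$, $4/3$ appearing in the energy functional before Lemma \ref{lem:Lpestimates1} can be invoked.
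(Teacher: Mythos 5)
There is a genuine gap, and it sits at the heart of the argument. You assert that $X^{N,(2)}_t = X^N_t - P_N^{(2)}Z_t + \lambda\mathcal{Z}^{(0,3,N)}_t$ is stationary, so that $E[\|X^{N,(2)}_t\|_{L^2}^2]=E[\|X^{N,(2)}_0\|_{L^2}^2]$ and the boundary terms from Proposition \ref{prop:global2-2} cancel exactly. This is not justified. What has been established is stationarity of the \emph{pair} $(X^N_t, Z_t)$, coming from the construction of $(\xi_N,\zeta)$ as a sample from the invariant measure of $(Y^N,Z)$, together with stationarity of $Z$ as a two-sided process (and hence of $\mathcal{Z}^{(0,3,N)}$ on its own). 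But $\mathcal{Z}^{(0,3,N)}_t$ is a functional of the whole past $(Z_s)_{s\le t}$, not of $Z_t$ alone, so joint stationarity of $(X^N_t-P_N^{(2)}Z_t,\ \mathcal{Z}^{(0,3,N)}_t)$ does not follow from the stationarity of the Markov pair $(X^N_t,Z_t)$. Stationarity of each summand separately does not give stationarity of the sum, which depends on their joint law. Consequently you cannot claim exact cancellation of $E[\|X^{N,(2)}_t\|_{L^2}^2]-E[\|X^{N,(2)}_0\|_{L^2}^2]$.

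This same gap is why your explanation of the term $C\sup_{s\in[0,t]}E\bigl[\|X^{N,(2)}_s\|_{B_1^{-1/2+\varepsilon}}^q\bigr]$ is off. That term is not produced by splitting random coefficients against $\mathcal{Z}^{(2,N)}$ or by refining $\|X^{N,(2),<}_t\|_{L^2}^2$; it appears precisely because the boundary terms do \emph{not} cancel exactly. The correct move, which the paper carries out, is to expand
\begin{align*}
(X^{N,(2)}_t)^2-(X^{N,(2)}_0)^2 &= (X^N_t-P_N^{(2)}Z_t)^2-(X^N_0-P_N^{(2)}Z_0)^2 \\
&\quad + 2\lambda\,\mathcal{Z}^{(0,3,N)}_t X^{N,(2)}_t - \lambda^2(\mathcal{Z}^{(0,3,N)}_t)^2 - 2\lambda\,\mathcal{Z}^{(0,3,N)}_0 X^{N,(2)}_0 + \lambda^2(\mathcal{Z}^{(0,3,N)}_0)^2 ,
\end{align*}
apply the proven stationarity of $(X^N,Z)$ (together with (\ref{eq:invXN}) and (\ref{eq:invZN})) to cancel only the $(X^N-P_N^{(2)}Z)^2$ contributions, and then estimate the residual cross-terms $E\bigl[|\int_\Lambda \mathcal{Z}^{(0,3,N)}_s X^{N,(2)}_s\,dx|\bigr]$ via the duality pairing of Proposition \ref{prop:paraproduct}\ref{prop:paraproduct7}, putting $\mathcal{Z}^{(0,3,N)}_s$ in $B_\infty^{(1-\varepsilon)/2}$ and $X^{N,(2)}_s$ in $B_1^{-1/2+\varepsilon}$; an application of H\"older in the probability space with exponents $q$ and $q'$ then yields exactly the $\sup_s E[\|X^{N,(2)}_s\|_{B_1^{-1/2+\varepsilon}}^q]$ contribution. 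Your final splitting of $E[Q\cdot(\sup)^{4/5}]$ by H\"older and Young is fine and is indeed the last step, but the middle of your argument rests on an unestablished stationarity and therefore does not locate the $B_1^{-1/2+\varepsilon}$ term correctly.
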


\begin{proof}
It holds that for $t \in [0,T]$
\begin{align*}
&\left( X_t^{N,(2)} \right) ^2 - \left( X_0^{N,(2)} \right) ^2\\
&= \left( X_t^N - P_N^{(2)} Z_t\right) ^2 - \left( X_0^N - P_N^{(2)} Z_0\right) ^2 + 2\lambda \left( P_N^{(1)} {\mathcal Z}^{(0,3,N)}_t \right) X_t^{N,(2)} \\
&\quad \hspace{2cm} - \left( \lambda P_N^{(1)} {\mathcal Z}^{(0,3,N)}_t \right) ^2 - 2\lambda \left( P_N^{(1)} {\mathcal Z}^{(0,3,N)}_0 \right) X_0^{N,(2)} + \left( \lambda P_N^{(1)} {\mathcal Z}^{(0,3,N)}_0 \right) ^2.
\end{align*}
By the stationarity of the pair $(X_t^N, Z_t)$, (\ref{eq:invXN}) and (\ref{eq:invZN}) we have for $t \in [0,T]$
\begin{align*}
&\left| E\left[ \left\| X_t^{N,(2)} \right\| _{L^2} ^2 \right] - E\left[ \left\| X_0^{N,(2)} \right\| _{L^2} ^2 \right] \right| \\
&\leq 2 \lambda \left( E\left[ \left| \int _{\Lambda} \left( P_N^{(1)} {\mathcal Z}^{(0,3,N)}_t \right) X_t^{N,(2)} dx \right| \right] + E\left[ \left| \int _{\Lambda} \left( P_N^{(1)} {\mathcal Z}^{(0,3,N)}_0 \right) X_0^{N,(2)} dx \right| \right] \right) + C .
\end{align*}
Hence, Proposition \ref{prop:paraproduct} implies
\begin{align*}
&\left| E\left[ \left\| X_t^{N,(2)} \right\| _{L^2} ^2 \right] - E\left[ \left\| X_0^{N,(2)} \right\| _{L^2} ^2 \right] \right| \\
&\leq 4 \lambda \sup _{s\in [0,t]} E\left[ \left\| {\mathcal Z}^{(0,3,N)}_s \right\| _{B_{\infty}^{(1-\varepsilon)/2}} \left\| X_s^{N,(2)} \right\| _{B_1^{-1/2+\varepsilon}} \right] + C .
\end{align*}
From this inequality and Proposition \ref{prop:global2-2} we obtain that for $t \in [0,T]$
\begin{align*}
&E\left[ \int _0^t \left( \left\| \nabla X_u^{N,(2)}\right\| _{L^2}^2 +  \left\| X_u^{N,(2)}\right\| _{L^2}^2 + \lambda \left\| P_N^{(1)} X_u^{N,(2)}\right\| _{L^4}^4 \right) du \right] \\
&\leq E\left[ Q\left( \sup _{s', t'\in [0,t];s'<t'} \frac{(s')^\eta \left\| P_N^{(1)} X^{N,(2)}_{t'} - P_N^{(1)} X^{N,(2)}_{s'} \right\| _{L^{4/3}}}{(t'-s')^{\gamma }} \right) ^{4/5} \right] + CE\left[ \| X^{N,(2),<}_t \| _{L^2}^2 \right] \\
&\quad + C E\left[ {\mathfrak Y}_{\varepsilon}^N (t) \right] + C\sup _{s\in [0,t]} E\left[ \left\| X_s^{N,(2)} \right\| _{B_1^{-1/2+\varepsilon}} ^q\right] + C .
\end{align*}
Therefore, the assertion of Proposition \ref{prop:global2-2exp} holds.
\end{proof}

Next we consider the estimate of the H\"older continuity that appeared in Proposition \ref{prop:global2-2exp}. 

\begin{prop}\label{prop:global2-3}
For $\gamma \in (0,1/8)$, $\eta \in [0,1)$, $\varepsilon \in (0,\gamma /2)$ and $t\in [0,T]$, 
\begin{align*}
&E\left[ \sup _{s',t'\in [0,t]; s'<t'} \frac{(s')^\eta \left\| X^{N,(2)}_{t'} - X^{N,(2)}_{s'} \right\| _{L^{4/3}}}{(t'-s')^{\gamma }} \right] \\
&\leq C E\left[ \sup _{r\in [0,t]} r^\eta \left\| X^{N,(2),\geqslant}_{r} \right\| _{B_{4/3}^{2\gamma }} \right]  + CE\left[ \sup _{r\in [0,t]} r^\eta \left\| X^{N,(2),<}_r \right\| _{B_{4/3}^{2\gamma }} \right] + C E\left[ \| X_t^{N,(2),<}\| _{L^2}^2 \right] \\
&\quad + C E\left[ {\mathfrak Y}_{\varepsilon}^N (t) \right] + C\sup _{s\in [0,t]} E\left[ \left\| X_s^{N,(2)} \right\| _{B_1^{-1/2+\varepsilon}} ^q\right] + C.
\end{align*}
\end{prop}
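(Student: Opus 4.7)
The strategy is to decompose $X^{N,(2)} = X^{N,(2),<} + X^{N,(2),\geqslant}$ and estimate the time regularity of each summand via its mild formulation. For any $0 \leq s' < t' \leq t$ and $?\in\{<,\geqslant\}$, write
\[
X^{N,(2),?}_{t'} - X^{N,(2),?}_{s'} = \bigl(e^{(t'-s')(\triangle - m_0^2)} - I\bigr) X^{N,(2),?}_{s'} + \int_{s'}^{t'} e^{(t'-u)(\triangle - m_0^2)} F^{?}_u\, du ,
\]
where $F^{<}$ is the paraproduct on the right-hand side of the first equation in (\ref{PDEpara2}) and $F^{\geqslant}$ is the sum on the right-hand side of the second equation in (\ref{PDEpara2}).

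For the boundary term, I would use the standard semigroup estimate
$\|(e^{\tau(\triangle - m_0^2)} - I) f\|_{L^{4/3}} \leq C\tau^{\gamma} \|f\|_{B_{4/3}^{2\gamma}}$, which follows from the identity $e^{\tau(\triangle - m_0^2)} - I = \int_0^\tau (\triangle - m_0^2) e^{s(\triangle - m_0^2)} ds$ combined with Proposition \ref{prop:paraproduct}\ref{prop:paraproduct8}. Multiplying by $(s')^\eta$, dividing by $(t'-s')^\gamma$ and taking the supremum yields the contributions
$E\bigl[\sup_{r\in[0,t]} r^\eta \|X^{N,(2),\geqslant}_r\|_{B_{4/3}^{2\gamma}}\bigr]$ and $E\bigl[\sup_{r\in[0,t]} r^\eta \|X^{N,(2),<}_r\|_{B_{4/3}^{2\gamma}}\bigr]$.

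For the integral term, the idea is to bound each component $F^{?}_u$ in a Besov space of negative regularity $B_{p}^{\alpha}$ with $\alpha > 2\gamma - 2$, then use Proposition \ref{prop:paraproduct}\ref{prop:paraproduct8} in the form
$\|e^{(t'-u)(\triangle - m_0^2)} F^{?}_u\|_{L^{4/3}} \leq C\bigl(1 + (t'-u)^{-(1-\gamma)}\bigr)\|F^{?}_u\|_{B_{4/3}^{2\gamma-2}}$,
so that integrating $(t'-u)^{\gamma - 1}$ from $s'$ to $t'$ produces the factor $(t'-s')^\gamma$ (with possibly a Besov embedding to pass from $B_p^\alpha$ to $B_{4/3}^{2\gamma-2}$). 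For $F^{<}$, Proposition \ref{prop:paraproduct}\ref{prop:paraproduct3} applied with $\mathcal{Z}^{(2,N)}\in B_\infty^{-1-\varepsilon/24}$ converts the integrated $L^{4/3}$ bound into contributions controlled by the $L^2$-norm of the $X^{N,(2),<}$-trajectory and by $\sup_s E[\|X^{N,(2)}_s\|_{B_1^{-1/2+\varepsilon}}^q]$. For $F^{\geqslant}$, each of the summands $\Phi_t^{(1)}, \Phi_t^{(2)}, \Phi_t^{(3)}, (P_N^{(1)} X^{N,(2),\geqslant})\mbox{\textcircled{\scriptsize$=$}}\mathcal{Z}^{(2,N)}, \Psi^{(1)}_t(\cdot)\mbox{\textcircled{\scriptsize$=$}}\mathcal{Z}^{(2,N)}, \Psi^{(2)}_t(\cdot)$ and the cubic $(X^{N,(2),<}+X^{N,(2),\geqslant})^3$ is already estimated by Lemmas \ref{lem:v3}, \ref{lem:estcom}, \ref{lem:Phi1}, \ref{lem:Phi2}, \ref{lem:Phi3}, \ref{lem:=Z2}, and \ref{lem:vw}; these estimates, after integration in time, are absorbed in $E[\mathfrak{Y}_\varepsilon^N(t)]$ together with lower-order contributions majorised by $E[\|X^{N,(2),<}_t\|_{L^2}^2]$, $\sup_s E[\|X^{N,(2)}_s\|_{B_1^{-1/2+\varepsilon}}^q]$ and a random constant $Q$ whose expectation is bounded by Proposition \ref{prop:Z}.

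The principal technical hurdle is the cubic term $(P_N^{(1)} X^{N,(2),<} + P_N^{(1)} X^{N,(2),\geqslant})^3$: it is the roughest non-resonant piece and does not admit a pointwise-in-time estimate in a negative Besov space of index large enough to yield $(t'-s')^\gamma$ directly. My plan here is to split the cube using the binomial expansion, then treat the two pure summands via Proposition \ref{prop:paraproduct}\ref{prop:paraproduct5.5} (exploiting the $L^4$ integrability of $P_N^{(1)} X^{N,(2),<}$ provided by the first term of $\mathfrak{Y}_\varepsilon^N$ and the $B_{4/3}^{1+\varepsilon}$ regularity of $X^{N,(2),\geqslant}$ provided by the second term of $\mathfrak{Y}_\varepsilon^N$), while the mixed terms are controlled by H\"older's inequality together with Proposition \ref{prop:paraproduct}\ref{prop:paraproduct6}. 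The final time integration of $(t'-u)^{\gamma-1}$ against these bounds yields the desired $(t'-s')^\gamma$ factor and a right-hand side integrable in $u\in[0,t]$, completing the estimate after taking expectation and invoking $E[Q]\leq C$ from Proposition \ref{prop:Z}.
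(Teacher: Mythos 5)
Your overall skeleton (split into $X^{N,(2),<}$ and $X^{N,(2),\geqslant}$, write the mild formulation, estimate the boundary term by $\|(e^{\tau(\triangle-m_0^2)}-I)f\|_{L^{4/3}}\leq C\tau^{\gamma}\|f\|_{B_{4/3}^{2\gamma}}$, and then push the various forcing terms through the heat kernel) matches the paper's. But two aspects of the plan misfire. First, you single out the cubic $(P_N^{(1)}X^{N,(2)})^3$ as ``the roughest non-resonant piece'' and propose a binomial expansion with paraproduct estimates. That is not where the difficulty is: since $X^{N,(2)}_u$ is a genuine function, $\|(P_N^{(1)}X_u^{N,(2)})^3\|_{L^{4/3}}=\|P_N^{(1)}X_u^{N,(2)}\|_{L^4}^3$, and the time integral of this is controlled straight away through the $\|\cdot\|_{L^4}^4$ part of the energy functional $\mathfrak{X}_{\lambda,\eta,\gamma}^N$. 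No decomposition of the cube is needed. The actually delicate terms are the commutators $\Psi^{(1)},\Psi^{(2)}$ and $(P_N^{(1)}X^{N,(2),\geqslant})\mbox{\textcircled{\scriptsize$=$}}\mathcal{Z}^{(2,N)}$, whose estimates (Lemmas \ref{lem:estcom} and \ref{lem:=Z2}) feed back a multiple of the weighted H\"older semi-norm itself; the paper copes with this by inserting a $\delta$-factor and then choosing $\delta$ of order $(Q(1+T))^{-1}$ to absorb that multiple.

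Second, and more importantly, your plan does not actually close. After the lemmas are applied, the pathwise bound for the $\geqslant$ piece (the paper's (\ref{eq:propglobal2-3-11})) still contains the term $\int_0^t(\|\nabla X_u^{N,(2),\geqslant}\|_{L^2}^2+\lambda\|P_N^{(1)}X_u^{N,(2)}\|_{L^4}^4)^{7/8}\,du$, which is \emph{not} a piece of $\mathfrak{Y}_\varepsilon^N$ and cannot simply be ``absorbed'' with constants as you suggest. The paper handles it by taking expectations and invoking Proposition \ref{prop:global2-2exp} (the stationarity-based estimate of the energy functional), which reintroduces a term $C\delta'E[\sup(\ldots)]$ that must in turn be absorbed into the left-hand side by taking $\delta'$ small. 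Without this step, and without the $\delta$-absorption mentioned above, the argument does not terminate with the stated right-hand side. You should make both absorption mechanisms, and the explicit appeal to Proposition \ref{prop:global2-2exp}, part of the proof.
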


\begin{proof}
From (\ref{PDEpara2}) it follows
\begin{align*}
&X^{N,(2),<}_t - e^{(t-s)(\triangle - m_0^2)} X^{N,(2),<}_s\\
& = -3 \lambda \int _s^t e^{(t-u)(\triangle - m_0^2)} P_N^{(1)} \left[ \left( P_N^{(1)} X^{N,(2)}_u - \lambda P_N^{(1)} {\mathcal Z}^{(0,3,N)}_u \right) \mbox{\textcircled{\scriptsize$<$}} {\mathcal Z}^{(2,N)}_u \right] du
\end{align*}
for $s,t \in [0,T]$ such that $s<t$.
Hence, for $s',t,t' \in [0,T]$ such that $s'<t' \leq t$, the mild form representation of the solutions implies
\begin{align*}
&\left\| X^{N,(2),<}_{t'} - X^{N,(2),<}_{s'} \right\| _{L_{4/3}} \\
&\leq \left\| e^{(t'-s')(\triangle - m_0^2)} -I \right\| _{B_{4/3}^{2\gamma } \rightarrow L^{4/3}} \left\| X^{N,(2),<}_{s'} \right\| _{B_{4/3}^{2\gamma }} \\
&\quad + C \lambda \int _{s'}^{t'} \left\| e^{(t'-u)(\triangle - m_0^2)} P_N^{(1)} \left[ \left( P_N^{(1)} X^{N,(2)}_u - \lambda P_N^{(1)} {\mathcal Z}^{(0,3,N)}_u \right) \mbox{\textcircled{\scriptsize$<$}} {\mathcal Z}^{(2,N)}_u \right] \right\| _{L^{4/3}}du \\
&\leq C \lambda (t'-s')^{\gamma } \left\| X^{N,(2),<}_{s'} \right\| _{B_{4/3}^{2\gamma }} \\
&\quad + C \int _{s'}^{t'} (t'-u)^{-1/2-\varepsilon /2} \left\| \left( P_N^{(1)} X^{N,(2)}_u - \lambda P_N^{(1)} {\mathcal Z}^{(0,3,N)}_u \right) \mbox{\textcircled{\scriptsize$<$}} {\mathcal Z}^{(2,N)}_u \right\| _{B_{4/3}^{-1-\varepsilon}}du.
\end{align*}
Applying Proposition \ref{prop:paraproduct} we have
\begin{align*}
&\left\| X^{N,(2),<}_{t'} - X^{N,(2),<}_{s'} \right\| _{L^{4/3}} \\
&\leq C(t'-s')^{\gamma } \left\| X^{N,(2),<}_{s'} \right\| _{B_{4/3}^{2\gamma }} + \lambda Q \int _{s'}^{t'} (t'-u)^{-1/2-\varepsilon /2} \left\| P_N^{(1)} X^{N,(2)}_u - \lambda P_N^{(1)} {\mathcal Z}^{(0,3,N)}_u \right\| _{L^{4/3}}du .
\end{align*}
Thus, by applying (\ref{eq:lemLpestimates1-1}) we have for $s,t \in [0,T]$ such that $s<t$
\begin{equation}\label{eq:propglobal2-3-01}
\begin{array}{l}
\displaystyle \sup _{s',t' \in [0,t]; s'<t'} \frac{(s')^\eta \left\| X^{N,(2),<}_{t'} - X^{N,(2),<}_{s'} \right\| _{L^{4/3}}}{(t'-s')^{\gamma }} \\
\displaystyle \leq C \sup _{r\in [0,t]} \left( r^\eta \left\| X^{N,(2),<}_r \right\| _{B_{4/3}^{2\gamma }} \right) + \lambda \int _0^t \left\| P_N^{(1)} X^{N,(2)}_u \right\| _{L^{4/3}}^{7/2} du + Q.
\end{array}
\end{equation}

Similarly, from (\ref{PDEpara2}) and Proposition \ref{prop:bddp} for $s',t' \in [0,T]$ such that $s'<t'$ we have the estimate
\begin{align*}
&\left\| X^{N,(2),\geqslant}_{t'} - X^{N,(2),\geqslant}_{s'} \right\| _{L^{4/3}} \\
&\leq C(t'-s')^{\gamma } \left\| X^{N,(2),\geqslant}_{s'} \right\| _{B_{4/3}^{2\gamma }} + C \lambda (t'-s')^{\gamma } \int _{s'}^{t'} (t'-u)^{-\gamma } \left\| P_N^{(1)} X^{N,(2)}_u \right\| _{L^4}^3 du \\
&\quad + C \lambda (t'-s')^{\gamma } \int _{s'}^{t'} (t'-u)^{-\gamma } \left\| \Phi _u^{(1)}(P_N^{(1)} X^{N,(2)} ) \right\| _{L^{4/3}} du \\
&\quad + C \lambda (t'-s')^{\gamma } \int _{s'}^{t'} (t'-u)^{-1/4-\gamma -\varepsilon /2} \left\| \Phi _u ^{(2)} (P_N^{(1)} X^{N,(2)}) \right\| _{B_{4/3}^{-1/2-\varepsilon}} du \\
&\quad + C \lambda (t'-s')^{\gamma } \int _{s'}^{t'} (t'-u)^{-1/4-\gamma -\varepsilon /2} \left\| \Phi _u ^{(3)} (P_N^{(1)} X^{N,(2)}) \right\| _{B_{4/3}^{-1/2-\varepsilon}} du \\
&\quad + C \lambda (t'-s')^{\gamma } \int _{s'}^{t'} (t'-u)^{-\gamma }  \left\| (P_N^{(1)} X^{N,(2),\geqslant}) \mbox{\textcircled{\scriptsize$=$}} {\mathcal Z}^{(2,N)}_u \right\| _{L^{4/3}} du \\
&\quad + C \lambda (t'-s')^{\gamma } \int _{s'}^{t'} (t'-u)^{-\gamma }  \left\| \Psi _u^{(1)} (P_N^{(1)} X^{N,(2)}) \mbox{\textcircled{\scriptsize$=$}} {\mathcal Z}^{(2,N)}_u \right\| _{B_{4/3}^{\varepsilon}} du \\
&\quad + C \lambda (t'-s')^{\gamma } \int _{s'}^{t'} (t'-u)^{-\gamma }  \left\| \Psi _u^{(2)} (P_N^{(1)} X^{N,(2)}) \right\| _{B_{4/3}^{\varepsilon}} du .
\end{align*}
For $\delta \in (0,1]$, applying Lemmas \ref{lem:estcom}, \ref{lem:Phi1}, \ref{lem:Phi2}, \ref{lem:Phi3} and \ref{lem:=Z2} with replacing $\delta$ by $\delta (t-u) ^\alpha$ with suitable $\alpha $ for each lemmas, and applying Lemma \ref{lem:timeint}, we have for $\delta \in (0,1)$ $s',t,t' \in [0,T]$ such that $s'<t' \leq t$
\begin{align*}
&\left\| X^{N,(2),\geqslant}_{t'} - X^{N,(2),\geqslant}_{s'} \right\| _{L^{4/3}} \\
&\leq C(t'-s')^{\gamma } \left\| X^{N,(2),\geqslant}_{s'} \right\| _{B_{4/3}^{2\gamma }} \\
&\quad + C (t'-s')^{\gamma }\int _0^t \left( \left\| \nabla X_u^{N,(2),\geqslant}\right\| _{L^2}^2 + \left\| X_u^{N,(2)}\right\| _{L^2}^2  + \lambda \left\| P_N^{(1)} X_u^{N,(2)}\right\| _{L^4}^4 \right) ^{7/8} du \\
&\quad + C (t'-s')^{\gamma } \int _{s'}^{t'} \left\| P_N^{(1)} X^{N,(2),<}_u \right\| _{B_{2}^{1-\varepsilon }}^{7/4} du + Q (t'-s') ^{\gamma } \int _{s'}^{t'} (t'-u)^{-\gamma /3} \left\| P_N^{(1)} X^{N,(2),\geqslant}_u \right\| _{B_{4/3}^{1+\varepsilon }}^{5/6} du\\
&\quad + \delta Q (t'-s')^{\gamma } \int _{s'}^{t'} \sup _{r\in [s',u)} \frac{r^\eta \left\| P_N^{(1)} X^{N,(2)}_u - P_N^{(1)} X^{N,(2)}_r \right\| _{L^{4/3}}}{(u-r)^{\gamma }} du + C_\delta Q (t'-s')^{\gamma } .
\end{align*}
This inequality and (\ref{eq:propglobal2-3-01}) imply
\begin{align*}
&\sup _{s',t'\in [0,t]; s'<t'} \frac{(s' )^\eta \left\| X^{N,(2)}_{t'} - X^{N,(2)}_{s'} \right\| _{L^{4/3}}}{(t'-s')^{\gamma}} \\
&\leq C \sup _{r\in [0,t]}\left( r^\eta \left\| X^{N,(2),\geqslant}_{r} \right\| _{B_{4/3}^{2\gamma }} \right) + C \sup _{r\in [0,t]} \left( r^\eta \left\| X^{N,(2),<}_r \right\| _{B_{4/3}^{2\gamma }} \right)\\
&\quad + C\int _0^t \left( \left\| \nabla X_u^{N,(2),\geqslant}\right\| _{L^2}^2 + \left\| X_u^{N,(2)}\right\| _{L^2}^2 + \lambda \left\| P_N^{(1)} X_u^{N,(2)}\right\| _{L^4}^4 \right) ^{7/8} du\\
&\quad + \delta Q \sup _{s',t'\in [0,t]; s'<t'} \frac{(s') ^\eta \left\| P_N^{(1)} X^{N,(2)}_{t'} - P_N^{(1)} X^{N,(2)}_{s'} \right\| _{L^{4/3}}}{(t'-s')^{\gamma}} + C \int _0^t \left\| P_N^{(1)} X^{N,(2),<}_u \right\| _{B_{2}^{1-\varepsilon }}^{7/4} du \\
&\quad + Q \sup _{s',t' \in [0,t]; s'<t'} \int _{s'}^{t'} (t'-u)^{-1/2 -\gamma -\varepsilon /2} \left\| P_N^{(1)} X^{N,(2)}_u \right\| _{L^{4/3}}du \\
&\quad + Q\sup _{s',t'\in [0,t]; s'<t'} \int _{s'}^{t'} (t'-u)^{-\gamma /3} \left\| P_N^{(1)} X^{N,(2),\geqslant}_u \right\| _{B_{4/3}^{1+\varepsilon }}^{5/6} du + C_\delta Q .
\end{align*}
Hence, by taking $\delta = \frac 12 \left( Q [1+T] \right) ^{-1}$ and applying Proposition \ref{prop:bddp} and (\ref{eq:lemLpestimates1-1}) we obtain
\begin{equation}\label{eq:propglobal2-3-11}\begin{array}{l}
\displaystyle \sup _{s',t'\in [0,t]; s'<t'} \frac{(s')^\eta \left\| X^{N,(2),\geqslant}_{t'} - X^{N,(2),\geqslant}_{s'} \right\| _{L^{4/3}}}{(t'-s')^{\gamma }} \\
\displaystyle \leq C \sup _{r\in [0,t]}\left( r^\eta \left\| X^{N,(2),\geqslant}_{r} \right\| _{B_{4/3}^{2\gamma }} \right) + C \sup _{r\in [0,t]} \left( r^\eta \left\| X^{N,(2),<}_r \right\| _{B_{4/3}^{2\gamma }} \right)\\
\displaystyle \quad + C\int _0^t \left( \left\| \nabla X_u^{N,(2),\geqslant}\right\| _{L^2}^2 + \left\| X_u^{N,(2)}\right\| _{L^2}^2 + \lambda \left\| P_N^{(1)} X_u^{N,(2)}\right\| _{L^4}^4 \right) ^{7/8} du\\
\displaystyle \quad + C \int _0^t \left\| P_N^{(1)} X^{N,(2)}_u \right\| _{B_2^{1-\varepsilon }}^{7/4} du + C \int _0^t \left\| P_N^{(1)} X^{N,(2),\geqslant}_u \right\| _{B_{4/3}^{1+\varepsilon }} du + Q  .
\end{array}\end{equation}
By (\ref{eq:propglobal2-3-11}) and Proposition \ref{prop:global2-2exp}, for $\delta ' \in (0,1]$ we have
\begin{align*}
&E\left[ \sup _{s',t'\in [0,t]; s'<t'} \frac{(s')^\eta \left\| X^{N,(2)}_{t'} - X^{N,(2)}_{s'} \right\| _{L^{4/3}}}{(t'-s')^{\gamma }} \right] \\
&\leq C \delta ' E\left[ \sup _{s',t'\in [0,t]; s'<t'} \frac{(s')^\eta \left\| P_N^{(1)} X^{N,(2)}_{t'} - P_N^{(1)} X^{N,(2)}_{s'} \right\| _{L^{4/3}}}{(t'-s')^{\gamma }} \right] + C E\left[ \| X_t^{N,(2),<}\| _{L^2}^2 \right] \\
&\quad + C E\left[ \sup _{r\in [0,t]} r^\eta \left\| X^{N,(2),\geqslant}_{r} \right\| _{B_{4/3}^{2\gamma }} \right] + CE\left[ \sup _{r\in [0,t]} r^\eta \left\| X^{N,(2),<}_r \right\| _{B_{4/3}^{2\gamma }} \right] \\
&\quad + C E\left[ {\mathfrak Y}_{\varepsilon}^N (t) \right] + C E\left[ \left\| X_t^{N,(2)} \right\| _{B_{1}^{-1/2+\varepsilon}}^q \right] + C _{\delta'} .
\end{align*}
Therefore, by taking $\delta'$ sufficiently small we have the conclusion.
\end{proof}

We have to estimate the terms that appeared in Propositions \ref{prop:global2-2exp} and \ref{prop:global2-3}.
For convenience in the proofs of the estimates we give the following lemma. 

\begin{lem}\label{lem:global2-1}
\begin{enumerate}
\item \label{lem:global2-1-1} For $p\in [1,2]$, $\alpha , \beta \in {\mathbb R}$ and $s,t\in [0,T]$ such that $s<t$,
\begin{align*}
\left\| X^{N,(2),<}_t \right\| _{B_{p}^{\alpha}}
&\leq C (t-s)^{-\beta} \left\| X^{N,(2),<}_s \right\|  _{B_{p}^{\alpha - 2\beta }} \\
&\quad + Q \int _s^t (t-u)^{-(\alpha +1)/2-\varepsilon /4} \left\| P_N^{(1)} X^{N,(2)}_u - \lambda P_N^{(1)} {\mathcal Z}^{(0,3,N)}_u \right\| _{L^p}du 
\end{align*}

\item \label{lem:global2-1-2} For $\gamma \in (0,1/4)$, $\eta \in [0,1)$, $p\in [1,4/3]$, $\varepsilon \in (0,1/16)$, $\alpha \in [0,2]$, $\beta \in {\mathbb R}$ and $\theta \in (0,1]$
\begin{align*}
& \left\| X^{N,(2),\geqslant}_t \right\| _{B_{p}^{\alpha}} \\
& \leq C (t-s)^{-\beta} \left\| X^{N,(2),\geqslant}_s \right\| _{B_{p}^{\alpha -2\beta}} \\
& \quad + \delta \int _0^t (t-u)^{-(2\alpha +1 + 2\varepsilon )/4}  \left( \left\| \nabla X_u^{N,(2),\geqslant}\right\| _{L^2}^2 + \left\| X_u^{N,(2)}\right\| _{L^2}^2 + \lambda \left\| P_N^{(1)} X_u^{N,(2)}\right\| _{L^4}^4 \right) ^{7/8} du \\
& \quad + \delta \int _s^t (t-u)^{-(2\alpha +1 + 2\varepsilon )/4}  \left\| P_N^{(1)} X^{N,(2),<}_u \right\| _{B_{2}^{1-\varepsilon }}^{7/4} du \\
& \quad + \delta \int _s^t (t-u)^{-\alpha /2} \left\| P_N^{(1)} X^{N,(2),\geqslant}_u \right\| _{B_{p}^{1+\varepsilon }}^{5/6} du \\
& \quad + \lambda Q \int _s^t (t-u)^{-(\alpha -\varepsilon )/2} \left( \sup _{r\in [s,u)} \frac{r^\eta \left\| P_N^{(1)} X^{N,(2)}_u - P_N^{(1)} X^{N,(2)}_r \right\| _{L^{p}}}{(u-r)^{\gamma }} \right) ^{\theta} \\
& \quad \hspace{0.5cm} \times \left(\| P_N^{(1)} X^{N,(2)}_u \| _{L^{p}}^{1-\theta} + \int _0^u r^{-\eta \theta}(u-r)^{\gamma \theta -1 -3\varepsilon /2} \left\| P_N^{(1)} X^{N,(2)}_r \right\| _{L^{p}} ^{1-\theta} dr \right) du + C_\delta Q.
\end{align*}

\item \label{lem:global2-1-3} For $\gamma \in (0,1/4)$, $\eta \in [0,1)$, $\varepsilon \in (0,1/16)$, $p\in [1,4/3]$, $\alpha \in [0,2/3]$, $\beta \in {\mathbb R}$ and $\theta \in (0,1]$
\begin{align*}
& \left\| X^{N,(2),\geqslant}_t \right\| _{B_{p}^{\alpha}} \\
& \leq C (t-s)^{-\beta} \left\| X^{N,(2),\geqslant}_s \right\| _{B_{p}^{\alpha -2\beta}} \\
& \quad + \delta \int _0^t (t-u)^{-\alpha /2} \left( \left\| \nabla X_u^{N,(2),\geqslant}\right\| _{L^2}^2+ \left\| X_u^{N,(2)}\right\| _{L^2}^2  + \lambda \left\| P_N^{(1)} X_u^{N,(2)}\right\| _{L^4}^4 \right) ^{7/8} du \\
& \quad + \delta \int _s^t (t-u)^{-\alpha /2} \left\| P_N^{(1)} X^{N,(2),<}_u \right\| _{B_{2}^{1-\varepsilon }}^{7/4} du + \delta \int _s^t (t-u)^{-\alpha /2} \left\| P_N^{(1)} X^{N,(2),\geqslant}_u \right\| _{B_{p}^{1+\varepsilon }}^{5/6} du \\
& \quad + \lambda Q \int _s^t (t-u)^{-(\alpha -\varepsilon )/2} \left( \sup _{r\in [s,u)} \frac{r^\eta \left\| P_N^{(1)} X^{N,(2)}_u - P_N^{(1)} X^{N,(2)}_r \right\| _{L^{p}}}{(u-r)^{\gamma }} \right) ^{\theta} \\
& \quad \hspace{0.5cm} \times \left(\| P_N^{(1)} X^{N,(2)}_u \| _{L^{p}}^{1-\theta} + \int _0^u r^{-\eta \theta}(u-r)^{\gamma \theta -1 -3\varepsilon /2} \left\| P_N^{(1)} X^{N,(2)}_r \right\| _{L^{p}} ^{1-\theta} dr \right) du + C_\delta Q.
\end{align*}

\end{enumerate}
\end{lem}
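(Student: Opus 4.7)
The overarching strategy is to write each of $X^{N,(2),<}$ and $X^{N,(2),\geqslant}$ in mild (Duhamel) form by integrating the paracontrolled system (\ref{PDEpara2}) against the semigroup $\{e^{t(\triangle-m_0^2)}\}$, and then to control each forcing term in a suitable Besov norm using exactly the pointwise-in-time estimates already established in Lemmas \ref{lem:estcom}, \ref{lem:Phi1}, \ref{lem:Phi2}, \ref{lem:Phi3}, and \ref{lem:=Z2}. The semigroup cost $(t-u)^{-\beta}$ of moving from one Besov regularity to another comes uniformly from Proposition \ref{prop:paraproduct}\ref{prop:paraproduct8}, and the spectral cutoff $P_N^{(1)}$ is handled by Proposition \ref{prop:bddp}; the first term $e^{(t-s)(\triangle-m_0^2)} X^{N,(2),\bullet}_s$ in each mild formula produces the $C(t-s)^{-\beta} \|X^{N,(2),\bullet}_s\|_{B_p^{\alpha-2\beta}}$ contribution in all three cases.

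For part \ref{lem:global2-1-1}, the only forcing term for $X^{N,(2),<}$ is $-3\lambda P_N^{(1)}[(P_N^{(1)} X^{N,(2)}_u - \lambda {\mathcal Z}^{(0,3,N)}_u) \mbox{\textcircled{\scriptsize$<$}} {\mathcal Z}^{(2,N)}_u]$. I would apply Proposition \ref{prop:paraproduct}\ref{prop:paraproduct3} to put the paraproduct in $B_p^{-1-\varepsilon/4}$, using $\|{\mathcal Z}^{(2,N)}_u\|_{B_\infty^{-1-\varepsilon/24}}$ (a $Q$-quantity) together with $\|P_N^{(1)} X^{N,(2)}_u - \lambda {\mathcal Z}^{(0,3,N)}_u\|_{L^p}$, then invoke Proposition \ref{prop:paraproduct}\ref{prop:paraproduct8} to promote it to $B_p^\alpha$ at cost $(t-u)^{-(\alpha+1)/2-\varepsilon/4}$, absorbing the constant part of ${\mathcal Z}^{(0,3,N)}$ into $Q$. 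This yields the stated inequality directly.

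For parts \ref{lem:global2-1-2} and \ref{lem:global2-1-3}, the nonlinearity for $X^{N,(2),\geqslant}$ in (\ref{PDEpara2}) splits into seven pieces: the cubic $(P_N^{(1)} X^{N,(2)})^3$, the three $\Phi^{(i)}$ terms, the resonance $(P_N^{(1)} X^{N,(2),\geqslant}_u)\mbox{\textcircled{\scriptsize$=$}}{\mathcal Z}^{(2,N)}_u$, the commutator term $\Psi^{(1)}\mbox{\textcircled{\scriptsize$=$}}{\mathcal Z}^{(2,N)}$, and $\Psi^{(2)}$. Each is bounded by its companion lemma: the cubic by Hölder ($\|f^3\|_{L^{4/3}} \le \|f\|_{L^4}^3$) combined with the elementary $x^{3/4}\le 1+x^{7/8}$ to match the $7/8$ exponent coming from the other terms; $\Phi^{(1)}$ by Lemma \ref{lem:Phi1}; $\Phi^{(2)}$ by Lemma \ref{lem:Phi2}; $\Phi^{(3)}$ by Lemma \ref{lem:Phi3}; the resonance by Lemma \ref{lem:=Z2}; and $\Psi^{(1)}\mbox{\textcircled{\scriptsize$=$}}{\mathcal Z}^{(2,N)}$ and $\Psi^{(2)}$ by the two bounds in Lemma \ref{lem:estcom} (from which the $\theta$-parameterised $L^p$/Hölder decomposition on the last integrand is inherited verbatim). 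After bounding each term in its natural negative Besov index, Proposition \ref{prop:paraproduct}\ref{prop:paraproduct8} and Proposition \ref{prop:bddp} convert these into the $(t-u)$-singular integrals that appear in the statement.

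The main technical obstacle will be bookkeeping: verifying that for every term the time-singularity exponent produced by the semigroup is strictly less than $1$ (so that Lemma \ref{lem:timeint} keeps integrals finite) and that the Besov exponent chosen for the control of each forcing term falls within the range for which the relevant lemma applies. The formal difference between \ref{lem:global2-1-2} and \ref{lem:global2-1-3} is precisely the choice of $L^{4/3}$-style versus $B_{4/3}^{\varepsilon/8}$-style intermediate bounds: in \ref{lem:global2-1-2} one pays the larger semigroup exponent $(2\alpha+1+2\varepsilon)/4$, which buys the wider window $\alpha\in[0,2]$ via a Sobolev downgrade when $p<4/3$; in \ref{lem:global2-1-3} one uses the smaller exponent $\alpha/2$ but is restricted to $\alpha\in[0,2/3]$ so that the integrand remains locally integrable without such a downgrade. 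Once these two choices are made consistently, collecting the bounds and invoking Young's inequality ($C_\delta$ absorbing the interpolation constants) gives each displayed estimate.
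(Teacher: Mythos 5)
Your overall strategy tracks the paper's proof closely: write $X^{N,(2),<}_t$ and $X^{N,(2),\geqslant}_t$ in mild form, use Proposition \ref{prop:paraproduct}\ref{prop:paraproduct8} to turn the semigroup into the factor $(t-u)^{-\beta}$ (and $(t-s)^{-\beta}$ for the initial-data term), control $P_N^{(1)}$ uniformly via Proposition \ref{prop:bddp}, and then bound the seven forcing pieces of (\ref{PDEpara2}) using Lemmas \ref{lem:estcom}, \ref{lem:Phi1}, \ref{lem:Phi2}, \ref{lem:Phi3} and \ref{lem:=Z2}, together with $\left\|(P_N^{(1)}X^{N,(2)}_u)^k\right\|_{L^p}\leq C\left\|P_N^{(1)}X^{N,(2)}_u\right\|_{L^4}^k$ for $k=2,3$ and (\ref{eq:lemLpestimates1-1}). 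That is exactly the argument in the paper.

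Where you go astray is the explanation of the split between \ref{lem:global2-1-2} and \ref{lem:global2-1-3}. There is no choice between an ``$L^{4/3}$-style'' and a ``$B_{4/3}^{\varepsilon/8}$-style'' intermediate bound, and no Sobolev downgrade: in \emph{both} parts $\Phi^{(2)}_u$ and $\Phi^{(3)}_u$ are measured in the same norm $B_p^{-1/2-\varepsilon}$, so the semigroup weight produced by Proposition \ref{prop:paraproduct}\ref{prop:paraproduct8} is $(t-u)^{-(2\alpha+1+2\varepsilon)/4}$ in both cases. The only difference is how the free small parameter in Lemmas \ref{lem:Phi2} and \ref{lem:Phi3} is tuned. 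For \ref{lem:global2-1-2} one applies those lemmas with a fixed $\delta$, so the full weight $(t-u)^{-(2\alpha+1+2\varepsilon)/4}$ stays in front of the energy term. For \ref{lem:global2-1-3} one first substitutes $\delta \mapsto \delta(t-u)^{(1+2\varepsilon)/4}$; this cancels the factor $(t-u)^{-(1+2\varepsilon)/4}$ against the semigroup weight and produces the softer $(t-u)^{-\alpha/2}$ on the energy contribution, but the $\delta^{-16/19}$ and $\delta^{-4/3}$ factors in the remainders of Lemmas \ref{lem:Phi2}, \ref{lem:Phi3} now generate additional powers of $(t-u)$. The restriction $\alpha \in [0,2/3]$ in \ref{lem:global2-1-3} is precisely what keeps the combined exponent of these remainder singularities strictly below $1$, so that the corresponding integrals are finite and can be absorbed into $C_\delta Q$. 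Once this substitution mechanism is identified, the rest of your bookkeeping goes through as you describe.
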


\begin{proof}
Similarly to the beginning of the proof of Proposition \ref{prop:global2-3} we have for $s,t\in [0,T]$ such that $s<t$
\begin{align*}
\left\| X^{N,(2),<}_t \right\| _{B_{p}^{\alpha}}
&\leq C (t-s)^{-\beta} \left\| X^{N,(2),<}_s \right\| _{B_{p}^{\alpha - 2\beta }} \\
&\quad + C \int _s^t (t-u)^{-(\alpha +1 )/2 -\varepsilon /4} \left\| \left( P_N^{(1)} X^{N,(2)}_u - \lambda P_N^{(1)} {\mathcal Z}^{(0,3,N)}_u \right) \mbox{\textcircled{\scriptsize$<$}} {\mathcal Z}^{(2,N)}_u \right\| _{B_{p}^{-1-\varepsilon /2}} du .
\end{align*}
Therefore Proposition \ref{prop:paraproduct} yields \ref{lem:global2-1-1}. 

Similarly to above, from (\ref{PDEpara2}) and Propositions \ref{prop:paraproduct} and \ref{prop:bddp} we have
\begin{align*}
\left\| X^{N,(2),\geqslant}_t \right\| _{B_{p}^{\alpha}}
&\leq C (t-s)^{-\beta} \left\| X^{N,(2),\geqslant}_s \right\| _{B_{p}^{\alpha - 2\beta }} + C\int _s^t (t-u)^{-\alpha /2} \left\| \left( P_N^{(1)} X^{N,(2)}_u \right) ^3 \right\| _{L^{p}} du \\
&\quad + C\int _s^t (t-u)^{-\alpha /2} \left\| \Phi _u^{(1)}( P_N^{(1)} X^{N,(2)} ) \right\| _{L^p} du \\
&\quad + C\int _s^t (t-u)^{-(2\alpha +1 + 2\varepsilon )/4} \left\| \Phi _u ^{(2)} ( P_N^{(1)} X^{N,(2)}) \right\| _{B_{p}^{-1/2-\varepsilon}} du \\
&\quad + C\int _s^t (t-u)^{-(2\alpha +1 + 2\varepsilon )/4} \left\| \Phi _u ^{(3)} ( P_N^{(1)} X^{N,(2)}) \right\| _{B_{p}^{-1/2-\varepsilon}} du \\
&\quad + C\int _s^t (t-u)^{-\alpha /2}\left\| ( P_N^{(1)} X^{N,(2)}_u) \mbox{\textcircled{\scriptsize$=$}} {\mathcal Z}^{(2,N)}_u \right\| _{L^p} du \\
&\quad + C\int _s^t (t-u)^{-(\alpha -\varepsilon )/2} \left\| \Psi _u^{(1)} ( P_N^{(1)} X^{N,(2)} ) \mbox{\textcircled{\scriptsize$=$}} {\mathcal Z}^{(2,N)}_u \right\| _{B_{p}^{\varepsilon}} du \\
&\quad + C\int _s^t (t-u)^{-(\alpha -\varepsilon )/2} \left\| \Psi_u^{(2)} ( P_N^{(1)} X^{N,(2)} ) \right\| _{B_{p}^{\varepsilon}} du .
\end{align*}
Hence, by the fact that 
\[
\left\| \left( P_N^{(1)} X^{N,(2)}_u \right) ^k \right\| _{L^{p}} = \left\| P_N^{(1)} X^{N,(2)}_u \right\| _{L^{kp}}^k \leq C \left\| P_N^{(1)} X^{N,(2)}_u \right\| _{L^{4}}^k
\]
for $k=2,3$, and Lemmas \ref{lem:timeint}, \ref{lem:estcom}, \ref{lem:Phi1} and \ref{lem:=Z2}, and Lemmas \ref{lem:Phi2} and \ref{lem:Phi3} with and without replacing $\delta$ by $\delta (t-u)^{(1+2\varepsilon )/4}$, we obtain \ref{lem:global2-1-2} and \ref{lem:global2-1-3}.
\end{proof}

\begin{prop}\label{prop:global2-1exp}
For $\gamma \in (0,1/8)$, $\eta \in [0,1)$, $\varepsilon \in (0,\gamma /2)$, $q\in (1, 8/7) $, $t\in [0,T]$ and $\delta \in (0,1]$, 
\[
E\left[ {\mathfrak Y}_{\varepsilon}^N (t) ^{q} \right] \leq C E\left[ \left\| X^{N,(2)}_0 \right\| _{B_{4/3}^{-1+2\gamma + 3\varepsilon }} ^{q} \right] + C \delta E\left[ {\mathfrak X}_{\lambda , \eta ,\gamma}^N (t) \right] + C_\delta .
\]
\end{prop}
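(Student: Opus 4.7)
The strategy is to derive pointwise-in-time estimates for $\|X_s^{N,(2),<}\|_{B_4^{1-\varepsilon}}$ and $\|X_s^{N,(2),\geqslant}\|_{B_{4/3}^{1+\varepsilon}}$ from Lemma~\ref{lem:global2-1} started at $s=0$, integrate them against the relevant kernels, raise to the $q$-th power, take the expectation, and exploit $q<8/7$ (equivalently $7q/8<1$) to close the estimate via Young's inequality. The fact that $X_0^{N,(2),<}=0$ is crucial: it kills the semigroup term in Lemma~\ref{lem:global2-1}\ref{lem:global2-1-1}, and it yields $X_0^{N,(2),\geqslant}=X_0^{N,(2)}$, so the semigroup term in Lemma~\ref{lem:global2-1}\ref{lem:global2-1-2} produces exactly the initial-data term in the statement.

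For the first summand of ${\mathfrak Y}_\varepsilon^N$, I would apply Lemma~\ref{lem:global2-1}\ref{lem:global2-1-1} with $p=4$, $\alpha=1-\varepsilon$, $\beta=0$, $s=0$. The convolution kernel $(t-u)^{-1+\varepsilon/4}$ lies in $L^1(0,T)$, so Young's convolution inequality followed by H\"older's $\int_0^T f^3\leq T^{1/4}(\int_0^T f^4)^{3/4}$ gives
\[
\int_0^t \|X_s^{N,(2),<}\|_{B_4^{1-\varepsilon}}^3\,ds \leq CT^{1/4}Q\Bigl(\int_0^t\|P_N^{(1)}X_u^{N,(2)}\|_{L^4}^4 du\Bigr)^{3/4}+Q \leq CQ\,{\mathfrak X}_{\lambda,\eta,\gamma}^N(t)^{3/4}+Q .
\]
For the second summand, I would apply Lemma~\ref{lem:global2-1}\ref{lem:global2-1-2} with $p=4/3$, $\alpha=1+\varepsilon$, $s=0$, $\beta=1-\gamma-\varepsilon$ (so that $\alpha-2\beta=-1+2\gamma+3\varepsilon$, matching the statement), and a parameter $\theta\in(3/4,5/6)$ chosen so that $\gamma\theta>3\varepsilon/2$ (integrability of the inner kernel $(u-r)^{\gamma\theta-1-3\varepsilon/2}$, using $\varepsilon<\gamma/2$) and $(3\theta+1)q/4<1$ (needed in the final step). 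The semigroup term is $Ct^{-(1-\gamma-\varepsilon)}\|X_0^{N,(2)}\|_{B_{4/3}^{-1+2\gamma+3\varepsilon}}$, integrable at $0$ in $t$. After Fubini (every remaining kernel $(t-u)^{-(3+4\varepsilon)/4}$ or $(t-u)^{-(1+\varepsilon)/2}$ is in $L^1(0,T)$), each of the three $\delta$-integrals contributes at worst a $C\,{\mathfrak X}^{7/8}$-term: the $\bigl(\|\nabla X^{\geqslant}\|_{L^2}^2+\lambda\|P_NX^{N,(2)}\|_{L^4}^4\bigr)^{7/8}$-integral directly by $\int f^{7/8}\leq T^{1/8}(\int f)^{7/8}$, the $\|X^{N,(2),<}\|_{B_2^{1-\varepsilon}}^{7/4}$-integral by a parallel use of Lemma~\ref{lem:global2-1}\ref{lem:global2-1-1} with $p=2$ reducing it to $(\|X^{N,(2)}\|_{L^2}^2)^{7/8}$, and the self-consistency term $\|X^{\geqslant}\|_{B_{4/3}^{1+\varepsilon}}^{5/6}$ absorbed on the left by Young. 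Finally, the H\"older-seminorm term with the chosen $\theta$ and $\|\cdot\|_{L^{4/3}}\leq C\|\cdot\|_{L^4}$ gives $Q\,H^\theta\,{\mathfrak X}^{(1-\theta)/4}\leq Q\,{\mathfrak X}^{(3\theta+1)/4}$, where $H\leq{\mathfrak X}_{\lambda,\eta,\gamma}^N$ is the H\"older seminorm.

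Combining these pathwise bounds and raising to the $q$-th power yields
\[
{\mathfrak Y}_\varepsilon^N(t)^q \leq C\|X_0^{N,(2)}\|_{B_{4/3}^{-1+2\gamma+3\varepsilon}}^q + CQ^q\,{\mathfrak X}_{\lambda,\eta,\gamma}^N(t)^{\kappa q}+CQ^q ,
\]
with $\kappa:=\max\{3/4,(3\theta+1)/4,7/8\}<1/q$. Taking expectations and applying H\"older with conjugate exponents $a=(1-\kappa q)^{-1}$, $b=(\kappa q)^{-1}$, both finite since $\kappa q<1$, together with $E[Q^{qa}]<\infty$ from Proposition~\ref{prop:Z}, yields $E\bigl[Q^q\,({\mathfrak X}^N)^{\kappa q}\bigr]\leq C\,\bigl(E[{\mathfrak X}^N]\bigr)^{\kappa q}$. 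A final Young step $x^{\kappa q}\leq\delta x+C_\delta$ (valid because $\kappa q<1$) converts this to $\delta E[{\mathfrak X}_{\lambda,\eta,\gamma}^N]+C_\delta$, which, after relabelling $\delta$, is the claim.

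The main obstacle is the delicate choice of $\theta\in(3/4,5/6)$ for the H\"older-seminorm term: it must simultaneously ensure $\gamma\theta>3\varepsilon/2$ (kernel integrability, which exploits $\varepsilon<\gamma/2$) and $(3\theta+1)q/4<1$ (sub-linearity), and it is precisely this balance that forces the hypothesis $q<8/7$ (the limit $\theta\to 5/6$ corresponds to $q\to 8/7$). A secondary, mainly bookkeeping, subtlety is that the $\|X^{N,(2),<}\|_{B_2^{1-\varepsilon}}^{7/4}$ term has to be reduced to the $L^2$-quadratic integral of ${\mathfrak X}_{\lambda,\eta,\gamma}^N$ via an additional application of Lemma~\ref{lem:global2-1}\ref{lem:global2-1-1} with $p=2$, rather than to the $L^4$-quartic integral, to keep the final exponent below $1$.
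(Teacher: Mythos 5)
Your proof is correct and follows the paper's own blueprint: apply Lemma~\ref{lem:global2-1} started at $s=0$ to get pointwise-in-time bounds for $X^{N,(2),<}$ and $X^{N,(2),\geqslant}$, integrate in time and unfold the iterated integrals via Lemma~\ref{lem:timeint}, then close by Young, H\"older and a self-consistency absorption before passing to expectations. The bookkeeping choices that differ from the paper (using Lemma~\ref{lem:global2-1}\ref{lem:global2-1-1} directly for the first summand rather than the pre-packaged Lemma~\ref{lem:v3}; absorbing the $\|X^{N,(2),\geqslant}\|_{B_{4/3}^{1+\varepsilon}}^{5/6}$-term pathwise rather than as a $\delta E[{\mathfrak Y}_\varepsilon^N(t)^q]$ term) are immaterial.

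The one substantive departure is the interpolation exponent $\theta$ in Lemma~\ref{lem:global2-1}\ref{lem:global2-1-2}. The paper fixes $\theta = 1/2$, visible in the $\bigl(\sup\ldots\bigr)^{1/2}$ and $(u-r)^{\gamma/2-1-3\varepsilon/2}$ factors of (\ref{eq:propglobal2-1exp-01}); that inner kernel is locally integrable, and Lemma~\ref{lem:timeint} applicable, only under $\gamma > 3\varepsilon$, i.e.\ $\varepsilon < \gamma/3$, which is strictly narrower than the stated hypothesis $\varepsilon < \gamma/2$. Your larger $\theta$ removes this gap and covers the full stated range of $\varepsilon$---a genuine, if inadvertent, tightening of the argument. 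Two small inaccuracies in your commentary, neither affecting the conclusion: the lower bound $3/4$ on $\theta$ is over-restrictive (only $\theta > 3\varepsilon/(2\gamma)$ is needed, and this can be well below $3/4$); and the $q<8/7$ hypothesis is in fact forced by the $\bigl(\|\nabla X^{N,(2),\geqslant}\|_{L^2}^2 + \lambda\|P_N^{(1)} X^{N,(2)}\|_{L^4}^4\bigr)^{7/8}$ energy term, which yields ${\mathfrak X}_{\lambda,\eta,\gamma}^N(t)^{7/8}$ in both proofs, rather than specifically by your H\"older-seminorm term, whose exponent $(3\theta+1)/4$ can be kept strictly below $7/8$ by taking $\theta$ smaller whenever $\varepsilon$ is small. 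Finally, your choice $\beta = 1-\gamma-\varepsilon$ reproduces the Besov index $-1+2\gamma+3\varepsilon$ in the statement exactly; the paper's own proof lands on the slightly different index $-1+3\varepsilon+2\gamma/3$.
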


\begin{proof}
By Lemmas \ref{lem:timeint} and \ref{lem:global2-1}\ref{lem:global2-1-2} we have for $t\in [0,T]$
\begin{equation}\label{eq:propglobal2-1exp-01}\begin{array}{l}
\displaystyle \int_0^t \left\| X^{N,(2),\geqslant}_u \right\| _{B_{4/3}^{1+\varepsilon}} du \\
\displaystyle \leq C \left\| X^{N,(2),\geqslant}_0 \right\| _{B_{4/3}^{-1+3\varepsilon}} + C \delta \int _0^t \left( \left\| \nabla X_u^{N,(2),\geqslant}\right\| _{L^2}^2 + \left\| X_u^{N,(2)}\right\| _{L^2}^2 + \lambda \left\| P_N^{(1)} X_u^{N,(2)}\right\| _{L^4}^4 \right) ^{7/8} du \\
\displaystyle \quad + C \delta \int _0^t \left\| P_N^{(1)} X^{N,(2),<}_u \right\| _{B_{2}^{1-\varepsilon }}^{7/4} du + C \delta \int _0^t \left\| P_N^{(1)} X^{N,(2),\geqslant}_u \right\| _{B_{4/3}^{1+\varepsilon }}^{5/6} du \\
\displaystyle \quad + \lambda Q \int _0^t \left( \sup _{r\in [0,u)} \frac{r^\eta \left\| P_N^{(1)} X^{N,(2)}_u - P_N^{(1)} X^{N,(2)}_r \right\| _{L^{4/3}}}{(u-r)^{\gamma }} \right) ^{1/2} \\
\displaystyle \quad \hspace{1cm} \times \left(\| P_N^{(1)} X^{N,(2)}_u \| _{L^{4/3}}^{1/2} + \int _0^u r^{-\eta /2}(u-r)^{\gamma /2 - 1 -3\varepsilon /2} \left\| P_N^{(1)} X^{N,(2)}_r \right\| _{L^{4/3}} ^{1/2} dr \right) du + C_\delta Q.
\end{array}\end{equation}
On the other hand, by Lemma \ref{lem:timeint}
\begin{align*}
&\int _0^t\left( \int _0^u r^{-\eta /2} (u-r)^{\gamma /2 - 1 -3\varepsilon /2} \left\| P_N^{(1)} X^{N,(2)}_r \right\| _{L^{4/3}} ^{1/2} dr \right) du \\
&\leq C \int _0^t r^{-\eta /2} (t-r)^{\gamma /2 -3\varepsilon /2} \left\| P_N^{(1)} X^{N,(2)}_r \right\| _{L^{4/3}} ^{1/2} dr \leq C \left( \int _0^t \left\| P_N^{(1)} X^{N,(2)}_r \right\| _{L^4}^4 dr \right) ^{1/8} .
\end{align*}
This inequality and (\ref{eq:lemLpestimates1-1}) imply
\begin{align*}
&Q \int _0^t \left( \sup _{r\in [0,u)} \frac{r^\eta \left\| P_N^{(1)} X^{N,(2)}_u - P_N^{(1)} X^{N,(2)}_r \right\| _{L^{4/3}}}{(u-r)^{\gamma }} \right) ^{1/2} \\
&\quad \hspace{1cm} \times \left(\| P_N^{(1)} X^{N,(2)}_u \| _{L^{4/3}}^{1/2} + \int _0^u r^{-\eta /2} (u-r)^{\gamma /2 - 1 -3\varepsilon /2} \left\| P_N^{(1)} X^{N,(2)}_r \right\| _{L^{4/3}} ^{1/2} dr \right) du \\
&\leq C Q \left( \sup _{s',t'\in [0,t]; s'<t'} \frac{(s')^\eta \left\| P_N^{(1)} X^{N,(2)}_{t'} - P_N^{(1)} X^{N,(2)}_{s'} \right\| _{L^{4/3}}}{(t'-s')^{\gamma }} \right) ^{1/2} \left( \int _0^t \left\| P_N^{(1)} X^{N,(2)}_r \right\| _{L^4}^4 dr \right) ^{1/8} \\
&\leq \delta \left( \sup _{s',t'\in [0,t]; s'<t'} \frac{(s')^\eta \left\| P_N^{(1)} X^{N,(2)}_{t'} - P_N^{(1)} X^{N,(2)}_{s'} \right\| _{L^{4/3}}}{(t'-s')^{\gamma }} \right) ^{7/8} + \delta \left( \int _0^t \left\| P_N^{(1)} X^{N,(2)}_r \right\| _{L^4}^4 dr \right) ^{7/8} \\
&\quad + C_\delta Q .
\end{align*}
Hence, (\ref{eq:propglobal2-1exp-01}) yields
\begin{equation}\label{eq:propglobal2-1exp-02}\begin{array}{l}
\displaystyle \int_0^t \left\| X^{N,(2),\geqslant}_u \right\| _{B_{4/3}^{1+\varepsilon}} du \\
\displaystyle \leq C \left\| X^{N,(2),\geqslant}_0 \right\| _{B_{4/3}^{-1+3\varepsilon}} \\
\displaystyle + C \delta \int _0^t \left( \left\| \nabla X_u^{N,(2),\geqslant}\right\| _{L^2}^2 + \left\| X_u^{N,(2)}\right\| _{L^2}^2 + \lambda \left\| P_N^{(1)} X_u^{N,(2)}\right\| _{L^4}^4 \right) ^{7/8} du \\
\displaystyle \quad + \delta \left( \sup _{s',t'\in [0,t]; s'<t'} \frac{(s')^\eta \left\| P_N^{(1)} X^{N,(2)}_{t'} - P_N^{(1)} X^{N,(2)}_{s'} \right\| _{L^{4/3}}}{(t'-s')^{\gamma }} \right) ^{7/8} \\
\displaystyle \quad + \delta \int _0^t \left( \left\| P_N^{(1)} X^{N,(2),<}_u \right\| _{B_{2}^{1-\varepsilon }}^{7/4} + \left\| P_N^{(1)} X^{N,(2),\geqslant}_u \right\| _{B_{4/3}^{1+\varepsilon }} \right) du + C_\delta Q.
\end{array}\end{equation}
Thus, from this inequality and Lemma \ref{lem:v3} we get
\[
E\left[ {\mathfrak Y}_{\varepsilon}^N (t) ^{q} \right] \leq C E\left[ \left\| X^{N,(2),\geqslant}_0 \right\| _{B_{4/3}^{-1 +3\varepsilon +2\gamma /3}} ^q \right] + C \delta E\left[ {\mathfrak X}_{\lambda ,\eta ,\gamma}^N (t) \right] + \delta E\left[ {\mathfrak Y}_{\varepsilon}^N (t) ^{q} \right] + C_\delta .
\]
By taking $\delta$ sufficiently small, we obtain the desired inequality.
\end{proof}

\begin{prop}\label{prop:estimate10}
For $\gamma \in (0,1/8)$, $\eta \in [0,1)$, $\varepsilon \in (0, \gamma /2]$, $q\in (1, 8/7) $, $t\in ( 0,T]$ and $\delta \in (0,1]$, 
\[
\sup _{s\in [0,t]} E\left[ \left\| X_s^{N,(2)} \right\| _{B_1^{-1/2+\varepsilon}} ^q\right] \leq \delta E\left[{\mathfrak X}_{\lambda ,\eta ,\gamma}^N (T) \right] + C_\delta .
\]
\end{prop}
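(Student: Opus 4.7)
The idea is to reduce the bound to a statement about the auxiliary process $X^{N,(1)}_s := X^N_s - P_N^{(2)}Z_s$. Since $(X^N_s, Z_s)_{s\geq 0}$ is stationary by construction of $(\xi_N,\zeta)$, the marginal law of $X^{N,(1)}_s$ does not depend on $s$. By contrast, the paper does not assert stationarity of $X^{N,(2)}$ itself (indeed, the proof of Proposition \ref{prop:global2-2exp} explicitly allows for a cross-term error), so working directly with $X^{N,(2)}$ would be problematic. Writing $X^{N,(2)}_s = X^{N,(1)}_s + \lambda\mathcal{Z}^{(0,3,N)}_s$, the triangle inequality together with Proposition \ref{prop:Z}\ref{prop:Z03} and the inclusion $B_\infty^{1/2-\varepsilon}\subset B_1^{-1/2+\varepsilon}$ (valid on the torus since $\varepsilon\leq 1/2$) reduces the task to estimating $E[\|X^{N,(1)}_s\|_{B_1^{-1/2+\varepsilon}}^q]$ uniformly in $s$ and $N$.

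The key step uses the embedding $L^2 \hookrightarrow L^1 \hookrightarrow B_1^0 \hookrightarrow B_1^{-1/2+\varepsilon}$ on the torus, which gives $\|X^{N,(1)}_s\|_{B_1^{-1/2+\varepsilon}} \leq C\|X^{N,(1)}_s\|_{L^2}$. By stationarity,
\[
E\left[\|X^{N,(1)}_s\|_{L^2}^q\right] = \frac{1}{T}\int_0^T E\left[\|X^{N,(1)}_u\|_{L^2}^q\right]du.
\]
Since $q<2$, pointwise Jensen gives $E[\|X^{N,(1)}_u\|_{L^2}^q]\leq (E[\|X^{N,(1)}_u\|_{L^2}^2])^{q/2}$, and a second Jensen applied to the time integral bounds $\int_0^T (E[\|X^{N,(1)}_u\|_{L^2}^2])^{q/2}du$ by $T^{1-q/2}(\int_0^T E[\|X^{N,(1)}_u\|_{L^2}^2]\,du)^{q/2}$. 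Using $\|X^{N,(1)}_u\|_{L^2}^2 \leq 2\|X^{N,(2)}_u\|_{L^2}^2 + 2\lambda^2\|\mathcal{Z}^{(0,3,N)}_u\|_{L^2}^2$ together with the definition of ${\mathfrak X}^N_{\lambda,\eta,\gamma}$ and the embedding $B_\infty^{1/2-\varepsilon}\hookrightarrow L^\infty\hookrightarrow L^2$ combined with Proposition \ref{prop:Z}\ref{prop:Z03} then yields
\[
\int_0^T E\left[\|X^{N,(1)}_u\|_{L^2}^2\right]du \leq 2E\left[{\mathfrak X}^N_{\lambda,\eta,\gamma}(T)\right] + C.
\]

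Collecting the above gives $\sup_{s\in[0,t]}E[\|X^{N,(2)}_s\|_{B_1^{-1/2+\varepsilon}}^q] \leq CE[{\mathfrak X}^N_{\lambda,\eta,\gamma}(T)]^{q/2} + C'$. Since $q/2<1$, a final application of Young's inequality (in the form $a^{q/2}\leq \delta a + C_\delta$) converts this into $\delta E[{\mathfrak X}^N_{\lambda,\eta,\gamma}(T)] + C_\delta$, as required. The conceptually important point — the closest thing to an obstacle — is the choice of decomposition of $X^{N,(2)}$: the paracontrolled splitting $X^{N,(2),<} + X^{N,(2),\geqslant}$ would force estimating the initial datum $X^{N,(2),\geqslant}_0 = X^{N,(2)}_0$ at a single time, while the naive decomposition $X^N - P_N^{(2)}Z + \lambda\mathcal{Z}^{(0,3,N)}$ would require bounding $X^N$ or $P_N^{(2)}Z$ in a norm where they are not uniformly bounded in $N$. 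Isolating the stationary combination $X^{N,(1)} = X^N - P_N^{(2)}Z$ avoids both difficulties simultaneously.
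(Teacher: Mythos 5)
Your proof is correct and takes essentially the same route as the paper's: decompose $X^{N,(2)}_s$ as the stationary process $X^N_s - P_N^{(2)}Z_s$ plus $\lambda\mathcal{Z}^{(0,3,N)}_s$, replace the marginal at time $s$ by the time average over $[0,T]$, pass to the $L^2$-norm via Besov embedding, and convert the $q$-power to a $2$-power so as to match the energy functional ${\mathfrak X}^N_{\lambda,\eta,\gamma}$. The only cosmetic difference is that you route through two Jensen inequalities to reach $\bigl(\int_0^T E[\|X^{N,(1)}_u\|_{L^2}^2]\,du\bigr)^{q/2}$ before applying Young, whereas the paper applies the Young-type inequality (\ref{eq:lemLpestimates1-1}) directly under both the expectation and the time integral in a single step.
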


\begin{proof}
By the stationarity of the pair $(X_t^N, Z_t)$, (\ref{eq:invXN}) and (\ref{eq:invZN}) we have for $t \in [0,T]$
\begin{align*}
E\left[ \left\| X_t^{N,(2)} \right\| _{B_1^{-1/2+\varepsilon}} ^q\right]
& \leq C E\left[ \left\| X_t^N - P_N^{(2)} Z_t  \right\| _{B_1^{-1/2+\varepsilon}} ^q\right] + C E\left[ \left\| {\mathcal Z}^{(0,3,N)}_t \right\| _{B_1^{-1/2+\varepsilon}} ^q\right] \\
&=  \frac{C}{T} \int _0^T E\left[ \left\| X_s^N - P_N^{(2)} Z_s \right\| _{B_1^{-1/2+\varepsilon}} ^q\right] ds + C \\
&\leq  C \int _0^T E\left[ \left\| X_s^{N,(2)} \right\| _{B_1^{-1/2+\varepsilon}} ^q\right] ds + C .
\end{align*}
Hence, by (\ref{eq:lemLpestimates1-1}) we have the assertion.
\end{proof}

\begin{prop}\label{prop:estsup}
For $\gamma \in (0,1/8)$, $\eta \in [0,1)$, $\varepsilon \in (0,\gamma /2)$, $q\in (1, 8/7) $, $t\in [0,T]$ and $\delta \in (0,1]$, we have
\begin{align*}
&E\left[ \sup _{r\in [0,t]} r^\eta \left\| X^{N,(2),<}_{r} \right\| _{B_{4}^{2\gamma }}^3 \right] + E\left[ \sup _{r\in [0,t]} r^\eta \left\| X^{N,(2),\geqslant}_{r} \right\| _{B_{4/3}^{2\gamma }} \right]\\
&\leq C E\left[ \left\| X^{N,(2)}_0 \right\| _{B_{4/3}^{2(\gamma - \eta )}} \right] + C \delta E\left[{\mathfrak X}_{\lambda , \eta ,\gamma}^N (t) \right] + C \delta E\left[ {\mathfrak Y}_{\varepsilon}^N (t) ^q \right] + C_\delta ,
\end{align*}
for some constants $C$ and $C_\delta$.
\end{prop}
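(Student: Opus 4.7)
The plan is to apply the mild-form representations of $X^{N,(2),<}$ and $X^{N,(2),\geqslant}$ coming from (\ref{PDEpara2}) to each of the two summands on the left-hand side separately, combine them with the smoothing estimates of Proposition~\ref{prop:paraproduct}\ref{prop:paraproduct8} and the paraproduct estimates of Proposition~\ref{prop:paraproduct}, and then absorb the resulting integrals into $\mathfrak{X}$ and $\mathfrak{Y}$ via H\"older and Young inequalities, much as in the proofs of Propositions~\ref{prop:global2-2exp}, \ref{prop:global2-3}, and \ref{prop:global2-1exp}.

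For the summand $E[\sup_{r\in[0,t]} r^\eta \|X^{N,(2),\geqslant}_r\|_{B_{4/3}^{2\gamma}}]$, I would apply Lemma~\ref{lem:global2-1}\ref{lem:global2-1-3} with $s=0$, $p=4/3$, $\alpha = 2\gamma$, $\beta = \eta$, and $\theta = 1/2$. The choice $\beta=\eta$ is exactly what makes $r^\eta\cdot r^{-\eta}=1$, so that the initial-data contribution becomes $\|X^{N,(2),\geqslant}_0\|_{B_{4/3}^{2(\gamma-\eta)}} = \|X^{N,(2)}_0\|_{B_{4/3}^{2(\gamma-\eta)}}$, matching the term in the statement. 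The remaining integral terms are of the forms
\begin{align*}
&\delta\int_0^r (r-u)^{-\gamma}\left(\|\nabla X_u^{N,(2),\geqslant}\|_{L^2}^2 + \lambda\|P_N^{(1)}X_u^{N,(2)}\|_{L^4}^4\right)^{7/8}du,\\
&\delta\int_0^r (r-u)^{-\gamma}\|P_N^{(1)}X_u^{N,(2),<}\|_{B_2^{1-\varepsilon}}^{7/4}du+\delta\int_0^r(r-u)^{-\gamma}\|P_N^{(1)}X_u^{N,(2),\geqslant}\|_{B_{4/3}^{1+\varepsilon}}^{5/6}du,
\end{align*}
plus a paracontrolled-commutator integral already handled in the proof of Proposition~\ref{prop:global2-1exp}. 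Since $\gamma<1/8<1$, all these kernels are integrable; multiplying by $r^\eta\le T^\eta$, taking the supremum in $r$ and then the expectation, and finally applying Young's inequality (together with the inequality $r^\eta\|\cdots\|\le T^\eta\|\cdots\|$ on the commutator term treated exactly as in Proposition~\ref{prop:global2-1exp}) yields the bound $C\delta E[\mathfrak{X}]+C\delta E[\mathfrak{Y}^q]+C_\delta$.

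For the summand $E[\sup_{r\in[0,t]} r^\eta \|X^{N,(2),<}_r\|_{B_4^{2\gamma}}^3]$, the initial-data contribution vanishes since $X^{N,(2),<}_0=0$. The direct analog of Lemma~\ref{lem:global2-1}\ref{lem:global2-1-1} with $p=4$ (which is established by combining Proposition~\ref{prop:paraproduct}\ref{prop:paraproduct8} applied to the mild form with target index $2\gamma$ and Proposition~\ref{prop:paraproduct}\ref{prop:paraproduct2} applied to the paraproduct $(\cdot)\mbox{\textcircled{\scriptsize$<$}}{\mathcal Z}^{(2,N)}$ with $L^4\times B_\infty^{-1-\varepsilon}\to B_4^{-1-\varepsilon}$) gives
\[
\|X^{N,(2),<}_r\|_{B_4^{2\gamma}}\le Q\int_0^r (r-s)^{-(1+2\gamma+\varepsilon)/2}\|P_N^{(1)} X^{N,(2)}_s\|_{L^4}\,ds + Q,
\]
where the singularity exponent is less than $3/4$ since $\gamma<1/8$. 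Splitting $P_N^{(1)}X^{N,(2)}=P_N^{(1)}X^{N,(2),<}+P_N^{(1)}X^{N,(2),\geqslant}$, bounding the first piece by $\|X^{N,(2),<}\|_{B_4^{1-\varepsilon}}$ (the quantity appearing in $\mathfrak{Y}$, via $B_4^{1-\varepsilon}\hookrightarrow L^4$) and the second piece by Gagliardo--Nirenberg interpolation of $\|X^{N,(2),\geqslant}\|_{L^2}$ and $\|\nabla X^{N,(2),\geqslant}\|_{L^2}$ (both controlled by $\mathfrak{X}$), then cubing and applying H\"older with exponents $(3,3/2)$ (resp.\ $(4,4/3)$) gives sums of integrals controlled by $\mathfrak{Y}$ and by $\lambda\int\|P_N^{(1)}X^{N,(2)}\|_{L^4}^4\,ds\le \mathfrak{X}$. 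Young's inequality then produces $C\delta E[\mathfrak{X}]+C\delta E[\mathfrak{Y}^q]+C_\delta$.

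The main obstacle is the $X^{N,(2),<}$ term: one needs the extra $p=4$ version of the mild-form estimate (outside the stated range of Lemma~\ref{lem:global2-1}\ref{lem:global2-1-1}, but readily obtained by the same argument since the underlying paraproduct estimate in Proposition~\ref{prop:paraproduct}\ref{prop:paraproduct2} has no restriction on $p$), and the cubed singular integral must be split carefully between the low and high paraproduct parts of $X^{N,(2)}$ so as to land exactly in the quantities making up $\mathfrak{X}$ and $\mathfrak{Y}$; the bookkeeping of the exponents (ensuring $(1+2\gamma+\varepsilon)/2<3/4$ for H\"older integrability) and the matching of the Gagliardo--Nirenberg powers with the quartic term in $\mathfrak{X}$ are the most delicate points.
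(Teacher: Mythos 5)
Your handling of the second summand, $E[\sup_r r^\eta\|X^{N,(2),\geqslant}_r\|_{B_{4/3}^{2\gamma}}]$, matches the paper: apply Lemma \ref{lem:global2-1}\ref{lem:global2-1-3} with $s=0$, $p=4/3$, $\alpha=2\gamma$, $\beta=\eta$, $\theta=1/2$, so that $r^\eta r^{-\eta}=1$ turns the initial-data contribution into $C\|X^{N,(2)}_0\|_{B_{4/3}^{2(\gamma-\eta)}}$, and then absorb the remaining integrals into $\mathfrak X$, $\mathfrak Y$ and the paracontrolled commutator term exactly as in Proposition \ref{prop:global2-1exp}. You are also right that the $p=4$ version of Lemma \ref{lem:global2-1}\ref{lem:global2-1-1} is unproblematic despite the stated range $p\in[1,2]$: the paraproduct estimate has no $p$-restriction, and the paper itself silently uses it with $p=4$.

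However, your treatment of the first summand, $E[\sup_r r^\eta\|X^{N,(2),<}_r\|_{B_4^{2\gamma}}^3]$, has a genuine gap. After the mild-form estimate
\[
\|X^{N,(2),<}_r\|_{B_4^{2\gamma}}\le Q\int_0^r (r-u)^{-a}\|P_N^{(1)}X^{N,(2)}_u\|_{L^4}\,du+Q,
\qquad a=\gamma+\tfrac12+\tfrac{\varepsilon}{4},
\]
you propose to split $P_N^{(1)}X^{N,(2)}=P_N^{(1)}X^{N,(2),<}+P_N^{(1)}X^{N,(2),\geqslant}$ and to treat the $\geqslant$-piece by Gagliardo--Nirenberg interpolation against $\|X^{N,(2),\geqslant}\|_{L^2}$ and $\|\nabla X^{N,(2),\geqslant}\|_{L^2}$, ``both controlled by $\mathfrak X$.'' This does not close. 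To land in the quantities $\int_0^t\|\nabla X^{N,(2),\geqslant}_u\|_{L^2}^2\,du$ and $\int_0^t\|X^{N,(2)}_u\|_{L^2}^2\,du$ appearing in $\mathfrak X$, the H\"older exponent on the time-integral factor must be $2$, forcing the kernel exponent to be $2$ as well; but $\int_0^r(r-u)^{-2a}\,du$ diverges since $a>1/2$. The only integrable pairing against the kernel is with an $L^4$-in-time integral (H\"older exponents $(4/3,4)$, as $4a/3<1$ for $\gamma<1/8$), and that yields $(\int\|\cdot\|_{L^4}^4\,du)^{3/4}$ --- which is precisely why one should \emph{not} split: the paper keeps $\|P_N^{(1)}X^{N,(2)}_u\|_{L^4}$ intact, cubes, applies H\"older $(4/3,4)$ to get $Q\cdot\bigl(\int_0^t\|P_N^{(1)}X^{N,(2)}_u\|_{L^4}^4\,du\bigr)^{3/4}$, which is directly a fractional power of the $\lambda\int\|P_N^{(1)}X^{N,(2)}\|_{L^4}^4$ summand in $\mathfrak X$, and finishes with a single Young inequality. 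Your route (GN on the $\geqslant$-piece, $\mathfrak Y$ for the $<$-piece) introduces a power-$9/4$ on the gradient that cannot be reduced to the power-$2$ gradient term in $\mathfrak X$ without the kernel becoming non-integrable, so that step would fail as written.
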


\begin{proof}
By Lemma \ref{lem:global2-1}\ref{lem:global2-1-1} we have
\begin{align*}
&E\left[ \sup _{r\in [0,t]} r^\eta \left\| X^{N,(2),<}_{r} \right\| _{B_{4}^{2\gamma }}^3 \right] \\
&\leq \lambda E\left[ Q \sup _{r\in [0,t]} \left( \int _0^r (r-u)^{-\gamma -1/2 -\varepsilon /4} \left\| P_N^{(1)} X^{N,(2)}_u - \lambda P_N^{(1)} {\mathcal Z}^{(0,3,N)}_u \right\| _{L^{4}} du \right) ^3\right] \\
&\leq \lambda E\left[ Q \sup _{r\in [0,t]} \left( \int _0^r (r-u)^{-(4 \gamma +2 +\varepsilon )/3} du \right) ^4 \left( \int _0^t \left\| P_N^{(1)} X^{N,(2)}_u - \lambda P_N^{(1)} {\mathcal Z}^{(0,3,N)}_u \right\| _{L^4}^4 du \right) ^{3/4}\right] .
\end{align*}
Hence, we have for $\delta \in (0,1]$
\begin{equation}\label{eq:propestsup01}
E\left[ \sup _{r\in [0,t]} r^\eta \left\| X^{N,(2),<}_{r} \right\| _{B_{4}^{2\gamma }}^3 \right] \leq \delta E\left[ \int _0^t  \left\| P_N^{(1)} X^{N,(2)}_u\right\| _{L^4}^4 du \right] + C_\delta.
\end{equation}

By Lemma \ref{lem:global2-1}\ref{lem:global2-1-3} and H\"older's inequality, for $\delta \in (0,1]$ we have
\begin{align*}
&E\left[ \sup _{r\in [0,t]} r^\eta \left\| X^{N,(2),\geqslant}_{r} \right\| _{B_{4/3}^{2\gamma }} \right] \\
&\leq C E\left[ \left\| X^{N,(2)}_0 \right\| _{B_{4/3}^{2(\gamma - \eta )}} \right] + \delta E\left[ {\mathfrak Y}_{\varepsilon}^N (t) ^q \right]\\
&\quad + \delta E\left[ \int _0^t \left( \left\| \nabla X_u^{N,(2),\geqslant}\right\| _{L^2}^2 + \left\| X_u^{N,(2)}\right\| _{L^2}^2 + \lambda \left\| P_N^{(1)} X_u^{N,(2)}\right\| _{L^4}^4 \right) du \right]  \\
& \quad + \lambda E\left[ Q \sup _{r\in [0,t]} \int _0^r  (r-u)^{-\gamma +\varepsilon /2} \left( \sup _{v\in [0,u)} \frac{v^\eta \left\| P_N^{(1)} X^{N,(2)}_u - P_N^{(1)} X^{N,(2)}_v \right\| _{L^{4/3}}}{(u-v)^{\gamma }} \right) ^{1/2} \right. \\
& \quad \hspace{1cm} \left. \times \left(\| P_N^{(1)} X^{N,(2)}_u \| _{L^{4/3}}^{1/2} + \int _0^u v^{-\eta /2}(u-v)^{\gamma /2 -1 -3\varepsilon /2} \left\| P_N^{(1)} X^{N,(2)}_v \right\| _{L^{4/3}} ^{1/2} dv \right) du \right] + C_\delta.
\end{align*}
Since in view of Lemma \ref{lem:timeint} and H\"older's inequality it holds that
\begin{align*}
&E\left[ Q \sup _{r\in [0,t]} \int _0^r (r-u)^{-\gamma +\varepsilon /2} \left( \sup _{v\in [0,u)} \frac{v^\eta \left\| P_N^{(1)} X^{N,(2)}_u - P_N^{(1)} X^{N,(2)}_v \right\| _{L^{4/3}}}{(u-v)^{\gamma }} \right) ^{1/2} \right. \\
& \quad \left. \phantom{\left( \frac{\left\| X^{N,(2)}_{t'} \right\|}{(t'-s')^{\gamma }} \right) }\times \left(\| P_N^{(1)} X^{N,(2)}_u \| _{L^{4/3}}^{1/2} + \int _0^u v^{-\eta /2}(u-v)^{\gamma /2 -1 -3\varepsilon /2} \left\| P_N^{(1)} X^{N,(2)}_v \right\| _{L^{4/3}} ^{1/2} dv \right) du \right] \\
&\leq E\left[ Q \left( \sup _{s',t'\in [0,t]; s' <t'} \frac{(s')^\eta \left\| P_N^{(1)} X^{N,(2)}_{t'} - P_N^{(1)} X^{N,(2)}_{s'} \right\| _{L^{4/3}}}{(t'-s')^{\gamma }} \right) ^{1/2} \right. \\
&\quad \hspace{1cm} \times \left( \sup _{r\in [0,t]} \int _0^r (r-u)^{-\gamma +\varepsilon /2} \| P_N^{(1)} X^{N,(2)}_u \| _{L^{4/3}}^{1/2} du \right. \\
&\quad \left. \phantom{\left( \frac{\left\| X^{N,(2)}_{t'} \right\| _{L^{4/3}}}{(t'-s')^{\gamma }} \right) ^{1/2}} \left. + \sup _{r\in [0,t]} \int _0^r u^{-\eta /2} (r-u)^{-\gamma /2 -\varepsilon } \left\| P_N^{(1)} X^{N,(2)}_u \right\| _{L^{4/3}} ^{1/2} du \right) \right] \\
&\leq E\left[ Q \left( \sup _{s',t'\in [0,t]; s' <t'} \frac{(s')^\eta \left\| P_N^{(1)} X^{N,(2)}_{t'} - P_N^{(1)} X^{N,(2)}_{s'} \right\| _{L^{4/3}}}{(t'-s')^{\gamma }} \right) ^{1/2} \left( 1+ \int _0^t \| P_N^{(1)} X^{N,(2)}_u \| _{L^{4/3}} du \right) \right] \\
&\leq C_\delta + \delta E\left[ \sup _{s',t'\in [0,t]; s' <t'} \frac{(s')^\eta \left\| P_N^{(1)} X^{N,(2)}_{t'} - P_N^{(1)} X^{N,(2)}_{s'} \right\| _{L^{4/3}}}{(t'-s')^{\gamma }} \right] + C \delta E\left[ \int _0^t \| P_N^{(1)} X^{N,(2)}_u \| _{L^4}^3 du \right] ,
\end{align*}
we obtain for $\delta \in (0,1]$
\[
E\left[ \sup _{r\in [0,t]} r^\eta \left\| X^{N,(2),\geqslant}_{r} \right\| _{B_{4/3}^{2\gamma }} \right] \\
\leq C E\left[ \left\| X^{N,(2)}_0 \right\| _{B_{4/3}^{2(\gamma - \eta )}} \right]  + C \delta E\left[{\mathfrak X}_{\lambda ,\eta ,\gamma}^N (t) \right] + C \delta E\left[ {\mathfrak Y}_{\varepsilon}^N (t) ^q \right] + C_\delta.
\]
Therefore, by this inequality and (\ref{eq:propestsup01}) we have the assertion.
\end{proof}

We have finished estimating the terms.
So, now we obtain the following uniform estimate.

\begin{thm}\label{thm:tight1}
Let $\gamma \in (0,1/8)$, $\eta \in [0,1)$, $\varepsilon \in (0, \gamma /2)$ and $q\in (1, 8/7)$.
Assume that 
\[
\eta > \gamma + \frac 14.
\]
Then, 
\[
E\left[{\mathfrak X}_{\lambda , \eta ,\gamma}^N (T) \right] + E\left[ {\mathfrak Y}_{\varepsilon}^N (T) ^q \right]+E\left[ \sup _{r\in [0,T]} r^\eta \left\| X^{N,(2),<}_{r} \right\| _{B_4^{2\gamma }}^3 \right] + E\left[ \sup _{r\in [0,T]} r^\eta \left\| X^{N,(2),\geqslant}_{r} \right\| _{B_{4/3}^{2\gamma }} \right] \leq C .
\]
\end{thm}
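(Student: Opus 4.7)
The plan is to combine Propositions \ref{prop:global2-2exp}, \ref{prop:global2-3}, \ref{prop:global2-1exp}, \ref{prop:estimate10}, and \ref{prop:estsup} into a closed chain of estimates for the quantity
\[
F_N := E\bigl[{\mathfrak X}_{\lambda,\eta,\gamma}^N(T)\bigr] + E\bigl[{\mathfrak Y}_\varepsilon^N(T)^q\bigr] + E\Bigl[\sup_{r\in[0,T]} r^\eta \|X^{N,(2),<}_r\|_{B_4^{2\gamma}}^3\Bigr] + E\Bigl[\sup_{r\in[0,T]} r^\eta \|X^{N,(2),\geqslant}_r\|_{B_{4/3}^{2\gamma}}\Bigr]
\]
and to show $F_N \leq (C\delta)\,F_N + C_\delta$ for arbitrary $\delta \in (0,1]$. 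Applied in sequence, the five propositions bound each summand of $F_N$ by a sum of $\delta$ times other summands of $F_N$, plus three residual terms that do not obviously sit in $F_N$: the auxiliary quantity $\sup_{s\in[0,T]} E[\|X_s^{N,(2)}\|_{B_1^{-1/2+\varepsilon}}^q]$, which Proposition \ref{prop:estimate10} directly controls by $\delta E[{\mathfrak X}_{\lambda,\eta,\gamma}^N(T)]+C_\delta$; the two initial-data norms $E[\|X_0^{N,(2)}\|_{B_{4/3}^{2(\gamma-\eta)}}]$ and $E[\|X_0^{N,(2)}\|_{B_{4/3}^{-1+2\gamma+3\varepsilon}}^q]$; and the terminal quantity $E[\|X_T^{N,(2),<}\|_{L^2}^2]$.

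For the initial-data norms, I invoke stationarity of the pair $(X_t^N, Z_t)$: since ${\mathcal Z}_t^{(0,3,N)}$ is a shift-invariant functional of $\{Z_s:s\leq t\}$, the distribution of $X_t^{N,(2)} = X_t^N - P_N^{(2)}Z_t + \lambda {\mathcal Z}_t^{(0,3,N)}$ is constant in $t$, so
\[
E[\|X_0^{N,(2)}\|_{L^2}^2] = \frac{1}{T}\int_0^T E[\|X_s^{N,(2)}\|_{L^2}^2]\,ds \leq \frac{1}{T}E[{\mathfrak X}_{\lambda,\eta,\gamma}^N(T)] \leq \frac{1}{T} F_N.
\]
The hypotheses $\gamma<1/8$ and $\varepsilon<\gamma/2$ force $-1+2\gamma+3\varepsilon<0$, while the strict inequality $\eta > \gamma + 1/4$ forces $2(\gamma-\eta) < -1/2 < 0$, so both Besov indices in question are strictly negative. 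The embedding $L^p \hookrightarrow B_p^s$ for $s \leq 0$ and $p \in (1,\infty)$ (coming from $\|\Delta_j f\|_{L^p}\leq C_p\|f\|_{L^p}$, cf.\ the proof of Proposition \ref{prop:bddp}) combined with H\"older's inequality on the finite-measure torus $\Lambda$ yields $E[\|X_0^{N,(2)}\|_{B_{4/3}^s}^q] \leq C\,E[\|X_0^{N,(2)}\|_{L^2}^2]^{q/2}$ whenever $s\leq 0$ and $q\leq 2$; since $q\in(1,8/7)\subset(1,2)$, this is $\leq C T^{-q/2}F_N^{q/2}$, and Young's inequality converts it to $\delta F_N + C_\delta$. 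For the terminal term $E[\|X_T^{N,(2),<}\|_{L^2}^2]$, the condition $\gamma > 0$ yields the continuous embedding $B_4^{2\gamma} \hookrightarrow L^4 \hookrightarrow L^2(\Lambda)$; the definition of the third summand of $F_N$ gives $T^\eta \|X_T^{N,(2),<}\|_{L^2}^3 \leq C \sup_{r\in[0,T]} r^\eta \|X_r^{N,(2),<}\|_{B_4^{2\gamma}}^3$, so Jensen's inequality yields $E[\|X_T^{N,(2),<}\|_{L^2}^2] \leq C T^{-2\eta/3}F_N^{2/3}$, and Young once again produces $\delta F_N + C_\delta$.

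Plugging these three residual estimates into the chain of Propositions and choosing the various $\delta$'s sufficiently small, I obtain $F_N \leq \tfrac{1}{2}F_N + C$, hence $F_N \leq 2C$ uniformly in $N\in{\mathbb N}$. The main obstacle is the bookkeeping required to confirm that after the cascading substitutions every term on the right-hand side carries either an arbitrarily small prefactor (from $\delta$) or a constant independent of $N$ (absorbing moments of the random polynomial $Q$ via Proposition \ref{prop:Z}, and absorbing the Hölder–seminorm part of $E[{\mathfrak X}]$ that already appears inside Proposition \ref{prop:global2-2exp} with small prefactor $\delta$). The structural role of the hypothesis $\eta > \gamma + 1/4$ is precisely to push the Besov exponent $2(\gamma-\eta)$ strictly below $-1/2$, thereby enabling the $L^2$-embedding step on the initial data that closes the absorption loop.
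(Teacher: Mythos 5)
Your chaining of Propositions~\ref{prop:global2-2exp}, \ref{prop:global2-3}, \ref{prop:global2-1exp}, \ref{prop:estimate10} and \ref{prop:estsup}, your choice of small $\delta$, and your treatment of the terminal term $E[\|X^{N,(2),<}_T\|_{L^2}^2]$ all match the paper's argument. But there is a genuine gap in your handling of the initial-data norms. You assert that because $\mathcal{Z}^{(0,3,N)}_t$ is a shift-invariant functional of $\{Z_s : s\leq t\}$ and the pair $(X^N_t, Z_t)$ is stationary, the law of $X^{N,(2)}_t = X^N_t - P_N^{(2)}Z_t + \lambda\mathcal{Z}^{(0,3,N)}_t$ is independent of $t$, and hence $E[\|X_0^{N,(2)}\|_{L^2}^2] = \frac{1}{T}\int_0^T E[\|X_s^{N,(2)}\|_{L^2}^2]\,ds$. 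This inference is not valid. Stationarity of the pair $(X^N_t, Z_t)$ does not give stationarity of the triple $(X^N_t, Z_t, \mathcal{Z}^{(0,3,N)}_t)$, because $\mathcal{Z}^{(0,3,N)}_t$ depends on the entire past $\{Z_s : s\leq t\}$, not merely on the current state $Z_t$. In the construction of Section~\ref{sec:main}, $(\xi_N, \zeta)$ is independent of the driving noise $W$ on $(-\infty,\infty)$; so, conditionally on $Z_0 = \zeta$, the variable $\mathcal{Z}^{(0,3,N)}_0$ (a functional of $(\zeta, \{W_u : u<0\})$) is independent of $X^N_0 = \xi_N$. For $t > 0$, by contrast, both $X^N_t$ and $\mathcal{Z}^{(0,3,N)}_t$ depend on the shared noise $\{W_u : 0\leq u\leq t\}$, hence are correlated given $Z_t$. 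The cross-term $E\bigl[\int_\Lambda \mathcal{Z}^{(0,3,N)}_t (X^N_t - P_N^{(2)}Z_t)\,dx\bigr]$ therefore need not be constant in $t$, and your claimed equality is unjustified.

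The paper avoids exactly this: in Proposition~\ref{prop:global2-2exp}, $E[\|X_t^{N,(2)}\|_{L^2}^2] - E[\|X_0^{N,(2)}\|_{L^2}^2]$ is not claimed to vanish but is bounded by the cross-terms, and in the proof of Theorem~\ref{thm:tight1} the initial-data norms are handled by the triangle inequality $\|X_0^{N,(2)}\| \leq \|X_0^N\| + \|P_N^{(2)}Z_0\| + \lambda\|\mathcal{Z}^{(0,3,N)}_0\|$, the invariance of the law of $X^N_0$ alone (a genuine stationarity fact), and Proposition~\ref{prop:Z} for the explicit Gaussian objects. If you replace your stationarity claim by that triangle-inequality route and unwind back to $\|X_t^{N,(2)}\|_{L^2}$, the absorption loop closes as intended. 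One further point: your rationale for $\eta > \gamma + \tfrac14$ is not the right one. The $L^2$-embedding on the initial data only requires $2(\gamma-\eta) \leq 0$, i.e.\ $\eta \geq \gamma$; the threshold $-1/2$ plays no role there, so the sharper hypothesis must in fact be used elsewhere in the chain of weighted time-integral estimates.
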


\begin{proof}
Propositions \ref{prop:global2-2exp}, \ref{prop:global2-3}, \ref{prop:global2-1exp}, \ref{prop:estimate10} and \ref{prop:estsup} imply that for $\delta \in (0,1]$
\begin{align*}
&E\left[{\mathfrak X}_{\lambda , \eta ,\gamma}^N (T) \right] + E\left[ {\mathfrak Y}_{\varepsilon}^N (T) ^q \right]+ E\left[ \sup _{r\in [0,T]} r^\eta \left\| X^{N,(2),<}_{r} \right\| _{B_4^{2\gamma }}^3 \right] + E\left[ \sup _{r\in [0,T]} r^\eta \left\| X^{N,(2),\geqslant}_{r} \right\| _{B_{4/3}^{2\gamma }} \right] \\
&\leq C\delta E\left[{\mathfrak X}_{\lambda , \eta ,\gamma}^N (T) \right] + C \delta E\left[ {\mathfrak Y}_{\varepsilon}^N (T) ^q \right] + C E\left[ \left\| X^{N,(2),<}_{T} \right\| _{L^2}^2 \right] \\
&\quad + C E\left[ \left\| X^{N,(2)}_0 \right\| _{B_{4/3}^{2(\gamma - \eta )}} \right] + C E\left[ \left\| X^{N}_0 \right\| _{B_{4/3}^{-1+2\gamma + 3\varepsilon}} ^{q} \right] + C_\delta.
\end{align*}
On the other hand,
\[
E\left[ \left\| X^{N,(2),<}_{T} \right\| _{L^2}^2 \right] \leq \delta E\left[ \sup _{r\in [0,T]} r^\eta \left\| X^{N,(2),<}_{r} \right\| _{B_4^{2\gamma }}^3 \right] + C_\delta.
\]
Hence, by taking $\delta$ sufficiently small we have
\begin{equation}\label{eq:thmtight1-1}\begin{array}{l}
\displaystyle E\left[{\mathfrak X}_{\lambda , \eta ,\gamma}^N (T) \right] + E\left[ {\mathfrak Y}_{\varepsilon}^N (T) ^q \right]\\
\displaystyle + E\left[ \sup _{r\in [0,T]} r^\eta \left\| X^{N,(2),<}_{r} \right\| _{B_{4/3}^{2\gamma }}^3 \right] + E\left[ \sup _{r\in [0,T]} r^\eta \left\| X^{N,(2),\geqslant}_{r} \right\| _{B_{4/3}^{2\gamma }} \right] \\
\displaystyle \leq C E\left[ \left\| X^{N,(2)}_0 \right\| _{B_{4/3}^{2(\gamma - \eta )}} \right] + C E\left[ \left\| X^{N,(2)}_0 \right\| _{B_{4/3}^{-1+2\gamma + 3\varepsilon}} ^{q} \right] + C.
\end{array}\end{equation}
The invariance of  the law of $X^{N}_0$ with respect to $X^N$ implies that
\begin{align*}
&E\left[ \left\| X^{N,(2)}_0 \right\| _{B_{4/3}^{2(\gamma - \eta )}} \right] \\
&\leq \frac{1}{T} \int _0^T E\left[ \left\| X^{N}_t \right\| _{B_{4/3}^{2(\gamma - \eta )}} \right] dt + E\left[ \left\| P_N^{(2)}Z_0 \right\| _{B_{4/3}^{2(\gamma - \eta )}} + \left\| {\mathcal Z}^{(0,3,N)}_0 \right\| _{B_{4/3}^{2(\gamma - \eta )}}\right] \\
&\leq \frac{1}{T} \int _0^T E\left[ \left\| X^{N,(2)}_t \right\| _{B_{4/3}^{2(\gamma - \eta )}} \right] dt + 2 \sup _{t\in [0,T]} E\left[ \left\| P_N^{(2)}Z_t \right\| _{B_{4/3}^{2(\gamma - \eta )}} + \left\| {\mathcal Z}^{(0,3,N)}_t \right\| _{B_{4/3}^{2(\gamma - \eta )}} \right] \\
&\leq \frac 13 E\left[{\mathfrak X}_{\lambda, \eta ,\gamma}^N (T) \right] + C.
\end{align*}
Similarly it holds that
\[
E\left[ \left\| X^{N,(2)}_0 \right\| _{B_{4/3}^{-1+2\gamma + 3\varepsilon }} ^{q} \right] \leq \frac 13 E\left[{\mathfrak X}_{\lambda , \eta ,\gamma}^N (T) \right] + C.
\]
By these inequalities and (\ref{eq:thmtight1-1}) we obtain the assertion.
\end{proof}

Theorem \ref{thm:tight1} yields the tightness of the laws of $\{ X^N\}$, which is the target in the present paper.

\begin{thm}\label{thm:tight2}
For $\varepsilon \in (0, 1/16]$, the laws of $\{ X^N\}$ are tight on $C([0,\infty ); {B_{4/3}^{-1/2-\varepsilon }})$.
Moreover, if $X$ is a limit of a subsequence $\{ X^{N(k)}\}$ of $\{ X^N\}$ on $C([0,\infty ); {B_{4/3}^{-1/2-\varepsilon }})$, then $X$ is a continuous process on $B_{4/3}^{-1/2-\varepsilon }$, the limit measure $\mu$ (in the weak convergence sense) of the associated subsequence $\{ \mu _{N(k)}\}$ is an invariant measure with respect to $X$ and it holds that
\[
\int \| \phi \| _{B_2^{-1/2-\varepsilon}}^2 \mu (d\phi ) < \infty .
\]
\end{thm}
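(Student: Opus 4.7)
The plan is to leverage the uniform bounds of Theorem~\ref{thm:tight1} together with the convergence of the Ornstein-Uhlenbeck polynomials (Proposition~\ref{prop:Z}) to extract a subsequential limit, and then to use the built-in stationarity of $X^N$ to identify the invariant measure. I would begin with the decomposition
\[
X^N_t = X^{N,(2),<}_t + X^{N,(2),\geqslant}_t + P_N^{(2)} Z_t - \lambda\, {\mathcal Z}^{(0,3,N)}_t,
\]
and treat the four pieces separately. The two stochastic terms on the right converge almost surely in $C([0,T]; B_\infty^{-1/2-\varepsilon/2})$ by Proposition~\ref{prop:Z}\ref{prop:Z1} and \ref{prop:Z03}, which embeds continuously into $C([0,T]; B_{4/3}^{-1/2-\varepsilon})$, so their laws are tight there. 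For the two paracontrolled pieces, Theorem~\ref{thm:tight1} supplies uniform-in-$N$ bounds in expectation on
\[
\sup_{r\in[\tau,T]} r^\eta\!\left(\|X^{N,(2),<}_r\|_{B_4^{2\gamma}}^3 + \|X^{N,(2),\geqslant}_r\|_{B_{4/3}^{2\gamma}}\right) \quad \text{and} \quad \sup_{\substack{s,t\in[\tau,T]\\ s<t}} \frac{s^\eta\|X^{N,(2)}_t-X^{N,(2)}_s\|_{L^{4/3}}}{(t-s)^\gamma}
\]
for any $\tau\in(0,T]$. Interpolation (Proposition~\ref{prop:paraproduct}\ref{prop:paraproduct6}) between $B_{4/3}^{2\gamma}$ and $L^{4/3}$, combined with the $\gamma$-H\"older regularity, yields relative compactness of the laws of $X^{N,(2)}$ in $C([\tau,T]; B_{4/3}^{-1/2-\varepsilon})$. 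Extending to $t=0$ uses the bound $E[\|X^{N,(2)}_0\|_{B_{4/3}^{2(\gamma-\eta)}}]\leq C$ (derived from stationarity as in the proof of Theorem~\ref{thm:tight1}) together with the mild-form identities of Lemma~\ref{lem:global2-1} and the smoothing estimate of Proposition~\ref{prop:paraproduct}\ref{prop:paraproduct8}. A diagonal argument in $\tau\downarrow 0$, $T\uparrow\infty$ then produces tightness on $C([0,\infty); B_{4/3}^{-1/2-\varepsilon})$.

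To identify the limit, I would invoke Prokhorov to extract $X^{N(k)}\to X$ in law, and, passing to a sub-subsequence, arrange $\mu_{N(k)}\to\mu$ weakly on $B_{4/3}^{-1/2-\varepsilon}$ (the tightness of $\{\mu_N\}$ is a byproduct of the same stationary moment bounds). Continuity of $X$ is inherited from the ambient path space. Now each $\tilde X^N$ is stationary under Theorem~\ref{thm:invN}, so $X^N = P_N^{(2)} \tilde X^N$ is stationary with one-dimensional marginal $(P_N^{(2)})_\sharp \mu_N$; since $P_N^{(2)}\to I$ in the relevant distributional sense uniformly along the tight family $\{\mu_N\}$, the weak limit of $(P_N^{(2)})_\sharp \mu_{N(k)}$ coincides with that of $\mu_{N(k)}$, namely $\mu$. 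Thus $X$ is stationary with marginal $\mu$, which is equivalent to the invariance of $\mu$ under the flow of $X$. Finally, the moment bound $\int \|\phi\|_{B_2^{-1/2-\varepsilon}}^2\, \mu(d\phi)<\infty$ will follow by Fatou's lemma from
\[
\sup_N \int \|\phi\|_{B_2^{-1/2-\varepsilon}}^2\, \mu_N(d\phi) < \infty,
\]
which is in turn a consequence of the stationary-time estimate $E[\|X^{N,(2)}_0\|_{L^2}^2] \leq C$ (extracted from Theorem~\ref{thm:tight1}) together with the $B_2^{-1/2-\varepsilon}$-control of the Ornstein--Uhlenbeck corrections provided by Proposition~\ref{prop:Z}.

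The hardest part, I expect, is handling the behaviour near $t=0$. The weight $r^\eta$ in ${\mathfrak X}_{\lambda,\eta,\gamma}^N$ degenerates at the origin, so the interior estimates only yield tightness on compact subsets of $(0,\infty)$; joining these to a tight family on $[0,\infty)$ demands an independent uniform control of the modulus of continuity at zero, which must be teased out from the stationary $B_{4/3}^{2(\gamma-\eta)}$-bound on $X^{N,(2)}_0$ by means of the mild-form equation for $X^{N,(2),\geqslant}$ and the semigroup smoothing. A secondary technical point is the identification of the weak limit of the marginal $(P_N^{(2)})_\sharp\mu_N$ with the weak limit of $\mu_N$, which requires a uniform-in-$N$ bound on $\|(I-P_N^{(2)})\phi\|$ integrable against $\mu_N$; this should again be available from the explicit Fourier cutoff structure of $P_N^{(2)}$ combined with the moment bounds already established.
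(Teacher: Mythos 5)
Your interior argument (modulus of continuity via Chebyshev applied to the bounds of Theorem~\ref{thm:tight1}, plus the compact embedding $B_{4/3}^{2\gamma} \hookrightarrow L^{4/3}$, plus Corollary~\ref{cor:AT} to combine with the Ornstein--Uhlenbeck pieces) matches the paper's and is fine, as are the stationarity/Fatou arguments for the invariance of $\mu$ and the second-moment bound.

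However, there is a genuine gap in the passage from tightness on $C([t_0,T];B_{4/3}^{-1/2-\varepsilon})$ (for fixed $t_0>0$) to tightness on $C([0,\infty);B_{4/3}^{-1/2-\varepsilon})$, and you yourself flag it as the hardest part. A diagonal argument in $\tau\downarrow 0$ does not by itself work: tightness on every $[\tau,T]$ says nothing about equicontinuity as $t\downarrow 0$, which is exactly what the $r^\eta$ weight degrades. Your proposed repair --- extracting uniform continuity at zero from the $B_{4/3}^{2(\gamma-\eta)}$-bound on $X^{N,(2)}_0$ via the mild form and Proposition~\ref{prop:paraproduct}\ref{prop:paraproduct8} --- would amount to a whole new set of estimates: one must re-run the bounds of Lemmas~\ref{lem:global2-1}--\ref{lem:=Z2} without the $s^\eta$ weight near the origin, show that the resulting modulus of continuity in $B_{4/3}^{-1/2-\varepsilon}$ at $t=0$ is uniform in $N$, and confirm the compactness of the initial marginal, none of which is immediate from the machinery developed. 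The paper bypasses all of this with a short observation: since $X^N$ is stationary (being started from its invariant law) and Markov, the law of $X^N|_{[t_0,T]}$ on $C([t_0,T];\cdot)$ is identical to the law of $X^N|_{[0,T-t_0]}$ on $C([0,T-t_0];\cdot)$. Tightness on $[t_0,T]$ therefore \emph{is} tightness on $[0,T-t_0]$, and since $t_0$ and $T$ are arbitrary and the topology of $C([0,\infty);\cdot)$ is that of uniform convergence on compacts, the global tightness follows at once. Your plan misses this time-shift trick; it is the one essential new idea in the proof, and without it the argument near $t=0$ is not complete.

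A minor secondary point: in identifying the invariant measure you correctly flag that the one-dimensional marginal of $X^N$ is $(P_N^{(2)})_\sharp\mu_N$ rather than $\mu_N$ itself. The paper glosses over this, and your remark that the two have the same weak limit because $P_N^{(2)}\to I$ on the tight family is the right way to fill that small gap; it is consistent with the paper though more explicit.
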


\begin{proof}
Let $T\in (0,\infty )$ and $t_0 \in (0,T)$.
Take $\gamma$, $\eta$ and $\varepsilon$ as in Theorem \ref{thm:tight1}.
For $h\in (0,1]$ and $\varepsilon' \in (0,1]$, Chebyshev's inequality implies that
\begin{align*}
&\sup _{N\in {\mathbb N}} P \left( \sup _{s,t\in [t_0,T]; |s-t|<h} \left\| X^{N,(2)}_t - X^{N,(2)}_s \right\| _{L^{4/3}} > \varepsilon' \right) \\
&\leq \frac{h^\gamma }{\varepsilon' t_0^\eta } E\left[ \sup _{s,t\in [t_0,T]; s<t, t-s<h} \frac{s^\eta \left\| X^{N,(2)}_{t} - X^{N,(2)}_{s} \right\| _{L^{4/3}}}{(t-s)^{\gamma }} \right] 
\end{align*}
Hence, from Theorem \ref{thm:tight1} we obtain
\begin{equation}\label{eq:thmtight2-1}
\lim _{h\downarrow 0} \sup _{N\in {\mathbb N}} P \left( \sup _{s,t\in [t_0,T]; |s-t|<h} \left\| X^{N,(2)}_t - X^{N,(2)}_s \right\| _{L^{4/3}} > \varepsilon' \right) =0
\end{equation}
for $\varepsilon' \in (0,1]$.
On the other hand, Chebyshev's inequality implies that, for any $R>0$,
\[
\sup _{N\in {\mathbb N}} P \left( \left\| X^{N,(2)}_{t_0} \right\|  _{B_{4/3}^{2\gamma }} >R \right) 
\leq \frac{1}{R t_0^\eta} \sup _{N\in {\mathbb N}} E\left[ \sup _{r\in [0,T]} r^\eta \left\| X^{N,(2)}_{r} \right\| _{B_{4/3}^{2\gamma }}\right].
\]
Hence, by Theorem \ref{thm:tight1} we obtain
\begin{equation}\label{eq:thmtight2-2}
\lim _{R\rightarrow \infty} \sup _{N\in {\mathbb N}} P \left( \left\| X^{N,(2)}_{t_0} \right\|  _{B_{4/3}^{2\gamma }} >R \right) =0 .
\end{equation}
In view of the fact that the unit ball in $B_{4/3}^{2\gamma }$ is compactly embedded in $L^{4/3}$ (see Theorem 2.94 \cite{BCD}), the tightness of the laws of $\{ X^{N,(2)}\}$ on $C([t_0,T]; {L^{4/3}})$ follows from (\ref{eq:thmtight2-1}) and (\ref{eq:thmtight2-2}).
From this fact, the tightness of the laws of $\{ P_N^{(2)}Z \}$ and $\{ {\mathcal Z}^{(0,3,N)} \}$ on $C([t_0,T]; {B_{\infty}^{-1/2-\varepsilon }})$ for sufficiently small $\varepsilon \in (0,1]$, and Corollary \ref{cor:AT}, we have the tightness of $\{ X^N\}$ on $C([t_0,T]; {B_{4/3}^{-1/2-\varepsilon }})$.
For $N\in {\mathbb N}$, in view of the Markov property of $X^N$ and the invariance of $\mu ^N$ with respect to $X^N$, the law of $X^N$ on $C([t_0,T]; {B_{4/3}^{-1/2-\varepsilon }})$ coincides with the law of $X^N$ on $C([0,T-t_0]; {B_{4/3}^{-1/2-\varepsilon }})$.
Hence, we have the tightness of the laws of $\{ X^N\}$ on $C([0,T-t_0]; {B_{4/3}^{-1/2-\varepsilon }})$.
Since $T\in (0,\infty )$ and $t_0 \in (0,T)$ are arbitrary and the topology of $C([0,\infty ); {B_{4/3}^{-1/2-\varepsilon }})$ is given by uniform convergence on compact sets, we obtain the tightness of the laws of $\{ X^N\}$ on $C([0,\infty ); {B_{4/3}^{-1/2-\varepsilon }})$.
By construction there is then a continuous limit process $X$ (which might depend on the subsequence).

Let $f$ be a bounded continuous function on $B_{4/3}^{-1/2-\varepsilon }$.
Then, by the invariance of $\mu ^N$ with respect to $X^N$ for any $N\in {\mathbb N}$, we have
\[
E\left[f(X_t)\right] = \lim _{N\rightarrow \infty} E\left[  f(X^N_t) \right] = \lim _{N\rightarrow \infty} \int f d \mu _N = \int f d\mu , \quad t\in [0,\infty ).
\]
Therefore, $\mu$ is invariant with respect to $X$.
Moreover, by the invariance of $\mu ^N$ with respect to $X^N$, for $t\in (0,\infty )$ we have
\begin{align*}
&E\left[ \| X_0 \| _{B_2^{-1/2-\varepsilon}}^2 \right] \leq \liminf _{N\rightarrow \infty} E\left[ \| X_0^N \| _{B_2^{-1/2-\varepsilon}}^2 \right] \leq C \liminf _{N\rightarrow \infty} E\left[ \| X_0^{N,(2)} \| _{B_2^{-1/2-\varepsilon}}^2 \right] +C \\
&\quad = \frac{C}{t} \liminf _{N\rightarrow \infty} \int _0^t E\left[ \| X_t^{N,(2)} \| _{B_2^{-1/2-\varepsilon}}^2 \right] dt +C \leq \frac{C}{t} \liminf _{N\rightarrow \infty} {\mathfrak X}_{\lambda , \eta ,\gamma}^N (t) +C .
\end{align*}
From this also the last assertion in Theorem \ref{thm:tight2} is proven.
\end{proof}

\begin{rem}
The existence of the continuous process $X$ obtained in Theorem \ref{thm:tight2} is only for almost all initial point $X_0$ with respect to the probability measure $\mu$ which we obtained as a limit measure of the $\{ \mu _N\}$.
The exceptional set appears, because we give the initial distribution of $X^N$ by the specific measure $\mu _N$. 
\end{rem}

\begin{rem}
The state space of $X$ obtained in Theorem \ref{thm:tight2} is $B_{4/3}^{-1/2-\varepsilon}$.
The index $-1/2-\varepsilon$ for the differentiability seems to be optimal.
However, the index $4/3$ for integrability is not expected to be optimal, in fact higher integrability for the process is obtained in \cite{MW3}.
By following the argument in \cite{MW3} we may improve also in our case the integrability index of the state space.
\end{rem}

\begin{rem}
In the present paper, we proved only the existence of a continuous limit process and of an associated invariant measure.
However, the uniqueness of the limit process in some classes of approximations is expected to hold, because in Theorem 1.15 in \cite{Ha} and Theorem 3.1 in \cite{CaCh} a contractive map from the polynomials of the Ornstein-Uhlenbeck process to the unique local solution has been obtained.
It seems thus possible to show this kind of uniqueness in our approach by adapting the arguments in \cite{Ha} and \cite{CaCh} to our setting.
\end{rem}

\begin{rem}
In the present paper, we only considered the approximation of the $\Phi ^4_3$-measure by finite sums in a Fourier series expansion.
However, a small modification of the proof yields the same result with other spatial regularization as for the process discussed in \cite{CaCh}.
\end{rem}

\begin{cor}\label{cor:tight2}
The limit process $X$ that appeared in Theorem \ref{thm:tight2} can be regarded as a $B_2^{-3/4}$-valued continuous process.
\end{cor}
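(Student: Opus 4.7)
The plan is to exploit the intermediate tightness of $\{X^{N,(2)}\}$ in the $L^{4/3}$-topology (already established inside the proof of Theorem \ref{thm:tight2}) combined with the continuous embedding
\[
L^{4/3}\hookrightarrow B_{4/3}^{0} \hookrightarrow B_{2}^{-3/4}.
\]
The first inclusion follows from the uniform $L^p$-boundedness of the Littlewood--Paley blocks $\{\Delta_j\}$ for $p=4/3$ (as used in Proposition \ref{prop:bddp}), and the second is Proposition \ref{prop:paraproduct}\ref{prop:paraproduct1} applied with $p_1=4/3$, $p_2=2$, $r_1=r_2=\infty$, $s=0$, giving the loss of regularity $3(3/4-1/2)=3/4$. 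The complementary pieces $P_N^{(2)}Z$ and $\mathcal{Z}^{(0,3,N)}$ live in much smoother spaces by Proposition \ref{prop:Z} and embed into $B_2^{-3/4}$ freely.

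First, I will upgrade the tightness of $\{X^{N,(2)}\}$ from $C([t_0,T];L^{4/3})$, with $0<t_0<T$ (which was obtained in the proof of Theorem \ref{thm:tight2} via the uniform H\"older estimate of Theorem \ref{thm:tight1} and the compact embedding $B_{4/3}^{2\gamma}\hookrightarrow L^{4/3}$), to $C([0,T];L^{4/3})$. This proceeds exactly by the Markov/stationarity argument used at the end of the proof of Theorem \ref{thm:tight2}: the pair $(X^N,Z)$ is stationary by the construction of $(\xi_N,\zeta)$ via Proposition \ref{prop:Ainv}, hence so is the continuous process $X^{N,(2)}=X^N-P_N^{(2)}Z+\lambda\mathcal{Z}^{(0,3,N)}$, and stationarity forces the law of $(X^{N,(2)}_t)_{t\in[t_0,T]}$ on $C([t_0,T];L^{4/3})$ to coincide with the law of $(X^{N,(2)}_t)_{t\in[0,T-t_0]}$ on $C([0,T-t_0];L^{4/3})$. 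Letting $t_0\downarrow0$ and $T\uparrow\infty$ yields tightness of $\{X^{N,(2)}\}$ on $C([0,\infty);L^{4/3})$.

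Second, by Proposition \ref{prop:Z}\ref{prop:Z1} and \ref{prop:Z}\ref{prop:Z03}, along the subsequence $\{N(k)\}$ fixed in Theorem \ref{thm:tight2} (refining it if necessary), $P_{N(k)}^{(2)}Z$ and $\mathcal{Z}^{(0,3,N(k))}$ converge almost surely in $C([0,\infty);B_{\infty}^{-1/2-\varepsilon})$ for every $\varepsilon\in(0,1/4)$. Combined with the tightness of $\{X^{N(k)}\}$ on $C([0,\infty);B_{4/3}^{-1/2-\varepsilon})$ and of $\{X^{N(k),(2)}\}$ on $C([0,\infty);L^{4/3})$ obtained above, Corollary \ref{cor:AT} gives tightness of the relevant triple; after extraction of a further subsequence and application of the Skorokhod representation, we obtain simultaneous almost sure convergence. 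The identity $X^{N,(2)}=X^N-P_N^{(2)}Z+\lambda\mathcal{Z}^{(0,3,N)}$ then identifies the limit as $X^{(2)}=X-Z+\lambda\mathcal{Z}^{(0,3,\infty)}$ and forces $X^{(2)}$ to be almost surely continuous on $[0,\infty)$ with values in $L^{4/3}$, and hence, via the embedding of the first paragraph, with values in $B_2^{-3/4}$.

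Finally, since $Z$ and $\mathcal{Z}^{(0,3,\infty)}$ are almost surely continuous on $[0,\infty)$ with values in $B_\infty^{-1/2-\varepsilon}\hookrightarrow B_2^{-3/4}$ (for $\varepsilon<1/4$), the decomposition
\[
X = X^{(2)} + Z - \lambda\mathcal{Z}^{(0,3,\infty)}
\]
presents $X$ as a sum of three $B_2^{-3/4}$-valued continuous processes, which yields the claim. No substantive obstacle is foreseen; the only mild care needed is in tracking the joint (rather than merely marginal) convergence of the three ingredients along a single subsequence, which is precisely what Corollary \ref{cor:AT} supplies.
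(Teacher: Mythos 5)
Your proof is correct and follows essentially the same route as the paper: take the tightness of $\{X^{N,(2)}\}$ on $C([t_0,T];L^{4/3})$ already obtained in the proof of Theorem \ref{thm:tight2}, pass to $B_2^{-3/4}$ via the Besov embedding $L^{4/3}\hookrightarrow B_{4/3}^{0}\hookrightarrow B_2^{-3/4}$ (Proposition \ref{prop:paraproduct}\ref{prop:paraproduct1}), and then reassemble $X$ from $X^{(2)}$, $Z$, and $\mathcal{Z}^{(0,3,\infty)}$ exactly as in that theorem's proof. The paper compresses all of your steps 2 and 3 into ``the rest follows similarly''; your write-up is simply a more explicit unpacking of the same argument.
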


\begin{proof}
As in the proof of Theorem \ref{thm:tight2}, for $T\in (0,\infty )$ and $t_0 \in (0,T)$, the laws of $\{ X^{N,(2)}\}$ on $C([t_0,T]; {L^{4/3}})$ are tight.
Hence, by the Besov embedding theorem, the laws of $\{ X^{N,(2)}\}$ are also tight as the probability measures on $C([t_0,T]; {B_2^{-3/4}})$.
The rest of the proof follows similarly as in the proof of Theorem \ref{thm:tight2}.
\end{proof}

\section*{Acknowledgment}
The authors are grateful to Professors Massimiliano Gubinelli, Hiroshi Kawabi and Minoru Yoshida for helpful discussions and valuable comments.
They also thank Professors Alexei Daletskii, Yuzuru Inahama, Hendrik Weber, Masato Hoshino, Rongchan Zhu and Xiangchan Zhu for pointing out mistakes in previous versions of this work.
This work was initiated during a common stay at the Centre Interfacultaire Bernoulli (CIB) of the Ecole Polytechnique F\'ed\'erale in Lausanne in  2015 during the semester program (organized by Professors Ana Bela Cruzeiro and Darryl Holm, with the first named author) on Geometric Stochastic Mechanics.
We are grateful to the Centre and its Director, Professor Nicolas Monod for their great hospitality.
The first named author gratefully acknowledges financial support by CIB at EPFL, and the second named author support by Professor Masayoshi Takeda for the CIB stay, HCM for a stay in Bonn in March 2016, and the JSPS grants KAKENHI Grant numbers 25800054 and 17K14204 for financial support for the research activity.

\appendix
\section{Appendix}

\subsection{Almost sure convergence of continuous stochastic processes}

\begin{prop}\label{prop:AC}
Let $(S,d)$ be a separable metric space, and let $X^n, X$ be $S$-valued continuous stochastic processes on a probability space $(\Omega, {\mathcal F}, P)$.
Assume that the family of the laws of $\{ X^n\}$ is tight as a family of probability measures on $C([0,\infty);S)$, and that $X_t^n$ converges to $X_t$ almost surely for $t\in [0,T]$.
Then, $X^n$ converges to $X$ almost surely in $C([0,\infty);S)$ with the topology of uniform convergence on finite intervals.
\end{prop}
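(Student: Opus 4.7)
The strategy is a three-step combination of tightness (to extract weak subsequential limits), identification of those limits via the pointwise almost sure hypothesis, and an upgrade from weak convergence to almost sure convergence by coupling $X^n$ with $X$.

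First, I would fix a countable dense set $D \subset [0,\infty)$ and observe that outside a single null set (the countable union over $t \in D$ of the null sets on which $X^n_t \not\to X_t$) one has $X^n_t(\omega) \to X_t(\omega)$ simultaneously for every $t \in D$. By tightness, every subsequence of $\{X^n\}$ has a further subsequence converging weakly in $C([0,\infty);S)$ to some random element $Y$. Since any finite tuple $(X^n_{t_1},\ldots,X^n_{t_k})$ with $t_1,\ldots,t_k \in D$ converges jointly almost surely, hence in law, to $(X_{t_1},\ldots,X_{t_k})$, while continuous mapping applied to the weak limit forces convergence of the same finite-dimensional marginals to those of $Y$, path continuity of $Y$ and $X$ yields that $Y$ has the same law as $X$. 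Consequently $X^n \to X$ in law in $C([0,\infty);S)$.

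Next, I would upgrade to convergence in probability by considering the joint sequence $\{(X^n,X)\}$, which is tight on $C([0,\infty);S)^2$. Any subsequential weak limit is of the form $(Y,X)$ with $Y$ having the same law as $X$; but the joint almost sure convergence $(X^n_t,X_t) \to (X_t,X_t)$ at every $t \in D$ forces $(Y_t,X_t)$ to have the same law as $(X_t,X_t)$, hence $Y_t = X_t$ almost surely for every $t \in D$; by path continuity, $Y = X$ almost surely. Thus $(X^n,X) \to (X,X)$ in law. Applying the continuous mapping theorem with the bounded metric $d_T(f,g) := \sup_{t\in[0,T]}(d(f(t),g(t)) \wedge 1)$ on $C([0,T];S)$ gives $d_T(X^n,X) \to 0$ in law to the constant zero, equivalently in probability, for every $T > 0$.

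Finally, almost sure convergence is obtained by combining the standard subsequence extraction from convergence in probability with the uniqueness of the pointwise limit: for each $T \in {\mathbb N}$ one extracts a subsequence along which $d_T(X^n,X) \to 0$ almost surely, whose limit must coincide with $X$ on $D$ by the hypothesis and hence everywhere by path continuity. A diagonal argument over $T \in {\mathbb N}$ then yields almost sure convergence of $X^n$ to $X$ in $C([0,\infty);S)$ with the topology of uniform convergence on finite intervals. The main obstacle is the identification step in the second paragraph: the pointwise almost sure hypothesis must be leveraged to pin down any weak subsequential limit of the joint process $\{(X^n,X)\}$ as the deterministic diagonal $(X,X)$, thereby converting tightness-based weak convergence into convergence in probability and setting up the extraction of almost surely convergent subsequences in the final step.
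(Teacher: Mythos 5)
Your proof takes a genuinely different route from the paper's, and it fails at the last step --- not at the identification step you single out as the main obstacle, which actually goes through. The paper's proof never passes through distributional convergence. It uses tightness to argue that, off a null set, the whole family $\{X(\omega)\}\cup\{X^n(\omega):n\in{\mathbb N}\}$ is equicontinuous on $[0,T]$, so that for each $p$ there is (a.s.) an $m$ for which every oscillation of every path over a window of length $1/m$ lies below $1/p$; it then combines this pathwise equicontinuity with a.s. convergence at the grid points $k/m$ to bound $\sup_{t\le T}d(X^n_t,X_t)\le\max_k d(X^n_{k/m},X_{k/m})+2/p$ on that $\omega$-set, and sends $n\to\infty$ then $p\to\infty$. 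The whole content of the argument is this simultaneous, $\omega$-by-$\omega$ control of the moduli of continuity.

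Your first two steps correctly give $(X^n,X)\to(X,X)$ in law on $C([0,\infty);S)^2$ and therefore $d_T(X^n,X)\to 0$ in probability for every $T$. The final upgrade to almost-sure convergence is a genuine gap. Convergence in probability together with ``any a.s.-convergent subsequence has limit $X$'' is merely a restatement of convergence in probability; it does not yield a.s. convergence of the full sequence (the typewriter sequence of interval indicators on $[0,1]$ is the standard counterexample). Your diagonal argument over $T\in{\mathbb N}$ likewise only produces a single a.s.-convergent subsequence, not the whole sequence. The reason the step cannot be closed is that in passing from tightness to weak convergence you have discarded exactly the pathwise equicontinuity on which the paper's argument is built; without it there is no mechanism to promote almost-sure pointwise convergence on a countable dense set of times into almost-sure uniform convergence.
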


\begin{proof}
Let $T>0$ and $\varepsilon >0$.
For $m,p \in {\mathbb N}$ define $\Omega _{m,p}$ by the total set of all $\omega \in {\Omega }$ satisfying
\[
\sup _{s,t \in [0,T]; |s-t|<1/m} d(X_s(\omega ), X_t(\omega ))< \frac 1p, \ \mbox{and}\ \sup _{n\in {\mathbb N}} \sup _{s,t \in [0,T]; |s-t|<1/m} d(X^n_s(\omega ), X^n_t(\omega )) < \frac 1p.
\]
Because of the tightness of $\{ P \circ X^{-1} \} \cup \{ P \circ (X^n)^{-1}; n\in {\mathbb N}\}$ on $C([0,\infty);S)$, for $P$-almost every $\omega \in \Omega$, $\{ X(\omega )\} \cup \{ X^n(\omega ); n\in {\mathbb N}\}$ is equi-continuous on $[0,T]$.
Hence, we have 
\begin{equation}\label{eq:propA1-1}
P \left( \bigcup _{m=1}^\infty \Omega _{m,p} \right) =1, \quad p\in {\mathbb N}.
\end{equation}
Let $K_m:= \min \{ k\in {\mathbb N}; k > m T\}$ for any $m\in {\mathbb N}$.
Since by assumption $X_t^n$ converges to $X_t$ almost surely for $t\in [0,T]$, for each $m\in {\mathbb N}$ there exists a $P$-null set $N_m$ such that
\begin{equation}\label{eq:propA1-2}
\lim _{n\rightarrow \infty} \max _{k =1,2,\dots ,K_m} d(X^n_{k/m}(\omega ), X_{k/m}(\omega )) =0, \quad \omega \in \Omega \setminus N_m.
\end{equation}
On the other hand, for $m\in {\mathbb N}$ and $\omega \in \Omega _{m,p}$ we have
\begin{align*}
&\sup _{t\in [0,T]}d(X^n_t(\omega ), X_t(\omega ))\\
&= \max _{k =1,2,\dots ,K_m} \sup _{t\in [(k-1)/m, k/m)} d(X^n_t(\omega ), X_t(\omega ))\\
&\leq \max _{k =1,2,\dots ,K_m} \left( d(X^n_{k/m}(\omega ), X_{k/m}(\omega )) + \sup _{t\in [(k-1)/m, k/m)} d(X^n_t(\omega ), X^n_{k/m}(\omega )) \right. \\
&\quad \hspace{6cm} \left. + \sup _{t\in [(k-1)/m, k/m)} d(X_t(\omega ), X_{t/m}(\omega )) \right)\\
&< \max _{k =1,2,\dots ,K_m} d(X^n_{k/m}(\omega ), X_{k/m}(\omega )) + \frac 2p.
\end{align*}
Hence, by (\ref{eq:propA1-2}), for $p \in {\mathbb N}$ and $\omega \in \cup _{m=1}^\infty (\Omega _{m,p} \setminus N_m)$,
\[
\limsup _{n\rightarrow \infty} \sup _{t\in [0,T]}d(X^n_t(\omega ), X_t(\omega )) \leq \frac 2p.
\]
Therefore, by (\ref{eq:propA1-1}) we obtain
\begin{align*}
P \left( \left\{ \lim _{n\rightarrow \infty} \sup _{t\in [0,T]}d(X^n_t, X_t) =0\right\} ^c \right)
&= P \left( \bigcup _{p=1}^\infty \left\{ \limsup _{n\rightarrow \infty} \sup _{t\in [0,T]}d(X^n_t(\omega ), X_t(\omega )) > \frac 2p\right\} \right) \\
&\leq \sum _{p=1}^\infty P \left(\limsup _{n\rightarrow \infty} \sup _{t\in [0,T]}d(X^n_t(\omega ), X_t(\omega )) > \frac 2p \right) \\
&\leq \sum _{p=1}^\infty \left[ 1- P \left( \bigcup _{m=1}^\infty (\Omega _{m,p} \setminus N_m ) \right) \right]\\
& =0.
\end{align*}
\end{proof}

\subsection{Moments of multidimensional Gaussian random variables}\label{secAG}

\begin{prop}\label{prop:AG1}
Let $n\in {\mathbb N}$ and let $(X_1, X_2, \dots , X_{2n})$ be a $2n$-dimensional Gaussian random vector with real-valued components.
Then, we have
\begin{align*}
E\left[ \prod _{i=1}^{2n} X_i \right]
&= \sum_{i=0}^n \frac{1}{(2i)!\, (n-i)!\, 2^{n-i}} \\
&\quad \times \sum _{\sigma \in {\mathfrak S}_{2n}} \left( \prod _{j=1}^{2i} E\left[ X_{\sigma(j)} \right]\right) \left( \prod _{j=i+1}^n {\rm Cov }(  X_{\sigma(2j-1)}, X_{\sigma(2j)}) \right)
\end{align*}
\end{prop}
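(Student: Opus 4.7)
The plan is to reduce this identity to the classical Isserlis--Wick formula for centered Gaussians, by splitting each $X_j$ into its mean plus its centered part, expanding the product, and re-expressing the combinatorial sum over subsets and pair matchings as a sum over all permutations in $\mathfrak{S}_{2n}$.

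Concretely, I would set $m_j := E[X_j]$ and $Y_j := X_j - m_j$, so that $(Y_1,\ldots,Y_{2n})$ is a centered Gaussian vector whose covariance matrix coincides with that of $(X_1,\ldots,X_{2n})$. Expanding and taking expectations gives
$$E\!\left[\prod_{j=1}^{2n} X_j\right] = \sum_{S \subseteq \{1,\ldots,2n\}} \Bigl(\prod_{j \in S} m_j\Bigr)\, E\!\left[\prod_{j \notin S} Y_j\right].$$
Since odd central moments of a Gaussian vector vanish, only subsets of even cardinality $|S| = 2i$ with $i \in \{0,1,\ldots,n\}$ contribute. For each such $S$, Isserlis' theorem (which I would either quote, or derive in one line by differentiating the moment generating function $\exp(\tfrac12 t^\top \Sigma t)$ a total of $2(n-i)$ times at $t = 0$) yields
$$E\!\left[\prod_{j \notin S} Y_j\right] = \sum_{\pi \in \mathcal{P}(S^c)} \prod_{\{a,b\} \in \pi} \mathrm{Cov}(X_a, X_b),$$
where $\mathcal{P}(S^c)$ denotes the set of perfect matchings of the $2(n-i)$-element set $S^c$ and the covariance is unchanged upon passing from $X$ to $Y$.

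The final step is purely combinatorial. Each pair $(S,\pi)$ with $|S| = 2i$ and $\pi \in \mathcal{P}(S^c)$ is encoded by permutations $\sigma \in \mathfrak{S}_{2n}$ via the rule that $\{\sigma(1),\ldots,\sigma(2i)\} = S$ and $(\{\sigma(2j-1),\sigma(2j)\})_{j=i+1}^{n} = \pi$. Every such $(S,\pi)$ is encoded by exactly $(2i)!\,(n-i)!\,2^{n-i}$ permutations: $(2i)!$ orderings of $S$ in the first block, $(n-i)!$ orderings of the matching pairs, and $2^{n-i}$ choices of orientation within each pair. Dividing the sum over $\sigma$ by this multiplicity produces precisely the factor $1/((2i)!\,(n-i)!\,2^{n-i})$ in the statement, and summing over $i \in \{0,1,\ldots,n\}$ recovers the full expansion.

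There is no serious obstacle here: once Isserlis' formula is in hand, the argument is routine, and the only point requiring care is the multiplicity count and the observation that odd central Gaussian moments vanish. The main bookkeeping is checking that as $i$ ranges from $0$ to $n$ the empty products at the endpoints ($i = 0$ and $i = n$) are handled correctly, which they are by the standard convention that empty products equal $1$.
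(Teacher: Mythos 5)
Your proposal is correct and takes essentially the same route as the paper: both decompose each $X_j$ as its mean plus a centered Gaussian, expand the product, invoke the Isserlis--Wick formula (the paper quotes it as Proposition I.2 in Simon's book), and then reorganize the resulting sum to match the stated permutation form. The only difference is bookkeeping style: you work with subsets $S$ and perfect matchings $\pi$ of $S^c$ and count the multiplicity $(2i)!\,(n-i)!\,2^{n-i}$ explicitly, whereas the paper stays in permutation sums throughout, first symmetrizing with a factor $1/(i!\,(2n-i)!)$ over the choice of which indices are centered, then absorbing the inner permutation $\tau \in \mathfrak{S}_{2i}$ coming from Wick's formula, and finally reindexing $i \mapsto n-i$; the underlying argument is the same.
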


\begin{proof}
It is well-known that for $m\in {\mathbb N}$ and $m$-dimensional real Gaussian vector $(Y_1,Y_2, \dots Y_m)$ it holds that
\[
E\left[ \prod _{i=1}^{m} (Y_i -E[Y_i])\right] =
\left\{ \begin{array}{cl}
\displaystyle \frac{1}{(m/2)!\, 2^{m/2}} \sum _{\sigma \in {\mathfrak S}_{m}} \prod _{i=1}^{m/2} {\rm Cov}(Y_{\sigma (2i-1)}, Y_{\sigma (2i)}), & m\ \mbox{:even} \\
\displaystyle 0, &m\ \mbox{:odd}
\end{array} \right.
\]
(see Proposition I.2 in \cite{Si}).
Applying this formula, we have
\begin{align*}
&E\left[ \prod _{i=1}^{2n} X_i \right] =E\left[ \prod _{i=1}^{2n} (X_i -E[X_i] + E[X_i]) \right]\\
&= \sum _{i=0}^{2n} \frac{1}{i!\, (2n-i)!} \sum _{\sigma \in {\mathfrak S}_{2n}} \left( \prod _{j=i+1}^{2n} E[X_{\sigma (j)}] \right) E\left[ \prod _{j=1}^{i} (X_{\sigma (j)} -E[X_{\sigma (j)}]) \right] \\
&= \sum _{i=0}^{n} \frac{1}{(2i)!\, (2n-2i)!} \sum _{\sigma \in {\mathfrak S}_{2n}} \left(\prod _{j=2i+1}^{2n} E[X_{\sigma (j)}] \right) \frac{1}{i!\, 2^i} \sum_{ \tau \in {\mathfrak S}_{2i}} \prod _{j=1}^i {\rm Cov }(  X_{\tau \circ \sigma(2j-1)}, X_{\tau \circ \sigma (2j)})\\
&= \sum _{i=0}^{n} \frac{1}{i!\, (2n-2i)!\, 2^i} \sum _{\sigma \in {\mathfrak S}_{2n}} \left(\prod _{j=2i+1}^{2n} E[X_{\sigma (j)}] \right) \left( \prod _{j=1}^i {\rm Cov }( X_{\sigma(2j-1)}, X_{\sigma (2j)}) \right)  .
\end{align*}
By changing $i$ for $n-i$ in the sum, we obtain the assertion.
\end{proof}

Now we consider a complex-valued version of Proposition \ref{prop:AG1}.
For square-integrable complex-valued random variables $Z_1$, $Z_2$ we define ${\rm Cov}(Z_1, Z_2)$ by
\[
{\rm Cov}(Z_1, Z_2) := E[(Z_1-E[Z_1])(Z_2-E[Z_2])].
\]

\begin{thm}\label{thm:AG}
Let $n\in {\mathbb N}$ and let $(X_1, Y_1, X_2, Y_2, \dots , X_{2n}, Y_{2n})$ be a $4n$-dimensional Gaussian random vector.
Then, we have
\begin{align*}
E\left[ \prod _{i=1}^{2n} (X_i + \sqrt{-1} Y_i) \right] &= \sum_{i=0}^n \frac{1}{(2i)!\, (n-i)!\, 2^{n-i}} \sum _{\sigma \in {\mathfrak S}_{2n}} \left( \prod _{j=1}^{2i} E\left[ X_{\sigma(j)} + \sqrt{-1} Y_{\sigma (j)}\right]\right) \\
&\quad \times \left( \prod _{j=i+1}^n {\rm Cov }(  X_{\sigma(2j-1)} + \sqrt{-1} Y_{\sigma (2j-1)}, X_{\sigma(2j)} + \sqrt{-1} Y_{\sigma (2j)}) \right)
\end{align*}
\end{thm}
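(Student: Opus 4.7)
The plan is to reduce the complex identity to the real one (Proposition \ref{prop:AG1}) by expanding each factor $X_i + \sqrt{-1}\,Y_i$ and then re-bundling the resulting sum by exploiting the bilinearity of the covariance and the linearity of the expectation. Concretely, I will expand
\[
\prod_{i=1}^{2n} (X_i + \sqrt{-1}\, Y_i) = \sum_{\epsilon \in \{0,1\}^{2n}} (\sqrt{-1})^{|\epsilon|} \prod_{i=1}^{2n} U_i^{(\epsilon_i)},
\]
where $U_i^{(0)} := X_i$ and $U_i^{(1)} := Y_i$ and $|\epsilon|=\epsilon_1+\cdots+\epsilon_{2n}$, take the expectation, and apply Proposition \ref{prop:AG1} to each of the $2^{2n}$ resulting real-Gaussian moments.

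After this substitution the left-hand side becomes
\[
\sum_{\epsilon} (\sqrt{-1})^{|\epsilon|} \sum_{k=0}^{n} \frac{1}{(2k)!\,(n-k)!\,2^{n-k}} \sum_{\sigma \in \mathfrak{S}_{2n}} \Biggl(\prod_{j=1}^{2k} E\bigl[U^{(\epsilon_{\sigma(j)})}_{\sigma(j)}\bigr] \Biggr) \Biggl( \prod_{j=k+1}^{n} \mathrm{Cov}\bigl(U^{(\epsilon_{\sigma(2j-1)})}_{\sigma(2j-1)},\, U^{(\epsilon_{\sigma(2j)})}_{\sigma(2j)}\bigr) \Biggr).
\]
The next step is to interchange the order of summation, fixing $k$ and $\sigma$ and summing over $\epsilon$ first. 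For each fixed $(k,\sigma)$, the index $\epsilon_i$ of each $i\in\{1,\dots,2n\}$ appears in exactly one factor (either in a single mean, if $\sigma^{-1}(i)\leq 2k$, or in a single covariance, if $\sigma^{-1}(i)>2k$), so the inner sum over $\epsilon$ factorizes.

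The remaining bookkeeping is then purely algebraic: for a mean position $j\in\{1,\dots,2k\}$,
\[
\sum_{\epsilon_{\sigma(j)}\in\{0,1\}} (\sqrt{-1})^{\epsilon_{\sigma(j)}} E\bigl[U^{(\epsilon_{\sigma(j)})}_{\sigma(j)}\bigr] = E[X_{\sigma(j)}] + \sqrt{-1}\,E[Y_{\sigma(j)}] = E[X_{\sigma(j)} + \sqrt{-1}\,Y_{\sigma(j)}],
\]
and for a covariance pair $(\sigma(2j-1), \sigma(2j))$ with $j>k$, bilinearity of $\mathrm{Cov}$ gives
\[
\sum_{\epsilon_{\sigma(2j-1)},\epsilon_{\sigma(2j)}} (\sqrt{-1})^{\epsilon_{\sigma(2j-1)}+\epsilon_{\sigma(2j)}} \mathrm{Cov}\bigl(U^{(\epsilon_{\sigma(2j-1)})}_{\sigma(2j-1)},\, U^{(\epsilon_{\sigma(2j)})}_{\sigma(2j)}\bigr) = \mathrm{Cov}\bigl(X_{\sigma(2j-1)}+\sqrt{-1}\,Y_{\sigma(2j-1)},\, X_{\sigma(2j)}+\sqrt{-1}\,Y_{\sigma(2j)}\bigr).
\]
Substituting these two identities back yields the stated formula.

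The calculation is entirely routine once the expansion is set up; the only delicate point I anticipate is making sure the combinatorial bookkeeping of ``which $\epsilon_i$ lives in which factor'' is clean, which is why the factorization of the inner sum (ensured by the fact that each $i$ appears exactly once in the product over $j$) is the pivotal observation. No probabilistic input beyond Proposition \ref{prop:AG1} and the definition of $\mathrm{Cov}$ for complex-valued square-integrable random variables is needed.
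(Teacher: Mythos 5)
Your proposal is correct and follows essentially the same route as the paper: both proofs expand each factor $X_i+\sqrt{-1}\,Y_i$ over the $2^{2n}$ real choices, apply Proposition \ref{prop:AG1} to each resulting real Gaussian moment, interchange the summations, and observe that—because $\sigma$ is a bijection, so each coordinate appears in exactly one factor—the inner sum over choices factorizes and recombines by linearity of $E$ and bilinearity of $\mathrm{Cov}$. The only cosmetic difference is that the paper encodes the $\sqrt{-1}$ factor into auxiliary variables $\tilde Z_{\pm i}$, whereas you carry the $(\sqrt{-1})^{|\epsilon|}$ weight explicitly and evaluate the two types of factorized sums directly.
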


\begin{proof}
Define a $4n$-dimensional real-valued Gaussian random vector\\
$(Z_1,Z_2,\dots , Z_{2n}, Z_{-1}, Z_{-2},\dots , Z_{-2n})$ and a $4n$-dimensional complex-valued Gaussian random vector $(\tilde Z_1, \tilde Z_2,\dots , \tilde Z_{2n}, \tilde Z_{-1}, \tilde Z_{-2},\dots , \tilde Z_{-2n})$ by
\begin{align*}
Z_i &:= \left\{ \begin{array}{rl}
X_i ,& i=1,2,\dots , 2n,\\
Y_{-i} ,& i=-1, -2, \dots , -2n,
\end{array}\right.
\\
\tilde Z_i &:= \left\{ \begin{array}{rl}
X_i ,& i=1,2,\dots , 2n,\\
\sqrt{-1} Y_{-i} ,& i=-1, -2, \dots , -2n,
\end{array}\right.
\end{align*}
respectively.
Then, by Proposition \ref{prop:AG1} we have
\begin{align*}
&E\left[ \prod _{i=1}^{2n} (X_i + \sqrt{-1} Y_i) \right] \\
&= \sum _{\epsilon = (\epsilon _k; k=1,2,\dots ,2n)\in \{ \pm 1\} ^{2n}} (\sqrt{-1})^{\#\{ j=1,2,\dots, 2n; \epsilon _j <0\}} E\left[ \prod _{i=1}^{2n} Z_{\epsilon _i i}\right] \\
&= \sum _{\epsilon = (\epsilon _k; k=1,2,\dots ,2n)\in \{ \pm 1\} ^{2n}} (\sqrt{-1})^{\#\{ j=1,2,\dots, 2n; \epsilon _j <0\}} \sum_{i=0}^n \frac{1}{(2i)!\, (n-i)!\, 2^{n-i}} \\
&\hspace{2cm} \quad \times \sum_{\sigma \in {\mathfrak S}_{2n}} \left( \prod _{l=1}^{2i} E\left[ Z_{\epsilon _l \sigma (l)}\right] \right) \left( \prod _{l=i+1}^n{\rm Cov}( Z_{\epsilon _{2l-1} \sigma (2l-1)}, Z_{\epsilon _{2l} \sigma (2l)})\right) \\
&= \sum_{i=0}^n \frac{1}{(2i)!\, (n-i)!\, 2^{n-i}} \\
&\quad \times \sum _{\sigma \in {\mathfrak S}_{2n}} \sum _{\epsilon = (\epsilon _k; k=1,2,\dots ,2n)\in \{ \pm 1\} ^{2n}} \left( \prod _{l=1}^{2i} E\left[ \tilde Z_{\epsilon _l \sigma (l)}\right] \right) \left( \prod _{l=i+1}^n{\rm Cov}( \tilde Z_{\epsilon _{2l-1} \sigma (2l-1)}, \tilde Z_{\epsilon _{2l} \sigma (2l)})\right) \\
&= \sum_{i=0}^n \frac{1}{(2i)!\, (n-i)!\, 2^{n-i}} \sum _{\sigma \in {\mathfrak S}_{2n}} \left( \prod _{j=1}^{2i} E\left[ X_{\sigma(j)} + \sqrt{-1} Y_{\sigma (j)}\right]\right) \\
&\hspace{3cm} \quad \times \left( \prod _{j=i+1}^n {\rm Cov }(  X_{\sigma(2j-1)} + \sqrt{-1} Y_{\sigma (2j-1)}, X_{\sigma(2j)} + \sqrt{-1} Y_{\sigma (2j)}) \right) .
\end{align*}
\end{proof}

\subsection{Tightness of the direct product of tight families}

Let $S_1$ and $S_2$ be metric spaces, $S_1\times S_2$ be the product space of $S_1$ and $S_2$ and $\pi _i$ be a projection on $S_1\times S_2$ to $S_i$ for $i=1,2$.
We remark that $S_1\times S_2$ is a metrizable topological space.

\begin{prop}\label{prop:AT}
Let $\{ P_\lambda \}$ be a family of probability measures on $S_1\times S_2$.
If the family $\{ P_{\lambda } \circ \pi _i ^{-1} \}$ is tight as probability measures on $S_i$ for $i=1,2$, then $\{ P_\lambda \}$ is tight on $S_1\times S_2$.
\end{prop}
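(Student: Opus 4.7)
The plan is straightforward: this is the standard fact that a product of tight marginals is jointly tight, so I would prove it directly from the definition of tightness by combining a compact set witnessing tightness of the first marginal with one witnessing tightness of the second, and taking their Cartesian product.

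More precisely, fix $\varepsilon > 0$. By the assumed tightness of $\{P_\lambda \circ \pi_1^{-1}\}$ on $S_1$, I would select a compact set $K_1 \subset S_1$ such that $P_\lambda \circ \pi_1^{-1}(K_1) \geq 1 - \varepsilon/2$ uniformly in $\lambda$, and similarly choose a compact $K_2 \subset S_2$ with $P_\lambda \circ \pi_2^{-1}(K_2) \geq 1 - \varepsilon/2$ uniformly in $\lambda$. The key topological fact I would invoke is that, since $S_1$ and $S_2$ are metrizable (hence so is $S_1 \times S_2$ with the product topology), the product $K_1 \times K_2$ is a compact subset of $S_1 \times S_2$ by Tychonoff's theorem (or elementary sequential compactness in the metric setting).

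It then remains to estimate the mass of the complement. I would write
\[
(K_1 \times K_2)^c = \pi_1^{-1}(K_1^c) \cup \pi_2^{-1}(K_2^c),
\]
and apply subadditivity to obtain
\[
P_\lambda\bigl((K_1 \times K_2)^c\bigr) \leq P_\lambda \circ \pi_1^{-1}(K_1^c) + P_\lambda \circ \pi_2^{-1}(K_2^c) \leq \frac{\varepsilon}{2} + \frac{\varepsilon}{2} = \varepsilon,
\]
uniformly in $\lambda$. This shows $P_\lambda(K_1 \times K_2) \geq 1 - \varepsilon$ for all $\lambda$, which is precisely the definition of tightness of $\{P_\lambda\}$ on $S_1 \times S_2$.

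There is no real obstacle in this argument; the only conceptual ingredient beyond the definition is the compactness of the product $K_1 \times K_2$, which is immediate in the metric category. Everything else is a union bound.
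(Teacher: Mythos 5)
Your proof is correct and is essentially the same as the paper's: both take compact sets $K_1, K_2$ witnessing tightness of the marginals, observe that $K_1 \times K_2$ is compact (the paper invokes sequential compactness of products of sequentially compact metric spaces; you invoke the same fact or Tychonoff), and bound $P_\lambda\bigl((K_1 \times K_2)^c\bigr)$ by a union bound. The only cosmetic difference is that you phrase the estimate via the complement $(K_1 \times K_2)^c = \pi_1^{-1}(K_1^c) \cup \pi_2^{-1}(K_2^c)$, while the paper uses the equivalent Bonferroni inequality on the intersection $(K_1 \times S_2) \cap (S_1 \times K_2)$.
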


\begin{proof}
For $\varepsilon \in (0,1]$ there exists compact sets $K_1$ and $K_2$ in $S_1$ and $S_2$ such that for $\lambda \in \Lambda$
\[
P_{\lambda } \circ \pi _1 ^{-1}(K_1) >1- \frac{\varepsilon}{2}, \quad P_{\lambda } \circ \pi _2 ^{-1}(K_2) >1-\frac{\varepsilon}{2}
\]
respectively.
Hence, for $\lambda \in \Lambda$
\begin{align*}
P_{\lambda }(K_1\times K_2) &= P((K_1 \times S_2) \cap (S_1\times K_2))\\
&\geq 1- P(K_1 \times S_2) - P(S_1 \times K_2) \\
&>1-\varepsilon .
\end{align*}
Since the compactness is equivalent to the sequential compactness on metric spaces and the product of sequentially compact sets is also sequentially compact, $K_1 \times K_2$ is a compact set in $S_1\times S_2$.
Therefore, the assertion holds.
\end{proof}

\begin{cor}\label{cor:AT}
Let $B$ be a Banach space.
Let $\{ X_\lambda^{(1)} \}$ and $\{ X_\lambda^{(2)} \}$ be families of $B$-valued random variables on a probability space.
If the laws of $\{ X_\lambda^{(1)} \}$ and $\{ X_\lambda^{(2)} \}$ are tight, then the laws of the pairs $\{ (X_\lambda^{(1)}, X_\lambda^{(2)} )\}$ are also tight as probability measures on $B\times B$.
In particular, the laws of $\{ X_\lambda^{(1)} + X_\lambda^{(2)} \}$ are tight as probability measures on $B$.
\end{cor}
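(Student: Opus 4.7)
The proof is essentially a direct application of Proposition \ref{prop:AT}, together with a continuous mapping argument for the sum. Since $B$ is a Banach space, it is in particular a metric space, so the setting of Proposition \ref{prop:AT} applies with $S_1 = S_2 = B$. My plan is as follows.

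First, I would let $P_\lambda$ denote the law on $B \times B$ of the pair $(X_\lambda^{(1)}, X_\lambda^{(2)})$. The crucial observation is that the pushforward of $P_\lambda$ by the coordinate projection $\pi_i : B \times B \to B$ is exactly the law of $X_\lambda^{(i)}$, for $i = 1, 2$. By hypothesis, the families $\{ P_\lambda \circ \pi_1^{-1} \}$ and $\{ P_\lambda \circ \pi_2^{-1} \}$ are tight on $B$. Thus Proposition \ref{prop:AT} applies verbatim and yields tightness of $\{ P_\lambda \}$ on $B \times B$.

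For the second assertion, I would invoke the continuous mapping principle. The addition map $\mathrm{Add} : B \times B \to B$, $(x,y) \mapsto x + y$, is continuous (in fact Lipschitz), so it sends relatively compact subsets of $B \times B$ to relatively compact subsets of $B$. Given $\varepsilon > 0$, pick a compact set $K \subset B \times B$ with $P_\lambda(K) > 1 - \varepsilon$ for all $\lambda$; then $\mathrm{Add}(K)$ is compact in $B$ and satisfies
\[
P\bigl( X_\lambda^{(1)} + X_\lambda^{(2)} \in \mathrm{Add}(K) \bigr) \geq P\bigl( (X_\lambda^{(1)}, X_\lambda^{(2)}) \in K \bigr) > 1 - \varepsilon,
\]
giving tightness of the laws of $\{ X_\lambda^{(1)} + X_\lambda^{(2)} \}$ on $B$.

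There is no real obstacle here: the only subtlety is conceptual, namely that tightness of marginals generally does not imply tightness of joint laws for abstract families of measures on a product space, but Proposition \ref{prop:AT} has already been established for general metric spaces $S_1, S_2$ via the observation that finite products of compact sets are compact. Once that proposition is in hand, both assertions of the corollary reduce to bookkeeping and the continuity of addition.
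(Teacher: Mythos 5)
Your proof is correct and follows essentially the same route as the paper: apply Proposition \ref{prop:AT} with $S_1 = S_2 = B$ to the joint laws, then push forward along the continuous addition map $B\times B \to B$. The paper states this in a single sentence, while you spell out the compact-image argument, but the content is identical.
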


\begin{proof}
The assertions follow from Proposition \ref{prop:AT} and the continuity of the mapping
\[
f: B\times B \rightarrow B, \quad f(x,y) = x+y ,\ (x,y\in B).
\]
\end{proof}

\subsection{Existence of invariant measures for stationary Markov processes}

\begin{prop}\label{prop:Ainv}
Consider a Markov process $(X_t^x; t\in [0,\infty ) )$ on a topological space $S$ and denote the process $X_\cdot$ with initial distribution $\nu$ by $X^\nu _\cdot$.
If the family of probability measures
\[
\left\{ P\circ (X_t^\nu )^{-1} ; t\in [0,\infty ) \right\}
\]
is tight for a probability measure $\nu$, then $X$ has an invariant probability measure.
\end{prop}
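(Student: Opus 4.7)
The plan is to use the classical Krylov--Bogoliubov time-averaging argument. Let $\{P_t\}$ denote the Markov semigroup associated with $X$, and for every $T>0$ define the Cesàro average
\[
\mu_T(A) \; := \; \frac{1}{T}\int_0^T P\bigl( X_t^\nu \in A\bigr)\,dt, \qquad A\subset S\ \text{Borel}.
\]
Each $\mu_T$ is a Borel probability measure on $S$.

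First I would verify tightness of the family $\{\mu_T: T\geq 1\}$. Given $\varepsilon>0$, the assumed tightness of $\{P\circ(X_t^\nu)^{-1}:t\geq 0\}$ provides a compact set $K_\varepsilon\subset S$ with $P(X_t^\nu\in K_\varepsilon)\geq 1-\varepsilon$ for all $t\geq 0$; averaging in $t$ yields $\mu_T(K_\varepsilon)\geq 1-\varepsilon$ uniformly in $T$. By Prokhorov's theorem there exist a subsequence $T_n\uparrow\infty$ and a Borel probability measure $\mu$ on $S$ such that $\mu_{T_n}\to\mu$ weakly.

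Next I would check invariance. For any bounded continuous test function $f:S\to\mathbb{R}$ and any fixed $s\geq 0$, the Markov property gives
\[
\int_S P_s f\,d\mu_T \;=\; \frac{1}{T}\int_0^T E\bigl[f(X_{t+s}^\nu)\bigr]\,dt \;=\; \int_S f\,d\mu_T \;+\; \frac{1}{T}\Bigl(\int_T^{T+s}-\int_0^s\Bigr) E\bigl[f(X_t^\nu)\bigr]\,dt,
\]
so that $\bigl|\int P_s f\,d\mu_T-\int f\,d\mu_T\bigr|\leq 2s\|f\|_\infty/T\to 0$ as $T\to\infty$. Passing to the limit along $T_n$, the right-hand side converges to $\int f\,d\mu$, while the left-hand side converges to $\int P_s f\,d\mu$ provided $P_s f$ is again bounded and continuous. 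Hence $\int P_s f\,d\mu=\int f\,d\mu$ for every bounded continuous $f$, which by a monotone-class argument extends to all bounded Borel $f$, proving that $\mu$ is invariant.

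The main obstacle is the last step: it implicitly requires the Feller property of the semigroup in order to apply weak convergence to $P_s f$. I would therefore either invoke the Feller property of the underlying process (which in the applications of this paper holds because $X^N$ is a finite-dimensional SDE with Lipschitz coefficients on each Fourier mode, yielding a strong Feller semigroup, and similarly for the Ornstein--Uhlenbeck part), or, if one prefers a purely measure-theoretic formulation, replace the pointwise convergence argument by a duality argument using a countable convergence-determining family of continuous functions. Once the Feller property is in hand the remainder of the argument is a routine passage to the limit, so no delicate estimates arise beyond those already furnished by tightness.
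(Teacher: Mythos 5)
Your proposal is the same Krylov--Bogoliubov time-averaging argument the paper uses: tightness of the marginals gives tightness of the Ces\`aro averages $\mu_T$, Prokhorov yields a weak limit $\mu$ along a subsequence, and a shift-of-integral estimate shows $\int P_s f\,d\mu = \int f\,d\mu$ for $f\in C_b(S)$. You go a bit further than the paper in flagging explicitly that passing to the limit in $\int P_s f\,d\mu_{T_n}\to\int P_s f\,d\mu$ requires $P_s f\in C_b(S)$, i.e.\ the Feller property; the paper's proof uses this tacitly in its first equality (which rewrites $E[f(X_t^\mu)]=\int P_t f\,d\mu$ as a limit of $\int P_t f\,d\mu_{t_n}$), so your remark is a correct and useful clarification rather than a deviation.
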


\begin{proof}
Since $\{ P\circ (X_t^\nu )^{-1}; t\in [0,\infty )\}$ is tight, the family $\{ \mu _t ; t\in (0,\infty )\}$ of probability measures on $(S,{\mathscr B}(S))$ defined by
\[
\mu _t (A) := \frac 1t \int _0^t P\circ (X_s^\nu )^{-1} (A) ds, \quad A\in {\mathscr B}(S)
\]
is also tight.
Hence, there exists a sequence $\{ t_n\} \subset (0,\infty )$ such that $\lim _{n\rightarrow \infty} t_n = \infty$ and $\mu _{t_n}$ converges to a probability measure $\mu$.
For $f\in C_b(S)$
\begin{align*}
E[f(X_t^\mu )] & = \lim _{n\rightarrow \infty} \frac{1}{t_n} \int _0^{t_n} E[f(X_{t+s}^\nu )] ds \\
&= \lim _{n\rightarrow \infty} \frac{1}{t_n} \left( \int _0^{t_n} E[f(X_s^\nu )] ds + \int _{t_n}^{t_n +t} E[f(X_s^\nu )] ds - \int _{0}^{t} E[f(X_s^\nu )] ds \right) \\
&= \int _S f d\mu .
\end{align*}
Therefore, $\mu$ is an invariant probability measure for $X$.
\end{proof}

%\bibliographystyle{plain}
%\bibliography{AK2.bib}

\def\polhk#1{\setbox0=\hbox{#1}{\ooalign{\hidewidth
  \lower1.5ex\hbox{`}\hidewidth\crcr\unhbox0}}}

\end{document}